\documentclass[11pt, a4paper, twoside, reqno]{amsart}
\usepackage[utf8]{inputenc}
\usepackage[T1]{fontenc}
\usepackage{lmodern}
\usepackage{microtype}
\usepackage[frak=boondox]{mathalpha}
\usepackage{dsfont}
\usepackage{bbold}
\usepackage{upgreek}
\usepackage{mathrsfs}
\usepackage{euscript}
\usepackage{amssymb}

\usepackage[top=1in, bottom=1in, left=1in, right=1in, headheight=15pt, footskip=40pt]{geometry}
\usepackage{enumitem}
\usepackage{parskip}
\usepackage{float}
\usepackage{caption}
\usepackage{tabto}
\usepackage{color}
\usepackage{mdframed}
\usepackage{hyphenat}
\usepackage{comment}

\usepackage{amsmath, amssymb, amsthm, mathtools}
\usepackage{extarrows}
\usepackage{tikz-cd}
\usepackage[all,cmtip]{xy} 
\usepackage{graphicx}

\usepackage[numbers]{natbib}
\setcitestyle{open={},close={}}
\makeatletter
\renewcommand{\@biblabel}[1]{[#1]\hfill}
\makeatother

\usepackage[hidelinks]{hyperref}
\usepackage[nameinlink]{cleveref}
\newtheoremstyle{mystyle}
  {\topsep}    
  {\topsep}   
  {\itshape}   
  {}           
  {\bfseries}   
  {.}          
  {.5em}        
  {}            

\newtheoremstyle{spacedremark} 
  {\topsep}    
  {\topsep}     
  {\normalfont} 
  {}           
  {\bfseries}   
  {.}          
  {.5em}        
  {}               

\theoremstyle{mystyle}
\newtheorem{thm}{Theorem}[subsection]
\newtheorem{lem}[thm]{Lemma}

\newtheorem{prop}[thm]{Proposition}
\newtheorem{cor}[thm]{Corollary}

\newtheorem{defn}[thm]{Definition}
\newtheorem{inner-recall-star}{Theorem}
\newenvironment{recall*}[1]
  {\begin{inner-recall-star}[see \Cref{#1}]}
  {\end{inner-recall-star}}
\theoremstyle{spacedremark}
\newtheorem{rem}[thm]{Remark}
\newtheorem{ex}[thm]{Example}
\newtheorem{coex}[thm]{Counterexample}

\crefname{thm}{Theorem}{Theorems}
\crefname{prop}{Proposition}{Propositions}

\DeclareMathOperator*{\colim}{colim}

\newcommand{\dcolim}{\varinjlim}
\newcommand{\plim}{\varprojlim}

\DeclareMathAlphabet{\duc}{U}{dutchcal}{m}{n}
\SetMathAlphabet{\duc}{bold}{U}{dutchcal}{b}{n}
\DeclareFontFamily{U}{BOONDOX-calo}{\skewchar\font=45 }
\DeclareFontShape{U}{BOONDOX-calo}{m}{n}{<-> s*[1.0] BOONDOX-r-calo}{}
\DeclareFontShape{U}{BOONDOX-calo}{b}{n}{<-> s*[1.0] BOONDOX-b-calo}{}
\DeclareMathAlphabet{\mal}{U}{BOONDOX-calo}{m}{n}
\SetMathAlphabet{\mal}{bold}{U}{BOONDOX-calo}{b}{n}

\newcommand{\shrp}{{\mathord{\text{\tiny $\sharp$}}}}
\newcommand{\esc}{\EuScript}

\setlist[enumerate]{leftmargin=*, nosep}
\begin{document}

\title[Birational Algebraic Topology]{Birational Algebraic Topology}

\author[Dipankar Maity]{Dipankar Maity}
\address{Department of Mathematical Sciences \\
Indian Institute of Science Education and Research (IISER) Mohali \\ Knowledge city, Sector 81, SAS Nagar, Manauli PO 140306 \\
India}
\email{ph22057@iisermohali.ac.in}
\subjclass[2020]{Primary 14F42; Secondary 14E05, 18N55, 18N60}
\keywords{Motivic homotopy theory, Birational geometry, Localizations and $\infty$-categories}

\begin{abstract}
 Over a qcqs scheme $S$, we analyze the birational localization $L_{\mathrm{bir}}\mathcal{H}^{\mathbb{A}^1}(S)$ of the motivic $\infty$-category $\mathcal{H}^{\mathbb{A}^1}(S)$. We establish that the associated localization functor $L_{bir}$ commutes with the bar construction, thereby preserving connectivity over fields. When the field is perfect, we show that a sheaf of groups is birational exactly when it is strongly $\mathbb{A}^1$-invariant and has trivial $\mathbb{G}_m$-contraction. For connected motivic spaces over such fields, this yields a canonical equivalence between $L_{bir}$ and the $S^{2,1}$-nullification functor $L^{2,1}$. For a general field $k$, we identify $\pi_0^b(X)$ with $\pi_0^{b\mathbb{A}^1}(X)$ for proper $k$-schemes and show that $\mathbb{A}^1$-connectedness is equivalent to birational connectedness for (ind-)proper $k$-schemes. This also confirms that $\pi_0^{b\mathbb{A}^1}(-)$ and $\mathbb{H}_0^{\mathbb{A}^1}(-)$ are stable birational invariants of smooth proper $k$-schemes, satisfying the expected universal property for invariants valued in birational sheaves (of sets and abelian groups respectively). Finally, we show that etale birational equivalences to $S$ are precisely the dense open immersions into $S$.
\end{abstract}
\maketitle

\tableofcontents

\section{Introduction to Birational Algebraic Topology}
\subsection{The aims of birational motivic homotopy theory}
Birational geometry studies birational morphisms of schemes. Denoted  $X\dashrightarrow Y$, a birational morphism from $X$ to $Y$ stands for a span $X \xhookleftarrow{i} U \xhookrightarrow{j} Y$ of dense open immersions (i.e., both $i$ and $j$ have homeomorphic images that are dense in their codomain and induce isomorphisms $i^*\mathcal{O}_{X}\simeq \mathcal{O}_U\simeq j^*\mathcal{O}_Y$ of structure sheaves). One of the classical questions in birational geometry is the rationality problem: How to determine whether a given variety is birational to $\mathbb{A}^n$? However clean the statement may seem, the rationality problem has no universal solution. 

The issue is that birational morphisms are not morphisms in any sufficiently well-behaved 1-category that suitably enlarges the category of (smooth) schemes. Moreover, the rationality question is a rigid one, in the sense that it is really about isomorphism classes. Problems of this type often require one to consider the `possible' $(2, 1)$-category of such spans, which extends the $1$-category of (smooth) schemes in a suitable manner. 

One of the best techniques for obtaining negative answers to such problems is to construct invariants that induce isomorphisms on such spans. However, an appropriate, well-behaved invariant must satisfy the properties of a good cohomology theory, for which even $2$-categories are insufficient. One shall rather build a category that has all higher categorical colimits. The `minimal' such category, which contains all smooth schemes and is a natural home for such spans, is precisely the point of the birational homotopy category. This is the localization of $\mathcal{P}_\Sigma(Sm_S)$ (see [\cite{lurie2009higher}, Definition 5.5.8.8] for the notation) at the set of dense open immersions, denoted $\mathcal{H}^{b}_{}(S):=L_{dense}\mathcal{P}_{\Sigma}(Sm_S)$.

Frankly, if one really wants cohomology theories of sufficiently good nature, a priori, it is not clear why our birational category should enforce them. Therefore, it is suggestive to start with a good cohomological extension rather than $\mathcal{P}_\Sigma(Sm_S)$. One of the best and most popular such unstable candidates is, no doubt, the $\mathbb{A}^1$ homotopy category, constructed by Morel and Voevodsky [\cite{morel19991}] (recall that this is defined as the $\mathbb{A}^1$-localization of the Nisnevich $\infty$-topos of $Sm_S$). So the correct candidate for birational homotopy shall rather be $\mathcal{H}^{b\mathbb{A}^1}(S):=L_{dense}\mathcal{H}^{\mathbb{A}^1}(S)$.

In fact, historically, this was the first construction of the `birational motivic homotopy category' and appeared in [\cite{pelaez2014unstable}] for a completely different reason: to study a possible slice filtration of the unstable motivic homotopy category. This unstable theory has a significant impact on motivic slice computations (see [\cite{bachmann2019voevodsky}]). (We will carry out a similar computation in another work [\cite{bas}]).
 
With a different goal in mind, Choudhury and Roy [\cite{choudhury2022characterisation}] considered both constructions over a field $k$ and showed in Theorem 3.4 of \textit{loc.cit.} that they coincide. That is, homotopy invariance and Nisnevich locality are automatic in $\mathcal{H}^{b}(k)$. This result also appeared in a work of Cisinski and Kahn [\cite{cisinski2023homotopy}, Corollary 2.3]. However, both contain a technical oversight: they rely on the unrestricted $\infty$-category of presheaves $\mathcal{P}(Sm_k)$ rather than $\mathcal{P}_\Sigma(Sm_k)$, so that their $\mathbf{H}_b(k)$ (which evaluates to $L_{dense}\mathcal{P}(Sm_k)$ in our setting) contains objects that fail to be local with respect to disjoint covers. The first section of this paper addresses this gap by providing a rigorously corrected version of this strategy while generalizing it to all base schemes. Thus, we show that for any qcqs scheme $S$, there is an equality of $\infty$-categories $\mathcal{H}^{b\mathbb{A}^1}(S)=\mathcal{H}^b(S)$. (The strategy we develop for automatic Nisnevich locality differs from that of [\cite{choudhury2022characterisation}]. In another work [\cite{sfbat}, Theorem 2.3.6], I provide a refinement of their Corollary 3.7 by employing the framework of $\mathcal{P}_\Sigma (Sm_S)$, carried out uniformly over all qcqs base schemes.)

Now, obviously, the $\mathbb{A}^1$ homotopy category is not the most general cohomologically rich construction, simply because not all cohomologies are $\mathbb{A}^1$-invariant. There are a few more on the list that do a little better. Among many such candidates, another unstable cohomological category is the logarithmic motivic homotopy category of [\cite{binda2023logarithmic}]. In \S\ref{2.3} of this paper, we will see that even with the logarithmic setup, one does not get anything new: the `strict logarithmic birational motivic' homotopy category is equivalent to the ordinary `birational motivic homotopy category'.

We may therefore unambiguously denote the birational motivic homotopy category by $\mathcal{H}^b(S)$. It will therefore follow that computations in $\mathcal{H}^{b\mathbb{A}^1}(S)$ are significantly simpler than their $\mathbb{A}^1$-homotopical counterparts. 

This tractability brings us to the other important aspect: to what extent can birational problems be resolved entirely via $\mathbb{A}^1$-homotopy theory? This question has been raised and partially answered in various accounts. The starting point of this line of study is probably [\cite{asok2011smooth}], where the authors show that, for smooth proper varieties, the sections of the $\mathbb{A}^1$-homotopy path component over $\mathcal{F}_k$ capture Manin's notion of $R$-equivalent points. However, it is known that $\pi_0^{\mathbb{A}^1}X$ (even for a smooth proper variety $X$) is not birational [\cite{amit}, Example 4.8], so we are forced to consider the `universal birationalization'. But this birationalization turns out to be difficult to compute by hand.

In later explorations [\cite{asok2010birational}], [\cite{asok2011stable}], the authors have used concepts like this, paired with stability/homology, to detect the existence of rational points or zero cycles of degree one. In fact, they conjectured that the zeroth $\mathbb{A}^1$ homology of such varieties is the free abelian sheaf of their $\pi_0^{b\mathbb{A}^1}$ (this has been verified in [\cite{koizumi2021zeroth}]) and is itself a birational invariant. Connections of these aspects to birational homotopy theory were established in the other initial references [\cite{choudhury2022characterisation}, Theorem 3.5], [\cite{cisinski2023homotopy}, Proposition 1.9]: the unstable birational path component $\pi_0^bX$ is isomorphic to $\pi_0^{b\mathbb{A}^1}X$ for a smooth proper variety $X/k$. 

In this paper, we remove the smoothness hypothesis via an alternative approach [\Cref{pi0bX=pi0ba1X}] and apply it in \Cref{uni bir of pia1x} to show that for a proper scheme $X/k$, $\pi_0^{b\mathbb{A}^1}X$ [\cite{asok2011smooth}] is precisely the aforementioned universal birationalization of $\pi_0^{\mathbb{A}^1}X$ (in fact, inducing an isomorphism over $\mathcal{F}_k$). Furthermore, this establishes that proper motivic spaces are $\mathbb{A}^1$-connected if and only if they are birationally connected [\Cref{biba1 is bir inv}]. 

It must, however, be noted that the full motivic homotopy type $L_{mot}X$ of a proper scheme $X$ differs significantly from its birational homotopy type $L_{bir}X$. Birational localization thus yields genuinely new birational invariants that remain invisible to the motivic localization.

(In [\cite{inhot}], we shall explain another important aspect of moving from the $\mathbb{A}^1$-homotopy category to the Birational homotopy category, namely, to add topos theoretic properties to $\mathcal{H}^{\mathbb{A}^1}$.)
\subsection{Notations and terminologies}\label{1.2}

We shall freely use the language of $\infty$-categories without referring to any particular model for the theory. The reader may freely use their preferred model. A standard reference is [\cite{lurie2009higher}], where the author models these on Kan complexes as $\infty$-groupoids. With a choice of Grothendieck universes, the terms Large and small will be treated as usual. The term $\mathrm{Map}_{-}(-,-)$ will always denote the $\infty$-groupoid of morphisms, while $[-,-]_-$ will denote the path component $\pi_0\mathrm{Map}_{-}(-,-)$. $Spc$ will stand for the $\infty$-category of spaces. $Pr^L$ shall denote the $\infty$-category of large presentable $\infty$-categories, while $Cat_{\infty}$ will denote the $\infty$-category of all large $\infty$-categories.

For an $\infty$-category $\mathcal{C}$, by $\mathcal{P}(\mathcal{C})$ we mean the category of presheaves of spaces on $\mathcal{C}$, which is the free co-completion of $\mathcal{C}$. If $\mathcal{C}$ has coproducts, by $\mathcal{P}_\Sigma(\mathcal{C})$ we mean the full subcategory of $\mathcal{P}(\mathcal{C})$ consisting of presheaves that turn finite coproducts into products. This is the sifted cocompletion of $\mathcal{C}$, and therefore the corresponding Yoneda embedding $h: \mathcal{C}\to \mathcal{P}_\Sigma(\mathcal{C})$ preserves coproducts [\cite{lurie2009higher}, Proposition 5.5.8.10]. On the other hand, $\mathcal{P}(\mathcal{C})_\bullet$ will denote the full subcategory of $\mathcal{P}(\mathcal{C})$ of pointed objects. $pt$ will stand for the terminal object, so $\mathcal{P}(\mathcal{C})_\bullet=\mathcal{P}(\mathcal{C})_{pt/}$

Throughout this paper, $S$ will denote a scheme, and $k$ will denote a field. In most of the important results, we will need $S$ to be Qcqs and $k$ to be perfect. But we will specify the conditions each time we use them: either at the beginning of the (sub)section or within the statements themselves. We do not assume $k$ is perfect unless otherwise specified. By a morphism of schemes, we shall always mean a separated morphism. $Sm_S$ stands for the category of smooth, finite-type, separated morphisms over $S$.

We refer to [\cite{hoyois2015quadratic}, Appendix C] for the notion of the Nisnevich topology and Nisnevich sheaves on $SM_S$ (smooth schemes over $S$, not necessarily of finite type) when $S$ is a general scheme. When $S$ is Qcqs, Proposition C.5 (3) of \textit{loc.cit.} yields a canonical equivalence $\mathcal{P}_{nis}(SM_S)\simeq \mathcal{P}_{nis}(Sm_S)$ of the $\infty$-toposes of Nisnevich sheaves of spaces. Since we will always work with Qcqs schemes, it is fine to work with $Sm_S$ instead of $SM_S$.

By $\mathcal{P}(S)$, we mean the $\infty$-category of spaces on $Sm_S$. Throughout, we let $\sigma$ denote a Grothendieck topology on $Sm_S$. In most of the important results, we will need $\sigma$ to be coarser than (but including) the Nisnevich topology. We denote the topos of $\sigma$-local ($\infty$-)sheaves as $\mathcal{P}_\sigma(S)$. The corresponding localization functor, denoted $L_\sigma$, is the associated $\sigma$ ($\infty$)-sheafification and is hence left exact. $\pi_n^{\sigma}$ denotes the homotopy groups internal to the $\mathcal{P}_\sigma(S)$-local topos, i.e., the $\sigma$ sheafification of the presheaf $\pi_n$.

We shall use the notions of accessible localization and saturation classes throughout the paper. A standard reference is [\cite{lurie2009higher}, \S5.4.4]. We will use the notation $\mathcal{H}^{\mathbb{A}^1}(S)$ for the full subcategory of $\mathcal{P}(S)$ consisting of $S$-motivic spaces i.e., $\mathbb{A}^1$-local Nisnevich-local objects. The corresponding localization functors will be denoted $L_{\mathbb{A}^1}, L_{nis}, L_{mot}$, etc. $\pi_i^{\mathbb{A}^1}$ will, as usual, stand for $\pi_i^{nis}L_{mot}$. In principle, one may use a combination of a topology $\sigma$ and a smooth scheme $A/S$ to construct a localization $\mathcal{H}^{A}_{\sigma}(S):=L_{A}\mathcal{P}_\sigma (S)$.  

We use the notation $ PSh_S$, $ PGrp_S$, and $ PAb_S$ for the categories of presheaves on $Sm_S$ with values in sets, groups, and abelian groups, respectively. For a topology $\sigma$, we use the notation $Sh_S^\sigma, Grp_S^\sigma, Ab_S^\sigma$ for the corresponding subcategories of $\sigma$-sheaves. When $\sigma=nis$, we denote these as usual by $Sh_S, Grp_S, Ab_S$. On the other hand, $Sh^{\mathbb{A}^1}_S, Grp_S^{\mathbb{A}^1}, Ab_S^{\mathbb{A}^1}$ denote the subcategories of ordinary, strongly, and strictly $\mathbb{A}^1$-invariant sheaves, respectively. 

 When $k$ is a field, $\mathcal{F}_k$ denotes the category of finitely generated separable extensions of $k$ and field homomorphisms. We write $\mathrm{Sch}_k$, $\mathrm{Prop}_k$, and $\mathrm{SmProp}_k$ for the categories of all schemes, proper schemes, and smooth proper schemes over $k$, respectively.

 \subsection{Summary of the paper}
 Over a general Qcqs base scheme $S$, we study the algebraic topology of the Birational Motivic homotopy category $L^0_{bir}\mathcal{H}^{\mathbb{A}^1}(S)$ of [\cite{bachmann2019voevodsky}]. Recall that this is defined as the localization of the Motivic homotopy category $\mathcal{H}^{\mathbb{A}^1}(S)$ at the set $B_0(S)$ of dense open immersions in $Sm_S$. To shorten notation and be consistent with the notation of [\cite{asok2011smooth}], we denote this localization by $\mathcal{H}^{b\mathbb{A}^1}(S)$. By dropping $0$, we denote the localization functor $L^0_{bir}$ of [\cite{bachmann2019voevodsky}] simply by $L_{bir}$. 
 
 The central idea of the paper is that localization at $B_0(S)$ is strong enough to enforce both Nisnevich and $\mathbb{A} ^1$-locality, provided one starts with the sifted cocompletion $\mathcal{P}_{\Sigma}(Sm_S)$. In fact, it is strong enough to induce locality for any rational scheme. 
 
 In other words, if we define: 
 \begin{enumerate}
     \item  The Birational homotopy category as $\mathcal{H}^b(S):=L_{dense}\mathcal{P}_{\Sigma}(Sm_S)$,
     \item The Birational Motivic homotopy category as $\mathcal{H}^{b\mathbb{A}^1}(S):=L_{dense}\mathcal{H}^{\mathbb{A}^1}(S)$,
     \item The rational motivic homotopy category $\mathcal{H}^{rat}(S)$ as the localization of $\mathcal{P}_{nis}(Sm_S)$ at all rational schemes over $S$, 
 \end{enumerate}
then,
 
\begin{inner-recall-star}[see \Cref{bir and rat} and \Cref{Motivic equivalences are Birational equivalences}]
$\mathcal{H}^b(S)\subset \mathcal{H}^{rat}(S)\subset \mathcal{H}^{\mathbb{A}^1}(S)$. In particular, $\mathcal{H}^{b\mathbb{A}^1}(S)\simeq \mathcal{H}^b(S)$.
\end{inner-recall-star}

In the same section, we define two analogs of the birational localization for the logarithmic motivic homotopy category $\mathrm{log}\mbox{-}\mathcal{H}^\Box(S)$ [\cite{binda2023logarithmic}]:
\begin{enumerate}
    \item $\mathrm{log}\mbox{-}\mathcal{H}^{log\mbox{-}b}(S)$: the localization of $\mathrm{log}\mbox{-}\mathcal{H}^\Box(S)$ at all log-dense open immersions of log schemes,
    \item $\mathrm{log}\mbox{-}\mathcal{H}^{b}(S)$: the localization of $\mathrm{log}\mbox{-}\mathcal{H}^\Box(S)$ at all dense open immersions of log schemes.
\end{enumerate}
We then show that:

\begin{inner-recall-star}[see \Cref{Equivalence of ordinary and logarithmic birational motivic homotopy categories}]
There is a canonical equivalence: $\mathrm{log}\mbox{-}\mathcal{H}^{b}(S)\simeq \mathcal{H}^b(S)$ induced by the forgetful functor $\rho: SmlSm_S\to Sm_S$ given by $(X,D)\mapsto X$.
\end{inner-recall-star}

By choosing to work with Qcqs schemes, we shall therefore consider $\mathcal{H}^b:=L_{dense}\mathcal{P}_\Sigma$ as the candidate for the birational motivic homotopy category. This simplicity of the birational motivic homotopy category will make the corresponding algebraic topology extremely simple. For example, we show that:

\begin{recall*}{bir bar commute}
  $L_{bir}$ commutes with the two-sided (Nisnevich) bar construction.      
\end{recall*}
One must be cautioned that similar statements are incorrect for the motivic localization functor $L_{mot}$, because the inclusion $\mathcal{H}^{\mathbb{A}^1}(S)\subset \mathcal{P}(S)$ is not closed under geometric realization. The problem stems from a similar issue with the inclusion $\mathcal{P}_{nis}(S)\subset \mathcal{P}(S)$. Although this last inclusion (and hence the other one) is closed under filtered colimits (because of the cd nature of the Nisnevich topology), it is not closed under sifted colimits. This is precisely the reason Morel's notion of Strong/Strict $\mathbb{A}^1$-invariance arises, which makes ordinary motivic homotopy theory more complicated. 

In fact, a higher $m$ connectivity theorem for $L_{bir}$ follows immediately from the same facts, which, in the case of $\mathcal{H}^{\mathbb{A}^1}$, is highly sensitive to the dimension of the base scheme:

\begin{recall*}{bir preserves a1 conn}
 Let $S$ be the spectrum of a henselian local ring. Then,
 $L_{bir}$ takes Nisnevich locally $n$-connected (in fact, $\mathbb{A}^1$-$n$-connected) $S$-spaces to sectionwise $ n$-connected $S$-spaces.
\end{recall*}
It turns out that commutation of $L_{bir}$ with the bar construction is sufficient to import many more topological results directly from the parent topos. Some of these results also hold in the motivic case, though they require more effort, and having a singular model of $L_{\mathbb{A}^1}$ saves most of them. 

Unfortunately, in the birational case, we are unable to construct an explicit formula for $L_{bir}$ in general. However, over perfect fields, we will prove that this is possible, at least for connected spaces. The key observation leading to this model is the following theorem:

\begin{recall*}{detecting n birationality by gm contraction}
Let $k$ be a perfect field. Then the following conditions for a Nisnevich-locally connected pointed motivic space $\esc{X}$ over $k$ are equivalent:
    \begin{enumerate}
        \item $\esc{X}$ is birational local.
        \item $\pi_i^{nis}\esc{X}$ is birational local for all $i$. 
        \item $(\pi_i^{nis}\esc{X})_{-1}=0$ for all $i$.
        \item $\esc{X}$ is $\mathbb{P}^1$ local (equivalently $S^{2,1}$ null [\cite{asok2023p}]).
    \end{enumerate}
In other words, $\tau^{nis}_{\geq 1}\mathcal{H}^b(k)= \tau^{nis}_{\geq 1}\big(\mathcal{H}^{\mathbb{A}^1}(k)\bigcap\mathcal{H}^{\mathbb{P}^1}(k)\big)$.
\end{recall*}

We shall, however, point out that this theorem might fail for non-connected spaces, as \Cref{coexamp} shows. Nevertheless, it follows immediately that when $k$ is a perfect field, there is an equivalence of functors $${L_{bir}}_{\big|\tau_{\geq 1}^{nis}\mathcal{H}^{\mathbb{A}^1}(k)
}\simeq {L^{2,1}}_{\big|\tau_{\geq 1}^{nis}\mathcal{H}^{\mathbb{A}^1}(k)}$$ where $L^{2,1}$ denotes the $S^{2,1}$ nullification from [\cite{asok2023p}]. Although the authors of \textit{loc.cit.} provide a formula for $L^{2,1}$, we caution that it is obtained only via a small object argument. To complete this modeling of $L_{bir}$, we will therefore provide explicit models for $L^{2,1}$ localization using a `singular-like' construction [\Cref{double localization}].

In the final subsection, we will obtain standard results about connected coverings (see \Cref{higher pii as universal birationalization}, \Cref{pi0b is birationalization}) and then show that:
\begin{recall*}{pi1b=pi0bpi1a1}
Suppose $\esc{X}$ is a (motivically) pointed $\mathbb{A}^1$-$n$-connected $S$-space. Then the canonical map $\pi_{n+1}^{\mathbb{A}^1}\esc{X}\to \pi_{n+1}^{b}\esc{X}$ is universal among maps to a birational sheaf of sets when $n=-1$; to a birational sheaf of groups when $n=0$; and to a birational sheaf of abelian groups when $n\geq 1$; i.e., $\pi_{n+1}^{b}\esc{X}\simeq \pi_0^b(\pi_{n+1}^{\mathbb{A}^1}\esc{X})$. 
\end{recall*}

In fact, using \Cref{pi0b is birationalization}, we will obtain a generalization of [\cite{choudhury2022characterisation}, Theorem 3.5] to all proper schemes (see [\cite{asok2011smooth}, Theorem 6.2.1] for the definition of 
$\pi_0^{b\mathbb{A}^1}X$):

\begin{inner-recall-star}[see \Cref{pi0bX=pi0ba1X}, \Cref{uni bir of pia1x}, and \Cref{biba1 is bir inv}]
Let $k$ be a field and $X$ a proper scheme over $k$. Then there is a canonical isomorphism $\pi^{b}_0X\simeq\pi_0^{b\mathbb{A}^1}X$. It follows that the canonical morphism $\pi^{\mathbb{A}^1}_0X\to\pi_0^{b\mathbb{A}^1}X$ is the universal birationalization and that $\pi_0^{b\mathbb{A}^1}(-)$ is a stable birational invariant of smooth proper schemes.
\end{inner-recall-star}
As a consequence, we obtain:
\begin{inner-recall-star}[see \Cref{Ha1 is bir loc for prop}, \Cref{ha1 is universal}]
For smooth proper schemes, $X\to \mathbb{H}_0^{\mathbb{A}^1}X$ is the universal birational sheaf of abelian groups, and the assignment $X\mapsto \mathbb{H}_0^{\mathbb{A}^1}X$ is a stable birational invariant.
\end{inner-recall-star}
 \begin{recall*}{proper schemes are a1 connected iff bir connected}
     An (ind-) proper scheme over a field $k$ is $\mathbb{A}^1$-connected if and only if it is birationally connected. 
 \end{recall*}

As a final application of \Cref{pi0b is birationalization}, we prove a birational version of the Freudenthal suspension theorem (see \Cref{Birational Freudenthal suspension theorem}).

We will conclude the paper in \S\ref{examples} with a collection of examples of birational motivic spaces and birational motivic equivalences. In the final theorem of this section, we characterize dense open immersions among birational equivalences:

\begin{recall*}{dense open into base}
   If $S$ is a Qcqs scheme, then a morphism $f: X\to S$ is a dense open immersion iff it is \'etale and a birational equivalence over $S$.  
\end{recall*}

\section{Constructing the Birational Motivic Homotopy Category}

 Let $S$ be a Qcqs scheme. By $Sm_S$ we mean the full subcategory of $Sch_S$ spanned by schemes over $S$ with finitely presented and smooth structure maps. By an $S$-space we mean a presheaf of spaces on $Sm_S$. The $\infty$-category of $S$-spaces, denoted $\mathcal{P}(S)$, is defined as the $\infty$-category of presheaves of spaces on $Sm_S$, i.e., $$ \mathcal{P}(S):=\mathcal{P}(Sm_S)=\mathrm{Fun}(Sm_S^{op}, Spc).$$ Since $Sm_S$ is (essentially) small, by [\cite{lurie2009higher}, Remark 5.5.3.7] the $\infty$-category $\mathcal{P}(S)$ is presentable and, in fact, an $\infty$-topos (see [\cite{lurie2009higher}, Definition 6.1.0.4]). When the base scheme $S$ is fixed, we refer to $S$-spaces simply as 'spaces' and implicitly identify any smooth scheme $X$ with its representable presheaf $h_S(X)$.

We say that an open immersion (see [\cite[\href{https://stacks.math.columbia.edu/tag/01HE}{Definition 01HE}]{stacks-project}]) $j:U\hookrightarrow X$ is a dense open immersion if the underlying topological space $j(U)$ is dense in $X$. We say that a presheaf $\esc{X}\in \mathcal{P}(S)$ is dense local if it is local with respect to the class $$B_0(S):=\{j:U\hookrightarrow X\in Sm_S\mid j \text{ is a dense open immersion} \}$$ of dense open immersions in $Sm_S$. We denote the full subcategory of $\mathcal{P}(S)$ consisting of dense local spaces by $L_{dense}\mathcal{P}(S)$. Moreover, since $B_0(S)$ is (essentially) small, it follows from [\cite{lurie2009higher}, Proposition 5.5.4.15] that $L_{dense}\mathcal{P}(S)$ is an accessible localization of $\mathcal{P}(Sm_S)$. Let us denote the associated saturated class and the corresponding localization functor by $Dense(S)$ and $L_{dense}$, respectively.  
 
It is well known that the Yoneda embedding $h: Sm_S\to \mathcal{P}(Sm_S)$ does not, in general, preserve coproducts. Consequently, the $\infty$-category $\mathcal{P}(\mathcal{S):=}\mathcal{P}(Sm_S)$ contains many (geometrically) unidentified homotopy types. As a result, the dense local homotopy category $L_{dense}\mathcal{P}(S)$ is too naive to support meaningful birational geometry. To avoid introducing new homotopy types for existing coproducts, one should replace $\mathcal{P}(Sm_S)$ with the category $\mathcal{P}_{\Sigma}(Sm_S)$ [\cite{lurie2009higher}, Definition 5.5.8.8]. This is the full subcategory of $\mathcal{P}(Sm_S)$ consisting of presheaves that send finite coproducts in $Sm_S$ to products in $Spc$.

\begin{rem}
 Since $Sm_S$ is an extensive ($\infty$-)category (i.e., has disjoint coproducts), the category $\mathcal{P}_{\Sigma}(S):=\mathcal{P}_{\Sigma}(Sm_S)$ is equivalent to the $\infty$-category $\mathcal{P}_{\sqcup}(Sm_S)$ of sheaves of spaces on $Sm_S$ with the disjoint cover topology (i.e., $\{U_\alpha\to X\}$ is a cover iff $\sqcup U_\alpha\to X$ is an isomorphism) [\cite{bachmann2017norms}, Lemma 2.4]. Thus, the localization functor $$L_\Sigma: \mathcal{P}(Sm_S)\to \mathcal{P}_{\Sigma}(Sm_S)$$ is left exact and accessible, making $\mathcal{P}_{\Sigma}(Sm_S)$ an $\infty$-topos. Clearly, the Nisnevich topology is finer than the $\sqcup$-topology and hence $\mathcal{P}_{nis}(S)\subset \mathcal{P}_\Sigma(S)$.    
\end{rem}

With this understanding, we are now ready to define the correct version of the birational homotopy category:

\begin{defn}\label[defn]{bir local}
A presheaf $\esc{X}\in \mathcal{P}(S)$ is said to be birational local if it belongs to $\mathcal{P}_{\Sigma}(S)$ and is dense local. The full subcategory of $\mathcal{P}(S)$ consisting of birational local spaces is called the \textbf{Birational Homotopy category}, denoted $\mathcal{H}^b(S)$. Since $B_0(S)$ is an essentially small set, we note that $\mathcal{H}^b(S) \subset \mathcal{P}(S)$ is an accessible localization. The associated saturated class is denoted $bir(S)$, and a morphism in it is called a \textit{Birational equivalence}.
\end{defn}

In fact, one may strengthen the geometric identification by imposing descent for a finer topology $\sigma$ and, analogously, define $\mathcal{H}^b_\sigma(S)$ as the localization of $\mathcal{P}_{\sigma}(Sm_S)$ at the set $B_0(S)$. Let us denote the corresponding saturated class by $bir_\sigma(S)$. If $\sigma$ is finer than $\tau$, then $\mathcal{H}^b_\sigma\subset \mathcal{H}^b_\tau$. In particular, we have $$\mathcal{H}^{b}_{nis}\subset \mathcal{H}^b_\sqcup\equiv\mathcal{H}^b.$$
 
On the other hand, to obtain a cohomologically rich homotopy category that also respects birationality, one may as well start with the $\infty$-category $\mathcal{H}^{\mathbb{A}^1}(S)$ of $S$-Motivic spaces from [\cite{morel19991}] rather than with $\mathcal{P}_{nis}(S)$. Recall that a space $\esc{X}\in\mathcal{P}(S)$ is an $S$-Motivic space if it is a nisnevich sheaf and local with respect to the projection maps $$\mathbb{A}(S):=\{\mathbb{A}^1_X\to X\}_{X/S\in Sm_S}.$$

We define a cohomological analog of the birational category as the localization of the motivic $\infty$-category $\mathcal{H}^{\mathbb{A}^1}(S)$ at $B_0(S)$. Let us denote this new category by $\mathcal{H}^{b\mathbb{A}^1}(S)$. Again, since $B_0(S)$ is (essentially) small, this is the full subcategory of $\mathcal{H}^{\mathbb{A}^1}(S)$ consisting of dense local objects, i.e., $$\mathcal{H}^{b\mathbb{A}^1}(S):=L_{dense}\mathcal{H}^{\mathbb{A}^1}(S).$$ This is the $0$-birational motivic homotopy category $L^0_{bir}\mathcal{H}^{\mathbb{A}^1}(S)$ of [\cite{bachmann2019voevodsky}] (indeed, it is easy to see that $0$-birational maps, as defined in \textit{loc.cit.}, are precisely the dense open immersions). We will thus drop the $0$ everywhere and call the category $$\mathcal{H}^{b\mathbb{A}^1}(S)\equiv L^0_{bir}\mathcal{H}^{\mathbb{A}^1}(S)$$ the Birational \textbf{Motivic} homotopy category, and denote the associated localization functor $L^0_{bir}$ from \textit{loc.cit.} simply by $L_{bir}$. By construction, there is a canonical natural transformation $L_{mot}\to L_{bir}$ that is an $L_{bir}$-equivalence.
\begin{rem}
    The dense localization of the motivic homotopy category is a strictly non-trivial localization. For example, it identifies the $\mathbb{A}^1$-local scheme $\mathbb{G}_m$ with the $\mathbb{A}^1$-contractible scheme $\mathbb{A}^1$ by the dense open immersion $\mathbb{G}_m\hookrightarrow \mathbb{A}^1$. 
\end{rem}
Since $\mathcal{H}^{\mathbb{A}^1}(S)$ and $\mathcal{P}_\sigma(S)$ are presentable and $B_0(S)$ is essentially small, by [\cite{lurie2009higher}, Proposition 5.5.4.15], each member of the chain of inclusions $$\mathcal{H}^{b\mathbb{A}^1}_{}\subset \mathcal{H}^{b}_{nis}\subset \mathcal{H}^b_\sqcup$$ is a presentable $\infty$-category. The aim of the next two subsections is to show that each of these inclusions is an equality. 

\subsection{Nisnevich locality of Birational local spaces}

In this section, we aim to show that $\mathcal{H}^b(S)\subset\mathcal{P}_{nis}(S)$. That is, a birational local space, in the sense of \Cref{bir local}, is automatically Nisnevich local. Equivalently, all Nisnevich local equivalences are birational equivalences. In other words, $$\mathcal{H}^b(S):=L_{dense}\mathcal{P}_\Sigma(S)= L_{dense}\mathcal{P}_{nis}(S)=:\mathcal{H}_{nis}^b(S).$$ 
Before we do that, let us note something preliminary:
\begin{prop}\label[prop]{red et invariance}
Let $r:S^{red}\to S$ be the reduction map. Then $r^*: Et_{S}\to Et_{S^{red}}$ is an equivalence of categories.
\end{prop}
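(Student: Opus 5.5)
The plan is to reduce to the topological invariance of the étale site, SGA 1, Exp.~I, Théorème~8.3, in the form of \cite[\href{https://stacks.math.columbia.edu/tag/04DY}{Theorem 04DY}]{stacks-project}: a universal homeomorphism $S'\to S$ induces an equivalence $Et_S\to Et_{S'}$ by pullback. Since $r:S^{red}\to S$ is a closed immersion defined by a nil ideal, hence a universal homeomorphism, the statement would follow at once. It is still worth recording the direct argument, because it is short and shows exactly what is used. Throughout, $Et_S$ means the category of étale $S$-schemes (separated, by the convention of \S\ref{1.2}).

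\textbf{Full faithfulness.} Let $X,Y$ be étale over $S$. An $S$-morphism $f:X\to Y$ corresponds to a section of the étale separated morphism $X\times_S Y\to X$. Such sections are the same as open and closed subschemes of $X\times_S Y$ mapping isomorphically onto $X$. Open subschemes depend only on the underlying topological space, and $(X\times_S Y)^{red}=(X^{red}\times_{S^{red}}Y^{red})^{red}$ is homeomorphic to $X\times_SY$. Moreover, an étale morphism that is an isomorphism after reduction, i.e.\ étale, radicial and surjective, is an isomorphism. Hence sections over $X$ correspond bijectively to sections over $X^{red}=X\times_S S^{red}$; here $X\times_S S^{red}$ is reduced because it is étale over a reduced scheme. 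This gives bijectivity of
\[
\mathrm{Hom}_S(X,Y)\to\mathrm{Hom}_{S^{red}}(r^*X,r^*Y).
\]

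\textbf{Essential surjectivity.} This is the main obstacle, since it requires lifting étale schemes across a nilpotent thickening. I would work Zariski locally on $S$, reducing to $S=\mathrm{Spec} A$ with nil ideal $I$. Locally, an étale $A/I$-algebra is standard étale, of the form $(A/I)[x]_g/(\bar f)$ with $\bar f$ monic and $\bar f'$ invertible. Lift $\bar f$ to a monic $f$ and $g$ to some lift. The resulting $A[x]_g/(f)$ is étale over $A$: invertibility of $f'$ can be checked modulo the nil ideal, since units lift across nil ideals. This gives local lifts.

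By full faithfulness, the local lifts are unique up to unique isomorphism and satisfy the cocycle condition, so they glue, first over affine opens of $X_0$ and then over $S$. Separatedness of the glued scheme can be checked on the reduction, since the diagonal's closedness is topological. When $I$ is not nilpotent, one either writes $A$ as a filtered colimit and uses limit arguments, or cites \cite[\href{https://stacks.math.columbia.edu/tag/04DY}{Theorem 04DY}]{stacks-project} directly for the nil case. I would simply invoke the latter for essential surjectivity.
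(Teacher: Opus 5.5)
Your proof is correct and takes the same approach as the paper. The paper notes that $r$ is a homeomorphism and cites the topological invariance of the étale site (EGA IV, Théorème 18.1.2), which is the result you invoke through the Stacks project; your direct arguments for full faithfulness and for the local lifting of standard étale algebras are sound extras that the final proof does not depend on.
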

\begin{proof}
    Since $r$ is a homeomorphism, this follows from [\cite{grothendieck1966elements}, Théorème (18.1.2)].
\end{proof}
\begin{lem}\label[lem]{dense prod}
    Let $X/S$ be smooth. Then, for a dense open immersion $V\hookrightarrow Y$ of schemes smooth over $S$, the pullback morphism $X\times _SV\to X\times _SY$ is also a dense open immersion over $S$.
\end{lem}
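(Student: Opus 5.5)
The plan is to separate the two conditions: being an open immersion, and having dense image. Open immersions are stable under base change, so $X\times_S V\to X\times_S Y$, the base change of $V\hookrightarrow Y$ along the projection $X\times_S Y\to Y$, is an open immersion. Its image is the preimage $p^{-1}(V)$ under the projection $p:X\times_S Y\to Y$. The whole content is therefore to show that $p^{-1}(V)$ is dense in $X\times_S Y$.

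For density I will use that $p$ is flat and locally of finite presentation, being the base change of the smooth morphism $X\to S$ along $Y\to S$. Flat morphisms locally of finite presentation are universally open (EGA IV, 2.4.6). For an open map $p$ and a dense subset $V\subset Y$, the preimage $p^{-1}(V)$ is dense. Indeed, if $W\subset X\times_S Y$ is a nonempty open set, then $p(W)$ is a nonempty open subset of $Y$. It therefore meets $V$, so $W$ meets $p^{-1}(V)$. Thus $X\times_S V\to X\times_S Y$ is a dense open immersion. It is a morphism over $S$ between schemes smooth over $S$, since products of smooth $S$-schemes are smooth, so it lies in $B_0(S)$.

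The only step needing care is openness of $p$. Here smoothness, or at least flatness plus finite presentation, of $X/S$ is exactly what is used. Without it the statement can fail: for instance, $X$ a closed point mapping into $S$ with $V$ missing its fibre. I expect no further obstacle. One might worry about the topological space of a fibre product versus the product of spaces, but the argument above only uses openness of the scheme morphism $p$, so it avoids that issue. An alternative argument via generic points, using flatness so that generizations lift and generic points of $X\times_S Y$ map to generic points of $Y$, which lie in $V$, would also work. The openness argument is cleaner, and I will use it.
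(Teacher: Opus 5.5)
Your proposal is correct and follows essentially the same route as the paper. Openness comes from stability of open immersions under base change. Density comes from the projection $X\times_S Y\to Y$ being smooth, hence universally open, so every nonempty open $W$ has open image meeting $V$ and therefore meets $X\times_S V$.
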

\begin{proof}
    Openness is standard. For denseness, let $W\subset X\times_S Y$ be an open set. We have to show that $W\cap (X\times _SV)\neq \emptyset$. Since smooth maps are (universally) open, the image $f(W)$ of $W$ under the smooth map $f:X\times_S Y\to Y$ is open in $Y$ and hence intersects the dense open subset $V$ of $Y$, say at $y\in V\cap f(W)$. A preimage $t\in W$ of $y$ must therefore be in $X\times_S V$.  
\end{proof}
Here is the main theorem of this subsection:
\begin{thm}\label{bir local is nis local}
    For any Qcqs scheme $S$, one has $\mathcal{H}^b(S)\subset\mathcal{P}_{nis}(S)$. Therefore, the inclusion $\mathcal{H}^{b}_{nis}(S)\subset \mathcal{H}^b_\sqcup(S)\equiv \mathcal{H}^b(S)$ is an equality.
\end{thm}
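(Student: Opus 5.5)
The plan is to verify Nisnevich descent through the cd-structure (excision) criterion. For $S$ qcqs, a presheaf $\esc{F}\in\mathcal{P}(S)$ is a Nisnevich sheaf if and only if two conditions hold: $\esc{F}(\emptyset)\simeq pt$, and $\esc{F}$ carries every Nisnevich distinguished square to a pullback square. I would cite [\cite{hoyois2015quadratic}, Appendix C] for this, together with the equivalence $\mathcal{P}_{nis}(SM_S)\simeq\mathcal{P}_{nis}(Sm_S)$ recalled in \S\ref{1.2}. So let $\esc{F}$ be birational local. The first condition is immediate: $\esc{F}\in\mathcal{P}_\Sigma(S)$ sends the empty coproduct to the empty product. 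The real work is excision. Fix a distinguished square: an open immersion $U\hookrightarrow X$ and an étale map $p:V\to X$ that is an isomorphism over the reduced closed complement $Z=X\smallsetminus U$. We must show that $\esc{F}(X)\to \esc{F}(U)\times_{\esc{F}(U_V)}\esc{F}(V)$ is an equivalence, where $U_V=p^{-1}(U)$.

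\emph{Splitting $X$.} Let $W:=X\smallsetminus\overline{U}$, which is an open subscheme contained in $Z$. Then $U\cap W=\emptyset$, and $U\sqcup W\hookrightarrow X$ is a dense open immersion in $Sm_S$. Combining dense locality with $\Sigma$-locality gives $\esc{F}(X)\simeq\esc{F}(U)\times\esc{F}(W)$, compatibly with restriction.

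\emph{Splitting $V$.} Put $V_W:=p^{-1}(W)$. Since $p$ is étale, it is open, and preimages of dense opens under open maps are dense. Hence $U_V\sqcup V_W\hookrightarrow V$ is again a dense open immersion, and $\esc{F}(V)\simeq \esc{F}(U_V)\times\esc{F}(V_W)$.

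\emph{Identifying $V_W$ with $W$.} The map $V_W\to W$ is étale. Every point of $W$ lies in $Z$, and $p$ is an isomorphism over $Z$. It follows that after reduction $(V_W)_{red}\to W_{red}=(W\cap Z)_{red}$ is an isomorphism. By \Cref{red et invariance}, pullback along $W_{red}\to W$ is an equivalence $Et_W\simeq Et_{W_{red}}$. Since both $V_W$ and $W$ reduce to the same étale $W_{red}$-scheme, the map $V_W\to W$ is itself an isomorphism. Therefore $\esc{F}(V_W)\simeq\esc{F}(W)$.

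\emph{Assembling.} Substituting the three identifications gives
$$\esc{F}(U)\times_{\esc{F}(U_V)}\esc{F}(V)\simeq \esc{F}(U)\times_{\esc{F}(U_V)}\big(\esc{F}(U_V)\times\esc{F}(W)\big)\simeq\esc{F}(U)\times\esc{F}(W)\simeq\esc{F}(X).$$
One must also check that the resulting equivalence is the canonical comparison map. This holds because every identification above is induced by restriction maps along the evident commuting open immersions $U\sqcup W\to X$ and $U_V\sqcup V_W\to V$ lying over $p$. Hence $\esc{F}$ is a Nisnevich sheaf, and so $\mathcal{H}^b(S)\subset\mathcal{P}_{nis}(S)$. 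The equality $\mathcal{H}^b_{nis}(S)=\mathcal{H}^b(S)$ is then formal: any birational local object is Nisnevich local and dense local, hence lies in $\mathcal{H}^b_{nis}(S)$, and the reverse inclusion was already noted.

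I expect the main obstacle to be the identification $V_W\cong W$. There, non-reducedness of $W$ and the precise meaning of ``isomorphism over $Z$'' (as reduced closed subschemes) must be handled carefully, which is exactly why \Cref{red et invariance} is needed. A secondary technical point is citing the correct form of the excision criterion for Nisnevich $\infty$-sheaves over a qcqs base.
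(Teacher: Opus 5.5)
Your proposal is correct and follows essentially the same route as the paper: you reduce to Nisnevich excision via Hoyois's criterion, split $X$ along the dense open $U\sqcup(X\smallsetminus\overline{U})$, use \Cref{red et invariance} to see that $p^{-1}(W)\to W$ is an isomorphism, and assemble the pullback from the resulting product decompositions. The only cosmetic difference is that you justify density of $U_V\sqcup V_W\hookrightarrow V$ directly from the openness of the \'etale map $p$, whereas the paper cites \Cref{dense prod}, whose proof rests on the same openness argument.
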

\begin{proof}
      Since our base scheme is Qcqs (i.e., coherent), by [\cite{hoyois2015quadratic}, Proposition C.5 (3), (4)], a presheaf on $Sm_S$ is a Nisnevich sheaf if and only if it satisfies Nisnevich excision. By definition, $\esc{X}\in \mathcal{P}(Sm_S)$ satisfies Nisnevich excision if it is contractible over the empty scheme and takes Nisnevich cd squares to pullback squares of spaces. So let $\esc{X}$ be a birational local space, i.e., $\esc{X}\in L_{dense}L_{\Sigma}\mathcal{P}(Sm_S)$. Since $\esc{X}\in \mathcal{P}_\Sigma(Sm_S)$, we have $\esc{X}(\emptyset)\simeq pt$ for free. For the excision part, consider a Nisnevich cd square:
    \[
    \xymatrix{
    V\ar@{^(->}[r]\ar[d]_{q}&Y\ar[d]^{p}\\
    U\ar@{^(->}[r]_j& X
    }
    \]
    where $j$ is an open subscheme and $p$ is etale. Consider the open subscheme $W=X\setminus \bar{U}$. Clearly, $U\sqcup W\xhookrightarrow{}X$ is a dense open immersion. Since $p_{| Y\setminus V}: Y\setminus V \to X\setminus U$ is an isomorphism of reduced schemes and $W\subset X\setminus U$, the pullback $$p_{|p^{-1}W}^{red}:p^{-1}W^{red} \to W^{red}$$ is an isomorphism. Since both $W$ and $p^{-1}W$ are etale over $X$, by \Cref{red et invariance} we deduce that the original map $p_{|p^{-1}W}^{}:p^{-1}W^{} \to W^{}$ is also an isomorphism. On the other hand, since $U\sqcup W\xhookrightarrow{}X$ is dense, by \Cref{dense prod} so is $V\sqcup p^{-1}W\hookrightarrow Y$. Now, let us consider the following commutative diagram:
    \[
    \xymatrix{
    \esc{X}(U)\times \esc{X}(W)\ar[d]^{q^*\times p^*_{|p^{-1}W}}& \esc{X}(U\sqcup W)\ar[l]^{\simeq}&\esc{X}(X)\ar[l]^-{\simeq}\ar[r] \ar[d]&\esc{X}(U)\ar[d]^{q^*}\\
    \esc{X}(V)\times \esc{X}(p^{-1}W)& \esc{X}(V\sqcup p^{-1}W)\ar[l]^-{\simeq}&\esc{X}(Y)\ar[l]^-{\simeq}\ar[r] &\esc{X}(V)}
    \]
The horizontal arrows on the left are equivalences because $\esc{X}\in L_{\Sigma}\mathcal{P}(Sm_S)$. The horizontal arrows in the middle are equivalences because $\esc{X}$ inverts dense open immersions. Since $p_{|p^{-1}W}$ is an isomorphism, the entire outer diagram (with the left and middle horizontal arrows replaced by their inverses) is tautologically a pullback square. Thus, the square on the extreme right (being equivalent to the whole outer diagram) is a pullback square itself.
\end{proof}
\begin{rem}\label[rem]{correction}
Alternatively, one can directly show that Nisnevich-local equivalences over Qcqs schemes are birational equivalences in our sense. This is established in [\cite{sfbat}, Theorem 2.3.5]. The argument there rectifies a slight inaccuracy in the formulation of the analogous result over fields, proven in [\cite{choudhury2022characterisation}, Lemma 3.6]. Because the framework in [\cite{choudhury2022characterisation}] relies on the naive dense local homotopy category (i.e., their birational category $\mathbf{H}_b(k)$ is merely $L_{dense}\mathcal{P}(k)$), the \textit{birational weak equivalences} they consider are, in fact, just the dense local equivalences in our sense. However, for a nontrivial decomposition $A\sqcup B=X$ (which is a Nisnevich cover) of smooth schemes, the \v{C}ech nerve of the presheaf map $$h_SA\sqcup h_SB\to h_SX$$ need neither be a sectionwise equivalence nor a dense local equivalence. In [\cite{sfbat}, Theorem 2.3.4] we address this discrepancy by providing a modified proof while simultaneously working over an arbitrary Qcqs base scheme.
\end{rem} 
\begin{rem}\label[rem]{bir is cdh}
 Using similar techniques, one can show that birational local spaces are also cdp local. Therefore, birational local spaces are cdh local.
\end{rem}
We conclude this section with a discussion of the implications of the above theorem. First, we present a generalization of a well-known definition (see [\cite{asok2011smooth}, Definition 6.1.1]): 
\begin{defn}\label[defn]{birational presheaves of sets}
A presheaf of sets $F$ on $Sm_S$ is called dense local if it is $B_0(S)$-local in $PSh(S)$. We call it birational if, in addition, $F$ lies in $PSh_\Sigma(S)$. In other words, these are precisely the discrete spaces that are birational local.
\end{defn}
\begin{rem}\label[rem]{birational presheavesa re nisnevich sheaves}
From \Cref{bir local is nis local}, it follows that, over any Qcqs scheme $S$, every birational presheaf is automatically a Nisnevich sheaf. We will denote by $ Shv^b_S, Grp^b_S $, and $Ab^b_S$ the categories of birational (pre)sheaves on $Sm_S$ with values in sets, groups, and abelian groups, respectively. 
\end{rem} 

\begin{prop}\label[prop]{pi is birational}
    A presheaf $\esc{X}$ of spaces is birational-local iff, for every $i\geq 0$, the presheaf $\pi_i\esc{X}$ is birational.
\end{prop}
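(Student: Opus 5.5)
The plan is to argue sectionwise via Whitehead's theorem. First I fix what ``$\pi_i\esc{X}$'' means for $i\geq 1$, since homotopy groups need basepoints. For each $Y\in Sm_S$ and each $x\in \esc{X}(Y)$, let $\pi_i(\esc{X},x)$ be the presheaf of groups on $Sm_Y$ given by $(T\xrightarrow{t}Y)\mapsto \pi_i(\esc{X}(T),t^*x)$. The statement is then read as: $\pi_0\esc{X}$ is birational on $Sm_S$, and each $\pi_i(\esc{X},x)$ is birational on $Sm_Y$. Two facts make the base change harmless.
\begin{enumerate}
\item A morphism of smooth $Y$-schemes is a dense open immersion over $Y$ iff it is one over $S$, since this is a property of the underlying morphism.
\item Finite coproducts in $Sm_Y$ are computed as in $Sm_S$.
\end{enumerate}
Both conditions (coproducts to products, and inverting $B_0$) are conditions on maps of spaces $\esc{X}(W)\to\esc{X}(V)$ or $\esc{X}(A\sqcup B)\to \esc{X}(A)\times\esc{X}(B)$. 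Whitehead's theorem says such a map is an equivalence iff it is a bijection on $\pi_0$ and an isomorphism on all $\pi_i$ at every basepoint of the source.

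For the forward direction, suppose $\esc{X}$ is birational local. Take a dense open immersion $j:V\hookrightarrow W$ in $Sm_Y$ and a basepoint $x\in\esc{X}(Y)$. Then $j^*:\esc{X}(W)\to \esc{X}(V)$ is an equivalence, so it induces isomorphisms $\pi_i(\esc{X}(W),x|_W)\cong \pi_i(\esc{X}(V),x|_V)$, and in particular a bijection on $\pi_0$. Similarly, $\esc{X}(A\sqcup B)\simeq \esc{X}(A)\times\esc{X}(B)$ gives
\[
\pi_i(\esc{X}(A\sqcup B))\cong \pi_i\esc{X}(A)\times \pi_i\esc{X}(B),
\]
since homotopy groups and $\pi_0$ commute with finite products. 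The empty coproduct gives $\pi_i\esc{X}(\emptyset)=pt$. Hence every $\pi_i$-presheaf is birational.

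For the converse, assume all the $\pi_i$-presheaves are birational and take a map $\esc{X}(W)\to \esc{X}(V)$ as above, now over $S$.
\begin{enumerate}
\item Birationality of $\pi_0\esc{X}$ gives the bijection on $\pi_0$.
\item For $i\geq 1$, let $y\in \esc{X}(W)$ be any basepoint. Apply birationality of $\pi_i(\esc{X},y)$ on $Sm_W$ to the dense open immersion $V\hookrightarrow W$, viewed over $W$, where $W\to W$ is the identity. This gives $\pi_i(\esc{X}(W),y)\cong\pi_i(\esc{X}(V),y|_V)$.
\item By Whitehead, $j^*$ is an equivalence.
\end{enumerate}
The same argument applied to $A\sqcup B$ (basepoint $y\in\esc{X}(A\sqcup B)$, working over $Y=A\sqcup B$) shows that $\esc{X}\in\mathcal{P}_\Sigma(S)$. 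Thus $\esc{X}$ is $B_0(S)$-local and in $\mathcal{P}_\Sigma(S)$, i.e.\ birational local.

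The main obstacle is the basepoint bookkeeping. Whitehead needs isomorphisms at basepoints of the source $\esc{X}(W)$. This is exactly why one needs the presheaves $\pi_i(\esc{X},y)$ indexed over the slice $Sm_W$, rather than only basepoints defined over $S$; the latter may not exist, e.g.\ if $\esc{X}(S)=\emptyset$. Once the statement is read with basepoints over arbitrary $Y\in Sm_S$, the rest is formal. The key inputs are that dense open immersions and finite coproducts are stable under passing to slices, and that $\pi_i$ commutes with finite products.
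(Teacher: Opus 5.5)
Your proof is correct and takes the same route as the paper: a sectionwise Whitehead argument reduces both dense-locality and the $\Sigma$-condition to the homotopy presheaves, and the forward direction of the $\Sigma$ part uses that $\pi_i$ commutes with finite products. The paper's two-line proof leaves the basepoints implicit. Your indexing of $\pi_i(\esc{X},x)$ over the slices $Sm_Y$ makes this precise, and you correctly observe that it is needed for the converse when $\esc{X}$ has no basepoints over $S$.
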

\begin{proof}
It is clear that $\esc{X}$ is dense local iff $\pi_i\esc{X}$ is dense local for all $i$. Thus it suffices to know that $\esc{X}\in \mathcal{P}_\Sigma(Sm_S)$ iff $\pi_i\esc{X}\in PSh_\Sigma(S)$. This is obvious because $\pi_i$ preserves finite products.
\end{proof}
It follows that when $\esc{X}$ is birational local, the presheaves $\pi_i\esc{X}$ are birational presheaves of sets for $i=0$, of groups for $i=1$, and of abelian groups for $i\geq2$. By the remark above, \Cref{birational presheavesa re nisnevich sheaves}, these are automatically Nisnevich local. Thus, in particular, the birational analog of Morel's $\mathbb{A}^1$-homotopical result (that higher $\mathbb{A}^1$-homotopy group sheaves are $\mathbb{A}^1$-invariant) is straightforward and is true even for the Nisnevich path-component:
 \begin{cor}\label[cor]{nis sheaves of bir is bir}
     Let $\esc{X}$ be a birational local space over a Qcqs scheme $S$ and let $\sigma$ be a topology coarser than the Nisnevich topology. Then for all $i\geq 0$, $\pi_i^{\sigma}\esc{X}=\pi_i\esc{X}$. It follows that the sheaves $\pi_i^{\sigma}\esc{X}$ are birational local. In particular, the Nisnevich sheaf of sets $\pi_i^{nis}\esc{X}$ is birational for all $i\geq 0$.
 \end{cor}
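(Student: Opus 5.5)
The idea is that the $\sigma$-sheafification of $\pi_i\esc{X}$ does nothing, because $\pi_i\esc{X}$ is already a Nisnevich sheaf and $\sigma$ is coarser. By \Cref{pi is birational}, since $\esc{X}$ is birational local, each presheaf $\pi_i\esc{X}$ is a birational presheaf. Here $\pi_0\esc{X}$ is a presheaf of sets. For $i\geq 1$ we choose a basepoint over some $U$ and view $\pi_i$ as a presheaf on $Sm_U$, or equivalently as a presheaf on the slice over $\esc{X}$; the argument below is identical in each case. By \Cref{birational presheavesa re nisnevich sheaves}, itself a consequence of \Cref{bir local is nis local} applied over the Qcqs base, each such presheaf is automatically a Nisnevich sheaf.

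Next I use that $\sigma$ is coarser than the Nisnevich topology. Every $\sigma$-covering sieve is then a Nisnevich covering sieve, so every Nisnevich sheaf is a $\sigma$-sheaf. Hence $\pi_i\esc{X}$ is a $\sigma$-sheaf of sets, groups or abelian groups. The $\sigma$-sheafification, being a localization onto $\sigma$-sheaves, fixes objects that are already $\sigma$-sheaves. This gives $\pi_i^{\sigma}\esc{X}=a_\sigma\pi_i\esc{X}=\pi_i\esc{X}$, and the remaining assertions follow at once: $\pi_i^\sigma\esc{X}$ is birational local, and in particular $\pi_i^{nis}\esc{X}$ is birational.

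The only step needing care is the bookkeeping for the based homotopy presheaves when $i\geq1$. Taking $\pi_i$ for a fixed basepoint $x\in\esc{X}(U)$ amounts to working with the restricted space over $Sm_U$. Birational locality must be checked to pass to this setting, because $U$ need not be the base. I would handle this with the same pullback argument as \Cref{dense prod}: a dense open immersion pulled back along a smooth map stays dense, so local objects restrict to local objects. The Qcqs hypothesis is preserved since $U$ is finitely presented over $S$. Alternatively, one can read $\pi_i$ fibrewise over $\pi_0$, as in the statement of \Cref{pi is birational}, and apply \Cref{bir local is nis local} there. Beyond this the argument is formal.
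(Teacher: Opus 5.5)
Your proposal is correct and follows essentially the same route as the paper. The paper argues that $\pi_i\esc{X}$ is birational by \Cref{pi is birational}, hence Nisnevich local by \Cref{bir local is nis local} and therefore $\sigma$-local, so $\sigma$-sheafification changes nothing. Your extra basepoint bookkeeping is something the paper simply omits. For that step you do not need the base-change argument of \Cref{dense prod}: restriction to $Sm_U$ is precomposition with $Sm_U\to Sm_S$, and this functor already sends dense open immersions to dense open immersions and finite coproducts to finite coproducts.
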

 \begin{proof}
     By \Cref{pi is birational}, we know that $\pi_i\esc{X}$ is birational. Using \Cref{bir local is nis local}, we therefore have that $\pi_i\esc{X}$ is Nisnevich local and thus $\sigma$-local, for all $\sigma$ coarser than the Nisnevich topology. Consequently, $\pi_i\esc{X}=\pi_i^\sigma\esc{X}$. Because $\pi_i\esc{X}$ is birational, in turn so are $\pi_i^\sigma\esc{X}$.
 \end{proof}
\begin{rem}
    An analogous result for $\mathcal{H}^{\mathbb{A}^1}(S)$ is false: $\pi_0^{nis}$ of an $S$-motivic space need not be $S$-motivic. Until the last decade, it was a well-studied conjecture of Morel [\cite{morel2012a1}, Conjecture 1.12] and was recently proven to be false by Ayoub [\cite{ayoubcounterexamples}]. 
\end{rem}
\subsection{Rationality of Birational local spaces}
The purpose of this section is to show that Birational local spaces are \textit{Rational local}. In particular, $\mathcal{H}^b(S)\subset \mathcal{H}^{\mathbb{A}^1}(S)$. In fact, we will prove a stronger result that $L_{dense}\mathcal{P}(Sm_S)\subset L_{A}\mathcal{P}(Sm_S)$, for any rational scheme $A\in Sm_S$.

Here, by a rational local space, we mean a space that is local with respect to every rational scheme $A/S$. Recall that a scheme $A/S$ is said to be rational if, for some $n$, there is a span of dense open immersions $A\hookleftarrow U \hookrightarrow \mathbb{A}^n_S$ over $S$. The following definition is analogous to the construction of the $\mathbb{A}^1$-motivic homotopy category:
\begin{defn}[$A$-motivic spaces]\label[defn]{A motivic local}
Let $A\in Sm_S$. We say that a presheaf $\esc{X}$ of spaces over $S$ is $A$-local if  the canonical map $\esc{X}(X) \to \esc{X}(A\times _SX) $ induced by the projection $A\times _SX\to X$ (where $X\in Sm_S$ is arbitrary) is an equivalence of $\infty$-groupoids. Denote by $L_A\mathcal{P}(S)$ the full subcategory of $A$-local spaces. We say that $\esc{X} $ is an $A$-motivic space if, furthermore, it is Nisnevich local. We denote the full subcategory of $A$-motivic spaces by $\mathcal{H}^A(S)$. 
\end{defn}
\begin{ex}
The primary example is $\mathcal{H}^{\mathbb{A}^1}(S)$, introduced and studied in [\cite{morel19991}]. For $A=\mathbb{P}^1_S$, this is the projective motivic homotopy category, introduced in [\cite{binda2023logarithmic}]. We will conduct an extensive study of this second candidate in an upcoming work [\cite{p1algtop}]. 
\end{ex}
\begin{defn}[Rational Motivic spaces]
We say that a presheaf $\esc{X}$ of spaces over $S$ is a rational local space (resp. a rational motivic space) if it is an $A$-local space (resp. $A$-motivic space) for every rational scheme $A\in Sm_S$. We denote the full subcategory of rational motivic spaces by $\mathcal{H}^{rat}(S)$.
    \end{defn}
\begin{rem}
    Note that, since $Sm_S$ is (essentially) small, both the inclusions $\mathcal{H}^A(S)\subset\mathcal{P}(S)$ and $\mathcal{H}^{rat}(S)\subset\mathcal{P}(S)$ are accessible localizations, and the resulting categories of local objects are presentable $\infty$-categories [\cite{lurie2009higher}, Proposition 5.5.4.15]. We call the associated local equivalences $A$-motivic equivalences and rational motivic equivalences, respectively.
\end{rem}
\begin{rem}
    Rational local spaces are in particular $\mathbb{A}^1$ local, and thus $\mathcal{H}^{rat}(S)\subset \mathcal{H}^{\mathbb{A}^1}(S)$.
\end{rem}  
Here is the key result relating birationality to rationality:
\begin{lem}\label[lem]{bir local is P1 local}
    Let $S$ be a scheme and $A\in Sm_S$ a rational scheme over $S$. Then a dense local presheaf of sets over $S$ is $A$-local. 
\end{lem}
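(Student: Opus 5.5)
The plan is to reduce first from an arbitrary rational $A$ to affine space, and then from $\mathbb{A}^n$ to $\mathbb{A}^1$. Let $F$ be a dense local presheaf of sets. Since $F$ is discrete, dense locality says exactly that $F(Y)\to F(V)$ is a bijection for every dense open immersion $V\hookrightarrow Y$ in $Sm_S$. Fix $X\in Sm_S$ and a span $A\hookleftarrow U\hookrightarrow \mathbb{A}^n_S$ of dense open immersions. By \Cref{dense prod}, both $U\times_S X\hookrightarrow A\times_S X$ and $U\times_S X\hookrightarrow \mathbb{A}^n_X$ are dense open immersions. This gives bijections
\[
F(A\times_S X)\xrightarrow{\ \sim\ } F(U\times_S X)\xleftarrow{\ \sim\ } F(\mathbb{A}^n_X).
\]

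The first difficulty is that these bijections are not obviously compatible with pullback along the projections to $X$, because $U\to X$ is not the restriction of a single map. To handle this, I would use that $p_A^*$ and $p_{\mathbb{A}^n}^*$ both restrict to $p_U^*$ on $U\times_S X$. Hence $F(X)\to F(A\times_SX)$ is bijective if and only if $F(X)\to F(U\times_SX)$ is, if and only if $F(X)\to F(\mathbb{A}^n_X)$ is. Writing $\mathbb{A}^n_X=\mathbb{A}^1\times_S\mathbb{A}^{n-1}_X$ and inducting on $n$, everything reduces to showing that $p^*:F(X)\to F(\mathbb{A}^1_X)$ is a bijection for all $X\in Sm_S$.

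Injectivity of $p^*$ is immediate from the zero section $i_0$, since $i_0^*p^*=\mathrm{id}$. For surjectivity we must show $a=p^*i_0^*a$ for every $a\in F(\mathbb{A}^1_X)$. The tool I would use is the multiplication map $m:\mathbb{A}^1_t\times\mathbb{A}^1_s\times X\to\mathbb{A}^1_X$, $(t,s,x)\mapsto(ts,x)$, together with the fact that $F$ cannot distinguish a smooth scheme from a dense open of it. Put $c=m^*a\in F(\mathbb{A}^2_X)$. Restricting along $t=1$ gives $a$, and restricting along $t=0$ gives $p^*i_0^*a$. On the dense open $\mathbb{G}_{m,t}\times\mathbb{A}^1_s\times X$, the map $\psi(t,s)=(t,ts)$ is an automorphism and $m=\mathrm{pr}_s\circ\psi$. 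Hence $c$ corresponds, under the bijection $F(\mathbb{A}^2_X)\cong F(\mathbb{G}_m\times\mathbb{A}^1\times X)$, to $\psi^*\mathrm{pr}_s^*a$.

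The main obstacle is to turn this into the equality of the two slices. The slice $t=0$ is not contained in the open set on which $\psi$ is defined, and slices are not open, so dense locality does not apply to them directly. My first attempt would be to compare $c$ with $\mathrm{pr}_s^*a$, using both $\psi$ and the inversion chart $t\mapsto t^{-1}$ of $\mathbb{P}^1$. Since $\mathbb{P}^1_X$ contains two dense copies of $\mathbb{A}^1_X$ glued along $\mathbb{G}_{m,X}$, the sets $F(\mathbb{P}^1\times\mathbb{A}^1_X)$, $F(\mathbb{A}^2_X)$ and $F(\mathbb{G}_m\times\mathbb{A}^1_X)$ are all identified. I would then try to extend $c$ to an element of $F(\mathbb{P}^1\times\mathbb{A}^1_X)$ and use the symmetry $t\leftrightarrow t^{-1}$ to identify its restrictions to the sections $t=0$ and $t=\infty$. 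The hope is that one of these is visibly $p^*i_0^*a$ and the other is $a$.

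I expect this step, forcing equality of two non-open slices using only the generic information that $F$ sees, to be where the real work lies. The reduction to $\mathbb{A}^1$ and the injectivity are routine by comparison.
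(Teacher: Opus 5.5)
Your reductions are sound and match the formal part of the paper's proof: from $\mathbb{P}^1$/$\mathbb{A}^1$-locality to $\mathbb{A}^n$, then to $A$ via the span. You also make explicit the compatibility of the span bijections with the projections, which the paper leaves implicit.

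The gap is the one non-formal step. You never prove that the restrictions of $c=m^*a$ to $t=0$ and $t=1$ agree, i.e.\ that $i_0^*=i_1^*$ on $F(\mathbb{A}^1_Y)$. The paper does not prove this itself either; it imports it as $\mathbb{P}^1$-locality of dense local presheaves of sets from Appendix A of \cite{MR3404383}.

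The route you sketch does not supply this step. The involution $\sigma\colon t\mapsto t^{-1}$ only gives $\infty^*\bar c=0^*\sigma^*\bar c$, which compares two different elements. To conclude you would need $\sigma^*\bar c=\bar c$, and that is itself an instance of the invariance you are trying to prove. The hoped-for identification is also wrong. The map $(t,s)\mapsto ts$ extends to a morphism $\mathbb{P}^1_t\times\mathbb{A}^1_s\setminus\{(\infty,0)\}\to\mathbb{P}^1$, so by density the $t=\infty$ slice of $\bar c$ is the constant $p^*\infty^*\bar a$, where $\bar a\in F(\mathbb{P}^1_X)$ extends $a$. It is not $a$. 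The element $a$ sits on the slice $t=1$, which $\sigma$ fixes, so comparing the $0$- and $\infty$-slices can never yield $a=p^*i_0^*a$.

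What is missing is a configuration in which the closures of distinct fibres of a generically defined map all pass through a common point, so that restricting to that point compares them. Projection from $O=[0:0:1]$ does this.
\begin{enumerate}
\item Fix $Y\in Sm_S$ and $b\in F(\mathbb{P}^1_Y)$. Let $\pi[x:y:z]=[x:y]$, and let $\tilde b\in F(\mathbb{P}^2_Y)$ correspond to $\pi^*b$ under the bijection $F(\mathbb{P}^2_Y)\cong F(\mathbb{P}^2_Y\setminus O_Y)$.
\item For an $S$-point $\ell$ of $\mathbb{P}^1$, let $L_\ell\cong\mathbb{P}^1_S$ be the line through $O$ on which $\pi$ equals $\ell$ away from $O$.
\item Since $(L_\ell\setminus O)_Y\cong\mathbb{A}^1_Y$ is dense in $(L_\ell)_Y$, you get $\tilde b|_{(L_\ell)_Y}=q^*\ell^*b$, where $q\colon (L_\ell)_Y\to Y$ is the projection.
\item Restricting along the section $O_Y$ gives $\ell^*b=O_Y^*\tilde b$, which is independent of $\ell$.
\end{enumerate}
Now take $\ell=0,1$, $Y=\mathbb{A}^1_s\times X$, and $b$ the extension of $c$ to $\mathbb{P}^1_t\times Y$. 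This gives $i_0^*c=i_1^*c$, i.e.\ $a=p^*i_0^*a$, and the rest of your argument then goes through.
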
 
\begin{proof}
Let $F$ be one such presheaf. We first observe that $F$ is $\mathbb{P}^1$-local, for which one may follow the arguments in [\cite{MR3404383}, Appendix A] verbatim. (The assumption in \textit{loc.cit.} that the base is a field is not necessary. See [\cite{sfbat}, Lemma 2.3.8] for a quick sketch.)

Since $F$ is dense local, the inclusion $\mathbb{A}^1_S\hookrightarrow \mathbb{P}^1$ as the complement of $\infty$, induces a natural isomorphism $F(\mathbb{P}^1_S\times_S-)\xrightarrow{\infty} F(\mathbb{A}^1_S\times_S -)$. The previous paragraph then shows that $F$ is $\mathbb{A}^1$-local and hence, for all $n$, $\mathbb{A}^n$-local as well. Finally, since dense open immersions are stable under base change by smooth morphisms (see \Cref{dense prod}), the span $A\hookleftarrow U\hookrightarrow \mathbb{A}^n$ of dense open immersions induces a natural isomorphism $F(A\times_S-)\to F(\mathbb{A}^n_S\times_S -)$ (again, since $F$ is dense local). By the $\mathbb{A}^n$-locality of $F$, this immediately implies the $A$-locality of $F$.
\end{proof}

\begin{cor}\label[cor]{bir is a1}
    Over a Qcqs scheme, birational presheaves are $\mathbb{A}^1$-invariant Nisnevich sheaves.
\end{cor}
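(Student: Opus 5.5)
The corollary follows by combining the two locality results already proved: \Cref{bir local is nis local}, which gives Nisnevich locality, and \Cref{bir local is P1 local}, which gives $\mathbb{A}^1$-invariance. Let $F$ be a birational presheaf of sets on $Sm_S$, i.e.\ $F\in PSh_\Sigma(S)$ and $F$ is $B_0(S)$-local (\Cref{birational presheaves of sets}). We regard $F$ as a discrete object of $\mathcal{P}(S)$. A discrete presheaf is $B_0(S)$-local as a presheaf of spaces exactly when it inverts dense open immersions on sections, and it lies in $\mathcal{P}_\Sigma(S)$ exactly when it lies in $PSh_\Sigma(S)$. So $F$ is a birational local space in the sense of \Cref{bir local}.

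\emph{Nisnevich locality.} Since $S$ is Qcqs, \Cref{bir local is nis local} gives $\mathcal{H}^b(S)\subset\mathcal{P}_{nis}(S)$, so $F$ is a Nisnevich sheaf of spaces. Being discrete, it is therefore a Nisnevich sheaf of sets; this is \Cref{birational presheavesa re nisnevich sheaves}.

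\emph{$\mathbb{A}^1$-invariance.} The scheme $\mathbb{A}^1_S$ is rational over $S$, via the trivial span $\mathbb{A}^1_S\hookleftarrow\mathbb{A}^1_S\hookrightarrow\mathbb{A}^1_S$ with $n=1$. By \Cref{bir local is P1 local}, the dense local presheaf of sets $F$ is therefore $\mathbb{A}^1$-local, meaning $F(X)\to F(\mathbb{A}^1_S\times_S X)$ is a bijection for all $X\in Sm_S$.

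The only point needing care is the identification of the set-level notions (dense local in $PSh(S)$, and membership in $PSh_\Sigma(S)$) with their space-level counterparts. This holds because mapping spaces into a discrete object are discrete, so locality against a morphism reduces to bijectivity on $\pi_0$. Once this is in place, the argument is just the two inclusions above, and no genuine obstacle remains.
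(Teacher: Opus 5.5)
Your proposal is correct and follows the paper's own proof: $\mathbb{A}^1$-invariance comes from \Cref{bir local is P1 local} (applied to the rational scheme $\mathbb{A}^1_S$) and Nisnevich locality from \Cref{bir local is nis local}. Your remark identifying the set-level and space-level locality notions for discrete presheaves is the only extra detail, and it is accurate.
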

\begin{proof}
By \Cref{bir local is P1 local}, such presheaves are $\mathbb{A}^1$-invariant. These are Nisnevich sheaves by \Cref{bir local is nis local}.
\end{proof}
\begin{lem}\label[lem]{bir local is rational local}
 For any rational scheme $A/S$ we have $L_{dense}\mathcal{P}(S)\subset L_A\mathcal{P}(S)$.
    \end{lem}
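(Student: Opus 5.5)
We reduce the statement to the discrete case, \Cref{bir local is P1 local}, using \Cref{pi is birational}. More precisely, we use the part of that argument that concerns only dense locality. A presheaf of spaces $\esc{X}$ is dense local iff for every $i\geq 0$ and every basepoint the presheaves of homotopy sets/groups are dense local. Since the question is local in the choice of basepoint, we pass to slices or work with the family of presheaves
\[
\pi_i(\esc{X}(-),x)\quad\text{on } Sm_{X'} \text{ for } x\in \esc{X}(X').
\]
Concretely: fix $X\in Sm_S$. We must show that $p^*:\esc{X}(X)\to \esc{X}(A\times_S X)$ is an equivalence of $\infty$-groupoids. We check that it is a bijection on $\pi_0$ and an isomorphism on all $\pi_i$ at every basepoint of $\esc{X}(X)$. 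Since $p$ has a section only when $A(S)\neq\emptyset$, we do not take the retraction shortcut. Instead we use the span $A\hookleftarrow U\hookrightarrow \mathbb{A}^n_S$ exactly as in \Cref{bir local is P1 local}. By \Cref{dense prod}, the maps $U\times_S X\to A\times_S X$ and $U\times_S X\to \mathbb{A}^n_X$ are dense open immersions, so $\esc{X}$ inverts them. It therefore suffices to show that $\esc{X}(X)\to \esc{X}(\mathbb{A}^n_X)$ is an equivalence. Because $\mathbb{A}^n_X\to X$ has the zero section, $\esc{X}(X)$ is a retract of $\esc{X}(\mathbb{A}^n_X)$, and it suffices to treat $n=1$ by induction.

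For $\mathbb{A}^1$-invariance, we apply \Cref{bir local is P1 local} to the presheaves of sets $\pi_0\esc{X}$ and, for each $X'\in Sm_S$ and $x\in\esc{X}(X')$, the presheaf of groups $\pi_i(\esc{X}(-),x)$ on $Sm_{X'}$. These are dense local presheaves of sets on $Sm_{X'}$: dense open immersions in $Sm_{X'}$ are in particular dense open immersions in $Sm_S$, and $\esc{X}$ inverts them, so the induced maps on homotopy groups at compatible basepoints are isomorphisms. By \Cref{bir local is P1 local}, applied over the base scheme $X'$ (that lemma allows arbitrary base schemes), each of these presheaves is $\mathbb{A}^1$-local. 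It follows that $\esc{X}(X)\to\esc{X}(\mathbb{A}^1_X)$ is bijective on $\pi_0$. At a basepoint $x\in\esc{X}(X)$ it is also an isomorphism on $\pi_i$, because $\pi_i(\esc{X}(\mathbb{A}^1_X),p^*x)$ is the value of the $X$-based presheaf $\pi_i(\esc{X}(-),x)$ on $\mathbb{A}^1_X$. Every basepoint of $\esc{X}(\mathbb{A}^1_X)$ lies in the component of some $p^*x$ by $\pi_0$-surjectivity, so Whitehead's theorem gives the equivalence.

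The main obstacle is the basepoint bookkeeping. The homotopy presheaves $\pi_i$ for $i\geq1$ are not presheaves on $Sm_S$ unless $\esc{X}$ is pointed, so the slice-over-$X'$ trick is needed. We must also check that the $\mathbb{P}^1$-locality argument borrowed in \Cref{bir local is P1 local} really works over an arbitrary base $X'$ and for presheaves of groups viewed as sets, which is harmless. Note that no $\Sigma$- or Nisnevich-locality is needed anywhere, consistent with the statement concerning $L_{dense}\mathcal{P}(S)$ rather than $\mathcal{H}^b(S)$.
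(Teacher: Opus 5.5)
Your proof is correct and takes the same route as the paper: reduce to the homotopy presheaves and apply the discrete statement \Cref{bir local is P1 local}. The paper does this in one line and glosses over the fact that $\pi_i$ of an unpointed presheaf is not a presheaf on $Sm_S$; your passage to the slices $Sm_{X'}$ with basepoints $x\in\esc{X}(X')$ is exactly the bookkeeping that makes that line rigorous (it uses that \Cref{bir local is P1 local} holds over an arbitrary base), and treating the span $A\hookleftarrow U\hookrightarrow\mathbb{A}^n_S$ at the level of spaces first is a harmless reordering.
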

\begin{proof}
   A space $\esc{E}\in \mathcal{P}(Sm_S)$ is $A$-local (resp. dense local) if and only if the presheaves $\pi_i(\esc{E})$ are $A$-local (resp. dense local). Therefore, the claim follows immediately from \Cref{bir local is P1 local}.
\end{proof}

\begin{cor}\label[cor]{bir and rat}
    Let $S$ be a Qcqs scheme. Then:
    \begin{enumerate}
    \item For every rational scheme $A/S$ we have, $$\mathcal{H}^{b}(S)=\mathcal{H}^{b}_{nis}(S)\subset \mathcal{H}^{A}(S).$$ Equivalently, $A$-motivic equivalences are birational equivalences.
    \item $\mathcal{H}^b(S)\subset \mathcal{H}^{rat}(S)$. Equivalently, rational motivic equivalences are birational equivalences.
    \end{enumerate}
    
\end{cor}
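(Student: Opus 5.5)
The statement should follow by combining \Cref{bir local is nis local} with \Cref{bir local is rational local}, using that membership in each of the categories involved is a locality condition on objects of $\mathcal{P}(S)$.

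For (1), fix a rational scheme $A/S$. By \Cref{bir local is nis local}, $\mathcal{H}^b(S)=\mathcal{H}^b_{nis}(S)$, and every birational local space $\esc{X}$ is a Nisnevich sheaf. Such an $\esc{X}$ is dense local by definition, so \Cref{bir local is rational local} (the inclusion $L_{dense}\mathcal{P}(S)\subset L_A\mathcal{P}(S)$) shows that it is $A$-local. Being both Nisnevich local and $A$-local, $\esc{X}$ is an $A$-motivic space in the sense of \Cref{A motivic local}. Hence $\mathcal{H}^b(S)\subset\mathcal{H}^A(S)$.

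The equivalent formulation about equivalences is the standard duality between accessible localizations and their saturated classes (see [\cite{lurie2009higher}, \S5.4.4]). If the local objects satisfy $\mathcal{L}_1\subset\mathcal{L}_2$, then every $\mathcal{L}_2$-local equivalence $f$ is an $\mathcal{L}_1$-local equivalence. Indeed, for every $\esc{Z}\in\mathcal{L}_1$ we also have $\esc{Z}\in\mathcal{L}_2$, so $\mathrm{Map}(f,\esc{Z})$ is an equivalence. Applying this to $\mathcal{L}_1=\mathcal{H}^b(S)$ and $\mathcal{L}_2=\mathcal{H}^A(S)$ shows that $A$-motivic equivalences lie in $bir(S)$. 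Conversely, if every $A$-motivic equivalence is a birational equivalence, then each birational local object is local with respect to all $A$-motivic equivalences, and is therefore $A$-motivic.

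For (2), $\mathcal{H}^{rat}(S)$ is by definition the intersection of the $\mathcal{H}^A(S)$ over all rational $A/S$. So (1), applied to each rational $A$, gives $\mathcal{H}^b(S)\subset\mathcal{H}^{rat}(S)$, and the statement about equivalences follows by the same duality argument.

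The argument is formal, so no serious obstacle is expected. The one point to check is that, via \Cref{bir local is nis local}, membership in $\mathcal{P}_\Sigma(S)$ together with dense locality already forces Nisnevich locality; this is what lets "Nisnevich sheaf" be dropped from the hypotheses.
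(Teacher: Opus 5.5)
Your proposal is correct and follows essentially the same route as the paper. The paper also gets the inclusion in (1) from \Cref{bir local is rational local} together with the equality $\mathcal{H}^b(S)=\mathcal{H}^b_{nis}(S)$ of \Cref{bir local is nis local}, gets (2) by intersecting over all rational $A$, and treats the ``equivalently'' clauses as the standard duality between inclusions of local subcategories and reverse inclusions of saturated classes.
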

\begin{proof}
The inclusion in (1) follows immediately from \Cref{bir local is rational local}. Indeed, $$\mathcal{H}^b_{nis}(S):=\mathcal{P}_{nis}(S)\bigcap L_{dense}\mathcal{P}(S),$$ so that \Cref{bir local is rational local} yields, $$\mathcal{H}^b_{nis}(S)\subset \mathcal{P}_{nis}(S)\bigcap L_{A}\mathcal{P}(S)=\mathcal{H}^{A}(S).$$ For the equality in 1, see \Cref{bir local is nis local}. 

2. The inclusion in (2) follows immediately from the inclusion in 1 and the definition: $ \mathcal{H}^{rat}(S)=\underset{A\in Rat_S}{\bigcap} \mathcal{H}^A(S)$ (where $Rat_S$ is the set of all rational schemes in $Sm_S$).

 Finally, the `equivalently' part of each statement is simply the dual of the corresponding inclusion. Indeed, in a presentable $\infty$-category $\mathcal{C}$, if $\mathrm{S}_1$ and $\mathrm{S}_2$ are two saturated classes, then $\mathrm{S}_1\subset \mathrm{S}_2$ if and only if $L_{\mathrm{S}_2}\mathcal{C}\subset L_{\mathrm{S}_1}\mathcal{C}$.     
\end{proof}
\begin{rem}\label[rem]{bir contr but not rational}
    The birational localization is strictly stronger than the rational localization. Indeed, there are nonrational schemes that are birationally contractible. For example, when $S=Spec k$, there is a stably rational smooth $k$-variety that is not rational [\cite{asok2011smooth}, Example 2.3.4]. But stably rational varieties are clearly birationally contractible.
\end{rem}
\begin{rem}\label[rem]{various rational homotopy categories}
    One can similarly define $\mathcal{H}^{st\mbox{-}rat}$, $\mathcal{H}^{ret\mbox{-}rat}$, etc., as the localization of the nisnevich topos at all stably rational schemes, retract rational schemes, etc. respectively and similarly the saturated classes $st\mbox{-}rat$, $ret\mbox{-}rat$ etc. It is evident that:
    $$\mathcal{H}^{bir}\subset \mathcal{H}^{ret\mbox{-}rat}\subset \mathcal{H}^{st\mbox{-}rat}\subset \mathcal{H}^{rat}\subset\mathcal{H}^{\mathbb{A}^1}.$$ Dually,
    $$mot\subset rat\subset st\mbox{-}rat\subset ret\mbox{-}rat\subset bir.$$
\end{rem}
\begin{cor}\label[cor]{Motivic equivalences are Birational equivalences}
Let $S$ be a Qcqs scheme. Then,

\begin{enumerate}
    \item $\mathcal{H}^b(S)\subset \mathcal{H}^{\mathbb{A}^1}(S)$, or, equivalently, motivic equivalences are birational equivalences.
    \item  $\mathcal{H}^{b\mathbb{A}^1}_{}(S)= \mathcal{H}^b(S)$.
\end{enumerate}
\end{cor}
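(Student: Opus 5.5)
Part (1) follows from \Cref{bir and rat}, specialized to $A=\mathbb{A}^1_S$. The affine space $\mathbb{A}^1_S$ is trivially rational: the identity span $\mathbb{A}^1_S\hookleftarrow\mathbb{A}^1_S\hookrightarrow\mathbb{A}^1_S$ consists of dense open immersions. So \Cref{bir and rat}(1) gives $\mathcal{H}^b(S)=\mathcal{H}^b_{nis}(S)\subset\mathcal{H}^{\mathbb{A}^1}(S)$. The notation agrees: $\mathcal{H}^{A}(S)$ for $A=\mathbb{A}^1_S$ is the $\mathbb{A}^1$-local Nisnevich-local subcategory, i.e.\ Morel--Voevodsky's $\mathcal{H}^{\mathbb{A}^1}(S)$. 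Alternatively, one can combine \Cref{bir and rat}(2) with the remark that $\mathcal{H}^{rat}(S)\subset\mathcal{H}^{\mathbb{A}^1}(S)$. The reformulation ``motivic equivalences are birational equivalences'' is the dual statement. As in the last paragraph of the proof of \Cref{bir and rat}, an inclusion of local subcategories of the presentable $\infty$-category $\mathcal{P}(S)$ is equivalent to the reverse inclusion of the associated saturated classes, i.e.\ $mot(S)\subset bir(S)$.

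For part (2) I compare both sides as full subcategories of $\mathcal{P}(S)$. By definition $\mathcal{H}^{b\mathbb{A}^1}(S)=L_{dense}\mathcal{H}^{\mathbb{A}^1}(S)$ is the full subcategory of $\mathcal{H}^{\mathbb{A}^1}(S)$ on the $B_0(S)$-local objects. Since $\mathcal{H}^{\mathbb{A}^1}(S)$ is a full subcategory of $\mathcal{P}(S)$ and the representables of $Sm_S$ are sent to their motivic localizations, one has to check that locality with respect to $L_{mot}(j)$ inside $\mathcal{H}^{\mathbb{A}^1}(S)$ coincides with locality with respect to $j$ in $\mathcal{P}(S)$. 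This follows from the adjunction $\mathrm{Map}(L_{mot}h(X),\esc{X})\simeq\mathrm{Map}(h(X),\esc{X})\simeq\esc{X}(X)$ for motivic $\esc{X}$. Hence $\mathcal{H}^{b\mathbb{A}^1}(S)=\mathcal{H}^{\mathbb{A}^1}(S)\cap L_{dense}\mathcal{P}(S)$.

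Next I show that this intersection is $\mathcal{H}^b(S)$. For ``$\supseteq$'': a birational local space is dense local by definition and lies in $\mathcal{H}^{\mathbb{A}^1}(S)$ by part (1). For ``$\subseteq$'': a motivic space is a Nisnevich sheaf, hence lies in $\mathcal{P}_{nis}(S)\subset\mathcal{P}_\Sigma(S)$, because the Nisnevich topology is finer than the $\sqcup$-topology (the remark preceding \Cref{bir local}). If it is also dense local, it is therefore birational local in the sense of \Cref{bir local}. This gives $\mathcal{H}^{b\mathbb{A}^1}(S)=\mathcal{H}^b(S)$, completing the chain $\mathcal{H}^{b\mathbb{A}^1}\subset\mathcal{H}^b_{nis}\subset\mathcal{H}^b_\sqcup$ of equalities together with \Cref{bir local is nis local}.

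There is no real obstacle, since all the geometric content sits in \Cref{bir local is nis local} and \Cref{bir local is P1 local}. The only point needing care is the identification in the second paragraph: localizing the subcategory $\mathcal{H}^{\mathbb{A}^1}(S)$ at the images of $B_0(S)$ yields the same objects as intersecting with the $B_0(S)$-local objects of $\mathcal{P}(S)$. I would state this explicitly through the mapping-space adjunction above.
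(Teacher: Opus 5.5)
Your proposal is correct and matches the paper's proof. Part (1) is \Cref{bir and rat}(1) applied to $A=\mathbb{A}^1_S$. Part (2) is the same chain $\mathcal{H}^{b\mathbb{A}^1}(S)=\mathcal{H}^{\mathbb{A}^1}(S)\cap L_{dense}\mathcal{P}(S)=\mathcal{H}^{\mathbb{A}^1}(S)\cap\mathcal{H}^b(S)=\mathcal{H}^b(S)$, using $\mathcal{H}^{\mathbb{A}^1}(S)\subset\mathcal{P}_{nis}(S)\subset\mathcal{P}_\Sigma(S)$ together with part (1). Your only addition is the adjunction argument identifying $B_0(S)$-locality inside $\mathcal{H}^{\mathbb{A}^1}(S)$ with dense locality in $\mathcal{P}(S)$, which the paper simply takes as part of the definition of $\mathcal{H}^{b\mathbb{A}^1}(S)$.
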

\begin{proof}
statement (1) follows immediately from \Cref{bir and rat}(1) applied to $A=\mathbb{A}^1_S$. statement (2) can then be deduced easily from 1. Indeed, $$\mathcal{H}^{b\mathbb{A}^1}(S)=\mathcal{H}^{\mathbb{A}^1}(S)\bigcap L_{dense}\mathcal{P}(S)=\mathcal{H}^{\mathbb{A}^1}(S)\bigcap\mathcal{P}_{\Sigma}(S)\bigcap L_{dense}\mathcal{P}(S)=\mathcal{H}^{\mathbb{A}^1}(S)\bigcap \mathcal{H}^b(S)$$ (the second equality comes from the fact that $\mathcal{H}^{\mathbb{A}^1}(S)\subset \mathcal{P}_{nis}(S)\subset \mathcal{P}_{\Sigma}(S)$).
 \end{proof}
 \begin{rem}
A version of the above corollary appears in [\cite[Theorem 3.9]{choudhury2022characterisation}] and [\cite[Theorem 2.3]{cisinski2023homotopy}]. In these sources, however, the birational homotopy category of a field $k$ (denoted by $\mathbf{H}_b(k)$ and $\mathscr{H}_b(k)$, respectively) is taken simply to be the naive dense local category $L_{dense}\mathcal{P}(k)$. As detailed in \Cref{correction}, this definition is technically incomplete for the purpose of establishing the inclusion $\mathbf{H}_{b}(S)\subset\mathcal{P}_{nis}(S)$, and thereby the equivalence $\mathbf{H}_b(S)=\mathcal{H}^{b\mathbb{A}^1}(S)$. The obstruction lies in the fact that objects in $L_{dense}\mathcal{P}(k)$ do not generally satisfy descent with respect to Nisnevich covers given by disjoint union decompositions. For example, the empty presheaf is clearly dense-local, but it is not a Nisnevich sheaf.
\end{rem}
\begin{rem}
    While $\mathbb{A}^1_S\to S$ is a dense equivalence over $S$, a dense open immersion into $S$ can never be a motivic equivalence. Indeed, suppose $j:U\hookrightarrow S$ is a (dense) open immersion that is a motivic equivalence. Since $j$ is an etale morphism to the base, being a motivic equivalence implies that it is an isomorphism [\cite{A1contractibledubouloz}, Proposition 3.1]. 
\end{rem}
\subsection{Logarithmic Birational motives}\label{2.3}
The logarithmic version of motivic homotopy theory has been extensively studied by Binda, Park, and \O stvare in a series of papers starting with [\cite{binda2023logarithmic}], with the goal of representing more cohomology theories. In fact, this approach has been highly successful in representing even topological Hochschild homology, which is known to be $\mathbb{A}^1$-homotopically contractible. 

In the previous subsection, we saw that $\mathcal{H}^b(S)$ is a full subcategory of $\mathcal{H}^{\mathbb{A}^1}(S)$. In fact, using \Cref{bir and rat}(1) (applied to the rational schemes $\mathbb{P}^n$), we see that $\mathcal{H}^b(S)$ is also a full subcategory of $\mathcal{H}^{\mathbb{P}^1}(S)$ and $\mathcal{H}^{\mathbb{P}^\bullet}(S)$ (see [\cite{binda2023logarithmic}, Definition 4.0.3], for notation). This suggests a connection between the birational homotopy category and the logarithmic homotopy category. In this subsection, we will examine this connection. Below, we will thus assume that $S$ is a Qcqs scheme; the reader is free to assume that $S$ is the spectrum of a field.

Consider the forgetful functor $\rho : SmlSm_S\to Sm_S$ defined by $\rho (\bar{X}, D):= \bar{X}$. This has a canonical right adjoint $\lambda  : Sm_S\to SmlSm_S$ defined by $\lambda (X):=(X,\emptyset)$ (see [\cite{binda2023logarithmic}, \S4]). Equipped with the logarithmic Nisnevich topology, $\rho$ is neither a morphism of sites nor does it preserve the interval object, so we cannot obtain motivic left derived functors for $\rho$ (it does work for $\lambda $, though).
\begin{thm}\label{roh preserves bir}
    The induced functor $\rho_\sharp: \mathcal{P}(SmlSm_S)\to \mathcal{P}(S)$ sends log motivic equivalences to birational equivalences. Hence, it induces an adjunction $$L_{bir}\rho_\sharp: log\mbox{-}\mathcal{H}^{\Box}(S)\leftrightarrows \mathcal{H}^{b}(S): \rho ^*\simeq \lambda_{\sharp}.$$ The right adjoint $\rho^* $ is fully faithful, and we have an embedding $$\rho^*:\mathcal{H}^b(S)\hookrightarrow \mathrm{log}\mbox{-}\mathcal{H}^{\Box}(S).$$
\end{thm}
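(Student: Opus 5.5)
It suffices to show that the right adjoint $\rho^*:\mathcal{P}(S)\to\mathcal{P}(SmlSm_S)$, $\rho^*\esc{X}(\bar X,D)=\esc{X}(\bar X)$, carries birational local spaces to log motivic spaces. Once that holds, $\rho_\sharp$ sends the generating log motivic equivalences to birational equivalences, and therefore sends the whole saturated class to $bir(S)$. The adjunction $\rho_\sharp\dashv\rho^*$ then descends to the local categories, with left adjoint $L_{bir}\rho_\sharp$. The identification $\rho^*\simeq\lambda_\sharp$ is formal, since $\lambda$ is right adjoint to $\rho$ on sites.

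I plan to check the three generating classes of $log\mbox{-}\mathcal{H}^{\Box}(S)$ in turn, taking $\esc{X}\in\mathcal{H}^b(S)$ throughout.

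\textbf{Strict Nisnevich descent.} A strict Nisnevich cover of $(\bar X,D)$ has an underlying family of schemes that forms a Nisnevich cover of $\bar X$. The \v{C}ech nerve is computed on underlying schemes, so descent for $\rho^*\esc{X}$ reduces to Nisnevich descent of $\esc{X}$, which holds by \Cref{bir local is nis local}.

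\textbf{$\Box$-invariance.} Here $\Box=(\mathbb{P}^1,\infty)$, so $\rho^*\esc{X}((\bar X,D)\times\Box)=\esc{X}(\bar X\times\mathbb{P}^1)$. We need $\esc{X}(\bar X)\to\esc{X}(\bar X\times\mathbb{P}^1)$ to be an equivalence, and this is exactly $\mathbb{P}^1$-locality. That follows from \Cref{bir local is rational local} applied to the rational scheme $A=\mathbb{P}^1_S$.

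\textbf{Admissible blow-ups.} This is the step I expect to be the main obstacle. The log motivic category also inverts the maps $(\mathrm{Bl}_Z\bar X,E)\to(\bar X,D)$, with $Z$ a smooth center lying in $D$ and having normal crossings with it. It also inverts the dividing covers, but those are generated by such blow-ups. For each such map, $\rho^*$ produces the restriction map $\esc{X}(\bar X)\to\esc{X}(\mathrm{Bl}_Z\bar X)$, and I must show it is an equivalence.

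The idea is to compare both sides with the common dense open $U=\bar X\setminus Z$. The composite $\esc{X}(\bar X)\to\esc{X}(\mathrm{Bl}_Z\bar X)\to\esc{X}(U)$ is the restriction along a dense open immersion, hence an equivalence. The second map $\esc{X}(\mathrm{Bl}_Z\bar X)\to\esc{X}(U)$ is also an equivalence, because $U$ embeds in $\mathrm{Bl}_Z\bar X$ as the complement of the exceptional divisor, again a dense open.

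Two-out-of-three then gives that the first map is an equivalence. One subtlety must be handled: $Z$ might contain a whole component of $\bar X$. That cannot happen, because $Z\subset D$ and $D$ is a divisor, so $Z$ is nowhere dense.

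\textbf{Full faithfulness.} It remains to show that $\rho^*$ is fully faithful on $\mathcal{H}^b(S)$. Equivalently, the counit $L_{bir}\rho_\sharp\rho^*\esc{X}\to\esc{X}$ should be an equivalence. Since $\rho\circ\lambda=\mathrm{id}$, we have $\rho_\sharp\lambda_\sharp\simeq\mathrm{id}$. Together with $\rho^*\simeq\lambda_\sharp$, this gives $\rho_\sharp\rho^*\esc{X}\simeq\esc{X}$, which is already birational local. Hence the counit is an equivalence and $\rho^*$ is the claimed embedding.
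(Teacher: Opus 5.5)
Your proof is correct and follows the paper's argument. The paper checks the same three generating classes: strict Nisnevich covers, $\bar{\Box}$-projections (which go to $\mathbb{P}^1_X\to X$), and admissible blow-ups (which go to birational morphisms). It then gets full faithfulness from $\lambda$ being fully faithful, which is your $\rho\circ\lambda=\mathrm{id}$ step. The differences are presentational: you phrase the first part dually (via $\rho^*$ preserving local objects), and you spell out via two-out-of-three through the common dense open $\bar X\setminus Z$ why $\mathrm{Bl}_Z\bar X\to\bar X$ is a birational equivalence, which the paper calls obvious.
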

\begin{proof}
    Local equivalences in $\mathcal{P}(SmlSm_S)$ given by strict Nisnevich covers are sent by $\rho_\sharp$ to Nisnevich covers of the total schemes, which are birational equivalences by \Cref{bir local is nis local}. Next, recall that the underlying morphism of total schemes of an admissible blow-up [\cite{binda2023logarithmic}, Definition 2.5.2, Construction 2.5.9] is given by an ordinary blow-up of schemes. Thus $\rho_\sharp$ maps an admissible blow-up to a birational morphism of schemes, which is obviously a birational equivalence. Finally, $\rho (\bar{\Box}\times X)=\mathbb{P}^1_X$, so the projection maps $\bar{\Box}\times X\to X$ are sent to $\mathbb{P}^1_X\to X$, which are birational local equivalences by \Cref{Motivic equivalences are Birational equivalences}.

   For the last statement, recall that, by definition of log morphisms, the right adjoint $\lambda$ is fully faithful at the level of schemes. Therefore, the right derived functor $\lambda_\sharp\simeq \rho ^*$ is fully faithful.
\end{proof}
We have thus obtained an equivalence $L_{bir}\rho _\sharp \rho ^*\to 1_{\mathcal{H}^b(S)}$. So the image of $\rho^*$ is precisely the locus where the unit $1_{\mathrm{log}\mbox{-}{\mathcal{H}^b(S)}}\to \rho ^*L_{bir}\rho _\sharp $ is an equivalence. We want to describe this image of $\rho^*$. To do this, we introduce the notion of birational locality in the logarithmic setting. 

First, let us call a morphism of log schemes $\underline{U}\to \underline{X}$ a dense open immersion if the underlying morphism of total schemes $\bar{U}\to \bar{X}$ is a dense open immersion of schemes and the log structure on $\underline{U}$ is the pullback of that on $\underline{X}$. Denote by $\mathrm{log}\mbox {-} \mathcal{H}^b(S)$ the localization of $\mathrm{log}\mbox{-}\mathcal{H}(S)$ with respect to the class of dense open immersions of log schemes.
\begin{lem}
    $\rho_\sharp: \mathcal{P}(SmlSm_S)\to \mathcal{P}(S) $ preserves birational local equivalences. Hence, the adjunction in \Cref{roh preserves bir} descends to $$L_{bir}\rho_\sharp: log\mbox{-}\mathcal{H}^b(S)\leftrightarrows \mathcal{H}^{b}(S): \rho ^*\simeq \lambda_{\sharp}.$$
\end{lem}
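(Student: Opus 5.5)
The plan is to reduce everything to generators. By \Cref{roh preserves bir}, $\rho_\sharp$ already sends log motivic equivalences to birational equivalences. The saturated class defining $\mathrm{log}\mbox{-}\mathcal{H}^b(S)$ is generated, as a strongly saturated class, by the log motivic equivalences together with the (images under Yoneda of the) dense open immersions of log schemes $\underline{U}\hookrightarrow\underline{X}$ in $SmlSm_S$.

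Since $\rho_\sharp$ is a left adjoint, it preserves colimits. The preimage under $\rho_\sharp$ of the saturated class $bir(S)$ is therefore itself strongly saturated, because it is closed under pushouts, colimits of arrows and 2-out-of-3. It thus suffices to show that this preimage contains each generator.

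For the log motivic generators, this is \Cref{roh preserves bir}. For a dense open immersion $\underline{U}\hookrightarrow\underline{X}$ of log schemes, $\rho_\sharp$ sends the representable $h(\underline{U})$ to $h(\bar{U})$, since $\rho_\sharp$ is left Kan extension along $\rho$ and so commutes with Yoneda. The image of the morphism is therefore the underlying map of total schemes $\bar U\to\bar X$. By definition this is a dense open immersion in $Sm_S$, so it lies in $B_0(S)\subset bir(S)$. We should check here that the total schemes of objects of $SmlSm_S$ are smooth over $S$, so that $\bar U,\bar X\in Sm_S$. This holds by the definition of $SmlSm_S$, since the underlying scheme is smooth and the boundary is a strict normal crossing divisor.

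To get the adjunction, note that $L_{bir}\rho_\sharp$ then inverts all local equivalences for $\mathrm{log}\mbox{-}\mathcal{H}^b(S)$. By the universal property of accessible localization [\cite{lurie2009higher}, Proposition 5.5.4.20], it factors through $\mathrm{log}\mbox{-}\mathcal{H}^b(S)$. It remains to see that the right adjoint $\rho^*$ lands in $\mathrm{log}\mbox{-}\mathcal{H}^b(S)$. This follows by adjunction: for $\esc{E}\in\mathcal{H}^b(S)$ and a generator $f$, we have $\mathrm{Map}(f,\rho^*\esc{E})\simeq\mathrm{Map}(\rho_\sharp f,\esc{E})$, which is an equivalence because $\rho_\sharp f$ is a birational equivalence and $\esc{E}$ is birational local.

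The main obstacle is a bookkeeping point rather than a substantive one. We need that $\rho_\sharp$ is computed on representables as $h(\underline{X})\mapsto h(\bar X)$ in $\mathcal{P}(S)$, and we must take care that the target is $\mathcal{P}(S)$ rather than $\mathcal{P}_\Sigma$. This is harmless, since $bir(S)$ is defined in $\mathcal{P}(S)$ and already contains the $\Sigma$-equivalences.
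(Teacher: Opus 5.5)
Your proposal is correct and is essentially the paper's argument: \Cref{roh preserves bir} handles the log motivic equivalences, and dense open immersions of log schemes go to dense open immersions of total schemes in $Sm_S$ by definition. Your saturated-class reduction (the preimage under the colimit-preserving $\rho_\sharp$ is strongly saturated) and the adjunction check on $\rho^*$ just spell out what the paper calls ``automatic.''
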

\begin{proof}
    This is automatic by the definition of log-dense open immersions and the above theorem, \Cref{roh preserves bir}.
\end{proof}

\begin{lem}
 $\lambda_\sharp: \mathcal{P}(S)\to \mathcal{P}(SmlSm_S)$ preserves birational local equivalences. Hence, we have an equivalence $L_{log-bir}\lambda_\sharp\simeq \lambda _\sharp $. This yields another adjunction $L_{log-bir}\lambda_\sharp \simeq \lambda_{\sharp}: \mathcal{H}^b(S)\leftrightarrows \mathrm{log}\mbox{-}\mathcal{H}^b(S): \lambda ^*\simeq \omega _\sharp $. 
\end{lem}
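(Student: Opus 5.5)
Here $\lambda_\sharp\colon \mathcal{P}(S)\to\mathcal{P}(SmlSm_S)$ is left Kan extension along $\lambda\colon Sm_S\to SmlSm_S$, $X\mapsto (X,\emptyset)$. Since $\lambda_\sharp$ is a left adjoint, it preserves colimits, and the saturated class of birational equivalences in $\mathcal{P}(S)$ is generated by two sets of maps: the coproduct maps $h(A)\sqcup h(B)\to h(A\sqcup B)$ (imposing $\mathcal{P}_\Sigma$-locality) and the dense open immersions $h(U)\to h(X)$ in $B_0(S)$. So the plan is to show that $\lambda_\sharp$ sends each generator to a log-birational equivalence, i.e.\ to a morphism inverted by $L_{log\mbox{-}bir}$. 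Colimit preservation then carries the whole saturated class into the log-birational saturated class.

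For representables, $\lambda_\sharp h(X)\simeq h(X,\emptyset)$. The coproduct generator goes to $h(A,\emptyset)\sqcup h(B,\emptyset)\to h(A\sqcup B,\emptyset)$. This is a strict Nisnevich (disjoint) cover, hence already inverted in $\mathrm{log}\mbox{-}\mathcal{H}^\Box(S)$, and therefore in $\mathrm{log}\mbox{-}\mathcal{H}^b(S)$. A dense open immersion $U\hookrightarrow X$ goes to $(U,\emptyset)\to(X,\emptyset)$. The underlying map of total schemes is a dense open immersion, and the trivial log structure on $U$ is the pullback of the trivial one on $X$. So this is a dense open immersion of log schemes in the sense just defined, and is inverted by definition. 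This gives the first assertion.

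For $L_{log\mbox{-}bir}\lambda_\sharp\simeq\lambda_\sharp$ on $\mathcal{H}^b(S)$, I need that $\lambda_\sharp$ of a birational local space is log-birational local. Here I would use the identification $\lambda_\sharp\simeq\rho^*$ from \Cref{roh preserves bir}, which comes from the adjunction $\rho\dashv\lambda$ on sites. By that adjunction, for any map $f$ in $\mathcal{P}(SmlSm_S)$ and any $\esc{X}\in\mathcal{P}(S)$,
\[
\mathrm{Map}(f,\rho^*\esc{X})\simeq\mathrm{Map}(\rho_\sharp f,\esc{X}).
\]
So if $\esc{X}\in\mathcal{H}^b(S)$, then $\rho^*\esc{X}$ is local for every $f$ with $\rho_\sharp f$ a birational equivalence. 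By \Cref{roh preserves bir} and the preceding lemma, this covers all log motivic equivalences and all log-dense open immersions. Hence $\rho^*\esc{X}=\lambda_\sharp\esc{X}$ lies in $\mathrm{log}\mbox{-}\mathcal{H}^b(S)$, and the unit $\lambda_\sharp\esc{X}\to L_{log\mbox{-}bir}\lambda_\sharp\esc{X}$ is an equivalence.

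The adjunction then follows formally. The right adjoint of $\lambda_\sharp$ is $\lambda^*$, which is restriction along $\lambda$; it is identified with $\omega_\sharp$, the left Kan extension along the right adjoint $\omega$ of $\lambda$. Because $\lambda_\sharp$ preserves birational equivalences, $\lambda^*$ sends log-birational local objects to birational local objects, so both functors restrict to the local subcategories. The step I expect to need the most care is the identification $\lambda_\sharp\simeq\rho^*$ on all presheaves, rather than only on representables. It holds because $\rho\dashv\lambda$ gives $\lambda_\sharp=\rho^*$ on presheaf categories: both are left adjoint to $\lambda^*$, since $\rho^*\dashv\rho_*=\lambda^*$. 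I would verify this carefully before using it.
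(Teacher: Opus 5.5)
Your proof is correct and has the same two-step structure as the paper's. First, check that $\lambda_\sharp$ sends a generating set of birational equivalences to maps inverted by $L_{log-bir}$. Second, deduce that $\lambda_\sharp$ preserves local objects from $\lambda_\sharp\simeq\rho^*$, using the fact (\Cref{roh preserves bir} and the preceding lemma) that $\rho_\sharp$ sends log motivic equivalences and dense open immersions of log schemes to birational equivalences. Your second step is exactly what the paper's brief ``by the lemma above'' relies on, and your check that $\rho_*\simeq\lambda^*$, hence $\rho^*\simeq\lambda_\sharp$, correctly supplies the justification the paper leaves implicit.

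The only real difference is the choice of generators in the first step. The paper uses \Cref{bir local is nis local} to present birational equivalences as generated by Nisnevich squares and dense open immersions, so it needs $\lambda$ to preserve Nisnevich squares for the strict Nisnevich topology. You work directly from the definition $\mathcal{H}^b(S)=L_{dense}\mathcal{P}_\Sigma(S)$ and only need that disjoint decompositions are strict Nisnevich covers. This makes your first step independent of the automatic Nisnevich locality theorem.

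One small correction: your $\mathcal{P}_\Sigma$ generators must include the nullary coproduct map $\emptyset\to h(\emptyset)$. The binary maps $h(A)\sqcup h(B)\to h(A\sqcup B)$ together with the dense open immersions do not generate it: the empty presheaf is local for all of those maps but not for $\emptyset\to h(\emptyset)$. Its image $\emptyset\to h(\emptyset,\emptyset)$ is inverted by the empty strict Nisnevich cover, so the argument goes through unchanged.
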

\begin{proof}
We know that the class of birational local equivalences is generated by Nisnevich squares and dense open immersions (use \Cref{bir local is nis local}), and that the functor $\lambda$ trivially preserves Nisnevich squares by the definition of the strict Nisnevich topology on log schemes [\cite{binda2023logarithmic}, Construction 2.5.9]. It remains to handle dense open immersions, but this is automatic by our definition of log dense open immersions. (Note that, in general, $\lambda$ does not map $\mathbb{A}^1$-equivalences to equivalences; but we are bypassing this issue by the result of the previous section that $\mathbb{A}^1$-equivalences are dense equivalences).

Since, by the lemma above, $\lambda_\sharp$ preserves birational local objects, we conclude that $L_{log-bir}\lambda_\sharp\simeq \lambda _\sharp$ on $\mathcal{H}^b(S)$.
\end{proof}
\begin{cor}
 $\rho^*:\mathcal{H}^b(S)\hookrightarrow \mathrm{log}\mbox{-}\mathcal{H}^{\Box}(S)$ factors through $\mathrm{log}\mbox{-}\mathcal{H}^b(S)$.
\end{cor}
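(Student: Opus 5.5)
The plan is to check that for every $\esc{X}\in\mathcal{H}^b(S)$ the object $\rho^*\esc{X}\in \mathrm{log}\mbox{-}\mathcal{H}^{\Box}(S)$ is local with respect to the log dense open immersions. This suffices because $\mathrm{log}\mbox{-}\mathcal{H}^b(S)$ is, by definition, the full subcategory of $\mathrm{log}\mbox{-}\mathcal{H}^{\Box}(S)$ consisting of objects local with respect to these maps. By \Cref{roh preserves bir}, $\rho^*$ already lands in $\mathrm{log}\mbox{-}\mathcal{H}^{\Box}(S)$, so only the additional locality condition needs to be verified.

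The key step is to use the adjunction $\rho_\sharp\dashv\rho^*$ on presheaf categories. For a log dense open immersion $u:\underline{U}\to\underline{X}$ in $SmlSm_S$, restriction along $u$ is the map
$$\mathrm{Map}(h\underline{X},\rho^*\esc{X})\to \mathrm{Map}(h\underline{U},\rho^*\esc{X}).$$
By adjunction, and since $\rho_\sharp$ sends representables to representables ($\rho_\sharp h\underline{X}=h\bar{X}$), this map is identified with
$$\esc{X}(\bar{X})\to\esc{X}(\bar{U}),$$
which is restriction along the underlying map of total schemes $\bar U\to\bar X$. By the definition of log dense open immersions, $\bar U\to\bar X$ is a dense open immersion in $Sm_S$, so it lies in $B_0(S)$. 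Since $\esc{X}$ is birational local, this map is an equivalence. This is essentially the content of the preceding lemma that $\rho_\sharp$ preserves birational local equivalences, and the corollary could be deduced from that lemma directly: a right adjoint whose left adjoint preserves the generating local equivalences preserves local objects.

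The main point to get right is the identification of $\rho^*\esc{X}$ with the presheaf $(\bar X,D)\mapsto \esc{X}(\bar X)$, and the resulting fact that no further localization is involved. Here I use $\rho^*\simeq\lambda_\sharp$ from \Cref{roh preserves bir}, together with the fact that $\rho^*$ is computed sectionwise on presheaves. It is also necessary that the embedding into $\mathrm{log}\mbox{-}\mathcal{H}^{\Box}(S)$ does not alter $\rho^*\esc{X}$, i.e.\ that $\rho^*\esc{X}$ is already strict Nisnevich and $\bar\Box$-local. This holds because $\rho_\sharp$ sends those generating equivalences to birational equivalences, as shown in \Cref{roh preserves bir}. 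Given these facts, the factorization follows.
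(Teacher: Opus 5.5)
Your argument is correct and is essentially the paper's own: the paper gives no separate proof of the corollary, since it follows from the preceding lemma. That lemma says $\rho_\sharp$ sends log dense open immersions to dense open immersions in $Sm_S$, so its right adjoint $\rho^*\simeq\lambda_\sharp$ preserves local objects, which is exactly the Yoneda/adjunction check you carry out. One small imprecision: locality in $\mathrm{log}\mbox{-}\mathcal{H}^{\Box}(S)$ also involves the dividing (admissible blow-up) equivalences, not only strict Nisnevich and $\bar\Box$-equivalences, but \Cref{roh preserves bir}, which you already invoke, handles these too.
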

\begin{thm}
    The adjunction $\rho^* \simeq \lambda_{\sharp}: \mathcal{H}^b(S)\leftrightarrows \mathrm{log}\mbox{-}\mathcal{H}^b(S): \lambda ^*\simeq \omega _\sharp$ as above, is an equivalence.
\end{thm}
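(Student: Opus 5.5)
We have an adjunction $\lambda_\sharp:\mathcal{H}^b(S)\leftrightarrows \mathrm{log}\mbox{-}\mathcal{H}^b(S):\lambda^*$, with $\lambda_\sharp\simeq\rho^*$. \Cref{roh preserves bir} already shows that $\rho^*$ is fully faithful, so the unit $1\to \lambda^*\lambda_\sharp$ is an equivalence. It therefore remains to show that the counit $\lambda_\sharp\lambda^*\to 1$ is an equivalence on $\mathrm{log}\mbox{-}\mathcal{H}^b(S)$. Equivalently, $\lambda^*$ should be conservative on $\mathrm{log}\mbox{-}\mathcal{H}^b(S)$, or $\lambda_\sharp$ should be essentially surjective. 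The cleanest way to arrange this is to show that every representable $(X,D)\in SmlSm_S$ becomes equivalent, in $\mathrm{log}\mbox{-}\mathcal{H}^b(S)$, to an object in the image of $\lambda_\sharp$. Since both $\lambda_\sharp$ and $L_{log-bir}$ preserve colimits, and $\mathrm{log}\mbox{-}\mathcal{H}^b(S)$ is generated under colimits by the images of representables, this suffices.

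The key geometric input is as follows. For $(X,D)$ with $D$ a strict normal crossing divisor on $X$, consider the open complement $U=X\setminus D$ with the pulled-back log structure. That structure is trivial, because the log structure of $(X,D)$ is supported on $D$. The inclusion $(U,\emptyset)=\lambda(U)\hookrightarrow (X,D)$ is a dense open immersion of log schemes in the sense defined above, provided $D$ is nowhere dense. This holds since $D$ is a divisor and $X$ is smooth, so $D$ contains no generic point. Hence $\lambda(U)\to (X,D)$ is inverted in $\mathrm{log}\mbox{-}\mathcal{H}^b(S)$. Moreover, in $\mathcal{H}^b(S)$ we have $U\simeq X$ via the dense open immersion. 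So every representable log scheme is log-birationally equivalent to $\lambda_\sharp$ of a scheme, namely of $U$, or equivalently of $X$. This gives essential surjectivity of $\lambda_\sharp$ on representables, hence on the whole localized category by a colimit argument.

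To conclude, I would check the counit on generators. For $(X,D)$, $\lambda^*$ of its image is computed using $\lambda^*\simeq\omega_\sharp$, and $\omega(X,D)=X\setminus D$. Then $\lambda_\sharp\lambda^*(X,D)=\lambda(X\setminus D)$, and the counit is exactly the dense open immersion $\lambda(X\setminus D)\hookrightarrow (X,D)$. That map is an equivalence after $L_{log-bir}$. Because $\lambda^*\simeq\omega_\sharp$ is a left adjoint (it preserves colimits) and $\lambda_\sharp$ preserves colimits, the counit is a natural transformation between colimit-preserving functors. Being an equivalence on generators therefore implies it is an equivalence everywhere.

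The main obstacle is verifying that the pulled-back log structure on $X\setminus D$ is genuinely trivial, and that the paper's definition of log dense open immersion (``log structure on $\underline{U}$ is the pullback of that on $\underline{X}$'') applies to this inclusion. One also has to confirm the identification $\lambda^*\simeq\omega_\sharp$ with $\omega(X,D)=X-D$ on representables. If the definition instead demands the same underlying total scheme data, I would fall back on combining the admissible blow-up invariance with a separate check that $\rho^*L_{bir}\rho_\sharp(X,D)\simeq (X,D)$.
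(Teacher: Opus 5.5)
Your proposal is correct and is essentially the paper's argument: full faithfulness of $\lambda_\sharp\simeq\rho^*$ handles the unit, and the counit $\lambda_\sharp\omega_\sharp\to 1$ is a transformation between colimit-preserving functors, so it only needs checking on the generators $(X,\partial X)$, where it is the inclusion $(X\setminus\partial X,\emptyset)\hookrightarrow(X,\partial X)$, inverted by definition. The worry in your last paragraph does not arise: the boundary log structure restricts to the trivial one on $X\setminus\partial X$, so this inclusion is a dense open immersion of log schemes exactly in the paper's sense, and the detour through essential surjectivity is redundant.
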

\begin{proof} 
As $\lambda_\sharp$ is fully faithful, we need to show that $  \lambda_\sharp \omega_\sharp \to 1$ is an equivalence in $\mathrm{log}\mbox{-}\mathcal{H}^b(S)$. Because both functors have left adjoints even at the birational level, it suffices to check this on generators, namely smooth log schemes over $S$. Since for any smooth $X$ we know $\lambda\omega(X)=(X\setminus \partial X,\emptyset)$, it suffices to observe that the obvious open immersion $(X\setminus \partial X,\emptyset)\to (X,\partial X) $ is dense by definition.
\end{proof}
The following is an immediate corollary:
\begin{cor}\label[cor]{Equivalence of ordinary and logarithmic birational motivic homotopy categories}
    A log motivic space $\esc{X}\in log\mbox{-}\mathcal{H}^{\Box}(S)$ is in the essential image of $\rho ^* $ if and only if it is dense local in $\mathcal{P}(SmlSm_S)$. Thus, $\rho ^*$ induces an equivalence $\mathcal{H}^b(S)\simeq \mathrm{log}\mbox{-}\mathcal{H}^b(S)$.
\end{cor}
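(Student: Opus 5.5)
The plan is to read the corollary off the previous theorem, which identifies $\mathcal{H}^b(S)$ with $\mathrm{log}\mbox{-}\mathcal{H}^b(S)$ via $\rho^*\simeq\lambda_\sharp$. The second sentence is exactly that theorem, so the real work is the first sentence: a description of the essential image of $\rho^*:\mathcal{H}^b(S)\hookrightarrow \mathrm{log}\mbox{-}\mathcal{H}^{\Box}(S)$ from \Cref{roh preserves bir}.

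For the forward direction, I first use the corollary above, which says $\rho^*$ factors through $\mathrm{log}\mbox{-}\mathcal{H}^b(S)$. Hence every object in the image of $\rho^*$ is local for the log dense open immersions. By definition, being dense local in $\mathcal{P}(SmlSm_S)$ means being local for these maps, so this direction is immediate.

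For the converse, let $\esc{X}$ be a log motivic space that is dense local. Then $\esc{X}$ lies in $\mathrm{log}\mbox{-}\mathcal{H}^{\Box}(S)\cap L_{dense}\mathcal{P}(SmlSm_S)=\mathrm{log}\mbox{-}\mathcal{H}^b(S)$. The theorem gives that the counit $\lambda_\sharp\omega_\sharp\esc{X}\to\esc{X}$ is an equivalence in $\mathrm{log}\mbox{-}\mathcal{H}^b(S)$. Since both source and target lie in that full subcategory, it is an equivalence in $\mathrm{log}\mbox{-}\mathcal{H}^{\Box}(S)$ as well. Therefore $\esc{X}\simeq\lambda_\sharp(\omega_\sharp\esc{X})\simeq\rho^*(\omega_\sharp\esc{X})$.

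It remains to confirm that $\omega_\sharp\esc{X}=\lambda^*\esc{X}$ really lies in $\mathcal{H}^b(S)$, so that the converse produces an object in the image of $\rho^*$ restricted to $\mathcal{H}^b(S)$. This follows because $\lambda^*$ is right adjoint to $\lambda_\sharp$, and $\lambda_\sharp$ preserves birational local equivalences by the lemma above. A right adjoint whose left adjoint preserves the generating equivalences preserves local objects.

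The only genuinely delicate step, which I expect to be the main obstacle, is the identification $\rho^*\simeq\lambda_\sharp$. It must hold on $\mathcal{H}^b(S)$ and not merely on presheaves, since $\rho_\sharp\dashv\rho^*$ and $\lambda_\sharp\dashv\lambda^*$ arise from the adjunction $\rho\dashv\lambda$ at the level of sites. I would take this identification as already established in \Cref{roh preserves bir} and the subsequent lemma, where $\rho^*\simeq\lambda_\sharp$ is recorded.
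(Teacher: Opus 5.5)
Your proposal is correct and matches the paper, which states this corollary as immediate from two facts: $\rho^*$ factors through $\mathrm{log}\mbox{-}\mathcal{H}^b(S)$, and $\lambda_\sharp\dashv\omega_\sharp$ is an equivalence between $\mathcal{H}^b(S)$ and $\mathrm{log}\mbox{-}\mathcal{H}^b(S)$; you spell out both directions, including the check via the lemma on $\lambda_\sharp$ that $\lambda^*\esc{X}$ is birational local. The step you flag as delicate is actually formal: since $\rho\dashv\lambda$ on sites, $\lambda_\sharp F\simeq F\circ\rho=\rho^*F$ for every presheaf $F$, so the identification holds on all of $\mathcal{P}(Sm_S)$ and hence on $\mathcal{H}^b(S)$.
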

At this point, we caution the reader that there is a weaker notion of log-dense maps than the one introduced above. Namely, a dense open immersion $(U,D_U)\to (X,D_X)$ of log schemes is called log-dense if, furthermore, $X\setminus D_X\subset U$ (and therefore $X\setminus D_X\subset U\setminus D_U$, since $D_U = D_X \cap U$). Let us denote the localization of $\mathrm{log}\mbox{-}\mathcal{H}^\Box(S)$ at the log-dense open immersions by $\mathrm{log}\mbox{-}\mathcal{H}^{log\mbox{-}b}(S)$. Since log-dense morphisms are in particular dense, it follows that $\mathrm{log}\mbox{-}\mathcal{H}^{b}(S)\subset \mathrm{log}\mbox{-}\mathcal{H}^{log\mbox{-}b}(S)$. This need not be an equivalence. In fact, this is obtained as a localization at the set $int(S)$ of interior immersions: $$int(S):=\{ X^* = (X \setminus D, \emptyset) \to (X, D)\}_{X\in SmlSm_S}$$
\begin{prop}
The inclusion $\mathrm{log}\mbox{-}\mathcal{H}^{b}(S)\subset \mathrm{log}\mbox{-}\mathcal{H}^{log\mbox{-}b}(S)$ is a localization with respect to the set $int(S)$ of interior immersions. Thus, $$\mathrm{log}\mbox{-}\mathcal{H}^{b}(S)=L_{int} \mathrm{log}\mbox{-}\mathcal{H}^{log\mbox{-}b}(S).$$
\end{prop}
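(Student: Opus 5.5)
We must show that, inside $\mathrm{log}\mbox{-}\mathcal{H}^{log\mbox{-}b}(S)$, an object is local with respect to all dense open immersions of log schemes if and only if it is local with respect to the interior immersions $int(S)$. Equivalently, the strongly saturated class generated by log-dense open immersions together with $int(S)$ coincides with the one generated by all dense open immersions. One inclusion is immediate. An interior immersion $(X\setminus D,\emptyset)\to(X,D)$ is a dense open immersion in our sense: $X\setminus D$ is dense in $X$ because $D$ is a strict normal crossing divisor, and the log structure on $X\setminus D$ is the pullback of that on $(X,D)$, which is trivial there. Log-dense maps are dense by definition. Hence $\mathrm{log}\mbox{-}\mathcal{H}^{b}(S)\subset L_{int}\mathrm{log}\mbox{-}\mathcal{H}^{log\mbox{-}b}(S)$.

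For the converse, take an arbitrary dense open immersion $j:(U,D_U)\to(X,D_X)$ with $D_U=D_X\cap U$. We want to show it is inverted in $L_{int}L_{log\mbox{-}dense}$. Consider the commutative square
\[
\xymatrix{
(U\setminus D_U,\emptyset)\ar[r]^{j^*}\ar[d] & (X\setminus D_X,\emptyset)\ar[d]\\
(U,D_U)\ar[r]_{j} & (X,D_X)
}
\]
The two vertical arrows lie in $int(S)$. The top arrow $j^*$ is a dense open immersion of schemes with trivial log structure, and it satisfies the log-dense condition vacuously, since the target has empty boundary. So it is log-dense: $(X\setminus D_X)\setminus\emptyset=X\setminus D_X$ must lie in $U\setminus D_U$, and this fails. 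The correct reading is that for a target with trivial log structure, the log-dense condition forces the interior $X\setminus D_X$ to be contained in the source. So $j^*$ is log-dense only if it is an isomorphism, and this route fails as stated.

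To repair it, I would factor $j$ differently. Set $V=U\cup D_X^{\circ}$, or more simply $V=U\cup (X\setminus \overline{X\setminus (U\cup D_X)})$; the goal is to reduce to the log-dense case. Write $j$ as the composite
\[
(U,D_U)\to (U\cup D_X\text{-nbhd})\to(X,D_X).
\]
An alternative cleaner plan: compare $j$ to $j^*$ via the square above. The map $j^*:(U\setminus D_U,\emptyset)\to(X\setminus D_X,\emptyset)$ is a dense open immersion between log schemes with empty boundary, that is, $\lambda$ of a dense open immersion of ordinary schemes. By the previous Lemma, $\lambda_\sharp$ preserves birational equivalences. So we need $\lambda(\text{dense})$ to be inverted already in $L_{int}\mathrm{log}\mbox{-}\mathcal{H}^{log\mbox{-}b}$.

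Here I would use the trick of the Theorem proof. Let $Y=X\setminus D_X$ and $W=U\setminus D_U\subset Y$ be dense open, with $Z=Y\setminus W$. The main obstacle is that $Z$ need not be a normal crossing divisor. I would first try to realize $W\to Y$ as an interior immersion up to log-dense maps and blow-ups, using admissible blow-ups (inverted in $\mathrm{log}\mbox{-}\mathcal{H}^\Box$) and resolution locally. Alternatively, I would use the equivalence already proven, namely the Corollary $\rho^*:\mathcal{H}^b(S)\simeq\mathrm{log}\mbox{-}\mathcal{H}^b(S)$. This reduces everything to showing that $L_{int}$-local objects are in the image of $\lambda_\sharp$, i.e. that the counit $\lambda_\sharp\omega_\sharp\to1$ is an equivalence there. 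That counit is checked on generators $(X,D)$, where it is exactly the interior immersion $(X\setminus D,\emptyset)\to(X,D)$, inverted by construction.

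Then $\lambda_\sharp\omega_\sharp$ identifies $L_{int}\mathrm{log}\mbox{-}\mathcal{H}^{log\mbox{-}b}$ with a subcategory of the image of $\mathcal{P}(S)$. There, log-dense maps between trivial-log schemes are precisely dense open immersions together with the Nisnevich and $\Box$ relations, which are birational equivalences by \Cref{roh preserves bir}. So the two localizations agree. The remaining point is checking that log-dense locality of $\lambda_\sharp\mathcal{E}$ forces dense locality of $\mathcal{E}$. I expect this to be the main obstacle, since with empty boundary the log-dense condition is restrictive, and it may require passing through compactifications $(\bar W, \partial)$.
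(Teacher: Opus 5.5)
You handle the easy inclusion $\mathrm{log}\mbox{-}\mathcal{H}^{b}(S)\subset L_{int}\mathrm{log}\mbox{-}\mathcal{H}^{log\mbox{-}b}(S)$ correctly. Your objection to the square is also correct: with the definition as written, a dense open immersion $(U^*,\emptyset)\to(X^*,\emptyset)$ is log-dense only when $U^*=X^*$. The paper's own proof rests on exactly this step. It uses the same square and asserts that the top map is ``trivially log-dense'', so it has the defect you found. Your proposal, however, does not supply a replacement. The factorization through a ``$U\cup D_X$-nbhd'' is never defined. Your last paragraph asserts that log-dense maps between trivial-log schemes are ``precisely dense open immersions'', which contradicts your own earlier (correct) observation. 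And the ``remaining point'' you isolate is left open.

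That point cannot be settled, because with the stated definitions the proposition is false.
\begin{itemize}
\item Every interior immersion $(X\setminus D,\emptyset)\to(X,D)$ is itself log-dense, since $U=X\setminus D$ contains $X\setminus D_X$. So $L_{int}$ does nothing to $\mathrm{log}\mbox{-}\mathcal{H}^{log\mbox{-}b}(S)$.
\item Conversely, a log-dense $(U,D_U)\to(X,D_X)$ satisfies $U\setminus D_U=X\setminus D_X$. Your square then has an isomorphism on top, so every log-dense map is already an $int$-equivalence.
\item Hence $L_{int}\mathrm{log}\mbox{-}\mathcal{H}^{log\mbox{-}b}(S)=\mathrm{log}\mbox{-}\mathcal{H}^{log\mbox{-}b}(S)=L_{int}\mathrm{log}\mbox{-}\mathcal{H}^{\Box}(S)$. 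The paper itself identifies this with $\mathcal{H}^{\mathbb{A}^1}(S)$, whereas $\mathrm{log}\mbox{-}\mathcal{H}^{b}(S)\simeq\mathcal{H}^b(S)$.
\end{itemize}
For a concrete counterexample over a field, take $(X,D)\mapsto\mathcal{O}(X\setminus D)^{\times}$, i.e.\ $\omega^*\mathbb{G}_m$. It lies in $\mathrm{log}\mbox{-}\mathcal{H}^{\Box}$ and is local for every log-dense and every interior immersion. Yet its values on $(\mathbb{A}^1,\emptyset)$ and $(\mathbb{G}_m,\emptyset)$ are $k^{\times}$ and $k[t^{\pm 1}]^{\times}$, so it is not dense-local.

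This also pinpoints the error in your last paragraph. The $int$-local objects are of the form $(X,D)\mapsto\mathcal{E}(X\setminus D)$ with $\mathcal{E}\in\mathcal{H}^{\mathbb{A}^1}(S)$, not $\lambda_\sharp\mathcal{E}$. Log-dense locality imposes no condition at all on $\mathcal{E}$.

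The square argument, together with your first paragraph, does prove the statement if log-dense is defined so that the extra condition is vacuous for empty boundary. For example, require $D_X\subset U$, i.e.\ the removed locus lies in the interior. Then $U^*\to X^*$ is log-dense, and two-out-of-three finishes.
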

\begin{proof}
    For this, it suffices to observe that a dense open immersion of log schemes is a span of log-dense and interior immersions. Indeed, if $\underline{U}\to \underline{X}$ is a dense open immersion of log schemes, then in the following commutative square of log schemes:
\[
\xymatrix{
U^*\ar@{^(->}[d]&X^*\ar@{^(->}[d]\ar[l]\\
\underline{U}\ar[r]&\underline{X}
}
\]
the vertical maps are interior equivalences, while the top horizontal map $U^*\leftarrow X^*$ is trivially log-dense. By 2-out-of-3 property, $\underline{U}\to \underline{X}$ is an equivalence in $L_{int} \mathrm{log}\mbox{-}\mathcal{H}^{log\mbox{-}b}(S)$.
\end{proof}
Actually, it is not difficult to see that $w_\sharp$ induces an equivalence $L_{int}\mathrm{log}\mbox{-}\mathcal{H}^\Box(S)\simeq\mathcal{H}^{\mathbb{A}^1}(S) $, and we obtain the following commutative square in $Cat_\infty$:
\[
\xymatrix{
\mathcal{H}^b(S)\ar[r]^-{\sim}\ar@{^(->}[d]&L_{int}\mathrm{log}\mbox{-}\mathcal{H}^{log\mbox{-}b}(S)\ar@{^(->}[r]\ar@{^(->}[d]&\mathrm{log}\mbox{-}\mathcal{H}^{log\mbox{-}b}(S)\ar@{^(->}[d]\\
\mathcal{H}^{\mathbb{A}^1}(S)\ar[r]^-{\sim}&L_{int}\mathrm{log}\mbox{-}\mathcal{H}^{\Box}(S)\ar@{^(->}[r]&\mathrm{log}\mbox{-}\mathcal{H}^{\Box}(S)
}
\]
The candidate $\mathrm{log}\mbox{-}\mathcal{H}^{log\mbox{-}b}(S)$ should be regarded as the logarithmically correct candidate for the \textit{logarithmic birational motivic homotopy category} and is canonically larger than $\mathcal{H}^b(S)$. We shall study this candidate in future work.
\section{Properties of \texorpdfstring{$L_{bir}$}{lbir} and the Birational Motivic Homotopy Category}
Throughout this section, let $S$ be a Qcqs scheme. The purpose of this section is to study the birational motivic homotopy category $\mathcal{H}^{b\mathbb{A}^1}(S)$ of [\cite{bachmann2019voevodsky},\cite{pelaez2014unstable}] and the corresponding localization functor $L_{bir}:\mathcal{P}(S)\to \mathcal{P}(S)$. Because this involves localization with respect to two different sets of morphisms, it seems difficult to study. However, as we saw in the previous section, the Birational motivic homotopy category admits a simpler model given by $\mathcal{H}^b(S):=L_{dense}\mathcal{P}_\Sigma(S)$. Consequently, both $\mathcal{H}^{b\mathbb{A}^1}(S)$ and $L_{bir}$ exhibit surprisingly strong structural properties. For ease of exposition, the subsequent results are formulated in terms of the $\infty$-category $\mathcal{H}^b(S)$; however, it should be understood that these statements fundamentally apply to the more refined birational motivic homotopy category $\mathcal{H}^{b\mathbb{A}^1}(S)$ and are therefore of motivic significance.

While the Nisnevich topology remains our primary focus, we formulate the following results in the general framework of an arbitrary Grothendieck topology $\sigma$ on $Sm_S$. Similarly, given a scheme $A\in Sm_S$, we use a general version of $\mathcal{H}^A(S)$, namely $\mathcal{H}^A_\sigma(S):=L_A\mathcal{P}_\sigma(S)$. This recovers $\mathcal{H}^A(S)$ when $\sigma=nis$ and $\mathcal{P}_\sigma(S)$ when $A=S$. It follows immediately from \Cref{bir local is nis local} that:
\begin{cor}\label[cor]{for sigma bir is rat}
If $A$ is rational, and $\sigma$ is coarser than (but including) the Nisnevich topology, then $\mathcal{H}^b(S)\subset \mathcal{H}^A_\sigma(S)$. Equivalently, $L^A_\sigma$ equivalences are birational equivalences.
\end{cor}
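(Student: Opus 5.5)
We need to show: for a rational $A$ and a topology $\sigma$ coarser than (but including) the Nisnevich topology, $\mathcal{H}^b(S)\subset \mathcal{H}^A_\sigma(S)=L_A\mathcal{P}_\sigma(S)$. The plan is to check separately that a birational local space $\esc{X}$ is $\sigma$-local and that it is $A$-local. The dual statement about equivalences then follows formally.

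\textbf{$\sigma$-locality.} By \Cref{bir local is nis local}, $\esc{X}$ is a Nisnevich sheaf. Since $\sigma$ is coarser than the Nisnevich topology, every $\sigma$-covering sieve is a Nisnevich covering sieve. Hence descent for Nisnevich covers implies descent for $\sigma$-covers, so $\mathcal{P}_{nis}(S)\subset\mathcal{P}_\sigma(S)$ and $\esc{X}\in\mathcal{P}_\sigma(S)$. If one wants to avoid working with hypercovers, note that both toposes are obtained by localizing at the sets of Čech nerves of covering sieves, and the Čech nerves for $\sigma$ form a subset of those for the Nisnevich topology. The phrase ``but including'' ensures $\sigma$ is a genuine topology in this range, so no further case is needed.

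\textbf{$A$-locality.} Since $\esc{X}$ is in particular dense local, \Cref{bir local is rational local} gives $\esc{X}\in L_A\mathcal{P}(S)$. Combining the two steps, $\esc{X}\in \mathcal{P}_\sigma(S)\cap L_A\mathcal{P}(S)$. This intersection is exactly $L_A\mathcal{P}_\sigma(S)$, because $A$-locality of a $\sigma$-sheaf is tested by the same projection maps $A\times_S X\to X$.

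\textbf{The ``equivalently'' clause.} This is the duality already recorded in the proof of \Cref{bir and rat}: for saturated classes in a presentable $\infty$-category, $\mathrm{S}_1\subset \mathrm{S}_2$ if and only if $L_{\mathrm{S}_2}\mathcal{C}\subset L_{\mathrm{S}_1}\mathcal{C}$. Applying this in $\mathcal{P}(S)$ shows that the saturated class of $L^A_\sigma$-equivalences is contained in $bir(S)$.

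\textbf{Main obstacle.} The only point needing care is the direction of ``coarser'': the argument requires that $\sigma$-sheaves contain Nisnevich sheaves, i.e.\ that $\sigma$-covers are Nisnevich covers. With that reading, the proof is immediate from the earlier results.
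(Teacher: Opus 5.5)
Your proposal is correct and is essentially the paper's own argument. The paper deduces the corollary directly from \Cref{bir local is nis local} (birational local spaces are Nisnevich local, hence $\sigma$-local) together with the $A$-locality in \Cref{bir local is rational local} and \Cref{bir and rat}, and the ``equivalently'' clause is the same saturated-class duality you use; the only quibble is that ``but including'' just means $\sigma$ may be the Nisnevich topology itself, which your argument already covers.
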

\subsection{Colimits of birational motives}

\begin{thm}\label{bir is closed under sifted colimits}
When $\sigma$ is a topology coarser than the Nisnevich topology and $A/S$ is a smooth rational scheme over $S$, the inclusion $\mathcal{H}^b(S)\subset\mathcal{H}^{A}_\sigma(S)$ is closed under sifted colimits. In particular, the inclusions $\mathcal{H}^b(S)\subset\mathcal{H}^{\mathbb{A}^1}(S)$ and $\mathcal{H}^b(S)\subset\mathcal{P}_\sigma(S)$ are closed under sifted colimits. 
 \end{thm}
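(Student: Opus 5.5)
The plan is to reduce everything to a statement about sifted colimits computed sectionwise in $\mathcal{P}(S)$. The key observation is that $\mathcal{H}^b(S)=L_{dense}\mathcal{P}_\Sigma(S)$ is itself closed under sifted colimits inside the ambient presheaf category $\mathcal{P}(S)$.

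First, $\mathcal{P}_\Sigma(S)\subset\mathcal{P}(S)$ is closed under sifted colimits: sifted colimits in $\mathcal{P}(S)$ are computed objectwise, and in $Spc$ sifted colimits commute with finite products (this is the defining property of siftedness, [\cite{lurie2009higher}, Lemma 5.5.8.11/Proposition 5.5.8.10]). Hence if each $\esc{X}_\alpha$ sends finite coproducts to products, so does $\colim_\alpha \esc{X}_\alpha$. Second, dense locality is a condition of the form ``$\esc{X}(X)\to\esc{X}(U)$ is an equivalence for each $U\hookrightarrow X$ in $B_0(S)$.'' Such conditions are stable under arbitrary objectwise colimits, since a colimit of equivalences is an equivalence. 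So $L_{dense}\mathcal{P}(S)\subset\mathcal{P}(S)$ is closed under all colimits computed in $\mathcal{P}(S)$, and in particular under sifted ones. Together these show that $\mathcal{H}^b(S)\subset\mathcal{P}(S)$ is closed under sifted colimits.

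Next I transfer this to the intermediate category. By \Cref{for sigma bir is rat} we have $\mathcal{H}^b(S)\subset\mathcal{H}^A_\sigma(S)\subset\mathcal{P}(S)$. Take a sifted diagram $\esc{X}_\bullet$ in $\mathcal{H}^b(S)$, and let $C$ be its colimit in $\mathcal{P}(S)$. By the first step, $C$ lies in $\mathcal{H}^b(S)$, hence also in $\mathcal{H}^A_\sigma(S)$. The colimit of the diagram in $\mathcal{H}^A_\sigma(S)$ is $L^A_\sigma C$, because the localization functor computes colimits in a reflective subcategory. Since $C$ is already local, $L^A_\sigma C\simeq C$, and so the colimit in $\mathcal{H}^A_\sigma(S)$ agrees with the one in $\mathcal{H}^b(S)$. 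The special cases follow by taking $A=\mathbb{A}^1_S$ with $\sigma=nis$, and $A=S$ respectively.

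The only point needing care is the commutation of sifted colimits with finite products in $Spc$, and the fact that the empty coproduct is handled correctly. The empty product is the point, and a sifted colimit of copies of $pt$ is $pt$ because sifted simplicial sets are weakly contractible. Everything else is formal. Note that the argument does not use any hypothesis on $\sigma$ beyond the inclusion supplied by \Cref{for sigma bir is rat}, which rests on \Cref{bir local is nis local}.
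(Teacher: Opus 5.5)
Your proposal is correct and follows essentially the same route as the paper. Both arguments first show that $\mathcal{H}^b(S)=\mathcal{P}_\Sigma(S)\cap L_{dense}\mathcal{P}(S)$ is closed under sifted colimits in $\mathcal{P}(S)$, and then observe that the $\mathcal{P}(S)$-colimit is already local, so $L^A_\sigma$ does not change it. The only cosmetic difference is that you treat $\mathcal{H}^A_\sigma(S)$ in one step via \Cref{for sigma bir is rat}, whereas the paper first handles $\mathcal{P}_\sigma(S)$ and then repeats the same trick for $A$.
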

\begin{proof}
 Let us first consider the case of the discrete topology with $A=S$, in which case we are dealing with the inclusion $\mathcal{H}^b(S)\subset\mathcal{P}(S)$. Since the dense localization $L_{dense}$ is defined by localizing morphisms of the site $Sm_S$, we have that $L_{dense}\mathcal{P}(S)\subset \mathcal{P}(S)$ is closed under all colimits. Indeed, dense localness is checked on sections, and colimits are computed sectionwise. 

Since sifted colimits commute with finite products and presheaves in $\mathcal{P}_\Sigma(S)$ are defined in terms of taking coproducts to products, it is well known that $\mathcal{P}_\Sigma(S)\subset \mathcal{P}(S)$ is closed under sifted colimits. Since $\mathcal{H}^b(S)=\mathcal{P}_\Sigma (S)\bigcap L_{dense}\mathcal{P}(S)$, we are done in this case. 

The case of a general topology $\sigma$, as in the statement, then follows formally. Indeed, consider a diagram $D: K\to \mathcal{H}^b(S)$ indexed by a sifted simplicial set $K$. Let $i: \mathcal{H}^b(S)\subset \mathcal{P}(S)$ be the inclusion and consider the colimit $P:=\colim_{K}(i \circ D)$ in $\mathcal{P}(S)$. By the argument sketched above, $P$ is birational local and hence $\colim_{K}D\simeq P$. To see that this is also the colimit in the $\sigma$ local category, we compute the colimit in the $\sigma$ local topos to be $$\colim_{K}i_{\sigma}\circ D=L_{\sigma}(\colim_{K}iD)=L_{\sigma}P.$$ But since $P$ is birational local, it is also Nisnevich local (see \Cref{bir local is nis local}) and hence is $\sigma$ local whenever $\sigma$ is coarser than the Nisnevich topology. Therefore, the $\sigma$ local colimit is identical (in $\mathcal{P}(S)$) to $P$ itself. 

The same trick now applies to $\mathcal{H}^{A}_\sigma(S)$, but with \Cref{for sigma bir is rat} instead of \Cref{bir local is nis local}.
\end{proof}
\begin{rem}\label[rem]{sifted colimits in hmot}
    A similar statement is certainly not true for the ordinary $\mathbb{A}^1$-motivic localization $\mathcal{H}^{\mathbb{A}^1}(S)\subset \mathcal{P}(S)$. Of course, $L_{\mathbb{A}^1}\mathcal{P}(S)\subset \mathcal{P}(S)$ is closed under all colimits. However, $\mathcal{P}_{nis}(S)\subset\mathcal{P}(S)$ is not closed under sifted colimits. This is primarily because the Nisnevich sheaf condition is given by taking Nisnevich cd squares to pullback squares, and sifted colimits do not commute with pullbacks. To make up for it, if we Nisnevich sheafify this, $\mathbb{A}^1$-locality is destroyed. Take, for example, $\mathbb{Z}(\mathbb{G}_m)$, the free abelian sheaf on $Sm_k$ (where $k$ is a perfect field) generated by $\mathbb{G}_m$ reduced at the unit section. $\mathrm{B}(\mathbb{Z}(\mathbb{G}_m))$ is not Nisnevich local but is $\mathbb{A}^1$-local. The Nisnevich sheafification of this last term is not $\mathbb{A}^1$-local (i.e., $\mathbb{Z}(\mathbb{G}_m)$ is not strongly $\mathbb{A}^1$-invariant, see [\cite{choudhury2014connectivity}, Lemma 5.6]). What commutes with pullbacks, though, is filtered colimits. Thus, $\mathcal{H}^{\mathbb
    {A}^1}(S)\subset \mathcal{P}(S)$ is closed only under filtered colimits, not all sifted colimits.
\end{rem} 

\begin{lem}\label[lem]{form for lbir}
Over a Qcqs base scheme $S$, let $\sigma$ be a topology coarser than the Nisnevich topology yet finer than the disjoint union topology, and let $A/S$ be a rational scheme. Then a model for the birational motivic localization functor $L_{bir}:\mathcal{P}(S)\to \mathcal{P}(S)$ is given by the formula: $L_{bir} \simeq \dcolim_n{(L_{\sigma}^AL_{dense})^n}$. In particular, $$L_{bir}\simeq \dcolim_n{(L_{\Sigma}L_{dense})^n}\simeq \dcolim_n{(L_{nis}L_{dense})^n}\simeq \dcolim_n{(L_{mot}L_{dense})^n}$$
\end{lem}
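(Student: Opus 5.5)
We plan to show that the colimit $T:=\dcolim_n (L^A_\sigma L_{dense})^n$ (the transfinite-free $\omega$-indexed colimit along the unit maps $1\to L^A_\sigma L_{dense}$) has two properties: its values are birational local, and the canonical map $1\to T$ is a birational equivalence. Together these identify $T$ with $L_{bir}$, since $\mathcal{H}^b(S)=\mathcal{H}^{b\mathbb{A}^1}(S)$ by \Cref{Motivic equivalences are Birational equivalences}.

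The second property is the easy half. Both $L_{dense}$ and $L^A_\sigma$ are localizations at saturated classes contained in $bir(S)$. For $L^A_\sigma$ this is \Cref{for sigma bir is rat}; note that $\sigma$ is finer than $\sqcup$, so $\mathcal{H}^b(S)\subset \mathcal{H}^A_\sigma(S)$ still holds. Hence each unit $\esc{X}\to L^A_\sigma L_{dense}\esc{X}$ lies in $bir(S)$. Composites of such maps are birational equivalences, and so is a sequential colimit of them, because saturated classes are closed under colimits in the arrow category. Therefore $\esc{X}\to T\esc{X}$ is a birational equivalence.

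The main point is that $T\esc{X}$ is birational local, i.e.\ lies in $\mathcal{P}_\Sigma(S)\cap L_{dense}\mathcal{P}(S)$. Write $\esc{X}_n:=(L^A_\sigma L_{dense})^n\esc{X}$. The key observation is that the same colimit can be computed along the interleaved sequence
\[\esc{X}_0\to L_{dense}\esc{X}_0\to \esc{X}_1\to L_{dense}\esc{X}_1\to\cdots.\]
Its two cofinal subsequences consist of objects in $L^A_\sigma\mathcal{P}(S)\subset \mathcal{P}_\Sigma(S)$ (this uses $\sigma\supseteq\sqcup$) and of dense local objects, respectively. We then use that sequential (filtered) colimits in $\mathcal{P}(S)$ are computed sectionwise, and that both subcategories are closed under filtered colimits in $\mathcal{P}(S)$:
\begin{enumerate}
\item $L_{dense}\mathcal{P}(S)$ is closed under all colimits, as in the proof of \Cref{bir is closed under sifted colimits}.
\item $\mathcal{P}_\Sigma(S)$ is closed under sifted colimits, hence under filtered ones.
\end{enumerate}
Consequently $T\esc{X}$ is both dense local and in $\mathcal{P}_\Sigma(S)$, hence in $\mathcal{H}^b(S)$. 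Combined with the previous paragraph, $T\simeq L_{bir}$.

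The step I expect to be the subtle one is that we only need $\mathcal{P}_\Sigma$ locality of the colimit, not $\sigma$- or $A$-locality. Indeed, $\mathcal{P}_\sigma(S)\subset\mathcal{P}(S)$ need not be closed even under filtered colimits for a general $\sigma$; this is why we route through $\mathcal{P}_\Sigma$. Once the colimit is in $\mathcal{H}^b(S)$, it is automatically Nisnevich, $\sigma$- and $A$-local by \Cref{bir local is nis local} and \Cref{for sigma bir is rat}. One should also check that the colimit is taken in $\mathcal{P}(S)$ rather than in $\mathcal{H}^A_\sigma(S)$; this matters only formally, since the result is already local.

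The special cases follow by choosing the parameters. Taking $\sigma=\sqcup$ and $A=S$ gives $L_\Sigma$. Taking $\sigma=nis$ and $A=S$ gives $L_{nis}$. Taking $\sigma=nis$ and $A=\mathbb{A}^1_S$ gives $L_{mot}$; here $\mathbb{A}^1_S$ is rational and $L^{\mathbb{A}^1}_{nis}=L_{mot}$.
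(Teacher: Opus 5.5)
Your proof is correct and follows the paper's argument: you use the interleaved sequence, whose two cofinal subsequences lie in $\mathcal{P}_\Sigma(S)$ and in $L_{dense}\mathcal{P}(S)$ (both closed under filtered colimits in $\mathcal{P}(S)$), together with the fact that every unit map is a birational equivalence. One small slip in justification: the inclusion $\mathcal{H}^b(S)\subset\mathcal{H}^A_\sigma(S)$ from \Cref{for sigma bir is rat} relies on $\sigma$ being coarser than the Nisnevich topology, not finer than $\sqcup$; the latter hypothesis is only needed to place $L^A_\sigma\mathcal{P}(S)$ inside $\mathcal{P}_\Sigma(S)$.
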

\begin{proof}
This is standard. Let $\esc{X}$ be an $S$-space. Consider the colimit of the following diagram:\begin{align}\label{1}
    L_{dense}\esc{X}\to L_{\sigma}^AL_{dense}\esc{X}\to L_{dense}L_{\sigma}^AL_{dense}\esc{X}\to\cdots
\end{align}
The colimit in the statement is the colimit of the even-numbered subdiagram of the above diagram. Since the even-numbered subdiagram is cofinal in the original one, the first colimit in the statement is the same as the colimit of the above diagram \ref{1}. Now, the inclusion $ \mathcal{P}_{\Sigma}(S)\subset  \mathcal{P}(S)$ is closed under filtered colimits, and a $\sigma$ sheaf is, in particular, a $\Sigma$-presheaf. Therefore, $\dcolim_n{(L_{\sigma}^AL_{dense})^n}$ is a $\Sigma$-presheaf. But this colimit is also identical to the colimit of the odd-numbered subdiagram given by 
\begin{align}
  L_{dense}\esc{X}\to  L_{dense}L_{\sigma}^AL_{dense}\esc{X}\to\cdots
\end{align}
Since each term in this diagram is dense local by construction, so is the colimit. Thus, we obtain that $\dcolim_n{(L_{\sigma}^AL_{dense})^n}\esc{X}$ is dense local. Combined with the previous paragraph, this implies that $\dcolim_n{(L_{\sigma}^AL_{dense})^n}\esc{X}$ is birational local. 

Because $L^A_\sigma$ equivalences are, in particular, birational equivalences [\Cref{Motivic equivalences are Birational equivalences}], we see that each odd-numbered morphism in diagram \ref{1}, is a birational equivalence. On the other hand, the even-numbered morphisms are, by construction, dense equivalences and hence birational. It follows from the stability of the saturation class under colimits that the resulting morphism $\esc{X}\to \dcolim_n{(L_{\sigma}^AL_{dense})^n}\esc{X}$ is a birational equivalence. 
\end{proof}
\begin{cor}\label[cor]{l bir preserve sifted colimit}
    $L_{bir}:\mathcal{P}(S)\to \mathcal{P}(S)$ preserves sifted colimits. More generally, for every topology $\sigma$ coarser than the nisnevich topology, $L_{bir}:\mathcal{P}_\sigma(S)\to \mathcal{P}_\sigma(S)$ preserves sifted colimits.
\end{cor}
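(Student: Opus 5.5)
The plan is to factor $L_{bir}$ as a composite of a left adjoint and a sifted-colimit-preserving inclusion. Write $L_{bir}=i\circ L$, where $L:\mathcal{P}(S)\to\mathcal{H}^b(S)$ is the localization (a left adjoint, hence preserving all colimits) and $i:\mathcal{H}^b(S)\hookrightarrow\mathcal{P}(S)$ is the inclusion. By \Cref{bir is closed under sifted colimits}, applied with the discrete topology and $A=S$, $i$ preserves sifted colimits. For a sifted diagram $D:K\to\mathcal{P}(S)$ we then have
$$L_{bir}\colim_K D=i\big(\colim^{\mathcal{H}^b}_K LD\big)\simeq\colim_K iLD=\colim_K L_{bir}D,$$
where the colimit on the right is computed in $\mathcal{P}(S)$. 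This settles the first statement.

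For a topology $\sigma$ coarser than the Nisnevich topology, I would run the same argument inside $\mathcal{P}_\sigma(S)$. By \Cref{bir local is nis local}, $\mathcal{H}^b(S)\subset\mathcal{P}_{nis}(S)\subset\mathcal{P}_\sigma(S)$, so the restriction $L_{bir}:\mathcal{P}_\sigma(S)\to\mathcal{P}_\sigma(S)$ again factors as a left adjoint $\mathcal{P}_\sigma(S)\to\mathcal{H}^b(S)$ followed by the inclusion $i_\sigma:\mathcal{H}^b(S)\subset\mathcal{P}_\sigma(S)$. \Cref{bir is closed under sifted colimits} (with $A=S$) says precisely that $i_\sigma$ preserves sifted colimits. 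Since sifted colimits in $\mathcal{P}_\sigma(S)$ are computed as $L_\sigma$ of the presheaf colimit, this is exactly the form needed.

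The only point needing care is to confirm that the restriction of $L_{bir}$ to $\mathcal{P}_\sigma(S)$ really is the left adjoint to $i_\sigma$. This holds because $\mathcal{H}^b(S)$ is a full subcategory of $\mathcal{P}_\sigma(S)$, and the unit $\esc{X}\to L_{bir}\esc{X}$ is a birational equivalence with birational-local target. I expect no genuine obstacle here; the substance is contained entirely in \Cref{bir is closed under sifted colimits}. As a cross-check, one could instead use the formula of \Cref{form for lbir}, $L_{bir}\simeq\dcolim_n(L_\Sigma L_{dense})^n$. There $L_{dense}$ preserves colimits, and $L_\Sigma$ preserves sifted colimits because $\mathcal{P}_\Sigma\subset\mathcal{P}$ is closed under them. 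Sequential colimits commute with sifted colimits, so this route also gives the result.
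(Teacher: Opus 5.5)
Your argument is correct, but your main route differs from the paper's. The paper proves the corollary from the iterative formula $L_{bir}\simeq\dcolim_n(L_\Sigma L_{dense})^n$ of \Cref{form for lbir}. There $L_{dense}$ preserves all colimits, $L_\Sigma$ preserves sifted colimits, and colimits commute with colimits. The $\sigma$-case is then deduced from $L_{bir}L_\sigma\simeq L_{bir}\simeq L_\sigma L_{bir}$, rather than by re-running the argument inside $\mathcal{P}_\sigma(S)$. This is essentially what you list as your cross-check.

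Your primary argument instead uses the general principle that the localization endofunctor $i\circ L$ of a reflective subcategory preserves every class of colimits under which the subcategory is closed. You then feed in \Cref{bir is closed under sifted colimits} directly, both for $\mathcal{P}(S)$ and for $\mathcal{P}_\sigma(S)$. In the latter case the inclusion $\mathcal{H}^b(S)\subset\mathcal{P}_\sigma(S)$ and the restricted adjunction come from \Cref{bir local is nis local}.

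What each route buys:
\begin{itemize}
\item Your route is shorter and needs no formula for $L_{bir}$. The $\sigma$-case becomes a literal instance of the closure theorem.
\item The paper's route uses only the elementary closure properties of $\mathcal{P}_\Sigma(S)$ and $L_{dense}\mathcal{P}(S)$, stage by stage. It also shows that each finite approximation $(L_\Sigma L_{dense})^n$ already preserves sifted colimits.
\end{itemize}
The underlying input is the same in both cases, since \Cref{bir is closed under sifted colimits} is itself proved from exactly those two closure properties.
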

\begin{proof}
Recall that the functor $L_{\Sigma}$ preserves sifted colimits [\cite{lurie2009higher}, Proposition 5.5.8.10(5)]. On the other hand, $L_{dense}$ preserves all colimits. Indeed, $$L_{dense}: \mathcal{P}(S)\to L_{dense}\mathcal{P}(S)$$ is a left adjoint and hence preserves all colimits, while $L_{dense}\mathcal{P}(S)\subset \mathcal{P}(S)$ is closed under all colimits. The first claim then follows from the formula $$L_{bir}\simeq \dcolim_n{(L_{\Sigma}L_{dense})^n}$$ and the fact that colimits commute with colimits. For a general topology $\sigma$, the general result then follows from the equivalences $$L_{bir}L_\sigma\simeq L_{bir}\simeq L_{\sigma}L_{bir}.$$
\end{proof}

In the next result, for a smooth scheme $X/S$, we denote by $h_S^b(X)$ the birational localization of the presheaf represented by $X/S\in Sm_S$, i.e., $h^b_S(X):=L_{bir}X=L_{bir}h_SX$.
\begin{lem}\label[lem]{generators of Hb}
Let $S$ be Qcqs. For every $X\in Sm_S$, the birational space $h^b_S(X)$ is a compact projective object of $\mathcal{H}^b(S)$.
\end{lem}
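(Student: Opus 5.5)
The plan is to compute mapping spaces out of $h^b_S(X)$ through the adjunction and then use that the inclusion of birational objects is closed under sifted colimits. Since $L_{bir}\dashv i$ with $i:\mathcal{H}^b(S)\subset \mathcal{P}(S)$ fully faithful, for any $\esc{Y}\in\mathcal{H}^b(S)$ we have
$$\mathrm{Map}_{\mathcal{H}^b(S)}(h^b_S(X),\esc{Y})\simeq \mathrm{Map}_{\mathcal{P}(S)}(h_SX,i\esc{Y})\simeq (i\esc{Y})(X)$$
by Yoneda. Hence the functor corepresented by $h^b_S(X)$ on $\mathcal{H}^b(S)$ is $\mathrm{ev}_X\circ i$, and compact projectivity amounts to showing that this composite preserves sifted colimits. (Compact projective means that $\mathrm{Map}(h^b_S(X),-)$ commutes with sifted colimits, equivalently with filtered colimits and geometric realizations.)

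The first step is to observe that $\mathrm{ev}_X:\mathcal{P}(S)\to Spc$ preserves all colimits, since colimits in presheaf categories are computed sectionwise. The second step invokes \Cref{bir is closed under sifted colimits} in the case of the trivial topology with $A=S$. That result says $i:\mathcal{H}^b(S)\subset\mathcal{P}(S)$ is closed under sifted colimits, so for a sifted diagram $D:K\to\mathcal{H}^b(S)$ the colimit in $\mathcal{H}^b(S)$ is computed by $\colim_K iD$ in $\mathcal{P}(S)$. Combining the two steps gives
$$\mathrm{Map}(h^b_S(X),\colim_K D)\simeq (\colim_K iD)(X)\simeq \colim_K (iD)(X)\simeq \colim_K \mathrm{Map}(h^b_S(X),D).$$
This is exactly the required commutation with sifted colimits, and compactness follows because filtered colimits are sifted.

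The main point that needs care is the identification of colimits in $\mathcal{H}^b(S)$ with colimits in $\mathcal{P}(S)$ for sifted diagrams. The argument must go through $\mathcal{P}_\Sigma$ and $L_{dense}$ separately: the dense-local subcategory is closed under all colimits because locality is checked on sections, and $\mathcal{P}_\Sigma(S)$ is closed under sifted colimits because sifted colimits commute with finite products in $Spc$. Both facts are recorded in \Cref{bir is closed under sifted colimits}. Once that is in hand, the rest is formal.

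As a remark, one could also add that the $h^b_S(X)$ generate $\mathcal{H}^b(S)$ under sifted colimits. Every object of $\mathcal{P}_\Sigma(S)$ is a sifted colimit of representables, and $L_{bir}$ preserves sifted colimits by \Cref{l bir preserve sifted colimit}. This would exhibit $\mathcal{H}^b(S)$ as $\mathcal{P}_\Sigma$ of the full subcategory spanned by the $h^b_S(X)$, but that is not needed for the statement itself.
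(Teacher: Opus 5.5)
Your proof is correct and is essentially the paper's argument. The paper notes that representables are compact projective in $\mathcal{P}(Sm_S)$ and that $\mathcal{H}^b(S)\subset\mathcal{P}(S)$ is closed under filtered colimits and geometric realizations by \Cref{bir is closed under sifted colimits}; you make explicit the adjunction step $\mathrm{Map}_{\mathcal{H}^b(S)}(h^b_S(X),-)\simeq \mathrm{ev}_X\circ i$, which the paper leaves implicit.
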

\begin{proof}
Every smooth $X/S$ is a compact projective object in $\mathcal{P}(Sm_S)$. Since projective objects are defined by preservation (by the corresponding representable functor) of geometric realizations, and compact objects by preservation of filtered colimits, we are done because $\mathcal{H}^b(S)\subset \mathcal{P}(S)$ is closed under these colimits by \Cref{bir is closed under sifted colimits}.
\end{proof}
\begin{lem}\label[lem]{generation of H^b}
Let $S$ be a Qcqs scheme. Then, $\mathcal{H}^b(S)$ is a presentable $\infty$-category, generated under sifted colimits by the set $$\{h_S^b(X)|\text{ the structure map } p:X\to S \text{ has property } P\},$$ where $P$ is one of the following properties: 

\begin{enumerate}[label=(\alph*)]
    \item When $S$ is Qcqs: 1. smooth,
    \item When $S$ is separated: 2. smooth affine, 3. smooth with affine domain;
    \item When $S$ is the spectrum of a field of characteristic zero: 4. smooth projective.
\end{enumerate}

Moreover, these colimits can actually be computed in $\mathcal{P}(S)$ itself.
\end{lem}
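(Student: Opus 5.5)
The plan is to handle the smooth case first and then reduce the other cases to it.

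\textbf{Step 1: the smooth case.} Presentability is already recorded in \Cref{bir local}, since $\mathcal{H}^b(S)\subset\mathcal{P}(S)$ is an accessible localization. For generation, start from the fact that every object $\esc{X}\in\mathcal{P}_\Sigma(Sm_S)$ is a sifted colimit of representables, computed in $\mathcal{P}(S)$ [\cite{lurie2009higher}, Lemma 5.5.8.14]. If $\esc{X}$ is birational local, then $\esc{X}\simeq L_{bir}\esc{X}$. Since $L_{bir}$ preserves sifted colimits (\Cref{l bir preserve sifted colimit}), we get
\[
\esc{X}\simeq L_{bir}\colim_i h_S X_i\simeq \colim_i h^b_S(X_i),
\]
where the colimit is taken in $\mathcal{P}(S)$. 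By \Cref{bir is closed under sifted colimits}, this colimit lies in $\mathcal{H}^b(S)$ and agrees with the colimit computed there. This gives case (1) together with the final ``moreover'' clause. \Cref{generators of Hb} shows that these generators are compact projective, which is consistent with the statement but not needed for it.

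\textbf{Step 2: reduction to a smaller class.} For the remaining cases it is enough to show that each $h^b_S(X)$, with $X$ smooth, is a sifted colimit in $\mathcal{P}(S)$ of objects $h^b_S(Y)$ with $Y$ having property $P$. Iterating sifted colimits is harmless: a sifted colimit of sifted colimits can be rewritten as a single sifted colimit, or one simply closes up the generating set under sifted colimits, which already contains everything by Step 1.

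\textbf{Case (b), separated base.} Choose a finite affine open cover $X=\bigcup U_\alpha$. This is possible because $X$ is qcqs, being of finite presentation over a qcqs base. Since $S$ is separated, each finite intersection $U_{\alpha_0}\cap\dots\cap U_{\alpha_n}$ is again affine, and it is smooth over $S$. The Čech nerve of the Zariski cover $\sqcup U_\alpha\to X$ has terms that are finite disjoint unions of such intersections, hence smooth affine (respectively, smooth with affine domain). Its geometric realization maps to $X$ by a Nisnevich equivalence, hence a birational equivalence by \Cref{bir local is nis local}. Applying $L_{bir}$, and using that it preserves geometric realizations (\Cref{l bir preserve sifted colimit}), expresses $h^b_S(X)$ as a geometric realization, computed in $\mathcal{P}(S)$ by \Cref{bir is closed under sifted colimits}, of objects $h^b_S(Y)$ with $Y$ affine. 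Coproducts are absorbed because $h^b_S$ preserves finite coproducts, being valued in $\mathcal{P}_\Sigma$. There is an even simpler route: an affine dense open $U\subset X$ already gives $h^b_S(U)\simeq h^b_S(X)$ when $X$ is irreducible. The Čech argument avoids any irreducibility hypothesis, so I would use it.

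\textbf{Case (c), characteristic zero field.} Each smooth $X$ decomposes as a finite disjoint union of connected, hence irreducible, components $X_j$. By Nagata compactification followed by Hironaka resolution, each $X_j$ admits a dense open immersion $X_j\hookrightarrow \bar X_j$ with $\bar X_j$ smooth projective. This map lies in $B_0(k)$, so $h^b_k(X_j)\simeq h^b_k(\bar X_j)$, and therefore $h^b_k(X)\simeq \sqcup_j h^b_k(\bar X_j)\simeq h^b_k(\sqcup_j\bar X_j)$. I expect this step to be the main obstacle. One has to make sure the dense open immersion is genuinely an open immersion into a \emph{smooth} projective variety, which requires resolving the boundary while keeping $X_j$ unchanged; Hironaka's theorem gives exactly this, and only in characteristic zero. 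One also has to check that the finite coproduct in $\mathcal{H}^b$ is represented by the disjoint union, which follows from $\mathcal{P}_\Sigma$-locality.
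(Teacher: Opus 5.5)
\textbf{Cases (a), (b) and the ``moreover'' clause.} Your treatment of these follows the paper's proof and is fine. You localize $\mathcal{P}_\Sigma(Sm_S)$ and invoke \Cref{l bir preserve sifted colimit}, then use a Zariski \v{C}ech cover by affines, which becomes a birational equivalence by \Cref{bir local is nis local}. You simply give more detail than the paper does.

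\textbf{The gap in case (c).} You claim that every smooth connected $X_j$ admits a dense open immersion into a smooth \emph{projective} variety by ``Nagata compactification followed by Hironaka resolution''. This is false. Nagata only produces a \emph{proper} compactification, and resolving its singularities keeps it proper, not projective. In fact no such embedding can exist when $X_j$ is not quasi-projective. Take, e.g.\ over $\mathbb{C}$, one of Hironaka's smooth proper non-projective threefolds, and suppose $X_j\hookrightarrow P$ is an open immersion into a projective variety. Then $X_j\to P$ is proper, since $X_j$ is proper and $P$ is separated. So $X_j$ is open and closed in $P$, hence projective, a contradiction. The step $h^b_k(X_j)\simeq h^b_k(\bar X_j)$ via a single dense open immersion into a smooth projective $\bar X_j$ therefore does not go through as stated.

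\textbf{Repair.} The fix uses an ingredient you already mention in case (b). Since $X_j$ is irreducible, choose a dense affine open $U_j\subset X_j$. Being quasi-projective, $U_j$ embeds in some $\mathbb{P}^N_k$. Resolve its closure by a projective birational morphism that is an isomorphism over the smooth locus (Hironaka, characteristic zero). This gives a smooth projective $\bar U_j$ containing $U_j$ as a dense open. Then $X\hookleftarrow \sqcup_j U_j\hookrightarrow \sqcup_j \bar U_j$ is a span of morphisms in $B_0(k)$, so $h^b_k(X)\simeq h^b_k(\sqcup_j\bar U_j)$ with $\sqcup_j\bar U_j$ smooth projective. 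No coproduct manipulation in $\mathcal{H}^b$ is needed. The paper's own argument has the same defect: \Cref{char zero smooth to smoothproper} only yields a smooth \emph{proper} compactification. As written, it proves generation by smooth proper schemes, and the same passage to a quasi-projective dense open is needed to reach ``projective''.
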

\begin{proof}
By construction, $\mathcal{P}_{\Sigma}(Sm_S)$ is generated under sifted colimits by smooth $X/S$. Being its localization, so is $\mathcal{H}^b(S)$. 

When $S$ is separated, every $X\in Sm_S$ is separated by construction. So, by choosing Zariski covers (note that since $S$ is Qcqs by \Cref{bir local is nis local}, Zariski equivalences are birational), we may take the affines as generators from 1. 
 
 Finally, by \Cref{char zero smooth to smoothproper} below, over a field of characteristic $0$, every smooth variety admits a dense open immersion into a smooth proper variety. In this case, the 4th class is therefore dense local to the class in 1.

 The last statement follows from \Cref{l bir preserve sifted colimit}.
\end{proof}

\begin{rem}\label[rem]{why not all bir iso}
    Let us touch on a subtle point that one might naively expect to be true but fails because of how we have constructed our birational homotopy category. In \S2 (and in the introduction), the birational homotopy category was suggested as a homotopical generalization of classical birational geometry. Thus, one would expect that all birational isomorphisms of schemes would be birational equivalences in this new world. The problem is that, since we started with $Sm_k$ rather than $Sch_k$ and are localizing only the dense open immersions belonging to $Sm_k$, this is not true by definition. For example, since by assumption our $X/S\in Sm_S$ are separated and of finite type, and $S$ is Qcqs, by Nagata compactification [\cite[\href{https://stacks.math.columbia.edu/tag/0F41}{Theorem 0F41}]{stacks-project}] we know that our $X/S$ has compactifications. Thus, by taking the scheme-theoretic image in such a compactification, we obtain a dense $X\subset X'$ such that $X'\to S$ is proper. Assume that this is equivalent (in $\mathcal{H}^b(S)$) to (the birational model of a) proper variety. Then the generating class property $P$ as in the above theorem could be taken to be all proper schemes. Similarly, if $S$ is Noetherian and $X/S$ is proper, we use [\cite{MR463157}, Exercise 4.10] to find a projective $X''/S$ with a birational morphism $X''\to X$ over $S$. So, when $S$ is Noetherian, one might expect that $P$ can be taken to be projective. These, alas, are not clear over an arbitrary base.
\end{rem}
\begin{lem}\label[lem]{char zero smooth to smoothproper}
    Let $k$ be a field of characteristic zero. Then for every smooth $X/k$, there is a dense open immersion $X\to \tilde{X}$ into a smooth proper scheme $\tilde{X}/k$.
\end{lem}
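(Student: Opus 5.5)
The plan is to combine Nagata compactification with Hironaka's resolution of singularities, both available in characteristic zero. Let $X/k$ be smooth, and so separated and of finite type by our conventions. Since $X$ is a finite disjoint union of its connected components, we will produce an embedding for each component separately. A finite disjoint union of smooth proper schemes is again smooth and proper, and a disjoint union of dense open immersions is again a dense open immersion. So from now on we may assume $X$ is connected, and hence (being smooth) integral.

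The first step is Nagata compactification [\cite[\href{https://stacks.math.columbia.edu/tag/0F41}{Theorem 0F41}]{stacks-project}]. It gives an open immersion $X\hookrightarrow \bar X$ with $\bar X/k$ proper. We replace $\bar X$ by the scheme-theoretic closure of $X$, which is still proper. Then $\bar X$ is an integral proper variety containing $X$ as a dense open subscheme, but $\bar X$ may be singular along $Z:=\bar X\setminus X$.

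The second step is to resolve these singularities while leaving $X$ untouched. We will use Hironaka's theorem in its strong form, as a resolution by a sequence of blow-ups with centers lying over the singular locus. Since $X$ is smooth, the singular locus $\mathrm{Sing}(\bar X)$ is a closed subset contained in $Z$. Hironaka therefore gives a proper birational morphism $\pi:\tilde X\to \bar X$ with $\tilde X$ smooth, and $\pi$ is an isomorphism over the regular locus $\bar X\setminus \mathrm{Sing}(\bar X)\supseteq X$. It follows that $\pi^{-1}(X)\to X$ is an isomorphism, which gives an open immersion $X\hookrightarrow \tilde X$. The scheme $\tilde X$ is proper over $k$, as the composite $\tilde X\to\bar X\to \operatorname{Spec}k$ of proper maps. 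Finally, $X$ is dense in $\tilde X$: the morphism $\pi$ is birational and $\tilde X$ is integral (each blow-up of an integral scheme along a nowhere dense center is integral), so the nonempty open subset $\pi^{-1}(X)$ is dense.

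I expect the main obstacle to be pinning down exactly which form of resolution to cite. We need the version in which the modification is an isomorphism over the smooth locus, not merely over some dense open subset. Hironaka's theorem, in the functorial or "strong" form of Bierstone–Milman or Villamayor, provides precisely this for varieties over a field of characteristic zero. That characteristic-zero input is why the hypothesis appears in the statement. Everything else is formal.
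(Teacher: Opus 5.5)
Your proposal is correct and follows essentially the same route as the paper: Nagata compactification, then Hironaka's resolution in the strong form (a proper birational $\pi:\tilde X\to\bar X$ that is an isomorphism over the smooth locus), so that $\pi^{-1}(X)\cong X$ becomes a dense open subscheme of the smooth proper $\tilde X$. Your additional steps (reducing to connected components, passing to the scheme-theoretic closure, and checking density via integrality of $\tilde X$) only make explicit details that the paper's proof leaves implicit.
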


\begin{proof}
    By Nagata compactification, we can choose a dense open immersion into a proper variety $\bar{X}/k$. By Hironaka's theorem [\cite{annalaresolution}, Corollary 4.13], there exists a proper birational morphism $p: \widetilde{X} \to \overline{X}$ such that $\widetilde{X}$ is smooth and $p$ is an isomorphism over the smooth locus of $\overline{X}$. Since $X$ is smooth, the dense open $X\hookrightarrow \bar{X}$ lands in the smooth locus, and thus the pullback $p^{-1}(X)\to X$ is an isomorphism, yielding the desired dense open immersion $X\hookrightarrow \bar{X}$.
\end{proof}
\subsection{Preservation of finite limits and the bar construction}\label{3.2}
\begin{prop}\label[prop]{closed under limits}
      The inclusion $\mathcal{H}^b(S)\subset\mathcal{P}(S)$ is closed under all limits. 
\end{prop}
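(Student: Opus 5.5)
The plan is to use the description $\mathcal{H}^b(S)=\mathcal{P}_\Sigma(S)\cap L_{dense}\mathcal{P}(S)$ from \Cref{bir local} and show that each of the two full subcategories is closed under all limits in $\mathcal{P}(S)$. Intersections of full subcategories closed under limits are again closed under limits, so this is enough. Limits in $\mathcal{P}(S)=\mathrm{Fun}(Sm_S^{op},Spc)$ are computed sectionwise, and both conditions are conditions on sections, so the argument should be formal.

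\emph{Dense locality.} A presheaf $\esc{X}$ is $B_0(S)$-local exactly when, for every dense open immersion $j:U\hookrightarrow X$ in $Sm_S$, the restriction $\esc{X}(X)\to\esc{X}(U)$ is an equivalence. By Yoneda this is the map $\mathrm{Map}(h_SX,\esc{X})\to\mathrm{Map}(h_SU,\esc{X})$. Given a diagram $\esc{X}_\bullet:K\to L_{dense}\mathcal{P}(S)$ with limit $\esc{X}$ in $\mathcal{P}(S)$, we have $\esc{X}(X)\simeq\lim_K\esc{X}_k(X)$, naturally in $X$. The restriction map for $\esc{X}$ is therefore the limit of the restriction maps for the $\esc{X}_k$, each of which is an equivalence. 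So it is an equivalence. (This is the general fact that the local objects for any class of maps are closed under limits, since $\mathrm{Map}(-,\lim\esc{X}_k)\simeq\lim\mathrm{Map}(-,\esc{X}_k)$.)

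\emph{The $\Sigma$-condition.} $\esc{X}\in\mathcal{P}_\Sigma(S)$ means that the canonical maps $\esc{X}(\sqcup_i X_i)\to\prod_i\esc{X}(X_i)$ are equivalences for finite coproducts, including $\esc{X}(\emptyset)\simeq pt$. For a limit $\esc{X}=\lim_K\esc{X}_k$, this map is $\lim_K$ of the corresponding maps for the $\esc{X}_k$, because limits commute with finite products in $Spc$. Hence it is again an equivalence. Alternatively, one can note that $\mathcal{P}_\Sigma(S)$ is the class of objects local with respect to the maps $\sqcup_i h_SX_i\to h_S(\sqcup_i X_i)$ and $\emptyset\to h_S\emptyset$, and reuse the previous paragraph.

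I do not expect any real obstacle. The only point needing care is checking that the comparison maps are natural in the diagram, so that they genuinely are limits of equivalences. The Yoneda/locality formulation handles this uniformly: both $L_{dense}\mathcal{P}(S)$ and $\mathcal{P}_\Sigma(S)$ are the local objects for small sets of maps in $\mathcal{P}(S)$, namely $B_0(S)$ and the coproduct comparison maps. Local objects for any class of maps are closed under all limits, and so is their intersection, which is $\mathcal{H}^b(S)$.
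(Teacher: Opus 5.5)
Your proposal is correct and follows the paper's approach: the paper's proof is the one line ``this follows from the definition of local objects.'' You spell that out by writing $\mathcal{H}^b(S)=\mathcal{P}_\Sigma(S)\cap L_{dense}\mathcal{P}(S)$ as the objects local for the small set consisting of the dense open immersions and the coproduct comparison maps, and local objects are closed under limits because $\mathrm{Map}(-,\lim_k \esc{X}_k)\simeq \lim_k \mathrm{Map}(-,\esc{X}_k)$.
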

\begin{proof}
    This follows from the definition of local objects.
\end{proof}
The first key observation we make in this section is that the birational localization functor $L_{bir}$ is cartesian. 

\begin{thm}\label{Lbir is cartesian}
    Each possible composition of the functors $$\mathcal{P}(S)\xrightarrow[]{L_\Sigma} \mathcal{P}_\Sigma(S)\xrightarrow[]{L_{nis}}\mathcal{P}_{nis}(S)\xrightarrow[]{L_{mot}} \mathcal{H}^{\mathbb{A}^1}(S)\xrightarrow[]{L_{bir}}\mathcal{H}^b(S)$$ is cartesian.
\end{thm}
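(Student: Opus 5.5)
Here I read ``cartesian'' as \emph{preserving finite products}, that is, the terminal object and binary products. Full left exactness fails already for $L_{mot}$, so that cannot be the intended meaning. The plan is to show that each of the four localization functors preserves finite products. Each inclusion of local objects into the larger category is closed under all limits, by \Cref{closed under limits} and its analogues for $\mathcal{P}_\Sigma$, $\mathcal{P}_{nis}$ and $\mathcal{H}^{\mathbb{A}^1}$. Hence finite products in every category of the chain are computed in $\mathcal{P}(S)$, and any composite of product-preserving functors is again product-preserving. This settles every possible composition at once.

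For $L_\Sigma$ and $L_{nis}$ this is immediate, since they are left exact sheafifications onto subtoposes. For $L_{mot}$ I will use the standard criterion for localizations of a presheaf topos at a set $W$ of morphisms between representables. If for every $w\in W$ and every $X\in Sm_S$ the map $X\times w$ lies in the saturation of $W$, then the localization preserves finite products. The proof of the criterion runs as follows. The class of $f$ with $Y\times f$ a local equivalence for all $Y$ is saturated, and it contains $W$ because presheaves are colimits of representables and products in $\mathcal{P}(S)$ commute with colimits in each variable. Therefore $f\times g=(f\times 1)\circ(1\times g)$ is a local equivalence whenever $f$ and $g$ are. For $\mathbb{A}^1$ this is the familiar fact that $X\times(\mathbb{A}^1_Y\to Y)=(\mathbb{A}^1_{X\times Y}\to X\times Y)$, combined with the stability of Nisnevich covering sieves under base change. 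The terminal object is already local.

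For $L_{bir}$ I apply the same criterion to $L_{dense}$. By \Cref{dense prod}, $X\times_S(V\hookrightarrow Y)$ is again a dense open immersion, hence lies in $B_0(S)$, so $L_{dense}$ preserves finite products. I then use the model $L_{bir}\simeq\dcolim_n(L_\Sigma L_{dense})^n$ of \Cref{form for lbir}. Each stage preserves finite products, and in $\mathcal{P}(S)$ filtered colimits commute with finite products, because this holds sectionwise in $Spc$. So the sequential colimit preserves finite products, and $L_{bir}(pt)=pt$.

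The main obstacle is making the criterion rigorous: checking that the class of maps stable under $X\times-$ is really saturated, and handling the Nisnevich part of $L_{mot}$ cleanly. If that becomes delicate, my fallback for $L_{mot}$ is the explicit formula $L_{mot}=\dcolim(L_{nis}\mathrm{Sing}^{\mathbb{A}^1})^n$. Here $\mathrm{Sing}^{\mathbb{A}^1}$ is a geometric realization, which is sifted and therefore commutes with finite products. This gives product preservation through the same filtered-colimit argument as for $L_{bir}$.
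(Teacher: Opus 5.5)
Your proposal is correct and follows essentially the paper's proof. $L_\Sigma$ and $L_{nis}$ are left exact; $L_{dense}$ preserves finite products because $X\times_S-$ preserves $B_0(S)$ (\Cref{dense prod}) and hence, by universality of colimits in $\mathcal{P}(S)$, the whole saturated class; and $L_{bir}$ then follows from the formula of \Cref{form for lbir} together with the commutation of filtered colimits with finite products. The only deviation is that you prove the $L_{mot}$ case with the same saturation criterion (or the $\mathrm{Sing}^{\mathbb{A}^1}$ formula), whereas the paper simply cites Hoyois.
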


\begin{proof}
    Since $L_{\Sigma}$ is the sheafification for the disjoint topology and $L_{nis}$ is the sheafification for the Nisnevich topology, both are well known to be left exact. That $L_{mot}$ is cartesian is a standard fact (see, for example, [\cite{hoyois2017six}, Proposition 3.15]).

To conclude, it suffices to show that the composite $L_{bir}: \mathcal{P}(S)\to \mathcal{H}^b(S)$ is cartesian. Since filtered colimits commute with finite products and $L_\Sigma$ preserves them, \Cref{form for lbir} immediately reduces the problem to showing that $L_{dense}$ is cartesian.

Suppose $X/S$ is smooth. The cartesian closedness of $\mathcal{P}(S)$ implies that the functor $$h_S(X)\times-: \mathcal{P}(S)\to \mathcal{P}(S)$$ preserves colimits. Hence, to prove that it preserves $Dense(S)$, it suffices to show that it preserves dense open immersions. This is precisely \Cref{dense prod}. Again, since $\mathcal{P}(S)$ is generated under sifted colimits by smooth $X/S$ and products commute with sifted colimits, we deduce that for any space $\esc{X}\in \mathcal{P}(S)$ the product functor $$\esc{X}\times-: \mathcal{P}(S)\to \mathcal{P}(S)$$ preserves dense equivalences. Hence, if $\esc{Y} $ is another space in $\mathcal{P}(S) $, the composite  $$\esc{X}\times \esc{Y}\to\esc{X}\times L _{dense}\esc{Y} \to L_{dense}\esc{X}\times L_{dense} \esc{Y}$$ is a dense equivalence. But since $L_{dense}\mathcal{P}(S)\subset \mathcal{P}(S)$ is closed under limits, the above composite has a dense local space as its codomain. Hence, $L_{dense}\esc{X}\times L_{dense} \esc{Y}$ is a dense localization of $\esc{X}\times\esc{Y}$, meaning that $$L_{dense}\big(\esc{X}\times\esc{Y}\big)\to L_{dense}\esc{X}\times L_{dense} \esc{Y}$$ is an equivalence. In other words, we have proved that $L_{dense}$ preserves binary products. 

(Since the terminal object $*\in\mathcal{P}(Sm_S)$ is already birational local, it is also the terminal object of $\mathcal{H}^b(S)$, and $L_{bir}(*)\simeq *$). \end{proof}

\begin{lem}\label[lem]{bir bar commute}
$L_{bir}$ has the following properties:
   \begin{enumerate}
       \item $L_{bir}$ takes (group-like) (commutative) monoid $S$-spaces to (grouplike) (commutative) monoid $S$-spaces and thus restricts to 
           \begin{flalign*}
               L_{bir} : (\mathrm{C})\mathrm{Mon}(\mathcal{P}(Sm_S))^{(gp)} &\to (\mathrm{C})\mathrm{Mon}(\mathcal{H}^b(S)) ^{(gp)}
           \end{flalign*}

       Similarly, $L_{bir}$ preserves monoid actions.
       \item  When $\sigma$ is coarser than the Nisnevich topology, $L_{bir}$ commutes with the two sided $\sigma$ local bar construction. In fact, $$L_{bir}\mathrm{B}_\sigma(-,-,-)\simeq \mathrm{B}(L_{bir}-,L_{bir}-,L_{bir}-).$$ In particular, if $\esc{M}$ is a birational monoid that acts on the birational local spaces $\esc{X}$ (from the right) and $\esc{Y}$ (from the left), then $\mathrm{B}_\sigma(\esc{X},\esc{M},\esc{Y})$ is birational local.
\end{enumerate}    
\end{lem}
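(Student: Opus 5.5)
Both parts will follow from two facts already established: $L_{bir}$ is cartesian (\Cref{Lbir is cartesian}) and it preserves sifted colimits (\Cref{l bir preserve sifted colimit}). Moreover, its image $\mathcal{H}^b(S)$ is closed in $\mathcal{P}(S)$ under finite limits (\Cref{closed under limits}) and under sifted colimits (\Cref{bir is closed under sifted colimits}).

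For (1), I would describe a (commutative) monoid in $\mathcal{P}(S)$ as a functor $M:\Delta^{op}\to\mathcal{P}(S)$, or $\mathrm{Fin}_*\to\mathcal{P}(S)$ in the commutative case, satisfying the Segal conditions $M_n\simeq M_1^{\times n}$ and $M_0\simeq *$. Since $L_{bir}$ preserves finite products and the terminal object, the composite $L_{bir}\circ M$ again satisfies the Segal conditions. Because finite products in $\mathcal{H}^b(S)$ are computed in $\mathcal{P}(S)$, this composite is a monoid in $\mathcal{H}^b(S)$. The grouplike condition says that the shear map $M\times M\to M\times M$, $(a,b)\mapsto(a,ab)$, is an equivalence, and $L_{bir}$ carries it to the shear map of $L_{bir}M$. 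Module (action) objects are treated the same way, as simplicial objects over $\Delta^{op}$ with Segal conditions of the form $X_n\simeq X\times M^{\times n}$, and these are again preserved by a finite-product preserving functor. Functoriality on $\mathrm{(C)Mon}$ is then automatic.

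For (2), recall that the two-sided bar construction $\mathrm{B}(\esc{X},\esc{M},\esc{Y})$ in $\mathcal{P}(S)$ is the geometric realization of the simplicial object $[n]\mapsto \esc{X}\times\esc{M}^{\times n}\times\esc{Y}$. The $\sigma$-local version is $\mathrm{B}_\sigma=L_\sigma\mathrm{B}$. The computation I would carry out is
\[L_{bir}\mathrm{B}_\sigma(\esc{X},\esc{M},\esc{Y})\simeq L_{bir}\,\colim_{\Delta^{op}}(\esc{X}\times\esc{M}^{\times n}\times\esc{Y})\simeq \colim_{\Delta^{op}}(L_{bir}\esc{X}\times(L_{bir}\esc{M})^{\times n}\times L_{bir}\esc{Y}).\]
The first step uses $L_{bir}L_\sigma\simeq L_{bir}$, which holds because $\sigma$-equivalences are birational equivalences (\Cref{bir local is nis local}). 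The second step combines preservation of sifted colimits with the cartesian property. The right-hand colimit is taken in $\mathcal{P}(S)$; by \Cref{bir is closed under sifted colimits} it is already birational local, so it agrees with the colimit in $\mathcal{H}^b(S)$ and in $\mathcal{P}_\sigma(S)$. This identifies it with $\mathrm{B}(L_{bir}\esc{X},L_{bir}\esc{M},L_{bir}\esc{Y})$ computed in any of these categories. For the final claim, take $\esc{X},\esc{M},\esc{Y}$ birational local. Then $\mathrm{B}_\sigma(\esc{X},\esc{M},\esc{Y})$ is a geometric realization of birational local spaces, so it is birational local by \Cref{bir is closed under sifted colimits}, with no sheafification needed.

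I expect the main obstacle to be bookkeeping rather than mathematics. The simplicial diagram for $L_{bir}\esc{X}$, $L_{bir}\esc{M}$, $L_{bir}\esc{Y}$ must be the one coming from the induced actions constructed in (1), so that the face maps match. This follows because both diagrams are obtained by applying the finite-product preserving functor $L_{bir}$ to the same simplicial action object.
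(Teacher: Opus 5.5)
Your proposal is correct and follows essentially the same route as the paper: finite-product preservation of $L_{bir}$ for (1), and for (2) the identification $L_{bir}L_\sigma\simeq L_{bir}$ followed by commuting $L_{bir}$ past the geometric realization and the finite products. The only cosmetic difference is in the final claim: you obtain it directly from closure of $\mathcal{H}^b(S)$ under sifted colimits in $\mathcal{P}(S)$. The paper instead reads it off from $\mathrm{B}(L_{bir}\esc{X},L_{bir}\esc{M},L_{bir}\esc{Y})\simeq L_{bir}\mathrm{B}(\esc{X},\esc{M},\esc{Y})$ and then observes that $L_\sigma$ acts trivially on this birational local object.
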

\begin{proof}
1. Since $L_{bir}$ is a cartesian localization, statement (1) is obvious.
        
2. First, assume that $\sigma$ is the trivial topology. When $\esc{M}$ is a monoid acting on $\esc{X}$ (from the right) and on $\esc{Y}$ (from the left), the first statement implies that $L_{bir}\esc{M}$ is a monoid space acting on $L_{bir}\esc{X}$ and $L_{bir}\esc{Y}$, so $\mathrm{B}(L_{bir}\esc{X}, L_{bir}\esc{M},L_{bir}\esc{Y})$ makes sense. Now, recall that $$\mathrm{B}(\esc{X},\esc{M},\esc{Y}):=| \esc{X}\times \esc{M}^{\times \bullet}\times \esc{Y}|.$$ Since $L_{bir}$ preserves finite products (see \Cref{Lbir is cartesian}) and geometric realizations (\Cref{l bir preserve sifted colimit}), we have 
\begin{flalign*}
  L_{bir}(\mathrm{B}(\esc{X}, \esc{M},\esc{Y}))&=L_{bir}\big| \esc{X}\times \esc{M}^{\times \bullet}\times \esc{Y}\big |\\&\simeq \big | L_{bir}(\esc{X}\times \esc{M}^{\times \bullet}\times \esc{Y})\big |\text{     (since   }L_{bir} \text{ preserves geometric realization)}
  \\&\simeq \big | L_{bir}\esc{X}\times (L_{bir}\esc{M})^{\times \bullet}\times L_{bir}\esc{Y})\big | \text{     (since   }L_{bir} \text{ preserves products)}
  \\&= \mathrm{B}(L_{bir}\esc{X}, L_{bir}\esc{M},L_{bir}\esc{Y})  
\end{flalign*}

When $\sigma$ is a topology coarser than the Nisnevich topology, \Cref{bir local is nis local} implies that $L_{bir}L_{\sigma}\simeq L_{bir}$. Therefore, $$L_{bir}\mathrm{B}_\sigma(-,-,-)\simeq L_{bir}L_\sigma\mathrm{B}(-,-,-)\simeq L_{bir}\mathrm{B}(-,-,-)\simeq \mathrm{B}(L_{bir}-,L_{bir}-,L_{bir}-).$$
For the first statement of (2), note that the object $\mathrm{B}(L_{bir}\esc{X}, L_{bir}\esc{M},L_{bir}\esc{Y})$, being birational local already (as it is $\simeq  L_{bir}(\mathrm{B}(\esc{X}, \esc{M},\esc{Y}))$), is $\sigma$-local by \Cref{bir local is nis local} and hence $$\mathrm{B}_\sigma(L_{bir}\esc{X}, L_{bir}\esc{M},L_{bir}\esc{Y})=L_{\sigma}\mathrm{B}(L_{bir}\esc{X}, L_{bir}\esc{M},L_{bir}\esc{Y})\simeq \mathrm{B}(L_{bir}\esc{X}, L_{bir}\esc{M},L_{bir}\esc{Y}).$$ But in the previous paragraph, we have already identified this last term to $L_{bir}\mathrm{B}_\sigma(\esc{X},\esc{M}, \esc{Y})$.
\end{proof}
\begin{cor}\label[cor]{bar of bir monoids}
When $\sigma$ is coarser than the Nisnevich topology, $L_{bir}$ commutes with the bar construction $\mathrm{B}_\sigma:=L_{\sigma}\mathrm{B}(*,-,*)$, and thus the restriction functor $$L_{bir}\mathrm{B}: \mathrm{Mon}(\mathcal{H}^b(S))\to\mathcal{H}^b(S)$$ coincides with $${\mathrm{B}}_{|\mathrm{Mon}(\mathcal{H}^b(S))}.$$ In particular, for a (commutative when $n\geq 2$) birational local monoid space $\esc{F}$, $\mathrm{B}_{nis}^n \esc{F}$ is again birational local.
\end{cor}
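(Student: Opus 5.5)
This corollary is the special case $\esc{X}=\esc{Y}=*$ of \Cref{bir bar commute}(2), followed by an induction on $n$. I first note that the terminal object $*$ is birational local (see the end of the proof of \Cref{Lbir is cartesian}), so $L_{bir}*\simeq *$. For any monoid $\esc{M}\in\mathrm{Mon}(\mathcal{P}(S))$, \Cref{bir bar commute}(2) applied with $\esc{X}=\esc{Y}=*$ gives
$$L_{bir}\mathrm{B}_\sigma\esc{M}=L_{bir}\mathrm{B}_\sigma(*,\esc{M},*)\simeq \mathrm{B}(*,L_{bir}\esc{M},*)=\mathrm{B}L_{bir}\esc{M}.$$
This is the statement that $L_{bir}$ commutes with $\mathrm{B}_\sigma$.

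Next I restrict to $\mathrm{Mon}(\mathcal{H}^b(S))$. If $\esc{M}$ is already a birational local monoid, then $L_{bir}\esc{M}\simeq\esc{M}$ as monoids, because $L_{bir}$ is cartesian by \Cref{Lbir is cartesian} and is the identity on local objects. The display above then gives $L_{bir}\mathrm{B}\esc{M}\simeq L_{bir}\mathrm{B}_\sigma\esc{M}\simeq\mathrm{B}\esc{M}$, where the first equivalence uses $L_{bir}L_\sigma\simeq L_{bir}$ from \Cref{bir local is nis local}. So $\mathrm{B}\esc{M}$ (the bar construction computed in $\mathcal{P}(S)$) is birational local, and the functor $L_{bir}\mathrm{B}$ on $\mathrm{Mon}(\mathcal{H}^b(S))$ agrees with $\mathrm{B}$. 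Because $\mathrm{B}\esc{M}$ is birational local, it is $\sigma$-local for every $\sigma$ coarser than the Nisnevich topology (again by \Cref{bir local is nis local}). Hence $\mathrm{B}_\sigma\esc{M}=L_\sigma\mathrm{B}\esc{M}\simeq \mathrm{B}\esc{M}$, and in particular $\mathrm{B}_{nis}\esc{F}\simeq \mathrm{B}\esc{F}$ is birational local. This settles the case $n=1$.

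For $n\geq 2$ I induct, and the only real point is to keep track of the commutative structure. If $\esc{F}$ is a commutative (that is, $\mathbb{E}_\infty$, or at least $\mathbb{E}_n$) monoid in $\mathcal{H}^b(S)$, then $\mathrm{B}\esc{F}$ inherits an $\mathbb{E}_\infty$ (respectively $\mathbb{E}_{n-1}$) monoid structure. I would justify this as follows: $\mathrm{B}$ is a geometric realization of finite products, so it is a product-preserving functor $\mathrm{Mon}\to\mathcal{P}(S)$ and therefore lifts to commutative monoid objects. Birational locality of this monoid is a condition on the underlying object only, and that was established above. So $\mathrm{B}\esc{F}\in\mathrm{CMon}(\mathcal{H}^b(S))$, and applying the $n=1$ case $n$ times shows that $\mathrm{B}^n_{nis}\esc{F}\simeq\mathrm{B}^n\esc{F}$ is birational local.

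The main obstacle I expect is this last structural point: checking that the iterated bar construction respects the monoid structures. Once that is in place, the rest is formal bookkeeping using \Cref{bir bar commute} and \Cref{bir local is nis local}.
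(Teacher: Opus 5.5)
Your proposal is correct and follows the paper's route: the corollary is the case $\esc{X}=\esc{Y}=*$ of \Cref{bir bar commute}(2), combined with $\sigma$-locality of birational local spaces and then iterated to get $\mathrm{B}^n_{nis}\esc{F}$. Your extra argument that $\mathrm{B}$ preserves finite products, and hence sends $\mathbb{E}_n$-monoids to $\mathbb{E}_{n-1}$-monoids, spells out a step the paper leaves implicit when it presents this as an immediate corollary.
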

\begin{rem}\label[rem]{bar mot does not commute}
This commutation relation is certainly false for $L_{mot}$ and $\mathrm{B}^{nis}$. One reason is that $\mathcal{H}^{mot}\subset \mathcal{P}^{nis}$ is not closed under geometric realizations; see \Cref{sifted colimits in hmot}. It is worth noting that the problem has almost nothing to do with the $\mathbb{A} ^1$-localization. In fact, it follows that $\mathrm{B}L_{\mathbb{A}^1}\simeq L_{\mathbb{A}^1}\mathrm{B}$. To see this, recall that $L_{\mathbb{A}^1} $ commutes with finite products [\cite{hoyois2017six}, Corollary 3.5]. Moreover, being a localization at a schematic class, $L_{\mathbb{A}^1}: \mathcal{P}(S)\to \mathcal{P}(S)$ preserves all colimits (indeed, $L_{\mathbb{A}^1}: \mathcal{P}(S)\to L_{\mathbb{A}^1}\mathcal{P}(S)$ is a left adjoint, while $L_{\mathbb{A}^1}\mathcal{P}(S)\subset \mathcal{P}(S)$  is closed under all colimits). Since the bar construction is given by a geometric realization of products, and both commute with $L_{\mathbb{A}^1}$, we get $\mathrm{B}L_{\mathbb{A}^1}\simeq L_{\mathbb{A}^1}\mathrm{B}$ just as in \Cref{bir bar commute}. On the other hand, directed colimits also commute with the bar construction, so in the formulae $\dcolim _{n}(L_{\mathbb{A}^1}\circ L_{nis})^n$ the only problematic candidate for this desired commutation is the nisnevich localization. Unfortunately, even though $L_{nis} $ preserves products, it fails to preserve geometric realizations. In fact, it fails for any other topology finer than the $\sqcup$ topology.
\end{rem} 

\begin{cor}\label[cor]{K is local}
The canonical functors 
\begin{flalign}
    \mathrm{K}(-,0)_{|Shv^b_S}:=i_{disc}: Shv^b_S\to \mathcal{P}_\Sigma(Sm_S)\\
    \mathrm{K}(-,1)_{|Grp^b_S}:=\mathrm{B}_{|Grp^b_S}:Grp^b_S\to \mathcal{P}_\Sigma(Sm_S)\\
    \mathrm{K}_{|Ab^b_S}(-,n):=\mathrm{B}^n_{|Ab^b_S}:Ab^b_S\to \mathcal{P}_\Sigma(Sm_S)
\end{flalign} factors through the birational local subcategory $\mathcal{H}^b(S)$ and are identical to $\mathrm{K}^{\sigma}(-,n)$ for any $\sigma$ coarser than the Nisnevich topology.
\end{cor}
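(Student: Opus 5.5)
The plan is to treat the three functors separately but uniformly. In each case we first show that the value lies in $\mathcal{H}^b(S)$, and then we compare it with the $\sigma$-local Eilenberg--MacLane object $\mathrm{K}^\sigma(-,n):=L_\sigma \mathrm{K}(-,n)$. Both parts will come from \Cref{pi is birational}, \Cref{bir bar commute}, \Cref{bar of bir monoids} and \Cref{bir local is nis local}.

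For $n=0$, a birational sheaf of sets $F$ viewed as a discrete presheaf of spaces is, by \Cref{birational presheaves of sets}, precisely a discrete birational local space, so $i_{disc}F\in\mathcal{H}^b(S)$. For $n=1$ and $G\in Grp^b_S$, the discrete presheaf $G$ is a birational local monoid: it is birational local by the $n=0$ case, and it is a monoid object in $\mathcal{H}^b(S)$ because $\mathcal{H}^b(S)\subset\mathcal{P}(S)$ is closed under products (\Cref{closed under limits}). By \Cref{bar of bir monoids} we get $L_{bir}\mathrm{B}G\simeq \mathrm{B}L_{bir}G\simeq \mathrm{B}G$, so $\mathrm{B}G$ is birational local.

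A more elementary check is also available. The presheaf $\mathrm{B}G$ is computed sectionwise, so $\pi_0\mathrm{B}G=*$ and $\pi_1\mathrm{B}G=G$, with all higher homotopy presheaves trivial. Each of these is a birational presheaf, so \Cref{pi is birational} applies directly. For $n\geq 2$ and $A\in Ab^b_S$, we induct: $\mathrm{B}^{n-1}A$ is a birational local commutative monoid, hence so is $\mathrm{B}^nA$ by \Cref{bar of bir monoids}. Alternatively, the sectionwise homotopy presheaves of $\mathrm{B}^nA$ are $*$ and $A$, so \Cref{pi is birational} again gives the claim. I would present the homotopy presheaf argument as the main proof, since it avoids any subtlety about which bar construction is meant, and mention the bar commutation as the conceptual reason.

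The remaining step is the identification with $\mathrm{K}^\sigma(-,n)$ for $\sigma$ coarser than the Nisnevich topology. By \Cref{bir local is nis local}, every birational local space is Nisnevich local, hence $\sigma$-local. So $L_\sigma$ acts as the identity on the objects just constructed, giving $\mathrm{K}^\sigma(-,n)=L_\sigma\mathrm{K}(-,n)\simeq\mathrm{K}(-,n)$. This agrees with the statement $\mathrm{B}_\sigma\simeq\mathrm{B}$ on birational monoids in \Cref{bir bar commute}(2).

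The main subtlety is bookkeeping rather than difficulty. One has to make sure that the sectionwise bar construction, and not its sheafification, is the object being tested, and that the target is $\mathcal{P}_\Sigma$: each $\mathrm{K}(-,n)$ sends finite coproducts to products because $F$, $G$ and $A$ do and $\mathrm{B}^n$ preserves finite products. Once this is in place, everything reduces to \Cref{pi is birational}.
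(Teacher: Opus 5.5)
Your proposal is correct. The argument you call the ``conceptual reason'' is exactly how the paper obtains this corollary: discrete birational groups are birational local monoids, then apply \Cref{bar of bir monoids}. The paper states the corollary without a separate proof, as a consequence of $L_{bir}\mathrm{B}_\sigma\simeq\mathrm{B}L_{bir}$ from \Cref{bir bar commute}(2). That equivalence in turn rests on $L_{bir}$ preserving finite products (\Cref{Lbir is cartesian}) and geometric realizations (\Cref{l bir preserve sifted colimit}, via \Cref{form for lbir}). It gives birational locality of $\mathrm{B}^n_\sigma$ and its agreement with the sectionwise $\mathrm{B}^n$ in one stroke.

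The argument you chose as your main proof is genuinely different and more elementary. The sectionwise $\mathrm{B}^nA$ is pointed and sectionwise connected, with homotopy presheaves $*$ and $A$, so the base-point issues implicit in \Cref{pi is birational} disappear. Hence \Cref{pi is birational} reduces everything to the birationality of $A$, and \Cref{bir local is nis local} then shows that $L_\sigma$ acts as the identity. This bypasses the bar-commutation machinery entirely and is the cleaner proof of this particular statement.

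What the paper's route buys is generality. It shows $\mathrm{B}_\sigma(\esc{X},\esc{M},\esc{Y})$ is birational local for arbitrary birational monoid spaces, possibly non-discrete and non-grouplike. There the homotopy presheaves of the bar construction are not just shifted copies of those of $\esc{M}$; already $\pi_1$ involves the group completion of $\pi_0\esc{M}$. So no direct homotopy-presheaf check is available, and it is this general form that the paper needs later for \Cref{Lbir preserve connected fiber sequence} and \Cref{total space of bir seq}.
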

\begin{cor}\label[cor]{bir is strong a1}
    Birational (commutative) monoid spaces are (strictly) strongly $\mathbb{A}^1$-invariant. i.e., $$\mathrm{Mon}(\mathcal{H}^b)\subset \mathrm{Mon}_{mot}(\mathcal{H}^{\mathbb{A}^1})\text{ and }\mathrm{CMon}(\mathcal{H}^b)\subset \mathrm{CMon}_{mot}(\mathcal{H}^{\mathbb{A}^1}).$$ In particular, $$Grp^{b}_S\subset Grp^{\mathbb{A}^1}_S,  Ab^b_S\subset Ab^{\mathbb{A}^1}_S,$$ where $  Grp^{\mathbb{A}^1}_S$ (resp. $Ab^{\mathbb{A}^1}_S$) stand for strongly (resp. strictly) $\mathbb{A}^1$-invariant Nisnevich sheaves of groups (resp. abelian groups).
\end{cor}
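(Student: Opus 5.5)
The plan is to reduce the statement to \Cref{bar of bir monoids} and \Cref{K is local}, combined with the inclusion $\mathcal{H}^b(S)\subset\mathcal{H}^{\mathbb{A}^1}(S)$ from \Cref{Motivic equivalences are Birational equivalences}. I take $\mathrm{Mon}_{mot}(\mathcal{H}^{\mathbb{A}^1})$ to mean motivic monoid spaces $\esc{M}$ whose Nisnevich bar construction $\mathrm{B}_{nis}\esc{M}$ is again $\mathbb{A}^1$-local. For commutative monoids, I take it to mean those whose iterated deloopings $\mathrm{B}^n_{nis}\esc{M}$ are all $\mathbb{A}^1$-local. This is the spacelike analogue of Morel's definitions.

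\textbf{Monoids.} Let $\esc{M}\in\mathrm{Mon}(\mathcal{H}^b(S))$. By \Cref{Motivic equivalences are Birational equivalences}, $\esc{M}$ is already a motivic space. By \Cref{bar of bir monoids} with $\sigma=nis$, the Nisnevich bar construction $\mathrm{B}_{nis}\esc{M}$ agrees with $L_{bir}\mathrm{B}\esc{M}$, so it is birational local. Applying \Cref{Motivic equivalences are Birational equivalences} again, $\mathrm{B}_{nis}\esc{M}$ lies in $\mathcal{H}^{\mathbb{A}^1}(S)$.

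\textbf{Commutative monoids.} For $\esc{M}\in\mathrm{CMon}(\mathcal{H}^b(S))$, I iterate the same argument. By \Cref{bir bar commute}(1), $\mathrm{B}_{nis}\esc{M}\simeq L_{bir}\mathrm{B}\esc{M}$ is again a commutative monoid in $\mathcal{H}^b(S)$, because $L_{bir}$ is cartesian and $\mathrm{B}$ of a commutative monoid carries a commutative monoid structure. Induction on $n$ then shows that every $\mathrm{B}^n_{nis}\esc{M}$ is birational local, hence motivic. This gives both inclusions of monoid categories.

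\textbf{Discrete groups.} Let $G\in Grp^b_S$. It is a Nisnevich sheaf by \Cref{birational presheavesa re nisnevich sheaves}. By \Cref{K is local}, $\mathrm{B}_{nis}G=\mathrm{K}^{nis}(G,1)$ is birational local, hence $\mathbb{A}^1$-local. This is equivalent to $H^0_{nis}(-,G)$ and $H^1_{nis}(-,G)$ being $\mathbb{A}^1$-invariant, which is exactly strong $\mathbb{A}^1$-invariance.

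\textbf{Discrete abelian groups.} For $A\in Ab^b_S$, \Cref{K is local} says each $\mathrm{K}^{nis}(A,n)$ is birational local, hence $\mathbb{A}^1$-local. Since $\pi_0\mathrm{Map}(X,\mathrm{K}^{nis}(A,n))=H^n_{nis}(X,A)$, this gives $\mathbb{A}^1$-invariance of $H^n_{nis}(-,A)$ for all $n$, which is strict $\mathbb{A}^1$-invariance.

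The only step needing care is matching the paper's notion of $\mathrm{Mon}_{mot}$ with the definition via bar constructions. In particular, one must check that $\mathrm{B}_{nis}$ of a commutative birational monoid stays commutative, so that the induction goes through. Everything else is a direct application of the cited results.
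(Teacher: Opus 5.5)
Your proposal is correct and follows essentially the same route as the paper. The paper uses \Cref{bar of bir monoids} to see that $\mathrm{B}_{nis}\esc{M}$ is birational local, then \Cref{Motivic equivalences are Birational equivalences} to conclude it is $\mathbb{A}^1$-local, and it takes this as the definition of strong $\mathbb{A}^1$-invariance (citing Elmanto et al.). Your iteration for commutative monoids and your treatment of discrete (abelian) groups via \Cref{K is local} only spell out what the paper leaves implicit in ``the result follows.''
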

\begin{proof}
    Since birational local spaces are $\mathbb{A}^1$-local, the result follows from \Cref{bir bar commute}. For example, let $\esc{M}$ be a birational monoid. By \Cref{bar of bir monoids}, $\mathrm{B}_{nis}\esc{M}$ is birational local. By \Cref{Motivic equivalences are Birational equivalences}, it follows that $\mathrm{B}_{nis}\esc{M}$ is motivic local. This is precisely the definition of strong $\mathbb{A}^1$ invariance of a monoid space [\cite{elmanto2021motivic}, Definition 3.1.6]. 
\end{proof}
\begin{rem}
This strengthens Remark 2.3 in [\cite{MR4637972}]. In fact, our argument is entirely different from that in \textit{loc.cit.} and works over all Qcqs base schemes.
\end{rem} 

Therefore, surprisingly, we see that defining strongly/strictly birational group sheaves (just like strongly/strictly $\mathbb{A}^1$-invariant group sheaves) is unnecessary.
\begin{cor}
For $\sigma$ coarser than the Nisnevich topology, a $\sigma$-local presheaf of grouplike monoids $\esc{G}$ is birational if and only if $\mathrm{B}_\sigma \esc{G}$ is birationally local. 
\end{cor}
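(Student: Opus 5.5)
The forward direction is a direct consequence of \Cref{bar of bir monoids}. Suppose $\esc{G}$ is birational, meaning it is a birational local presheaf (of grouplike monoids, viewed as a presheaf of spaces). Then $\esc{G}$ is a monoid object in $\mathcal{H}^b(S)$, and \Cref{bar of bir monoids} says that $\mathrm{B}_\sigma\esc{G}$ is birational local.

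For the converse, assume $\mathrm{B}_\sigma\esc{G}$ is birational local. I will recover $\esc{G}$ as a loop object. Since $\esc{G}$ is grouplike, the usual looping/delooping equivalence in the $\infty$-topos $\mathcal{P}_\sigma(S)$ gives
\[
\esc{G}\simeq \Omega\mathrm{B}_\sigma\esc{G}=pt\times_{\mathrm{B}_\sigma\esc{G}}pt .
\]
This is a pullback computed in $\mathcal{P}_\sigma(S)$. Because the inclusion $\mathcal{P}_\sigma(S)\subset\mathcal{P}(S)$ is a right adjoint, the same pullback is also computed in $\mathcal{P}(S)$. The terminal object $pt$ is birational local, and $\mathrm{B}_\sigma\esc{G}$ is birational local by hypothesis. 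By \Cref{closed under limits}, $\mathcal{H}^b(S)\subset\mathcal{P}(S)$ is closed under all limits, so the pullback $\esc{G}$ is birational local.

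Whether $\esc{G}$ is discrete or a space-valued monoid, birational locality of the underlying presheaf is exactly the intended meaning of "birational". In the discrete case, \Cref{pi is birational} confirms that a discrete presheaf is birational local precisely when it is a birational presheaf.

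The only step needing care is the identification $\Omega\mathrm{B}_\sigma\esc{G}\simeq\esc{G}$. This uses that $\esc{G}$ is grouplike and that $\mathcal{P}_\sigma(S)$ is an $\infty$-topos, which is the standard equivalence between grouplike monoids and pointed connected objects (Lurie, HTT 7.2.2.11). I will also make sure the basepoint is the canonical one coming from the bar construction. Beyond this point there is no real obstacle.
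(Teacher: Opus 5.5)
Your proposal is correct and follows essentially the same route as the paper: the forward direction via \Cref{bar of bir monoids}, and the converse via $\esc{G}\simeq\Omega\mathrm{B}_\sigma\esc{G}$ for grouplike $\esc{G}$. The paper phrases the last step as ``$\Omega$ preserves local objects''; your appeal to \Cref{closed under limits} (pullbacks in $\mathcal{P}_\sigma(S)$ agree with those in $\mathcal{P}(S)$) is the same observation spelled out.
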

\begin{proof}
The only-if part is just \Cref{bar of bir monoids}. For the if part, it suffices to observe that, because $\esc{G}$ is grouplike, $\esc{G}\simeq\Omega  \mathrm{B}_\sigma  \esc{G}$, and that $\Omega$ preserves local objects.
\end{proof}
\begin{cor}\label[cor]{bir is exact in presheaves}
The inclusion $ Ab^b_S\subset Ab^{\sigma}_S$ is closed under all colimits and limits and, in particular, is exact. (See \textup{[\cite{bas}, Corollary 3.2.3]}
for a different argument.)
\end{cor}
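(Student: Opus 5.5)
The plan is to show separately that $Ab^b_S$ is closed in $Ab^\sigma_S$ under all limits and under all colimits; exactness is then immediate, since finite limits and colimits (kernels, cokernels) are computed the same way in both categories. Throughout I would use \Cref{birational presheavesa re nisnevich sheaves}: a birational presheaf of abelian groups is automatically a Nisnevich sheaf, hence a $\sigma$-sheaf, so $Ab^b_S\subset Ab^\sigma_S$ makes sense.

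\emph{Limits.} Limits in $Ab^\sigma_S$ are computed sectionwise, as in presheaves. Dense locality says that $F(X)\to F(U)$ is an isomorphism for every dense open immersion $U\hookrightarrow X$, and this is preserved by sectionwise limits. The $\Sigma$-condition $F(X\sqcup Y)\simeq F(X)\times F(Y)$ is a limit condition, so it is preserved as well. This matches \Cref{closed under limits} at the discrete level.

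\emph{Colimits.} It suffices to treat direct sums and cokernels. For direct sums, I would reduce to finite sums, which are finite products and hence covered by the limit case, together with filtered colimits. Filtered colimits computed sectionwise preserve dense locality, since it is a sectionwise isomorphism condition. They also preserve the $\Sigma$-condition, because filtered colimits commute with finite products. So the presheaf colimit is birational, hence already a $\sigma$-sheaf, and therefore it is the colimit in $Ab^\sigma_S$.

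For cokernels I would use the bar construction, which is the key step. Given $f\colon A\to B$ in $Ab^b_S$, the presheaf cokernel $C=\mathrm{coker}^{pre}(f)$ is dense local, since a sectionwise cokernel of isomorphisms is an isomorphism. It lies in $PSh_\Sigma$, since cokernels commute with finite products of abelian groups. Hence $C$ is birational, so it is Nisnevich by \Cref{birational presheavesa re nisnevich sheaves} and therefore $\sigma$-local. Then the $\sigma$-sheafification of $C$, which is the cokernel in $Ab^\sigma_S$, equals $C$, and $C$ is birational.

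The only possibly subtle point is this cokernel step. If one prefers a more homotopical argument, one can view the cofiber as the bar construction $\mathrm{B}(*,A,B)$ of $A$ acting on $B$ and apply \Cref{bir bar commute}(2) to see that the $\sigma$-local cofiber is birational. Its $\pi_0$ is the $\sigma$-cokernel, which is birational by \Cref{nis sheaves of bir is bir}. The $\pi_1$ part gives the kernel, consistent with the limit case. I do not expect a real obstacle here; the direct presheaf argument above already suffices.
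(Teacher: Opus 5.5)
Your proposal is correct and follows essentially the paper's route. Limits are computed sectionwise. For colimits, one checks that the presheaf colimit of birational presheaves is again dense local, since that is a sectionwise isomorphism condition, and lies in $PSh_\Sigma$, since finite direct sums of abelian groups commute with colimits; it is therefore birational, hence a $\sigma$-sheaf by \Cref{birational presheavesa re nisnevich sheaves}, and so it is the colimit in $Ab^\sigma_S$. Your split into direct sums and cokernels simply unpacks the paper's observation that binary direct sums commute with all colimits, and you make explicit the final identification with the colimit in $Ab^\sigma_S$, which the paper leaves implicit; the bar-construction detour is not needed.
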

\begin{proof}
Clearly, the inclusion is closed under all limits. Let us now show the right continuity of the inclusion. First, $L_{dense} Ab_S\subset Ab_S$ is closed under all colimits. By definition, $$Ab_S^b=L_{dense}Ab_S\bigcap L_{\Sigma}Ab_S.$$ It thus suffices to show that $L_{\Sigma} Ab_S\subset Ab_S$ is also closed under all colimits. Since $Ab_S$ is semiadditive, i.e., products are coproducts, this follows from the definition of $L_{\Sigma}$ and the fact that taking the binary direct sum of abelian groups commutes with all colimits.
\end{proof}

\begin{lem}\label[lem]{Lbir preserve connected fiber sequence}
When $\sigma$ is coarser than the Nisnevich topology, $L_{bir}$ preserves fiber sequences $\Gamma \to \esc{X}\to \esc{Y}$ of $\sigma$-local spaces with $\sigma$-locally connected base $\esc{Y}$.
\end{lem}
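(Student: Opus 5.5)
We will realize the fiber sequence as a bar construction and then apply \Cref{bir bar commute}. Let $\Gamma\to\esc{X}\xrightarrow{f}\esc{Y}$ be a fiber sequence in $\mathcal{P}_\sigma(S)$ with $\esc{Y}$ $\sigma$-locally connected. We may assume $\esc{Y}$ is pointed, since the fiber is taken over a point $*\to\esc{Y}$. Set $\esc{G}:=\Omega\esc{Y}$, a grouplike monoid in $\mathcal{P}_\sigma(S)$. Because $\esc{Y}$ is $\sigma$-locally connected, the $\infty$-topos $\mathcal{P}_\sigma(S)$ gives $\esc{Y}\simeq \mathrm{B}_\sigma\esc{G}$, and $\esc{G}$ acts on $\Gamma$ (from the right, via the holonomy of the fibration). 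The standard topos-theoretic identification then gives $\esc{X}\simeq \mathrm{B}_\sigma(\Gamma,\esc{G},*)$. Under these identifications the given sequence is
$$\Gamma\to \mathrm{B}_\sigma(\Gamma,\esc{G},*)\to \mathrm{B}_\sigma(*,\esc{G},*),$$
induced by $\Gamma\to *$.

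Next apply $L_{bir}$. By \Cref{bir bar commute}(2), $L_{bir}\esc{X}\simeq \mathrm{B}(L_{bir}\Gamma, L_{bir}\esc{G},*)$ and $L_{bir}\esc{Y}\simeq \mathrm{B}(*,L_{bir}\esc{G},*)$. Here $L_{bir}\esc{G}$ is a grouplike monoid acting on $L_{bir}\Gamma$ by \Cref{bir bar commute}(1), which uses that $L_{bir}$ is cartesian (\Cref{Lbir is cartesian}). These bar constructions are already birational local, hence $\sigma$-local by \Cref{bir local is nis local}. So they agree with the bar constructions formed in $\mathcal{P}_\sigma(S)$, and also in $\mathcal{P}_\Sigma(S)$ or $\mathcal{P}(S)$, since $\mathcal{H}^b(S)$ is closed under geometric realizations there (\Cref{bir is closed under sifted colimits}). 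Naturality of these equivalences identifies $L_{bir}f$ with the map of bar constructions induced by $L_{bir}\Gamma\to *$.

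It remains to show that for a grouplike monoid $\esc{M}$ acting on $\esc{Z}$ in an $\infty$-topos, the fiber of $\mathrm{B}(\esc{Z},\esc{M},*)\to\mathrm{B}\esc{M}$ is $\esc{Z}$. This is the standard principal-bundle statement: in an $\infty$-topos, groupoid objects are effective and colimits are universal. We apply it in $\mathcal{P}_\sigma(S)$ and conclude that $L_{bir}\Gamma\to L_{bir}\esc{X}\to L_{bir}\esc{Y}$ is a fiber sequence there. Since $\mathcal{H}^b(S)\subset\mathcal{P}(S)$ is closed under limits (\Cref{closed under limits}), it is also a fiber sequence in $\mathcal{H}^b(S)$ and in $\mathcal{P}(S)$.

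The main obstacle is the first step: expressing an arbitrary fibration over a merely $\sigma$-locally connected base as $\mathrm{B}_\sigma(\Gamma,\Omega\esc{Y},*)$. Sectionwise, $\esc{Y}$ need not be connected, so this must be done internally in the topos $\mathcal{P}_\sigma(S)$. The tool is the equivalence between $\sigma$-connected pointed objects and grouplike monoids (Lurie, HA 5.2.6 / HTT 7.2.2), together with the identification of objects over $\mathrm{B}\esc{G}$ with $\esc{G}$-objects. A second subtlety is verifying that the map $L_{bir}\Gamma\to L_{bir}\esc{X}$ corresponds to the inclusion of the zeroth simplicial level. We will handle this by naturality of the equivalence in \Cref{bir bar commute} with respect to the degeneracy inclusion.
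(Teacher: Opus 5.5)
Your proposal is correct and follows essentially the same route as the paper. You rewrite the sequence as the principal fibration $\Gamma\to\mathrm{B}_\sigma(\Gamma,\Omega\esc{Y},pt)\to\mathrm{B}_\sigma\Omega\esc{Y}$, apply \Cref{bir bar commute}(2), and observe that the result is again a principal fiber sequence for the grouplike $L_{bir}\Omega\esc{Y}$. You simply spell out points the paper treats as evident: that the fiber of $\mathrm{B}(\esc{Z},\esc{M},*)\to\mathrm{B}\esc{M}$ is $\esc{Z}$ in an $\infty$-topos, and that the fiber inclusion corresponds to the zeroth simplicial level.
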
   \begin{proof}
        Every such fiber sequence is equivalent to a principal fiber sequence [\cite{asok2017simplicial}, Theorem 2.3.3 Step 1, Step 2] by realizing $\esc{X}\simeq \mathbf{B}_\sigma(\Gamma, \Omega \esc{Y}, pt)$ and $\esc{Y}\simeq \mathbf{B}_\sigma\Omega \esc{Y}$.
        \[
        \xymatrix{
        \Gamma \ar[r]\ar[d]^{\rotatebox{90}{$\sim$}}& \esc{X}\ar[r]\ar[d]^{\rotatebox{90}{$\sim$}}& \esc{Y}\ar[d]^{\rotatebox{90}{$\sim$}}\\
        \Gamma\ar[r]& \mathbf{B}_\sigma(\Gamma, \Omega \esc{Y}, pt)\ar[r] &\mathbf{B}_\sigma\Omega \esc{Y}
        }
        \]
        Now we apply $L_{bir}$ to the above diagram. Using  \Cref{bir bar commute} (2), the resulting diagram looks like:
         \[
        \xymatrix{
       L_{bir} \Gamma \ar[r]\ar[d]^{\rotatebox{90}{$\sim$}}& L_{bir}\esc{X}\ar[r]\ar[d]^{\rotatebox{90}{$\sim$}}&  L_{bir}\esc{Y}\ar[d]^{\rotatebox{90}{$\sim$}}\\
         L_{bir}\Gamma\ar[r]& \mathbf{B}_\sigma( L_{bir}\Gamma,  L_{bir}\Omega \esc{Y}, pt)\ar[r] &\mathbf{B}_\sigma L_{bir}\Omega \esc{Y}
        }
        \]
       But the bottom horizontal sequence of the diagram above is clearly a principal fiber sequence for the action of the group-like space $L_{bir}\Omega\esc{Y}$ on $L_{bir}\Gamma$. 

  It follows that the top horizontal sequence $L_{bir}\Gamma \to L_{bir}\esc{X}\to L_{bir}\esc{Y}$, being equivalent to the bottom one, is a fiber sequence as promised. 
\end{proof}
\begin{thm}\label{bir and Omega commute on connected}
   
When $\sigma$ is coarser than the Nisnevich topology, the localization functor $L_{bir}$ commutes with $\Omega$ on $\tau^{\sigma}_{\geq 1}\mathcal{P}_{\sigma}(Sm_S)_\bullet$.  
\end{thm}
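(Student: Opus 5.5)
The plan is to apply \Cref{Lbir preserve connected fiber sequence} to the path fibration. Let $\esc{Y}\in\tau^{\sigma}_{\geq 1}\mathcal{P}_{\sigma}(Sm_S)_\bullet$ be a pointed, $\sigma$-locally connected $\sigma$-local space. Consider the fiber sequence
\[
\Omega\esc{Y}\to P\esc{Y}\to \esc{Y},
\]
where $P\esc{Y}=\esc{Y}^{\Delta^1}\times_{\esc{Y}}pt$ is the path space. It is a fiber sequence of $\sigma$-local spaces, since $\mathcal{P}_\sigma(S)\subset\mathcal{P}(S)$ is closed under limits. Its base $\esc{Y}$ is $\sigma$-locally connected by hypothesis. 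Hence \Cref{Lbir preserve connected fiber sequence} shows that
\[
L_{bir}\Omega\esc{Y}\to L_{bir}P\esc{Y}\to L_{bir}\esc{Y}
\]
is again a fiber sequence.

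Next, $P\esc{Y}\simeq pt$, and $L_{bir}(pt)\simeq pt$ by the last line of the proof of \Cref{Lbir is cartesian}. The fiber of $pt\to L_{bir}\esc{Y}$ over the base point (the image of the base point of $\esc{Y}$, since $L_{bir}$ preserves the terminal object) is $\Omega L_{bir}\esc{Y}$. So the fiber sequence gives an equivalence $L_{bir}\Omega\esc{Y}\simeq\Omega L_{bir}\esc{Y}$.

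To get naturality, and to check that this equivalence is the canonical comparison map $L_{bir}\Omega\to\Omega L_{bir}$, I compare with the square obtained by applying $L_{bir}$ to the pullback square $\Omega\esc{Y}=pt\times_{\esc{Y}}pt$. This square maps canonically to the pullback $pt\times_{L_{bir}\esc{Y}}pt$, and that induced map is exactly the comparison map. Preservation of the fiber sequence says precisely that the comparison map is an equivalence.

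The main obstacle is the hypothesis check. \Cref{Lbir preserve connected fiber sequence} uses the principal-fibration model $\esc{X}\simeq \mathbf{B}_\sigma(\Gamma,\Omega\esc{Y},pt)$, which needs $\esc{Y}\simeq \mathrm{B}_\sigma\Omega\esc{Y}$, and that requires $\sigma$-local connectedness of $\esc{Y}$. This is exactly why the theorem is restricted to $\tau^{\sigma}_{\geq 1}$.

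Alternatively, one can argue directly from \Cref{bar of bir monoids}. Write $\esc{Y}\simeq\mathrm{B}_\sigma\Omega\esc{Y}$. Then
\[
L_{bir}\esc{Y}\simeq \mathrm{B}_\sigma L_{bir}\Omega\esc{Y}.
\]
Here $L_{bir}\Omega\esc{Y}$ is grouplike by \Cref{bir bar commute}(1), so looping in the topos $\mathcal{P}_\sigma(S)$ gives
\[
\Omega L_{bir}\esc{Y}\simeq\Omega\mathrm{B}_\sigma L_{bir}\Omega\esc{Y}\simeq L_{bir}\Omega\esc{Y}.
\]
I would present this second argument as the main proof, because it is shorter, and note the fiber-sequence version as a remark.
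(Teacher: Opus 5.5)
Your first argument is the paper's proof: the paper applies \Cref{Lbir preserve connected fiber sequence} to the fiber sequence $\Omega\esc{Y}\to pt\to\esc{Y}$ and reads off $L_{bir}\Omega\esc{Y}\simeq\Omega L_{bir}\esc{Y}$. Your preferred second argument, via $\esc{Y}\simeq\mathrm{B}_\sigma\Omega\esc{Y}$, \Cref{bir bar commute} and $\Omega\mathrm{B}_\sigma G\simeq G$ for grouplike $G$, is that lemma's proof specialized to $\Gamma=\Omega\esc{Y}$, so it uses the same mechanism, and both versions are correct.
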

\begin{proof}
     Apply the above lemma to the fiber sequence $\Omega\esc{E}\to pt\to \esc{E}$.     \end{proof}
Let us remind the reader that the motivic localization, even though not left exact, has a very nice property beyond cartesianness that makes certain motivic computations work well within $\mathcal{P}_{nis}(S)$ without taking motivic localizations. This is the property of local cartesianness of $L_{mot}$, which is the correct $\infty$-categorical analog of a right proper Bousfield localization. (Recall that a localization $L$ of an $\infty$-category $\mathcal{C}$ is said to be locally cartesian if for every morphism $X\to Y$ of local objects and another morphism $Z\to X$ the canonical comparison map $L(Z\times_YX)\to LZ\times_YX$ is an equivalence in $\mathcal{C}$.)

Unfortunately, because dense open immersions are not stable under arbitrary pullbacks, $L_{bir}$ is not locally cartesian:
\begin{coex}[$L_{bir}$ is not locally cartesian]\label[coex]{Lbir not locally cart}
In this counterexample, we shall show that the birational localization functor is not locally cartesian. For this, let $k$ be a field. Let $X$ be an abelian $k$-variety of positive dimension. Note that $X$ is birational local by \Cref{abelian variety}. Let $0: Speck\to X$ be the unit section and let $j:X\setminus 0\hookrightarrow X$ be the inclusion at the complement at the origin. Clearly, $j$ is dense. But the base change is the inclusion $\emptyset \hookrightarrow Speck$, which is evidently not a birational equivalence.
\end{coex}

The following and its corollary are a weaker version of this property. We record them here for completeness and for a future use in [\cite{inhot}, \S4.4]. The proof below requires a birational connectivity result for $L_{bir}$, which is an immediate consequence of \Cref{bir bar commute} and will be stated formally in \Cref{0 bir connectivity}.
\begin{lem}\label[lem]{conn locally cart}
    $L_{bir}$ is locally cartesian on $\sigma$-locally connected pointed objects. That is, if $\esc{X}$ is a $\sigma$-locally connected space, the morphism $\esc{Y}\to \esc{W}$ is in $\mathcal{H}^b(S)$, and $\esc{X}\to \esc{W}$ is given, then the morphism $L_{bir}(\esc{X}\times _{\esc{W}}\esc{Y})\to L_{bir}(\esc{X})\times_{\esc{W}} \esc{Y}$ is an equivalence. 
\end{lem}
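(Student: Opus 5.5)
Since $\esc{Y}\to\esc{W}$ is a morphism of birational local spaces, the plan is to reduce the claim to \Cref{Lbir preserve connected fiber sequence} applied fibrewise. First I would dispose of the generality in the base. Replacing $\esc{X}$ by $L_\sigma\esc{X}$ changes nothing: $L_\sigma$ is left exact, $\esc{W},\esc{Y}$ are $\sigma$-local by \Cref{bir local is nis local}, and $L_{bir}L_\sigma\simeq L_{bir}$. So I will assume $\esc{X}$ is a $\sigma$-local, $\sigma$-locally connected pointed space, with the base point giving a point of $\esc{W}$.

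The main step is to compare two fibre sequences. Let $\esc{F}$ be the fibre of $\esc{Y}\to\esc{W}$ over the point. It is birational local, because $\mathcal{H}^b(S)\subset\mathcal{P}(S)$ is closed under limits (\Cref{closed under limits}). Pulling back gives a fibre sequence
\[
\esc{F}\to \esc{X}\times_{\esc{W}}\esc{Y}\to \esc{X}.
\]
Its base $\esc{X}$ is $\sigma$-locally connected, so \Cref{Lbir preserve connected fiber sequence} shows that
\[
\esc{F}\simeq L_{bir}\esc{F}\to L_{bir}(\esc{X}\times_{\esc{W}}\esc{Y})\to L_{bir}\esc{X}
\]
is again a fibre sequence. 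On the other side, $L_{bir}\esc{X}\times_{\esc{W}}\esc{Y}\to L_{bir}\esc{X}$ is a pullback of $\esc{Y}\to\esc{W}$, and the map $\esc{X}\to\esc{W}$ factors through $L_{bir}\esc{X}$ because $\esc{W}$ is local. Hence this second map also has fibre $\esc{F}$ over the base point. The canonical comparison map between the two total spaces lies over $L_{bir}\esc{X}$ and induces the identity on $\esc{F}$.

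It then remains to conclude that the comparison map is an equivalence. Taking a map over a base that is an equivalence on the fibre over one point, I would pass to the long exact sequences of $\sigma$-homotopy sheaves and apply the five lemma. For that I need $L_{bir}\esc{X}$ to be $\sigma$-locally connected, which is the birational connectivity result flagged in \Cref{0 bir connectivity}. It follows from \Cref{bir bar commute}: writing $\esc{X}\simeq \mathrm{B}_\sigma\Omega\esc{X}$ gives $L_{bir}\esc{X}\simeq \mathrm{B}_\sigma L_{bir}\Omega\esc{X}$, and this is connected.

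I expect the main obstacle to be this last step. The five lemma only controls fibres over the base point, so one must argue that, because the base is $\sigma$-locally connected, checking the fibre over a single point suffices $\sigma$-locally. That is, a map of $\sigma$-local spaces over a $\sigma$-connected base that induces an equivalence on fibres over one global point should be an equivalence. The low-degree terms ($\pi_0$ and $\pi_1$ acting on fibres) need care here; I would handle them by the long exact sequence of pointed sheaves, noting that both total spaces surject onto the same connected base, and by using stalks/points of the Nisnevich topos where available.
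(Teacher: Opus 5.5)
Your setup matches the paper's proof. You compare the fibre sequence $\esc{F}\to L_{bir}(\esc{X}\times_{\esc{W}}\esc{Y})\to L_{bir}\esc{X}$, obtained from \Cref{Lbir preserve connected fiber sequence}, with the pulled-back sequence $\esc{F}\to L_{bir}\esc{X}\times_{\esc{W}}\esc{Y}\to L_{bir}\esc{X}$; this is exactly what the paper does after reducing to $\esc{W}\simeq L_{bir}\esc{X}$. Your preliminary replacement of $\esc{X}$ by $L_\sigma\esc{X}$ is a harmless extra that makes that lemma literally applicable.

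The gap is the final step, which you leave open. Your sketched route (five lemma on long exact sequences of $\sigma$-homotopy sheaves, then stalks) at best shows the comparison map is $\infty$-connective in $\mathcal{P}_\sigma(S)$. To conclude it is an equivalence you need Whitehead's theorem, i.e.\ hypercompleteness of $\mathcal{P}_\sigma(S)$, and the stalk argument needs enough points. Neither is available for an arbitrary qcqs $S$ and an arbitrary $\sigma$ coarser than the Nisnevich topology. You could rescue it: both total spaces are birational local, so their homotopy sheaves are the sectionwise homotopy presheaves, and you could then argue sectionwise with fibrations over the connected spaces $L_{bir}\esc{X}(U)$. But this has to be said, and the low-degree bookkeeping still has to be done.

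The missing idea, and the way the paper closes the argument, is descent along an effective epimorphism. Put $B=L_{bir}\esc{X}$, $E=L_{bir}(\esc{X}\times_{\esc{W}}\esc{Y})$, $E'=B\times_{\esc{W}}\esc{Y}$, and let $c\colon E\to E'$ be the comparison map over $B$.
\begin{itemize}
\item $B$ is connected: sectionwise by \Cref{0 bir connectivity}. If you work in $\mathcal{P}_\sigma(S)$, where all these objects live, $\sigma$-local connectivity already suffices.
\item Hence the base point $pt\to B$ is an effective epimorphism, and so is its pullback $\esc{F}\simeq pt\times_B E'\to E'$.
\item Since $E\to B$ factors through $c$, the pullback of $c$ along this map is $pt\times_B E\to pt\times_B E'$. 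This is the map $L_{bir}\esc{F}\simeq\esc{F}\to\esc{F}$ on fibres, which you have already identified as an equivalence.
\item Equivalences descend along effective epimorphisms in an $\infty$-topos [\cite{lurie2009higher}, Lemma 6.2.3.16], so $c$ is an equivalence.
\end{itemize}
No homotopy sheaves, stalks, or $\pi_0$/$\pi_1$ bookkeeping are required.
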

\begin{proof}
 By factoring $\esc{X}\to \esc{W}$ through $L_{bir}\esc{X}$, we may assume that $\esc{X}\to \esc{W}$ is a birational weak equivalence, and thus $L_{bir}\esc{X}\simeq \esc{W}$. Let $\esc{Z}:=\esc{X}\times_{\esc{W}} \esc{Y}$. We have to show that the canonical map $$L_{bir}\esc{Z}\to L_{bir}\esc{X}\times_{L_{bir}\esc{X}}\esc{Y}\simeq \esc{Y} $$ is an equivalence. Let us consider the evident morphism of fiber sequences on the left and their birationalizations on the right: 
\[
\xymatrix{
\Gamma \ar[r]\ar[d]_{\phi}& \esc{Z}\ar[r]\ar[d]& \esc{X}\ar[d]&&&L_{bir}\Gamma \ar[r]\ar[d]_{L_{bir}\phi}& L_{bir}\esc{Z}\ar[r]\ar[d]& L_{bir}\esc{X}\ar[d]\\
\Lambda \ar[r]& \esc{Y}\ar[r]&\esc{W}&&&\Lambda \ar[r]& \esc{Y}\ar[r]&\esc{W}
}
\]
Since $\phi$ is an equivalence in $\mathcal{P}(S)$ (for example, by the pasting lemma for pullbacks) and $\Lambda$ is birational local, we conclude that $L_{bir}(\Gamma)\simeq \Lambda$ (in fact, $\Gamma\simeq L_{bir}\Gamma$). Because $L_{bir}$ preserves $\sigma$-locally connected fiber sequences [\Cref{Lbir preserve connected fiber sequence}], the top line of the diagram on the right is a fiber sequence. The left-hand square in the same diagram (on the right) is a pullback square since the third leg $L_{bir}\esc{X}\to \esc{W}$ is an equivalence and the whole diagram is a morphism of fiber sequences. Moreover, the left bottom map $\Lambda \to \esc{Y}$ in that diagram is an effective epimorphism (since $\esc{W} $ is equivalent to $L_{bir}\esc{X}$, which is connected by birational connectivity \Cref{0 bir connectivity}). By [\cite{lurie2009higher}, Lemma 6.2.3.16], it is evident that the middle map $L_{bir}\esc{Z}\to \esc{Y}$ is an equivalence. This is exactly what we sought to prove. 
\end{proof}
\begin{cor}\label[cor]{locally cart on a1 conn}
  $L_{bir}$ is locally cartesian on pointed $\mathbb{A}^1$-connected objects.
\end{cor}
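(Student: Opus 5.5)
The plan is to reduce the corollary to \Cref{conn locally cart} by showing that a pointed $\mathbb{A}^1$-connected space is Nisnevich-locally connected after replacing it by its motivic localization, at no cost for $L_{bir}$. Take $\esc{X}$ pointed and $\mathbb{A}^1$-connected, meaning $\pi_0^{\mathbb{A}^1}\esc{X}=\pi_0^{nis}L_{mot}\esc{X}\simeq *$. Let $\esc{Y}\to\esc{W}$ be a morphism in $\mathcal{H}^b(S)$ and $\esc{X}\to\esc{W}$ any morphism. We must show that $L_{bir}(\esc{X}\times_{\esc{W}}\esc{Y})\to L_{bir}\esc{X}\times_{\esc{W}}\esc{Y}$ is an equivalence.

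First, replace $\esc{X}$ by $L_{mot}\esc{X}$. Since $\esc{W}$ is birational local, it is motivic local by \Cref{Motivic equivalences are Birational equivalences}, so $\esc{X}\to\esc{W}$ factors through $L_{mot}\esc{X}$. The space $L_{mot}\esc{X}$ is Nisnevich-locally connected by hypothesis, so \Cref{conn locally cart} (with $\sigma=nis$) gives
\[
L_{bir}(L_{mot}\esc{X}\times_{\esc{W}}\esc{Y})\simeq L_{bir}L_{mot}\esc{X}\times_{\esc{W}}\esc{Y}\simeq L_{bir}\esc{X}\times_{\esc{W}}\esc{Y},
\]
using that $L_{mot}\to L_{bir}$ is an $L_{bir}$-equivalence.

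It remains to compare $L_{bir}(\esc{X}\times_{\esc{W}}\esc{Y})$ with $L_{bir}(L_{mot}\esc{X}\times_{\esc{W}}\esc{Y})$, that is, to show that the base change of $\esc{X}\to L_{mot}\esc{X}$ along $\esc{Y}\to\esc{W}$ is a birational equivalence. This is the main obstacle, and I would use the local cartesianness of $L_{mot}$ recalled just before \Cref{Lbir not locally cart}. The map $\esc{Y}\to\esc{W}$ is a morphism of motivic local objects, since birational local spaces are motivic local. Local cartesianness therefore gives
\[
L_{mot}(\esc{X}\times_{\esc{W}}\esc{Y})\simeq L_{mot}\esc{X}\times_{\esc{W}}\esc{Y}.
\]
Thus the comparison map is a motivic equivalence, hence a birational equivalence by \Cref{Motivic equivalences are Birational equivalences}. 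Applying $L_{bir}$ and composing with the previous display finishes the proof.

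If one prefers not to invoke local cartesianness of $L_{mot}$ on non-sheaves, one can first apply $L_{nis}$. Since $L_{nis}$ is left exact and $\esc{W},\esc{Y}$ are Nisnevich local by \Cref{bir local is nis local}, $L_{nis}$ commutes with the fiber product and $L_{bir}L_{nis}\simeq L_{bir}$. This reduces the claim to $\esc{X}\in\mathcal{P}_{nis}(S)$, where the standard statement of local cartesianness [\cite{hoyois2017six}] applies.
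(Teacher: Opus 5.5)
Your proposal is correct and follows the paper's own proof. You use the local cartesianness of $L_{mot}$ together with $L_{bir}L_{mot}\simeq L_{bir}$ to replace $\esc{X}$ by the Nisnevich-connected space $L_{mot}\esc{X}$, then invoke \Cref{conn locally cart} with $\sigma=nis$, writing out explicitly the two comparison equivalences that the paper's one-line argument leaves implicit.
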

\begin{proof}
Since $L_{mot}$ is locally cartesian [\cite{hoyois2017six}, Proposition 3.15] and $L_{bir}L_{mot}\simeq L_{bir}$ (see \Cref{Motivic equivalences are Birational equivalences}), we may apply $L_{mot}$ to the whole setup and assume that $\esc{X}$ is Nisnevich connected.
\end{proof}
\begin{rem}
Since $L_{bir}$ is not locally cartesian (\Cref{Lbir not locally cart}), it is in particular not left exact. 
\end{rem}

\subsection{Birationality vs Null motivic spaces}
In \Cref{form for lbir}, we have constructed a standard formula for birational localization. However, without an explicit model for dense localization, the referred-to model is insufficient for meaningful computations. The aim of the next section is to show that when $\esc{X}$ is $\mathbb{A}^1$ connected and the base is a perfect field, there is an explicit model for $L_{bir}\esc{X}$, given by motivic $S^{1,1}$ and, equivalently, $S^{2,1}$ localization. (This idea of localizing the motivic homotopy category at motivic spheres has been studied extensively in [\cite{asok2023p} \S3] under the name `null motivic spaces'.) To establish the connection between birational local spaces and $S^{2,1}$ (or $S^{1,1}$) null motivic spaces, we will need a few preliminary observations, which we present below.

\begin{lem}\label[lem]{total space of bir seq}
    Suppose $\Gamma \to \esc{X}\to \esc{Y}$ is a fiber sequence in $\mathcal{P}_{\sigma}(S)$, with $\esc{Y}$ a pointed $\sigma$(coarser than Nisnevich)-locally connected space. If $\Gamma$ and $\esc{Y}$ are birational local, then so is $\esc{X}$.
\end{lem}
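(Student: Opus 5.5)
We will deduce the statement from \Cref{Lbir preserve connected fiber sequence}, using the fact that $\Gamma$ and $\esc{Y}$ are fixed by $L_{bir}$.

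Since $\esc{X}\to\esc{Y}$ is a morphism of $\sigma$-local spaces with $\sigma$-locally connected base $\esc{Y}$, \Cref{Lbir preserve connected fiber sequence} shows that $L_{bir}\Gamma\to L_{bir}\esc{X}\to L_{bir}\esc{Y}$ is again a fiber sequence. The unit $\eta:\mathrm{id}\to L_{bir}$ then gives a morphism of fiber sequences
\[
\xymatrix{
\Gamma\ar[r]\ar[d]_{\eta_\Gamma}&\esc{X}\ar[r]\ar[d]^{\eta_{\esc{X}}}&\esc{Y}\ar[d]^{\eta_{\esc{Y}}}\\
L_{bir}\Gamma\ar[r]&L_{bir}\esc{X}\ar[r]&L_{bir}\esc{Y}
}
\]
By hypothesis, $\Gamma$ and $\esc{Y}$ are birational local, so $\eta_\Gamma$ and $\eta_{\esc{Y}}$ are equivalences. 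Fiber sequences here are computed in $\mathcal{P}(S)$, because $\mathcal{H}^b(S)\subset\mathcal{P}_\sigma(S)$ is closed under limits (\Cref{closed under limits}, \Cref{bir local is nis local}). It remains to conclude that $\eta_{\esc{X}}$ is an equivalence.

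This last step is the main obstacle. The fibers over the base point agree, and the bases agree. However, a map of total spaces inducing equivalences on fiber and base need not be an equivalence unless fibers over every point of the base are controlled. That control is exactly what connectedness of $\esc{Y}$ provides.

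I would argue as in the proof of \Cref{conn locally cart}. Since $\eta_{\esc{Y}}$ is an equivalence, the right square is a pullback precisely when it is a morphism of fiber sequences over an equivalent base. Concretely, I would form $P:=L_{bir}\esc{X}\times_{L_{bir}\esc{Y}}\esc{Y}\simeq L_{bir}\esc{X}$ and compare $\esc{X}\to P$ over $\esc{Y}$. Pulling both maps back along the base point $pt\to\esc{Y}$ recovers $\eta_\Gamma$, which is an equivalence. Because $\esc{Y}$ is $\sigma$-locally connected, $pt\to\esc{Y}$ is an effective epimorphism in the $\infty$-topos $\mathcal{P}_\sigma(S)$. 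Descent ([\cite{lurie2009higher}, Lemma 6.2.3.16]) then shows that $\esc{X}\to P$ is an equivalence after $\sigma$-sheafification. Both source and target are $\sigma$-local, so it is an equivalence. Hence $\esc{X}\simeq L_{bir}\esc{X}$, and $\esc{X}$ is birational local.
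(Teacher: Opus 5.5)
Your argument is correct, but it takes a longer route than the paper.

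\textbf{The paper's route.} The paper proves the statement in one line. As in the proof of \Cref{Lbir preserve connected fiber sequence}, connectedness of $\esc{Y}$ gives $\esc{X}\simeq \mathrm{B}_\sigma(\Gamma,\Omega\esc{Y},pt)$. Since $\Gamma$ and $\Omega\esc{Y}$ are birational local (local objects are closed under limits), \Cref{bir bar commute}(2) says this bar construction is birational local. No comparison map and no descent step are needed.

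\textbf{Your route.} You use \Cref{Lbir preserve connected fiber sequence} as a black box. You then upgrade ``equivalence on fiber and on base'' to an equivalence on total spaces. This uses three ingredients: the unit, the identification of the pulled-back map with $\eta_\Gamma$ (naturality of the unit together with $L_{bir}(pt)\simeq pt$; state this explicitly), and conservativity of pullback along the effective epimorphism $pt\to\esc{Y}$ in $\mathcal{P}_\sigma(S)$. That step is sound. It is the same device as in \Cref{conn locally cart}, and you rightly do not need \Cref{0 bir connectivity} because $\esc{Y}$ is connected by hypothesis.

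\textbf{What each buys.} Your version is purely formal: it works for any localization of an $\infty$-topos that preserves fiber sequences over connected bases. However, the proof of \Cref{Lbir preserve connected fiber sequence} already shows $L_{bir}\esc{X}\simeq \mathrm{B}_\sigma(L_{bir}\Gamma, L_{bir}\Omega\esc{Y},pt)$. Under your hypotheses this is just $\mathrm{B}_\sigma(\Gamma,\Omega\esc{Y},pt)\simeq\esc{X}$. So the paper's proof is what you get by opening that black box, and it uses connectedness once rather than twice.

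\textbf{Two small points.}
\begin{enumerate}
\item The descent takes place directly in the $\infty$-topos $\mathcal{P}_\sigma(S)$, where all your objects already live. There $pt\to\esc{Y}$ is an effective epimorphism (it need not be one in $\mathcal{P}(S)$), so ``after $\sigma$-sheafification'' is unnecessary.
\item The sentence saying the right square is a pullback ``precisely when it is a morphism of fiber sequences over an equivalent base'' is muddled and can be dropped. Your construction of $P$ does the actual work.
\end{enumerate}
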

\begin{proof}
As in the proof of \Cref{Lbir preserve connected fiber sequence},
 we know that $\esc{X}\simeq \mathrm{B}_{\sigma}(\Gamma, \Omega \esc{Y}, pt)$, which is birational local by \Cref{bir bar commute} (2), since both $\Gamma$ and $\Omega\esc{Y}$ are birational local. \end{proof}
\begin{rem}\label[rem]{morel's methods of fiber seq}
    When $\sigma$ is non-trivial (for example, the Nisnevich topology), the standard method for proving such statements (see, for example, [\cite{morel2012a1}, Lemma 6.51]) is not applicable here. Indeed, when the method in \textit{loc.cit.} is adapted to the birational setup, it requires one to trace the following map of fiber sequences of Nisnevich-local spaces, induced by a non-trivial dense open immersion $U\hookrightarrow X$:
    \[
    \xymatrix{    \Gamma^X\ar[r]\ar[d]^{j^*_{\Gamma}}&\esc{X}^X\ar[r]\ar[d]^{j^*_{\esc{X}}}& \esc{Y}^X\ar[d]^{j^*_{\esc{Y}}}\\
     \Gamma^{U}\ar[r]&\esc{X}^{U}\ar[r]& \esc{Y}^{U}
    }
    \]
    But the problem is that even if $\esc{Y}$ is Nisnevich connected, there is no reason for either $\esc{Y}^X$ or $\esc{Y}^U$ to be so. So the morphism $\Gamma^{U}\to\esc{X}^{U}$ need not be an effective epimorphism.
    \end{rem} 
    \vspace{.3 cm}
\begin{rem}\label[rem]{nis sheafification can not detect birationality}
   There are non-birational presheaves of sets in $PSh_{\Sigma}(Sm_S)$ (and even in $PSh_{\Sigma}^{\mathbb{A}^1}(Sm_S)$) whose Nisnevich sheafification is birational. A general class of such examples is as follows: take a non-birational sheaf of groups $G$ in $Grp^{\mathbb{A}^1}_{k}$ (say, $\mathbb{G}_m$). Let $F:=H^1_{nis}(-;G)$ (for $\mathbb{G}_m$ this is the Picard group functor). Since $G$ is strongly $\mathbb{A}^1$-invariant, we have $F(\mathbb{A}^1_X)=F(X)$, so $F\in PSh_{\Sigma}^{\mathbb{A}^1}(Sm_S)$. Moreover, its Nisnevich sheafification $L_{nis}F$ is trivial, hence birational. But clearly, $F$ itself is not birational: $$F(\mathbb{P}^1_k)=[ \Sigma\mathbb{G}_m, \mathbf{B}^{nis}G]_{\mathcal{H}^{\mathbb{A}^1}}=[\mathbb{G}_m, G]_{\mathcal{H}^{\mathbb{A}^1}}=G(\mathbb{G}_m)=X^*G\neq pt=F(\mathbb{A}^1_k).$$ This kind of failure is also present in the case of $\mathbb{A}^1$-localization. Namely, there are non-$\mathbb{A}^1$-invariant presheaves of sets in $PSh_{\Sigma}(Sm_S)$ whose Nisnevich sheafification is $\mathbb{A}^1$-invariant. Here is an example. As above, take a non-$\mathbb{A}^1$-local Nisnevich sheaf $G$ of groups in $PSh_{\Sigma}(Sm_S)$ (say, $\mathbb{G}_a$) over a proper scheme $S$. The presheaf $F:=H^1_{zar}(-,\mathbb{G}_a)$ has trivial Nisnevich sheafification (since Zariski torsors are Nisnevich torsors as well), implying that $F^{nis}$ is $\mathbb{A}^1$-local. But since $\mathbb{A}^1_\mathbb{Z}$ is affine, by Serre vanishing and the Kunneth formulae, one has $$F(\mathbb{A}^1_S)=F(S)\otimes H^0_{zar}(\mathbb{A}^1_{\mathbb{Z}}, \mathbb{G}_a)=F(S)[t]\neq F(S).$$ 
\end{rem} 
 \begin{lem}\label[lem]{judgment to bir}
Suppose $S$ is a Qcqs scheme. For a space $\esc{X}\in \mathcal{P}(S)$, the following conditions are equivalent.
    \begin{enumerate}
        \item $\esc{X}$ is birational local. 
        \item $\pi_i\esc{X}$ is birational local for all $i\ge 0$.
        \item $\tau_{\leq i}\esc{X}$ is birational local for all $i\geq 0$.
    \end{enumerate} 
\end{lem}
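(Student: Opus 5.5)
The equivalence (1)$\Leftrightarrow$(2) is exactly \Cref{pi is birational}, so the plan is to establish (2)$\Leftrightarrow$(3). Both directions will use that local objects are detected on homotopy presheaves: by the argument of \Cref{pi is birational}, a presheaf of spaces is birational local if and only if each $\pi_j$ (over every basepoint) is a birational presheaf. Here $\pi_j$ denotes the presheaf on $Sm_S$ sending $X$ to $\pi_j(\esc{X}(X))$, more precisely the presheaf over $\pi_0$ indexed by sections. Dense locality is a sectionwise condition on the maps $\esc{X}(X)\to\esc{X}(U)$, and a map of spaces is an equivalence iff it is a bijection on $\pi_0$ and on all $\pi_j$ at all basepoints. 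Membership in $\mathcal{P}_\Sigma$ is likewise detected on $\pi_j$, since $\pi_j$ commutes with finite products.

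For (2)$\Rightarrow$(3), fix $i\geq 0$ and compute homotopy presheaves of the truncation. The truncation $\tau_{\leq i}$ in $\mathcal{P}(S)$ is computed sectionwise, so $\pi_j\tau_{\leq i}\esc{X}=\pi_j\esc{X}$ for $j\leq i$ and $\pi_j\tau_{\leq i}\esc{X}=\ast$ for $j>i$. The trivial presheaf is birational, since it is terminal. Hence all homotopy presheaves of $\tau_{\leq i}\esc{X}$ are birational, and \Cref{pi is birational} gives that $\tau_{\leq i}\esc{X}$ is birational local.

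For (3)$\Rightarrow$(2), fix $i$ and observe that $\pi_i\esc{X}\simeq\pi_i\tau_{\leq i}\esc{X}$ sectionwise. Since $\tau_{\leq i}\esc{X}$ is birational local, \Cref{pi is birational} applied to it yields that $\pi_i\esc{X}$ is birational. Alternatively, one can go directly from (3) to (1): $\esc{X}\simeq\lim_i\tau_{\leq i}\esc{X}$ because Postnikov towers converge in presheaf $\infty$-categories (they do so sectionwise in $Spc$). By \Cref{closed under limits} the subcategory $\mathcal{H}^b(S)$ is closed under limits, so the limit is birational local.

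The only real subtlety is the basepoint bookkeeping in higher $\pi_j$. "Birational" for $\pi_j$ with $j\geq1$ must be read as the statement for the presheaves $\pi_j(\esc{X}|_{Sm_{/X}},x)$ for all sections $x\in\esc{X}(X)$, equivalently as a condition on the map $\pi_j\to\pi_0$. I would handle this exactly as in \Cref{pi is birational}. The Postnikov convergence route gives a clean alternative that avoids this issue for (3)$\Rightarrow$(1). I expect no genuine obstacle.
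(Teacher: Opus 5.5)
Your proposal is correct and follows the paper's proof: (1)$\Leftrightarrow$(2) is \Cref{pi is birational}, and (2)$\Leftrightarrow$(3) comes from applying that same proposition to $\tau_{\leq i}\esc{X}$, using $\pi_j\tau_{\leq i}\esc{X}=\pi_j\esc{X}$ for $j\leq i$. You also make explicit that the higher homotopy presheaves of the truncation are trivial and hence birational, which the paper leaves implicit. Your alternative route (3)$\Rightarrow$(1) is also valid: it uses sectionwise Postnikov convergence together with \Cref{closed under limits}, and mirrors the paper's argument in \Cref{sigma judgment to bir}(c), except that there Postnikov completeness must be assumed, whereas in $\mathcal{P}(S)$ it holds automatically.
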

\begin{proof}
 The equivalence 443(1)$\iff$(2) is \Cref{pi is birational}. Since $\pi_j\esc{X}=\pi_j\tau_{\leq i}\esc{X}$, for all $j\leq i$, the equivalence (3)$\iff $(2) follows from the case of (1)$\iff$(2). 
\end{proof}

 \begin{prop}\label[prop]{sigma judgment to bir}
Suppose $S$ is a Qcqs scheme and $\sigma$ is a topology coarser than the Nisnevich topology. For a space $\esc{X}\in \mathcal{P}_{\sigma}(S)$, consider the following conditions: 
    \begin{enumerate}
        \item $\esc{X}$ is birational local.
        \item $\tau_{\leq i}^{\sigma}\esc{X}$ is birational local for all $i\geq 0$.
        \item $\pi_i^{\sigma}\esc{X}$ is a birational presheaf for all $i\geq 0$.
    \end{enumerate} 
Then:
\begin{enumerate}[label=(\alph*)]
    \item $1\implies 2\implies 3$. 
    \item If $\esc{X}\in \tau^{\sigma}_{\geq1}\mathcal{P}_{\sigma}(Sm_S)_{\bullet}$ then $3\implies 2$. 
    \item If $\sigma$ is postnikov complete then $2\implies 1$.
\end{enumerate}
In particular, when $\sigma=nis$, $S$ is a Qcqs scheme of finite Krull dimension, and $\esc{X}$ is pointed and Nisnevich connected, the three conditions 1,2, and 3 are equivalent.
\end{prop}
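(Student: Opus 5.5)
For (a), the implication $1\Rightarrow 2$ comes from the fact that $\tau^\sigma_{\le i}$ is a left adjoint onto $\sigma$-sheaves. I would first note that $\tau^\sigma_{\le i}\esc{X} = L_\sigma\tau_{\le i}\esc{X}$. If $\esc{X}$ is birational local, then \Cref{judgment to bir} gives that the sectionwise truncation $\tau_{\le i}\esc{X}$ is birational local. By \Cref{bir local is nis local} it is then Nisnevich local, hence $\sigma$-local for $\sigma$ coarser than Nisnevich. Therefore $L_\sigma\tau_{\le i}\esc{X}\simeq\tau_{\le i}\esc{X}$, which is birational local.

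For $2\Rightarrow 3$, assume each $\tau^\sigma_{\le i}\esc{X}$ is birational local. Applying \Cref{nis sheaves of bir is bir} to $\tau^\sigma_{\le i}\esc{X}$ shows that $\pi^\sigma_j\tau^\sigma_{\le i}\esc{X}=\pi_j\tau^\sigma_{\le i}\esc{X}$ is birational. Taking $i\ge j$, we have $\pi^\sigma_j\tau^\sigma_{\le i}\esc{X}\simeq\pi^\sigma_j\esc{X}$, so $\pi^\sigma_j\esc{X}$ is birational.

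For (b), I would induct up the $\sigma$-Postnikov tower of a pointed $\sigma$-connected $\esc{X}$. The base case is $\tau^\sigma_{\le 0}\esc{X}\simeq pt$. The first nontrivial stage is $\tau^\sigma_{\le 1}\esc{X}\simeq \mathrm{B}_\sigma\pi_1^\sigma\esc{X}$, which is birational local by \Cref{K is local}, since $\pi_1^\sigma\esc{X}$ is a birational sheaf of groups. For the inductive step I would use the fiber sequence
\[
\mathrm{K}^\sigma(\pi^\sigma_{i}\esc{X},i)\to\tau^\sigma_{\le i}\esc{X}\to\tau^\sigma_{\le i-1}\esc{X}.
\]
Its base is pointed and $\sigma$-connected because $\esc{X}$ is. Its fiber is birational local by \Cref{K is local}, because $\pi^\sigma_i\esc{X}$ is a birational sheaf of abelian groups for $i\ge2$. \Cref{total space of bir seq} then gives that the total space $\tau^\sigma_{\le i}\esc{X}$ is birational local. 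The connectedness hypothesis is exactly what \Cref{total space of bir seq} needs, which is why (b) is restricted to connected objects.

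For (c), Postnikov completeness gives $\esc{X}\simeq\lim_i\tau^\sigma_{\le i}\esc{X}$, and by \Cref{closed under limits} a limit of birational local objects is birational local. The final assertion follows by combining (a), (b) and (c). The one point that needs care is the input that the Nisnevich topos over a Qcqs scheme of finite Krull dimension is Postnikov complete. This holds because such schemes have finite Nisnevich cohomological dimension (the homotopy dimension bound of Lurie / Clausen–Mathew), which yields hypercompleteness and Postnikov convergence. I expect this citation to be the only nontrivial external input; everything else reduces to results already established earlier in the paper.
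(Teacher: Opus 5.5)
Your proposal is correct and follows the paper's proof essentially step by step. Both use the same sectionwise-truncation argument for (a), the same $\sigma$-Postnikov induction via \Cref{K is local} and \Cref{total space of bir seq} for (b), and the same limit argument via \Cref{closed under limits} for (c). The final assertion rests on the same Postnikov-completeness input (Clausen--Mathew together with Lurie's homotopy-dimension criterion).
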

\begin{proof}
(a). Assume 1 holds. Then, by \Cref{judgment to bir} we know that $\tau_{\leq i}\esc{X}$ is birational local. Since birational local spaces are automatically Nisnevich local [\Cref{bir local is nis local}] it follows that $\tau_{\leq i}\esc{X}$ is Nisnevich local. And hence $\tau_{\leq i}\esc{X}\simeq L_\sigma\tau_{\leq i}\esc{X}=:\tau^{\sigma}_{\leq i}\esc{X}$. Sicne $\tau_{\leq i}\esc{X}$ is birational local, it follows that so is $\tau^{\sigma}_{\leq i}\esc{X}$. That is 2 holds. 

Performing the same line of arguments with $\pi_i\esc{X}$ instead of $\tau_{\leq i}\esc{X}$ we see that $1\implies 3$. We may now apply this $1\implies 3$ result to the sheaves $\esc{Y}_i:=\tau_{\leq i}^\sigma\esc{X}$ and obtain $2\implies 3$.

(b). For the implications $3\implies 2$ we use standard ideas of Postnikov tower. So assume $3$ holds. First, consider the first slice of the Postnikov tower in the topos $\mathcal{P}_\sigma(S)$:
$$L_{\sigma}\mathrm{K}(\pi^{\sigma}_1(\esc{X}),1)\to \tau^{\sigma}_{\leq1}\esc{X}\to\tau^{\sigma}_{\leq0}\esc{X}=pt$$
Note that, since $\pi^{\sigma}_1\esc{X}$ is given to be birational, by \Cref{K is local} we know that $\mathrm{K}^{\sigma}(\pi^{\sigma}_1(\esc{X}),1)$ is birational local. Thus, the fiber in the above sequence is birational local. Since the base $\tau_{\leq 0}^\sigma\esc{X}$ is contractible (because $\esc{X}$ is $\sigma$-locally connecetd), it follows that $L_{\sigma}\mathbf{K}(\pi^{\sigma}_1(\esc{X}),1)\simeq  \tau^{\sigma}_{\leq1}\esc{X}$, and therefore $\tau^{\sigma}_{\leq 1}\esc{X}$ is birational local. Inductively, assume that $\tau^{\sigma}_{\leq n}\esc{X}$ is birational local. We have the following fiber sequence:    $$\mathrm{K}^{\sigma}(\pi^{\sigma}_{n+1}(\esc{X}),n+1)\to \tau^{\sigma}_{\leq n+1}\esc{X}\to\tau^{\sigma}_{\leq n}\esc{X}$$
Since $\esc{X}$ is $\sigma$-locally connected, we know that so is $\tau^{\sigma}_{\leq n}\esc{X}$. On the other hand, since $\pi_{n+1}^{\sigma}\esc{X}$ is given to be birational local, by \Cref{K is local} we deduce that the fiber $\mathrm{K}^{\sigma}(\pi^{\sigma}_{n+1}(\esc{X}),n+1)$ is birational local. Thus, in the last fiber sequence, both the base and the fiber satisfy the conditions of \Cref{total space of bir seq}, and we obtain the birational locality of $\tau^{\sigma}_{\leq n+1}\esc{X}$. By induction, we conclude that all the truncations $\tau^{\sigma}_{\leq n}\esc{X}$ are birational local. This proves $3\implies 2$.    

(c). It remains to prove that $3\implies 1$ under the assumption that $\mathcal{P}_\sigma (S)$ is postnikov complete. To see how this works, assume all truncations $\tau^{\sigma}_{\leq n}\esc{X}$ are birational local. By Postnikov completeness of $\mathcal{P}_{\sigma}(S)$, we have $\esc{X}\simeq  \plim_{n}\tau^{\sigma}_{\leq n}\esc{X}$. Since the inclusion $\mathcal{H}^b(S)\subset \mathcal{P}_{\sigma}(S)$ is closed under limits [\Cref{closed under limits}], it follows that $$\plim_{n}\tau^{\sigma}_{\leq n}\esc{X}\in \mathcal{H}^b(S),$$ and consequently, $\esc{X}\in \mathcal{H}^b(S)$.

For the `in particular' part, use the well-known fact that when $S$ is Qcqs of finite Krull dimension, the topos $\mathcal{P}_{nis}(S)$ is Postnikov complete. To see this, use [\cite{MR4296353}, Theorem 3.18] and [\cite{lurie2009higher}, Proposition 7.2.1.10]. 
\end{proof}

\begin{rem}
  For the proof of (b), however, the methods of \Cref{morel's methods of fiber seq} do work. The reason is that in this case $(\tau_{\leq n}^{nis}\esc{X})^{X}$ is connected. In fact, when $\tau_{\leq n}^{nis}\esc{X}$ is birational local, then so is $\pi_n^{nis}\tau_{\leq n}^{nis}\esc{X}$ (see \Cref{nis sheaves of bir is bir}). This implies, in particular, that this last term is birational local and hence flasque. Therefore, over a Noetherian base, it follows that $$\pi_0(\tau_{\leq n}^{nis}\esc{X})^{X}\simeq H^n_{nis}(X\times -, \pi_n^{nis}\esc{X})$$ is trivial for all $n\geq 1$ (see [\cite{riou2002theorie}, Lemma 1.39 and 1.40]), so that $\pi_0^{nis}(\tau_{\leq n}^{nis}\esc{X})^{X}=*$.
\end{rem}  
{\small{\textbf{{Birationality and contraction}}}}

Let $G$ be a presheaf of groups on a Qcqs scheme $S$. The $\mathbb{G}_m$-contraction of $G$ is defined as $$G_{-1}:=\Omega_{\mathbb{G}_m^{\wedge1}}G=ker \big(G(\mathbb{G}_{m,S}\times_S-)\xrightarrow{ev_1} G\big),$$ where the map $G(\mathbb{G}_{m,S}\times_S-)\xrightarrow{ev_1} G$ is the restriction along the unit section $1: S\to \mathbb{G}_{m,S}$. 
\begin{lem}\label[lem]{dense has trivial contraction}
    If $G$ is a dense local presheaf of groups over $S$, then $G_{-1}=0$.
\end{lem}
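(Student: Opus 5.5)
The plan is to show that for a dense local presheaf of groups $G$ and any $X\in Sm_S$, the restriction $ev_1: G(\mathbb{G}_{m,S}\times_S X)\to G(X)$ along the unit section is injective, so its kernel $G_{-1}(X)$ is trivial. The key input is that dense locality already forces $\mathbb{A}^1$-invariance and also identifies sections over $\mathbb{G}_m$ with sections over $\mathbb{A}^1$. Since $G$ is dense local, its underlying presheaf of sets is dense local, so \Cref{bir local is P1 local} (applied with the rational scheme $A=\mathbb{A}^1_S$) shows that $G$ is $\mathbb{A}^1$-local: $G(X)\to G(\mathbb{A}^1_S\times_S X)$ is a bijection induced by the projection.

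Next I would use \Cref{dense prod}: the dense open immersion $\mathbb{G}_{m,S}\hookrightarrow \mathbb{A}^1_S$ base changes to a dense open immersion $\mathbb{G}_{m,S}\times_S X\hookrightarrow \mathbb{A}^1_S\times_S X$ along the smooth $X$. Dense locality of $G$ then gives a bijection $G(\mathbb{A}^1_S\times_S X)\xrightarrow{\sim} G(\mathbb{G}_{m,S}\times_S X)$. Composing, the pullback along the projection $p:\mathbb{G}_{m,S}\times_S X\to X$ is a bijection $p^*: G(X)\to G(\mathbb{G}_{m,S}\times_S X)$, since the projection from $\mathbb{G}_m\times X$ factors as the inclusion into $\mathbb{A}^1\times X$ followed by the projection to $X$.

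Finally, the unit section $1\times \mathrm{id}: X\to \mathbb{G}_{m,S}\times_S X$ is a section of $p$, so $ev_1\circ p^*=\mathrm{id}_{G(X)}$. Because $p^*$ is bijective, $ev_1$ is its inverse and hence also bijective; in particular its kernel, which is $G_{-1}(X)$, is trivial. All identifications are natural in $X$, so $G_{-1}=0$ as a presheaf. Since these maps are group homomorphisms (restrictions of a presheaf of groups), no issue with basepoints arises.

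I expect no serious obstacle. The only point needing care is that \Cref{bir local is P1 local} is stated for presheaves of sets, so I must check that applying it to the underlying set-valued presheaf of $G$ is legitimate; it is, since locality is a condition on the underlying sets. I must also make sure to use \Cref{dense prod} to pass to products with arbitrary $X$, rather than only the case $X=S$.
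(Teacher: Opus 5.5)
Your proof is correct and is essentially the paper's argument. The paper applies \Cref{bir local is P1 local} directly to the rational scheme $A=\mathbb{G}_{m,S}$, whereas you apply it to $\mathbb{A}^1_S$ and redo the dense-open step via \Cref{dense prod}, which is what that lemma's proof does for a general rational $A$; your observation that $ev_1$ is a retraction of the bijection $p^*$, hence its inverse, makes explicit a step the paper leaves implicit.
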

\begin{proof}
 By \Cref{bir local is P1 local}, $G$ is $\mathbb{G}_m$-local. That is $G(\mathbb{G}_{m,S}\times_S-)\to G$ is an isomorphism. Since $G_{-1}$ is the kernel of this map, it is trivial.
\end{proof}
\begin{rem}\label[rem]{GENERIC SMOOTHNESS}
In the rest of the section, we will assume that the base is a perfect field. The reason is mostly generic smoothness. In particular, given a dense open immersion $U\hookrightarrow X$ one can find a filtration of $X$ by open subschemes $U=U_n\subset U_{n-1}\subset \cdots\subset U_1\subset U_0=X$ such that for all $i$, $U_{i} \setminus U_{i-1} =:C_i$ is smooth and has trivial normal
bundle in $U _i$ (see [Remark 5.2, \cite{levine2010slices}]). In fact, if $cod_{X}(X\setminus U)\ge d$, then it is clear that $cod_{U_i}(C_i)\geq d$ for all $i$. Thus, given a presheaf, any problem related to its injectivity or surjectivity of its restriction maps along open immersions can be reduced to checking the same on open immersions with smooth complement having a trivial normal bundle. 
\end{rem}

\begin{lem}\label[lem]{gm contraction 0}
     Suppose $G$ is a presheaf of (not necessarily abelian) groups over a perfect field $k$. Then $G$ is birational if and only if it is a strongly $\mathbb{A}^1$-invariant nisnevich sheaf with $G_{-1}=0$.
\end{lem}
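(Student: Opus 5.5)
The forward implication is assembled from results already in the paper. If $G$ is birational, then by \Cref{birational presheavesa re nisnevich sheaves} it is a Nisnevich sheaf. By \Cref{bir is strong a1} (through \Cref{bar of bir monoids} and \Cref{K is local}), $\mathrm{B}_{nis}G$ is birational local and hence motivic local, so $G$ is strongly $\mathbb{A}^1$-invariant. Finally, \Cref{dense has trivial contraction} gives $G_{-1}=0$.

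For the converse, let $G\in Grp^{\mathbb{A}^1}_k$ with $G_{-1}=0$. Since a Nisnevich sheaf already lies in $PSh_\Sigma$, it remains to show that $G(X)\to G(U)$ is bijective for every dense open immersion $U\hookrightarrow X$ in $Sm_k$. Injectivity is standard for strongly $\mathbb{A}^1$-invariant sheaves over a perfect field: Morel's unramifiedness says $G(X)\hookrightarrow G(U)$ for dense $U$ with $X$ irreducible, and we handle each component separately. So only surjectivity is at stake. By \Cref{GENERIC SMOOTHNESS}, we filter $X$ so that each step $U_i\subset U_{i-1}$ removes a smooth closed subscheme $C_i$ with trivial normal bundle. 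It therefore suffices to treat $U=X\setminus Z$ with $Z$ smooth of codimension $d\geq 1$ and trivial normal bundle.

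If $d\geq 2$, unramifiedness (purity in codimension $\geq 2$ for strongly $\mathbb{A}^1$-invariant sheaves, Morel's axiom for unramified sheaves) gives $G(X)\simeq G(X\setminus Z)$ directly, with no contraction hypothesis. If $d=1$, we use the homotopy purity cofiber sequence $X\setminus Z\to X\to \mathrm{Th}(N_Z)\simeq Z_+\wedge \mathbb{P}^1$, which holds because the normal bundle is trivial. Mapping into $\mathrm{B}_{nis}G$, which is motivic local, gives an exact sequence of pointed sets and groups:
\[
G(X)\to G(X\setminus Z)\to [Z_+\wedge S^1\wedge\mathbb{G}_m,\mathrm{B}_{nis}G]\simeq [Z_+\wedge\mathbb{G}_m, G]_{\bullet}\simeq G_{-1}(Z).
\]
We then identify the obstruction to extending a section across $Z$ with an element of $G_{-1}(Z)$. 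This is the classical Gersten/Rost–Schmid boundary for codimension-one points, namely $G(X\setminus Z)\to H^1_Z(X;G)\simeq G_{-1}(Z)$. Since $G_{-1}=0$, the restriction map is surjective. Nonabelian exactness in this degree only needs the pointed-set form, because $[\Sigma(\cdot),\mathrm{B}G]$ is a group acting on the relevant term.

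The main obstacle is the codimension-one step: in the nonabelian setting we must ensure that the purity sequence genuinely yields surjectivity, and not only exactness at a basepoint. I would first try the fiber sequence $\Omega(\mathrm{B}G)^{\mathrm{Th}}\to\cdots$, using connectedness of $\mathrm{B}_{nis}G$ and its motivic locality. Alternatively, one can reduce to the generic point of $Z$ (Henselization, $X=\mathbb{A}^1_{\mathcal{O}}$-type local models) and use Morel's description $G(\mathcal{O}_{X,z})\to G(k(X))\to G_{-1}(k(z))$. Both routes rely on $k$ perfect, as flagged in \Cref{GENERIC SMOOTHNESS}.
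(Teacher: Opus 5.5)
Your proposal is correct and follows essentially the paper's proof. The forward direction uses \Cref{bir is strong a1} and \Cref{dense has trivial contraction}. The converse uses unramifiedness for injectivity, then maps the purity cofiber sequence into the motivic local $\mathrm{B}_{nis}G$ and identifies the obstruction term with $G_{-1}(Z)$. The only difference is that the paper handles all codimensions $n\geq 1$ uniformly via $\mathrm{Th}(N_Z)\simeq\Sigma^n Z_+\wedge\mathbb{G}_m^{\wedge n}$, whereas you split off codimension $\geq 2$ by unramifiedness.

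The ``main obstacle'' you flag is not real. The third term of the pointed exact sequence $G(X)\to G(X\setminus Z)\to G_{-1}(Z)$ is a single point, so exactness at $G(X\setminus Z)$ already gives surjectivity, and the alternative routes you sketch are unnecessary.
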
 
\begin{proof}
The only if part follows from \Cref{bir is strong a1} and \Cref{dense has trivial contraction}.

Conversely, let $G$ be a strongly $\mathbb{A}^1$-invariant Nisnevich sheaf of groups with $G_{-1}=1$. Since such sheaves are weakly unramified [\cite{bachmann2024strongly}, Corollary 2.4], i.e., are injective on dense open immersions, to show that $G$ is birational, it suffices to show that the restriction map $G(X)\to G(U)$ is surjective for a dense open immersion $U\hookrightarrow X$. Moreover, by \Cref{GENERIC SMOOTHNESS}, we may assume that the complement $Z=X\setminus U$ is smooth and has a trivial normal bundle in $X$. In this situation, purity gives a cofiber sequence $$Th_Z(N_ZX)\to \Sigma U\to \Sigma X,$$ where $N_ZX$ is the normal bundle of $Z$ in $X$. Mapping this sequence to $\mathbf{B}^{nis}G$ (which is given to be $\mathbb{A}^1$-local) in $\mathcal{H}^{\mathbb{A}^1}(k)$, we are left to show that $$[Th_Z(N_ZX), \mathbf{B}^{nis}G]_{\mathcal{H}^{\mathbb{A}^1}{(k) }}$$ is trivial. Since $U\hookrightarrow X$ is dense, the normal bundle has positive rank, say $n\geq 1$. Because $Z$ has trivial normal bundle in $X$, i.e., there is an isomorphism $N_ZX\simeq Z\times \mathbb{A}^n_k$, we have $$ Th_Z(N_ZX)\simeq Z_+\wedge T^n\simeq ^{mot} \Sigma^nZ_+\wedge \mathbb{G}^{\wedge n}.$$ So we see that indeed, 
\begin{flalign*}
  [Th_Z(N_ZX), \mathbf{B}^{nis}G]_{\mathcal{H}^{\mathbb{A}^1(k) }}&\simeq[\Sigma^nZ_+\wedge \mathbb{G}^{\wedge n}, \mathbf{B}^{nis}G]_{\mathcal{H}^{\mathbb{A}^1}{(k) }}\simeq [\Sigma^{n-1}Z_+\wedge \mathbb{G}^{\wedge n}, G]_{\mathcal{H}^{\mathbb{A}^1}{(k) }}\\&\simeq [\Sigma^{n-1}Z_+\wedge \mathbb{G}^{\wedge n-1}, \Omega_\mathbb{G}G]_{\mathcal{H}^{\mathbb{A}^1}{(k) }}\simeq pt \text{ (since }\Omega_\mathbb{G}G=G_{-1}=1). 
\end{flalign*}
\end{proof}

\begin{rem}
If one has an independent proof of Corollary 4.6 in [\cite{MR4637972}], then, together with \Cref{gm contraction 0} here and Corollary 3.8 from \textit{loc.cit.}, one may deduce Theorem 4.5 in \textit{loc.cit.} 
\end{rem} 
{\small{\textbf{{Motivic null spaces}}}}

We now recall the notion of motivic null spaces from [\cite{asok2023p}]. A motivic space is said to be $S^{p,q}$-null if it is local with respect to the motivic sphere $S^{p,q}$. In the special case of $S^{2,1}$ (and $S^{1,1}$), the definition can be restated as follows:
\begin{defn}
A space $\esc{X}\in \mathcal{P}(S)$ is said to be motivically $S^{2,1}$ (resp. $S^{1,1}$) null if it is $\mathbb{A}^1$-local, Nisnevich-local, and $\mathbb{P}^1$-local (resp. $\mathbb{G}_m$-local) (see \Cref{A motivic local}). We denote the corresponding localization functors by $$L^{2,1}:\mathcal{P}(S)\to L^{2,1}\mathcal{P}(S)\text{ (and }L^{1,1}:\mathcal{P}(S)\to L^{1,1}\mathcal{P}(S),\text{ respectively)}.$$ 
\end{defn}
We now discuss the main result leading to the announced model of $L_{bir}$:
\begin{thm}\label{detecting n birationality by gm contraction}
Let $k$ be a perfect field. The following conditions on a Nisnevich-locally connected pointed motivic space $\esc{X}$ over $k$ are equivalent:
    \begin{enumerate}
        \item $\esc{X}$ is birational local.
        \item $\tau_{\leq i}^{nis}\esc{X}$ is birational local for all $i\geq 0$.
        \item $\pi_i^{nis}\esc{X}$ is birational for all $i\geq 0$.
        \item $(\pi_i^{nis}\esc{X})_{-1}=0$ for all $i\geq  0$.
        \item $\esc{X}$ is $\mathbb{G}_m$ local (equivalently, $S^{1,1}$ null).
        \item $\esc{X}$ is $\mathbb{P}^1$ local (equivalently, $S^{2,1}$ null).
    \end{enumerate}
\end{thm}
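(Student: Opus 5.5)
The plan is to arrange the six conditions into a cycle in which each step uses earlier results of the paper or standard facts from Morel's theory over a perfect field. Throughout, $\esc{X}$ is pointed, Nisnevich-locally connected and motivic, and $k$ is perfect.

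\textbf{Step 1: (1)$\iff$(2)$\iff$(3).} This is the ``in particular'' clause of \Cref{sigma judgment to bir} with $\sigma = nis$. The base $\mathrm{Spec}\,k$ is Qcqs of Krull dimension $0$, so $\mathcal{P}_{nis}(k)$ is Postnikov complete. Since $\esc{X}$ is pointed and Nisnevich connected, parts (a), (b) and (c) of that proposition apply and give all three equivalences.

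\textbf{Step 2: (3)$\iff$(4).} By Morel's theory, for a pointed $\mathbb{A}^1$-connected motivic space over a perfect field:
\begin{itemize}
\item $\pi_1^{nis}\esc{X}$ is strongly $\mathbb{A}^1$-invariant;
\item $\pi_i^{nis}\esc{X}$ is strictly $\mathbb{A}^1$-invariant for $i \geq 2$;
\item $\pi_0^{nis}\esc{X} = pt$, which is trivially birational and has trivial contraction.
\end{itemize}
For $i \geq 1$, \Cref{gm contraction 0} then says that $\pi_i^{nis}\esc{X}$ is birational if and only if $(\pi_i^{nis}\esc{X})_{-1} = 0$. Strictly invariant sheaves are in particular strongly invariant, so the lemma covers the abelian cases too.

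\textbf{Step 3: (1)$\implies$(5) and (1)$\implies$(6).} Both $\mathbb{G}_m$ (a dense open subscheme of $\mathbb{A}^1$) and $\mathbb{P}^1$ (which contains $\mathbb{A}^1$ as a dense open) are rational schemes. \Cref{bir and rat}(1) therefore gives $\mathcal{H}^b(k) \subset \mathcal{H}^{\mathbb{G}_m}(k) \cap \mathcal{H}^{\mathbb{P}^1}(k)$. Hence a birational local $\esc{X}$ is both $\mathbb{G}_m$-local and $\mathbb{P}^1$-local.

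\textbf{Step 4: (5)$\implies$(4) and (6)$\implies$(4).} This closes the cycle.
\begin{itemize}
\item Suppose $\esc{X}$ is $\mathbb{G}_m$-local. Then restriction along the unit section makes $\esc{X}^{\mathbb{G}_m} \to \esc{X}$ an equivalence. Its fiber, the pointed mapping space $\Omega_{\mathbb{G}_m}\esc{X}$, is therefore contractible.
\item Suppose $\esc{X}$ is $\mathbb{P}^1$-local. Since $\mathbb{P}^1 \simeq S^1 \wedge \mathbb{G}_m$ motivically, the same argument shows that $\Omega_{\mathbb{P}^1}\esc{X} \simeq \Omega\,\Omega_{\mathbb{G}_m}\esc{X}$ is contractible.
\end{itemize}
Now invoke Morel's contraction theorem for pointed $\mathbb{A}^1$-connected spaces over a perfect field: $\pi_i^{\mathbb{A}^1}(\Omega_{\mathbb{G}_m}\esc{X}) \simeq (\pi_i^{\mathbb{A}^1}\esc{X})_{-1}$. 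The first bullet then gives $(\pi_i^{nis}\esc{X})_{-1} = 0$ for every $i \geq 1$. The second bullet gives $(\pi_{i+1}^{nis}\esc{X})_{-1} = \pi_i(\Omega_{\mathbb{P}^1}\esc{X}) = 0$ for all $i \geq 0$, which again covers every $i \geq 1$. In both cases the case $i = 0$ holds trivially, as noted in Step 2.

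Combining the steps yields the chain $(5),(6) \Rightarrow (4) \Rightarrow (3) \Rightarrow (1) \Rightarrow (5),(6)$. Together with Step 1, this proves the equivalence of all six conditions.

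The main obstacle is Step 4. It relies on importing Morel's identification of the homotopy sheaves of $\Omega_{\mathbb{G}_m}\esc{X}$ with the contractions of the homotopy sheaves of $\esc{X}$. This requires that $\esc{X}$ be pointed and $\mathbb{A}^1$-connected, which is where the connectivity hypothesis is essential (compare \Cref{coexamp}). Some care is also needed to justify passing from unpointed $\mathbb{G}_m$-locality to contractibility of the pointed loop space. This follows from the fiber sequence $\Omega_{\mathbb{G}_m}\esc{X} \to \esc{X}^{\mathbb{G}_m} \to \esc{X}$ over the base point.
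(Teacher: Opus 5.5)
Your proof is correct. Steps 1 and 2 match the paper exactly: \Cref{sigma judgment to bir} gives (1)$\iff$(2)$\iff$(3), and \Cref{gm contraction 0} together with Morel's strong $\mathbb{A}^1$-invariance of $\pi_i^{nis}\esc{X}$ gives (3)$\iff$(4).

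Where you differ is in how (5) and (6) are brought in.

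\textbf{The paper's route.} It cites [\cite{asok2023p}, Corollary 3.1.21] for (4)$\iff$(5)$\iff$(6), in both directions. This is done independently of conditions (1)--(3).

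\textbf{Your route.} You close a cycle instead.
\begin{itemize}
\item You get (1)$\Rightarrow$(5),(6) from the inclusion $\mathcal{H}^b\subset\mathcal{H}^A$ for rational $A$ (\Cref{bir and rat}), applied to $A=\mathbb{G}_m$ and $A=\mathbb{P}^1$.
\item You prove only (5),(6)$\Rightarrow$(4). You do this by applying Morel's contraction theorem [\cite{morel2012a1}, Theorem 6.13] to the contractible spaces $\Omega_{\mathbb{G}_m}\esc{X}$ and $\Omega_{\mathbb{P}^1}\esc{X}\simeq\Omega\,\Omega_{\mathbb{G}_m}\esc{X}$.
\end{itemize}
Your remarks about passing from unpointed locality to the pointed fibre, and about needing $\esc{X}$ motivic to replace $\mathbb{P}^1$ by $S^1\wedge\mathbb{G}_m$, are exactly the details this requires.

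\textbf{What each route buys.} Yours is more self-contained. It uses only the ``null $\Rightarrow$ contractions vanish'' half of Morel's result. It makes visible that (1)$\Rightarrow$(5),(6) needs no connectivity hypothesis at all, which the paper only notes in a later remark. And it places the substantive reverse implication, from vanishing contractions back to nullity, entirely on the paper's own \Cref{gm contraction 0} and the Postnikov/bar-construction argument. The paper's citation is shorter. It also yields the equivalence of (4), (5) and (6) as a statement purely about null motivic spaces, independent of any of the birational machinery.
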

\begin{proof}
    The equivalence of the first three conditions is \Cref{sigma judgment to bir}. (3)$\iff$(4) follows from \Cref{gm contraction 0} since $\pi_i^{nis}\esc{X}$ are strongly $\mathbb{A}^1$-invariant [\cite{morel2012a1}, Corollary 6.2]. (6)$\iff$(4)$\iff$ (5) follows from [\cite{asok2023p}, Corollary 3.1.21]. Indeed, for a motivic space, $\mathbb{G}_m$-locality is equivalent to $S^{1,1}$-nullity, which, by the above-mentioned corollary [\cite{asok2023p}, Corollary 3.1.21], is equivalent to the vanishing $(\pi_i^{nis}\esc{X})_{-1}=0$ for $i\geq 0$ (the case $i=0$ is trivially true due to the connectivity of $\esc{X}$). Similarly, $\mathbb{P}^1$-locality is equivalent to $S^{2,1}$-nullity, which, by the conditions in \textit{loc.cit}, is equivalent to the vanishing $(\pi_i^{nis}\esc{X})_{-1}=0$ for $i\geq 1$ (again, the case $i=0$ is trivially true by the connectivity of $\esc{X}$).
\end{proof}
\begin{rem}
    Note that the results referred to in the proof, namely [\cite{asok2023p}, Corollary 3.1.21] do not use any explicit model for nullification and follow immediately from Morel's result: $\Omega_{\mathbb{G}}\pi_i^{\mathbb{A}^1}\simeq \pi_i^{\mathbb{A}^1}\Omega_\mathbb{G}$ [\cite{morel2012a1}, Theorem 6.13].
\end{rem}
\begin{rem}
    Since $$\mathcal{H}^b(k)\subset \mathcal{H}^{rat}(k)\subset \mathcal{H}^{\mathbb{A}^1}(k)\bigcap\mathcal{H}^{\mathbb{P}^1}(k)$$ (see \Cref{bir and rat}), when $k$ is a perfect field, the above theorem can be restated as saying that the inclusions restrict to equalities $$\tau^{nis}_{\geq 1}\mathcal{H}^b(k)= \tau^{nis}_{\geq 1}\mathcal{H}^{rat}(k)= \tau^{nis}_{\geq 1}\big(\mathcal{H}^{\mathbb{A}^1}(k)\bigcap\mathcal{H}^{\mathbb{P}^1}(k)\big).$$ The following counterexample shows that these equalities need not hold without the connectivity assumption.
\end{rem}
\begin{coex}[to the possibility that $\mathcal{H}^{\mathbb{A}^1}\bigcap \mathcal{H}^{\mathbb{P}^1}\subset \mathcal{H}^b$]\label[coex]{coexamp}
\mbox{}

1. It is well known that the stack of finite etale schemes and isomorphisms, denoted $\mathrm{F\acute{e}t}:=\mathrm{F\acute{E}t}^{\simeq}$, is Nisnevich-local and $\mathbb{P}^1$-local (see, for example, [\cite{p1algtop}, Example 2.2.18]). When the base is the field of complex numbers, we claim that this stack (viewed as a presheaf of groupoids on $Sm_\mathbb{C}$) is $\mathbb{A}^1$-local as well. To see this, it suffices to show that for any smooth scheme $X/\mathbb{C}$, the canonical map $$ \mathrm{F\acute{E}t}(X)\to \mathrm{F\acute{E}t}(X\times_\mathbb{C} \mathbb{A}^1_\mathbb{C})$$ induced by the projection $X\times _\mathbb{C}\mathbb{A}^1_\mathbb{C}\to X$ is an equivalence. This follows from the Riemann existence theorem [\cite{milne1980etale}, Theorem 3.4] and the $\mathbb{C}$-invariance of finite coverings of topological spaces. However, $\mathrm{F\acute{e}t}$ is not birational local. For example, consider the dense open immersion $\mathbb{G}_{m,\mathbb{C}}\hookrightarrow \mathbb{A}^1_\mathbb{C}$. By [\cite[\href{https://stacks.math.columbia.edu/tag/0BQF}{Lemma 0BQF}]{stacks-project}], we know that $\mathrm{F\acute{E}t}({\mathbb{A}^1_\mathbb{C}})\to \mathrm{F\acute{E}t}({\mathbb{G}_{m,\mathbb{C}}})$ is fully faithful. Hence, if $\mathrm{F\acute{e}t}({\mathbb{A}^1_\mathbb{C}})\to \mathrm{F\acute{e}t}({\mathbb{G}_{m,\mathbb{C}}})$ were an equivalence of groupoids, then $\mathrm{F\acute{E}t}({\mathbb{A}^1_\mathbb{C}})\to \mathrm{F\acute{E}t}({\mathbb{G}_{m,\mathbb{C}}})$ would be so as well. But by the Riemann Existence Theorem again, this is impossible, since $\pi_1^{top}(\mathbb{G}_{m,\mathbb{C}}(\mathbb{C}))=\mathbb{Z}$ while $\pi_1^{top}(\mathbb{A}^1_{\mathbb{C}}(\mathbb{C}))=0$. The problem is that $\mathrm{F\acute{e}t}$ is not Nisnevich locally connected. Indeed, $$\mathrm{F\acute{e}t}(\mathbb{C})=\mathrm{F\acute{E}t}(\mathbb{C})^{\simeq }=FinSet^{\simeq},$$ so $\pi_0^{nis}\mathrm{F\acute{E}t}$ evaluated at the base point $Spec\mathbb{C}$ equals to $\pi_0(FinSet^{\simeq})=\mathbb{Z}\neq *$.

2. There is an even simpler counterexample. Namely, the scheme $\mathbb{G}_m$ is both $\mathbb{A}^1$-local and $\mathbb{P}^1$-local (see [\cite{p1algtop}, Example 2.2.13]). But, $\mathbb{G}_m$ is not birational local; it is infact, birationally contractible. The prroblem is that $\mathbb{G}_m$ is disconnected.
\end{coex}   
\begin{rem}
    The point of mentioning the first complecated example is the following: 

1. First, $\mathbb{G}_m$ is discrete while $\mathrm{F\acute{e}t}$ has higher homotopies.

2. Secondly, example (1) is also a cdh sheaf (infact, $\mathrm{F\acute{E}t}$ satisfies descent for proper surjecctive morphisms [\cite{10.1007/BFb0058664}, Theorem 4.12]. But $\mathbb{G}_m$ is not a cdh sheaf (for example, it is not a topological invariant). So the theorem is not true for non connected spaces even with cdh descent.
\end{rem}
\begin{rem}
For grouplike spaces one does not need a connectivity assumption. We shall establish this in [\cite{p1algtop}].
\end{rem}
\begin{rem}
    It is worth noting that, stably, a similar claim holds without any connectivity assumption (i.e., an $S^1$-spectra is birational local iff it is $\mathbb{A}^1$-local and $\mathbb{P}^1$-local). The version for chain comlexes with transfers is known due to Ayoub [\cite{ayoub2020P1}, Proposition 3.9].
\end{rem}

\subsection{On finding a model for \texorpdfstring{$L_{bir}$}{}}
In this section, we will finally construct a model for $L_{bir}$ on motivically connected spaces. Before that, we take a detour to establish the required connectivity result for null spaces. The authors of [\cite{asok2023p}] deduce a connectivity result for $L^{p,q}$ in general; however, their formula for $L^{p,q}$, denoted $(-)_{\infty}^{p,q}$, is obtained only via the small object argument. We emphasize that in the special case of $S^{2,1}$ (and $S^{1,1}$) localization, it is possible to construct an explicit model (over an arbitrary base), and, in fact, one may immediately deduce a connectivity result from it. In an upcoming work, we will study the $\mathbb{P}^1$ motivic homotopy theory in greater detail [PHT \cite{p1algtop}]. We recall the relevant results from there and note that they apply equally to $\mathbb{G}_m$ localization and, for that matter, to localization at any smooth scheme with a rational point. The reader may use the connectivity result in [\cite{asok2023p}, Corollary 3.1.26]. Our result is stronger in that it holds over arbitrary base schemes and yields an explicit surjection of path components.

Let us recall the following results from [\cite{p1algtop}]:
\begin{thm}[\cite{p1algtop}, Theorem 2.1.1, Lemma 2.1.4]\label{model for P1 localization}
Let $Y/S$ be a smooth scheme and $\esc{X}\in \mathcal{P}(S)$.
\begin{enumerate}
    \item  Let $\delta^\bullet_{\mathbb{P}^1}\times Y$ be the sub-diagram of the slice category $({Sm_S})_{/Y}$ defined by $$\delta^n_{\mathbb{P}^1}\times Y:=(\mathbb{P}^1_S)^{\times n}\times _S Y,$$ with structure maps given by projection to $Y$. The $\mathbb{P}^1$-localization functor on $\mathcal{P}(S)$ is given by the following colimit formulae:    $$X\mapsto  L_{\mathbb{P}^1}\esc{X}(Y) =\colim_{(\delta^\bullet_{\mathbb{P}^1}\times Y)^{op}} \esc{X}({{\delta_{\mathbb{P}^1}^n}\times Y})$$
    \item The canonical map $\pi_0^{}\esc{X}\to \pi_0^{}L_{\mathbb{P}^1}\esc{X}$ is a surjective morphism of presheaves of sets on $Sm_S$.
\end{enumerate}
\end{thm}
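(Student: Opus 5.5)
The plan is to treat the colimit in (1) as a ``$\mathbb{P}^1$-singular construction''. Write $\mathrm{Sing}^{\mathbb{P}^1}\esc{X}(Y):=\colim_{(\delta^\bullet_{\mathbb{P}^1}\times Y)^{op}}\esc{X}(\delta^n_{\mathbb{P}^1}\times Y)$. The indexing diagram $\delta^\bullet_{\mathbb{P}^1}\times Y$ consists of the products $(\mathbb{P}^1_S)^{\times n}\times_S Y$ over $Y$. I take its morphisms to be generated by two kinds of maps:
\begin{enumerate}
\item the projections forgetting factors;
\item the sections inserting the rational points $0,\infty\colon S\to\mathbb{P}^1_S$ into a factor.
\end{enumerate}
Both kinds are maps over $Y$. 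The construction is functorial in $Y$ by base change, so $\mathrm{Sing}^{\mathbb{P}^1}$ is an endofunctor of $\mathcal{P}(S)$ receiving a natural map $\esc{X}\to\mathrm{Sing}^{\mathbb{P}^1}\esc{X}$ from the $n=0$ term. Part (1) then reduces to two claims:
\begin{itemize}
\item[(a)] $\mathrm{Sing}^{\mathbb{P}^1}\esc{X}$ is $\mathbb{P}^1$-local;
\item[(b)] $\esc{X}\to\mathrm{Sing}^{\mathbb{P}^1}\esc{X}$ is a $\mathbb{P}^1$-equivalence.
\end{itemize}

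For (a), I would use a cofinality argument. The restriction $\mathrm{Sing}^{\mathbb{P}^1}\esc{X}(Y)\to\mathrm{Sing}^{\mathbb{P}^1}\esc{X}(\mathbb{P}^1\times_S Y)$ is a comparison of colimits. The second colimit is indexed by $\delta^\bullet_{\mathbb{P}^1}\times(\mathbb{P}^1\times Y)$, which is the subdiagram of $\delta^{\bullet+1}_{\mathbb{P}^1}\times Y$ obtained by fixing one extra factor. I would check that the inclusion of this shifted diagram, together with the projection back to $\delta^\bullet\times Y$, are mutually inverse up to cofinal functors. The essential input is the identity $\delta^n\times(\mathbb{P}^1\times Y)=\delta^{n+1}\times Y$, together with the fact that the section at $\infty$ splits the projection. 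Granting this, the restriction map is an equivalence, so $\mathrm{Sing}^{\mathbb{P}^1}\esc{X}$ is $\mathbb{P}^1$-local.

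For (b), note that each term $Y\mapsto\esc{X}(\delta^n\times Y)$ is the internal mapping object $\esc{X}^{(\mathbb{P}^1)^{n}}$. Since $\mathbb{P}^1$-equivalences form a saturated class closed under colimits, it suffices to show that each $\esc{X}\to\esc{X}^{(\mathbb{P}^1)^n}$ is a $\mathbb{P}^1$-equivalence. I would test this against a $\mathbb{P}^1$-local $\esc{Z}$. Since $\mathcal{P}(S)$ is generated under colimits by representables and mapping into a local object turns colimits into limits, one may reduce to $\esc{X}$ representable.

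I expect this reduction to be the main obstacle. For $\mathbb{A}^1$ the analogous step uses the multiplication homotopy $\mathbb{A}^1\times\mathbb{A}^1\to\mathbb{A}^1$, and $\mathbb{P}^1$ has no such contraction. What I would try first is to avoid the term-by-term claim altogether. Instead I would show directly that $\mathrm{Map}(\mathrm{Sing}^{\mathbb{P}^1}\esc{X},\esc{Z})\simeq\mathrm{Map}(\esc{X},\esc{Z})$ for $\mathbb{P}^1$-local $\esc{Z}$. The idea is to use that precomposition with the section $\infty$ retracts each stage onto the $n=0$ stage. Combined with the locality of $\esc{Z}$, this should make the limit diagram of mapping spaces essentially constant.

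Part (2) is then formal. A colimit of spaces has $\pi_0$ equal to the quotient of $\coprod_n\pi_0\esc{X}(\delta^n\times Y)$ by the relations generated by the transition maps. Any class in $\pi_0\esc{X}(\delta^n\times Y)$ is identified in the colimit with its restriction along the section $Y\to\delta^n\times Y$ given by inserting $\infty$ in every factor. That restriction lies in $\pi_0\esc{X}(Y)$, so $\pi_0\esc{X}(Y)\to\pi_0L_{\mathbb{P}^1}\esc{X}(Y)$ is surjective for every $Y$. This is naturality in $Y$ plus surjectivity, i.e.\ a surjective morphism of presheaves of sets.
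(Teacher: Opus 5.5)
\textbf{Your indexing diagram is the wrong one, and with it the formula is false.} The diagram $\delta^\bullet_{\mathbb{P}^1}\times Y$ is the full subcategory of $(Sm_S)_{/Y}$ on the objects $(\mathbb{P}^1_S)^{\times n}\times_S Y$. Its morphisms are therefore all $Y$-morphisms, i.e.\ arbitrary maps $(\mathbb{P}^1)^n\times Y\to(\mathbb{P}^1)^m$. In particular it contains the sections $(x,\mathrm{id}_c)\colon c\to\mathbb{P}^1\times c$ for every $x\colon c\to\mathbb{P}^1$, such as diagonals. You keep only projections and the constant sections $0,\infty$, which is a cubical singular construction.

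To see that this fails, take $S=\mathrm{Spec}\,k$ and $\mathcal{X}=h_{\mathbb{P}^1}$. Then $\pi_0$ of your colimit at $Y$ is $\mathbb{P}^1(Y)$ modulo the relation generated by $H(0,-)\sim H(\infty,-)$ for $H\colon\mathbb{P}^1\times Y\to\mathbb{P}^1$. At $Y=\mathrm{Spec}\,k$ this is a point, by M\"obius transformations. At $Y=\mathbb{P}^1_k$, however, the degree is an invariant: if $H^*\mathcal{O}(1)=\mathcal{O}(a,b)$, then $H(0,-)$ and $H(\infty,-)$ both have degree $b$, and every map in your diagram preserves this last multidegree entry. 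So $\mathrm{id}_{\mathbb{P}^1}$ and a constant map stay distinct, $\pi_0$ is not $\mathbb{P}^1$-invariant, and claim (a) fails.

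In your cofinality sketch, the composite $\mathrm{Sing}\,\mathcal{X}(Y)\to\mathrm{Sing}\,\mathcal{X}(\mathbb{P}^1\times Y)\to\mathrm{Sing}\,\mathcal{X}(Y)$ is fine. The other composite requires identifying in the colimit the pullbacks along the two projections $\mathbb{P}^1\times d\rightrightarrows d$, for $d$ over $\mathbb{P}^1\times Y$, and only one of these projections lies over $\mathbb{P}^1\times Y$. What does this is the section $d\to\mathbb{P}^1\times d$ given by the base coordinate of $d$. It lies over $\mathbb{P}^1\times Y$ and is split by both projections, so the usual ``natural transformation gives a homotopy of induced maps on colimits'' argument applies. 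No constant section can do this job. In the full diagram, for instance, the diagonal $Y\to\mathbb{P}^1\times Y$ for $Y=\mathbb{P}^1$, applied to $\mathrm{pr}_1$, is exactly what identifies $\mathrm{id}_{\mathbb{P}^1}$ with a constant map.

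\textbf{The reduction in (b) is false, not merely hard.} The map $h_{\mathbb{P}^1}\to\underline{\mathrm{Hom}}(\mathbb{P}^1,\mathbb{P}^1)$ is not a $\mathbb{P}^1$-equivalence. Indeed $L_{\mathbb{P}^1}h_{\mathbb{P}^1}\simeq *$. On the other hand, the degree is a map to the $\mathbb{P}^1$-local discrete presheaf of locally constant $\mathbb{N}$-valued functions, and it shows that $L_{\mathbb{P}^1}\underline{\mathrm{Hom}}(\mathbb{P}^1,\mathbb{P}^1)$ has infinitely many components over $k$. Your fallback (``the limit of mapping spaces is essentially constant'') is the same claim: the section at $\infty$ only exhibits $\mathrm{Map}(\mathcal{X},\mathcal{Z})$ as a retract. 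Moreover, with the correct diagram the morphism sets depend on $Y$, so $\mathrm{Sing}^{\mathbb{P}^1}\mathcal{X}$ is not a colimit of the $\mathcal{X}^{(\mathbb{P}^1)^n}$ over a fixed category in the first place.

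A route that works has three ingredients:
\begin{itemize}
\item $\mathrm{Sing}^{\mathbb{P}^1}$ commutes with colimits in $\mathcal{X}$, since colimits are sectionwise.
\item $\mathcal{Z}\to\mathrm{Sing}^{\mathbb{P}^1}\mathcal{Z}$ is an equivalence for $\mathbb{P}^1$-local $\mathcal{Z}$, because the diagram is then equivalent to a constant one indexed by a category with terminal object $Y$.
\item $\mathrm{Sing}^{\mathbb{P}^1}$ inverts $h_{\mathbb{P}^1\times V}\to h_V$.
\end{itemize}
For the third point, write $\mathrm{Sing}^{\mathbb{P}^1}h_W(Y)$ as the nerve of the category of pairs $(c,\,c\to W)$. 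The composite $\mathrm{Sing}^{\mathbb{P}^1}h_V\to\mathrm{Sing}^{\mathbb{P}^1}h_{\mathbb{P}^1\times V}\to\mathrm{Sing}^{\mathbb{P}^1}h_V$ is the identity. The other composite is the endomorphism induced by $(x,v)\mapsto(\infty,v)$. It is connected to the identity through the functor $(c,(x,v))\mapsto(\mathbb{P}^1\times c,(\mathrm{pr}_1,v\circ\mathrm{pr}_2))$, by natural transformations with components $(x,\mathrm{id}_c)$ and $(\infty,\mathrm{id}_c)$ — again non-constant sections.

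Hence $\mathrm{Sing}^{\mathbb{P}^1}$ inverts all $\mathbb{P}^1$-equivalences. Comparing $\mathcal{X}\to\mathrm{Sing}^{\mathbb{P}^1}\mathcal{X}$ with the equivalence $L_{\mathbb{P}^1}\mathcal{X}\to\mathrm{Sing}^{\mathbb{P}^1}L_{\mathbb{P}^1}\mathcal{X}$ then gives $\mathrm{Sing}^{\mathbb{P}^1}\simeq L_{\mathbb{P}^1}$, which also yields (a). Your argument for (2) is correct once the diagram is the full one, since it only uses the sections $\infty^n$.
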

This leads to the following explicit model of $L^{2,1}$:
\begin{thm}\label[thm]{double localization}
 A model for $S^{2,1}$ motivic localization is given by :    $$L^{2,1}:=L_{nis}^{\mathbb{A}^1,\mathbb{P}^1}\simeq \colim _{n\to \infty }(L_{nis}\circ  L_{\mathbb{A}^1}\circ L_{{\mathbb{P}^1}})^n\simeq \colim _{n\to \infty }(L^{\mathbb{A}^1}_{nis}\circ  L_{nis}^{{\mathbb{P}^1}})^n\simeq \colim _{n\to \infty }(L^{\mathbb{P}^1}_{nis}\circ  L_{nis}^{{\mathbb{A}^1}})^n$$
where $L_{\mathbb{P}^1}\esc{X}(Y) =\colim_{(\delta^\bullet_{\mathbb{P}^1}\times Y)^{op}} \esc{X}({{\delta_{\mathbb{P}^1_S}^m}\times_S Y})$ and $L_{\mathbb{A}^1}\esc{Y}(Y) =|\esc{Y}({{\Delta_{\mathbb{A}^1}^\bullet}\times Y})|$. 
\end{thm}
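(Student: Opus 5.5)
The plan is to run the same argument as in \Cref{form for lbir}, with the class of dense open immersions replaced by the two projection classes $\mathbb{A}(S)$ and $\{\mathbb{P}^1_X\to X\}_{X\in Sm_S}$. Write $T:=L_{nis}\circ L_{\mathbb{A}^1}\circ L_{\mathbb{P}^1}$, where $L_{\mathbb{P}^1}$ is the explicit functor of \Cref{model for P1 localization}(1) and $L_{\mathbb{A}^1}$ is the singular construction $|\esc{Y}(\Delta^\bullet_{\mathbb{A}^1}\times -)|$. Set $L_\infty:=\colim_n T^n$. I would prove three things: (i) each natural map $\esc{X}\to T\esc{X}$ is an $L^{2,1}$-equivalence; (ii) $L_\infty\esc{X}$ is $L^{2,1}$-local; (iii) the remaining formulas are equivalent to this one by cofinality.

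For (i), each of the three constituent maps $\esc{X}\to L_{\mathbb{P}^1}\esc{X}$, $\esc{Y}\to L_{\mathbb{A}^1}\esc{Y}$ and $\esc{Z}\to L_{nis}\esc{Z}$ is a local equivalence for the respective class. That these models really compute the one-class localizations is \Cref{model for P1 localization}(1) together with the standard Morel--Voevodsky fact for $\mathrm{Sing}^{\mathbb{A}^1}$. Each class is contained in the saturated class generated by all three, so every map in the tower is an $L^{2,1}$-equivalence. Since that saturated class is stable under colimits and composition, $\esc{X}\to L_\infty\esc{X}$ is an $L^{2,1}$-equivalence.

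For (ii), I would interleave the tower as $\esc{X}\to L_{\mathbb{P}^1}\esc{X}\to L_{\mathbb{A}^1}L_{\mathbb{P}^1}\esc{X}\to L_{nis}L_{\mathbb{A}^1}L_{\mathbb{P}^1}\esc{X}\to L_{\mathbb{P}^1}(\cdots)\to\cdots$. Every third term is then $\mathbb{P}^1$-local, every third term $\mathbb{A}^1$-local, and every third term Nisnevich local. Each of these three subsequences is cofinal in the $\omega$-indexed diagram, so all compute the same colimit. It therefore suffices to know that each of the three subcategories of local objects is closed in $\mathcal{P}(S)$ under filtered (indeed sequential) colimits. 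For $L_{\mathbb{A}^1}\mathcal{P}(S)$ and $L_{\mathbb{P}^1}\mathcal{P}(S)$ this holds for all colimits, since locality is tested sectionwise against schematic maps $\esc{X}(X)\to\esc{X}(A\times X)$ and colimits are computed sectionwise; this is the argument of \Cref{bir is closed under sifted colimits} for $L_{dense}$. For $\mathcal{P}_{nis}(S)$, $S$ is Qcqs, so by [\cite{hoyois2015quadratic}, Proposition C.5] Nisnevich locality is Nisnevich excision, a finite-limit condition. This condition commutes with filtered colimits in $Spc$ (cf.\ \Cref{sifted colimits in hmot}). Hence $L_\infty\esc{X}$ lies in all three subcategories, so it is $L^{2,1}$-local, and together with (i) this gives $L_\infty\simeq L^{2,1}$.

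For (iii), $L^{\mathbb{A}^1}_{nis}\circ L^{\mathbb{P}^1}_{nis}$ and the other orderings each produce towers whose terms interleave with the terms above. For instance $L^{\mathbb{P}^1}_{nis}$ may itself be modelled as $\colim_m(L_{nis}L_{\mathbb{P}^1})^m$ by the same argument with one class fewer. Alternatively, I would rerun (i) and (ii) verbatim for each ordering, since (i) and (ii) only use that each step is a local equivalence and that each stage of local objects is closed under sequential colimits. I expect the main obstacle to be the Nisnevich step in (ii), namely making precise that excision is preserved by the sequential colimit. This is exactly where the Qcqs hypothesis and the filtered (rather than sifted) nature of the colimit are needed. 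One must also make sure $L_{\mathbb{P}^1}$ is applied to arbitrary presheaves, where \Cref{model for P1 localization} is stated, so no $\Sigma$-condition issues arise.
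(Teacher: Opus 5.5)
Your proposal is correct and follows essentially the same route as the paper. Both arguments interleave the localization steps into a single $\omega$-indexed tower. Both use cofinality of the $3\mathbb{N}$, $3\mathbb{N}+1$, $3\mathbb{N}+2$ subsequences (resp.\ even/odd subsequences), together with closure of each class of local objects under sequential colimits. Both then note that every step is an $L^{2,1}$-equivalence, exactly as in \Cref{form for lbir}.

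Your explicit justification that Nisnevich locality over a Qcqs base is an excision (finite-limit) condition, and hence preserved by filtered colimits, fills in a step the paper leaves implicit.
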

\begin{proof}
    The method is similar to \Cref{form for lbir}. The space $\colim _{n\to \infty }(L^{\mathbb{A}^1}_{nis}\circ  L_{nis}^{{\mathbb{P}^1}})^n\esc{X}$, associated with an $S$-space $\esc{X}$, is the colimit of the even-numbered subdiagram of the following diagram:
        $$ L^{\mathbb{P}^1}_{nis}\esc{X}\to L^{\mathbb{A}^1}_{nis}L^{\mathbb{P}^1}_{nis}\esc{X}\to  L^{\mathbb{P}^1}_{nis}L^{\mathbb{A}^1}_{nis}L^{\mathbb{P}^1}_{nis}\esc{X}\to  \cdots$$ It follows that the colimit of the whole diagram is both $\mathbb{A}^1$-local and Nisnevich-local. Similarly, by considering the odd-numbered (cofinal) subdiagram, it follows that the colimit is $\mathbb{P}^1$-local as well. The same arguments apply verbatim to the other colimit $\colim _{n\to \infty }(L^{\mathbb{P}^1}_{nis}\circ  L_{nis}^{{\mathbb{A}^1}})^n\esc{X}$.

        For the first colimit, one considers the $3\mathbb{N}$, $3\mathbb{N}+1$, and $3\mathbb{N}+2$ subdiagrams of the diagram:    $$ L_{\mathbb{P}^1}\esc{X}\to L_{\mathbb{A}^1}L_{\mathbb{P}^1}\esc{X}\to L_{nis}L_{\mathbb{A}^1}L_{\mathbb{P}^1}\esc{X}\to  L_{\mathbb{P}^1}L_{nis}L_{\mathbb{A}^1}L_{\mathbb{P}^1}\esc{X}\to  L_{\mathbb{A}^1}L_{\mathbb{P}^1}L_{nis}L_{\mathbb{A}^1}L_{B}\esc{X}\to \cdots$$
        and proceeds similarly.
\end{proof}
\begin{cor}[see [\cite{asok2023p}, Corollary 3.1.26\text{]} for a weaker version]\label[cor]{null connectivity}
Let $S$ be Qcqs and $\esc{X}\in \mathcal{P}(S)$. Then there is a canonical surjection $\pi_0^{nis}\esc{X}\to \pi_0^{nis}L^{2,1}\esc{X}$. This implies that $L^{2,1}$ preserves Nisnevich connectivity.
\end{cor}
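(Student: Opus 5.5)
Using the explicit model of \Cref{double localization}, I will write $L^{2,1}\esc{X}$ as the colimit of the sequential (filtered) diagram
$$\esc{X}\to L_{\mathbb{P}^1}\esc{X}\to L_{\mathbb{A}^1}L_{\mathbb{P}^1}\esc{X}\to L_{nis}L_{\mathbb{A}^1}L_{\mathbb{P}^1}\esc{X}\to L_{\mathbb{P}^1}L_{nis}L_{\mathbb{A}^1}L_{\mathbb{P}^1}\esc{X}\to\cdots,$$
and show that each transition map induces an epimorphism on $\pi_0^{nis}$. The canonical map to the colimit then also induces an epimorphism on $\pi_0^{nis}$. This is because $\pi_0$ commutes with filtered colimits sectionwise, Nisnevich sheafification commutes with colimits, and a filtered colimit of epimorphisms of sheaves of sets is an epimorphism. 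The composite of these epimorphisms, starting at $\pi_0^{nis}\esc{X}$, is the desired canonical surjection $\pi_0^{nis}\esc{X}\to\pi_0^{nis}L^{2,1}\esc{X}$.

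The three kinds of transition maps are handled as follows.
\begin{enumerate}
\item For $L_{\mathbb{P}^1}$, \Cref{model for P1 localization}(2) gives that $\pi_0\esc{Y}\to\pi_0L_{\mathbb{P}^1}\esc{Y}$ is surjective as a map of presheaves. Sheafifying preserves epimorphisms, so it is surjective on $\pi_0^{nis}$.
\item For $L_{\mathbb{A}^1}$ with the singular model $|\esc{Y}(\Delta^\bullet_{\mathbb{A}^1}\times-)|$, the $\pi_0$ of a geometric realization is the coequalizer of $\pi_0$ of the degree-$1$ and degree-$0$ terms. Hence $\pi_0\esc{Y}\to\pi_0L_{\mathbb{A}^1}\esc{Y}$ is sectionwise surjective.
\item For $L_{nis}$, we have $\pi_0^{nis}\esc{Y}\simeq\pi_0^{nis}L_{nis}\esc{Y}$ by definition, since $\pi_0^{nis}$ is the sheafification of $\pi_0$ and $L_{nis}$ is an $\infty$-topos localization that preserves $0$-truncation. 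So this map is an isomorphism.
\end{enumerate}

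For the connectivity consequence, suppose $\pi_0^{nis}\esc{X}=*$. Then $\pi_0^{nis}L^{2,1}\esc{X}$ receives a surjection from the terminal sheaf. It also has a global section over every $X$ coming from that section, so it is terminal. Thus $L^{2,1}\esc{X}$ is Nisnevich connected.

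The main obstacle is the bookkeeping for the colimit. One must check that the model in \Cref{double localization} really is a sequential colimit of the three kinds of functors applied in order, with the canonical natural transformations $1\to L$ as transition maps. One must also confirm that the Qcqs hypothesis is only needed to make $L_{nis}$ and Nisnevich sheafification of $\pi_0$ behave as stated. I would check the filtered-colimit step for $\pi_0^{nis}$ carefully, computing $\pi_0$ sectionwise before sheafifying, since the intermediate objects need not be sheaves.
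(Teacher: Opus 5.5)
Your proposal is correct and follows the paper's argument. Both write $L^{2,1}\esc{X}$ as the sequential colimit from \Cref{double localization}, show that each transition map is an epimorphism on $\pi_0^{nis}$, and pass to the colimit using that $\pi_0^{nis}$ commutes with colimits. The paper's displayed diagram alternates $L^{\mathbb{P}^1}_{nis}$ and $L^{\mathbb{A}^1}_{nis}$ and cites Morel--Voevodsky for the $\mathbb{A}^1$-step, whereas you use the three-factor model and check the $\mathrm{Sing}^{\mathbb{A}^1}$ and $L_{nis}$ steps directly, which is the same reasoning unpacked.
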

\begin{proof}
    From \Cref{model for P1 localization} (2), it follows that $\pi_0^{nis}\esc{X}\to \pi_0^{nis}L_{\mathbb{P}^1}\esc{X}$ is surjective (see [\cite{p1algtop}, Proposition 2.1.5] for details). By [\cite{morel19991}, Corollary 2.3.22], $\pi_0^{nis}\esc{X}\to \pi_0^{nis}L_{\mathbb{A}^1}\esc{X}$ is also surjective. 
    
    The claim then follows immediately from the formulas for $L^{2,1}$ given in the previous theorem, \Cref{double localization}. For example, $L^{2,1}\esc{X}$ is given by the colimit of the following diagram:
    $$ \esc{X}\to L^{\mathbb{P}^1}_{nis}\esc{X}\to L^{\mathbb{A}^1}_{nis}L^{\mathbb{P}^1}_{nis}\esc{X}\to  L^{\mathbb{P}^1}_{nis}L^{\mathbb{A}^1}_{nis}L^{\mathbb{P}^1}_{nis}\esc{X}\to  \cdots$$ Since $\pi_0^{nis}$ preserves colimits, we see that $\pi_0^{nis}L^{2,1}\esc{X}$ is the colimit of the following diagram 
     $$ \pi_0^{nis}\esc{X}\to \pi_0^{nis}L^{\mathbb{P}^1}_{nis}\esc{X}\to \pi_0^{nis}L^{\mathbb{A}^1}_{nis}L^{\mathbb{P}^1}_{nis}\esc{X}\to  \pi_0^{nis}L^{\mathbb{P}^1}_{nis}L^{\mathbb{A}^1}_{nis}L^{\mathbb{P}^1}_{nis}\esc{X}\to  \cdots$$ Now, inductively applying the observations from the previous paragraph, we know that each map in the last diagram is an epimorphism of Nisnevich sheaves of sets. We conclude that the colimit $\pi_0^{nis}\esc{X}\to \pi_0^{nis}L^{2,1}\esc{X}$ is also an epimorphism.
\end{proof}
\begin{thm}\label{model for L_bir on connected}
    Over a perfect field $k$, the restriction of $$L_{bir}: \mathcal{P}(k)_\bullet\to \mathcal{H}^b(k)$$ to $L^{2,1}$-connected objects (resp. $L^{1,1}$-connected objects) is equivalent to the restriction of $L^{2,1}$ (resp. $L^{1,1}$) to those objects.
    \end{thm}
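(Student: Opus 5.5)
Fix a pointed $\esc{X}\in\mathcal{P}(k)_\bullet$ with $L^{2,1}\esc{X}$ Nisnevich-locally connected. The plan is to show that $L^{2,1}\esc{X}$ is already birational local and that $\esc{X}\to L^{2,1}\esc{X}$ is a birational equivalence. Together these say that $L^{2,1}\esc{X}$ satisfies the universal property of $L_{bir}\esc{X}$. Naturality of $\esc{X}\to L^{2,1}\esc{X}$ then upgrades the objectwise identification to an equivalence of functors on this full subcategory.

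First I show that $\esc{X}\to L^{2,1}\esc{X}$ is a birational equivalence. By \Cref{bir and rat}(1) applied to the rational schemes $A=\mathbb{A}^1_k$ and $A=\mathbb{P}^1_k$, every $\mathbb{A}^1$-motivic equivalence and every $\mathbb{P}^1$-motivic equivalence is in $bir(k)$. By \Cref{bir local is nis local}, Nisnevich local equivalences are also in $bir(k)$. The saturated class generated by $Nis$, $\mathbb{A}(k)$ and the $\mathbb{P}^1$-projections is exactly the class of $L^{2,1}$-equivalences, so $L^{2,1}$-equivalences are birational equivalences. Concretely, in the model of \Cref{double localization}, each map in the sequential diagram is an $L^{\mathbb{A}^1}_{nis}$- or $L^{\mathbb{P}^1}_{nis}$-equivalence, hence birational. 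Since saturated classes are closed under transfinite composition, the map to the colimit is birational.

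Next I show that $L^{2,1}\esc{X}$ is birational local. It is Nisnevich local, $\mathbb{A}^1$-local and $\mathbb{P}^1$-local, so it is a motivic space that is $S^{2,1}$-null. By hypothesis it is Nisnevich-locally connected, and it is pointed because $\esc{X}$ is. Hence \Cref{detecting n birationality by gm contraction}, the implication (6)$\Rightarrow$(1), applies over the perfect field $k$, and $L^{2,1}\esc{X}$ is birational local. The $L^{1,1}$ case is identical, using $A=\mathbb{G}_m$ and condition (5). Here $\mathbb{G}_m$ is not rational in the span sense, but it is a dense open of $\mathbb{A}^1$, and \Cref{bir local is P1 local}'s argument (dense locality plus $\mathbb{A}^1$-locality) gives $\mathbb{G}_m$-locality of birational local spaces. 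Therefore $L^{1,1}$-equivalences are birational.

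Combining the two steps, $\esc{X}\to L^{2,1}\esc{X}$ is a birational equivalence into a birational local object, so it exhibits $L_{bir}\esc{X}\simeq L^{2,1}\esc{X}$ canonically. The hypothesis that $\esc{X}$ is $L^{2,1}$-connected is used only through connectivity of the target. If one instead assumes $\esc{X}$ itself is Nisnevich connected, \Cref{null connectivity} supplies the connectivity of $L^{2,1}\esc{X}$. The main point of care is checking that \Cref{detecting n birationality by gm contraction} applies, since it requires a pointed, Nisnevich-connected motivic space, and the counterexamples in \Cref{coexamp} show the connectivity hypothesis cannot be dropped. The rest is formal bookkeeping about saturated classes.
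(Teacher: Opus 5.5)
Your proposal is correct and is essentially the paper's own argument: $L^{2,1}\esc{X}$ (resp.\ $L^{1,1}\esc{X}$) is birational local by \Cref{detecting n birationality by gm contraction}, and $L^{2,1}$- (resp.\ $L^{1,1}$-) equivalences are birational equivalences by \Cref{bir and rat}(1), so the localization map $\esc{X}\to L^{2,1}\esc{X}$ is a model of $L_{bir}\esc{X}$. One small slip is your claim that $\mathbb{G}_m$ is not rational in the span sense: the identity of $\mathbb{G}_m$ and the inclusion $\mathbb{G}_m\hookrightarrow\mathbb{A}^1$ form a span of dense open immersions, so $\mathbb{G}_m$ is rational and your separate argument for the $L^{1,1}$ case is unnecessary, though it is also valid.
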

\begin{proof}
    Suppose $\esc{X}$ is a space with a chosen base point such that $L^{2,1}\esc{X}$ (or $L^{1,1}\esc{X}$) is Nisnevich connected. By \Cref{detecting n birationality by gm contraction}, $L^{2,1}\esc{X}$ (or $L^{1,1}\esc{X}$) is birational local. To conclude, it suffices to note that $L^{2,1}$ (or $L^{1,1}$) equivalences are birational equivalences. Indeed, then the localization map $\esc{X}\to L^{2,1}\esc{X}$ (or $\esc{X}\to L^{1,1}\esc{X}$) is a birational equivalence to a birational local space, and hence is a model of birational localization. But since $\mathbb{A}^1,\mathbb{P}^1,\mathbb{G}_m$ are rational it is obvious that $L^{2,1}$ and $L^{1,1}$ equivalences are birational equivalences [\Cref{bir and rat} (1)].
\end{proof}
\begin{cor}\label[cor]{mod cond}
In the previous theorem, it suffices to assume that $\esc{X}$ is $\mathbb{A}^1$-connected, $\mathbb{P}^1$-connected, or even Nisnevich-connected.
\end{cor}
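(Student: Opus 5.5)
The plan is to reduce each of the three weaker hypotheses to the one used in \Cref{model for L_bir on connected}, namely that $L^{2,1}\esc{X}$ (resp. $L^{1,1}\esc{X}$) is Nisnevich connected. Once this holds, that theorem identifies $L_{bir}\esc{X}$ with $L^{2,1}\esc{X}$ (resp. $L^{1,1}\esc{X}$). Since Nisnevich connectivity is the weakest of the three conditions, I will handle it first and derive the others from it.

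\emph{Step 1 (Nisnevich-connected case).} Assume $\pi_0^{nis}\esc{X}=*$. By \Cref{null connectivity}, there is a canonical surjection $\pi_0^{nis}\esc{X}\to\pi_0^{nis}L^{2,1}\esc{X}$. Hence $\pi_0^{nis}L^{2,1}\esc{X}=*$, so $\esc{X}$ is $L^{2,1}$-connected.

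For $L^{1,1}$ the argument is the same. \Cref{model for P1 localization} applies verbatim to localization at any smooth scheme with a rational point, in particular to $\mathbb{G}_m$, as the paper remarks before that theorem. Running the proof of \Cref{double localization} and \Cref{null connectivity} with $\mathbb{G}_m$ in place of $\mathbb{P}^1$ then gives a surjection $\pi_0^{nis}\esc{X}\to\pi_0^{nis}L^{1,1}\esc{X}$.

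\emph{Step 2 ($\mathbb{A}^1$-connected case).} Assume $L_{mot}\esc{X}$ is Nisnevich connected. Since $\mathbb{A}^1$ and Nisnevich locality are among the conditions defining $L^{2,1}$, we have $L^{2,1}\simeq L^{2,1}L_{mot}$. Applying Step 1 to $L_{mot}\esc{X}$ shows that $L^{2,1}\esc{X}$ is Nisnevich connected. In addition, $L_{bir}\esc{X}\simeq L_{bir}L_{mot}\esc{X}$ by \Cref{Motivic equivalences are Birational equivalences}, so the identification is compatible.

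\emph{Step 3 ($\mathbb{P}^1$-connected case).} This is the step I expect to need the most care, because ``$\mathbb{P}^1$-connected'' must first be given a meaning. I will read it as $\pi_0^{nis}L^{\mathbb{P}^1}_{nis}\esc{X}=*$. Then $L^{2,1}\esc{X}\simeq L^{2,1}L^{\mathbb{P}^1}_{nis}\esc{X}$, using the formula $\colim_n(L^{\mathbb{A}^1}_{nis}L^{\mathbb{P}^1}_{nis})^n$ of \Cref{double localization}, and Step 1 applied to $L^{\mathbb{P}^1}_{nis}\esc{X}$ gives connectivity of $L^{2,1}\esc{X}$.

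If instead ``$\mathbb{P}^1$-connected'' is meant to refer to $L^{2,1}$ itself, the statement is simply the hypothesis of \Cref{model for L_bir on connected} and there is nothing to prove. In either reading, the chosen base point transports along the localization maps, so the pointed hypothesis of \Cref{detecting n birationality by gm contraction} is met. Finally, since each $L^{\bullet}$-equivalence used here is a birational equivalence by \Cref{bir and rat}(1), the comparison maps are the canonical ones.
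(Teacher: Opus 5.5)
Your proposal is correct and follows the paper's own proof. Nisnevich connectivity yields $L^{2,1}$-connectivity by \Cref{null connectivity}, and the $\mathbb{A}^1$- and $\mathbb{P}^1$-connected cases reduce to this by replacing $\esc{X}$ with the Nisnevich-connected spaces $L_{mot}\esc{X}$ and $L_{pmot}\esc{X}$ (your $L^{\mathbb{P}^1}_{nis}\esc{X}$) respectively, while your separate handling of the $L^{1,1}$ variant via the $\mathbb{G}_m$ analogue of the model matches the paper's remark that those results apply to any smooth scheme with a rational point.
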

\begin{proof}
By \Cref{null connectivity}, Nisnevich connectivity implies $L^{2,1}$-connectivity. When $\esc{X}$ is $\mathbb{A}^1$-connected, replace it with the Nisnevich-connected space $L_{mot}\esc{X}$. Similarly, when $\esc{X}$ is $\mathbb{P}^1$-connected, replace it with the Nisnevich-connected space $L_{pmot}\esc{X}$. \end{proof}
\begin{cor}
    On pointed $\mathbb{A}^1$-connected spaces over perfect fields, the birational localization functor coincides with $L^{2,1}$ and $L^{1,1}$. Thus,    $${L_{bir}}_{\big |\tau_{\geq 1}^{nis}\mathcal{H}^{\mathbb{A}^1}(k)}\simeq 
     {\colim _{n\to \infty }(L_{nis}\circ  L_{\mathbb{A}^1}\circ L_{{\mathbb{P}^1}})^n}_{\big |\tau_{\geq 1}^{nis}\mathcal{H}^{\mathbb{A}^1}}\simeq 
     {\colim _{n\to \infty }(L_{nis}\circ  L_{\mathbb{A}^1}\circ L_{{\mathbb{G}_m}})^n}_{\big |\tau_{\geq 1}^{nis}\mathcal{H}^{\mathbb{A}^1}}$$
     where $$L_{\mathbb{P}^1}\esc{X}(Y) =\colim_{(\delta^n_{\mathbb{P}^1}\times Y)^{op}} \esc{X}({{\delta_{\mathbb{P}^1}^m}\times Y})\text{ and }L_{\mathbb{A}^1}\esc{Y}(Y) =|\esc{Y}({{\Delta_{\mathbb{A}^1}^\bullet}\times Y})|.$$ It follows that $L_{bir}$ takes $\mathbb{A}^1$ connected spaces to Nisnevich connected spaces.
\end{cor}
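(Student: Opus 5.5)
The corollary has two parts: the identification of $L_{bir}$ with the explicit $L^{2,1}$ and $L^{1,1}$ towers on pointed $\mathbb{A}^1$-connected spaces, and the consequence that $L_{bir}$ takes such spaces to Nisnevich connected ones. Both follow by assembling \Cref{model for L_bir on connected}, \Cref{mod cond}, \Cref{double localization} and \Cref{null connectivity}.

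\textbf{Step 1: identification with $L^{2,1}$ and $L^{1,1}$.} Let $\esc{X}\in \tau^{nis}_{\geq 1}\mathcal{H}^{\mathbb{A}^1}(k)$ be pointed. Since it is Nisnevich connected, \Cref{null connectivity} shows that $L^{2,1}\esc{X}$ is Nisnevich connected. Hence $\esc{X}$ is $L^{2,1}$-connected, and \Cref{model for L_bir on connected} (via \Cref{mod cond}) gives a natural equivalence $L_{bir}\esc{X}\simeq L^{2,1}\esc{X}$.

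For the $L^{1,1}$ case I need the analogue of \Cref{null connectivity} for $\mathbb{G}_m$. The proof of \Cref{model for P1 localization} applies to localization at any smooth scheme with a rational point, as the paper notes. So the same colimit formula gives $L_{\mathbb{G}_m}$, and $\pi_0\esc{X}\to\pi_0 L_{\mathbb{G}_m}\esc{X}$ is surjective. Running the argument of \Cref{double localization} and \Cref{null connectivity} with $\mathbb{G}_m$ in place of $\mathbb{P}^1$ then shows that $L^{1,1}\esc{X}$ is Nisnevich connected. By the $L^{1,1}$ version of \Cref{model for L_bir on connected}, $L_{bir}\esc{X}\simeq L^{1,1}\esc{X}$.

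\textbf{Step 2: the explicit formulas.} Substitute the colimit models. \Cref{double localization} gives $L^{2,1}\simeq\colim_n(L_{nis}L_{\mathbb{A}^1}L_{\mathbb{P}^1})^n$, with $L_{\mathbb{P}^1}$ and $L_{\mathbb{A}^1}$ given by the stated simplicial and $\delta$-colimit formulas. For $\mathbb{G}_m$ the same cofinality argument over the $3\mathbb{N}$, $3\mathbb{N}+1$ and $3\mathbb{N}+2$ subdiagrams applies verbatim. All equivalences are natural in $\esc{X}$, so they assemble into equivalences of functors restricted to $\tau^{nis}_{\geq 1}\mathcal{H}^{\mathbb{A}^1}$.

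\textbf{Step 3: connectivity.} Let $\esc{X}$ be any pointed $\mathbb{A}^1$-connected space. Since $L_{bir}L_{mot}\simeq L_{bir}$ by \Cref{Motivic equivalences are Birational equivalences}, replace $\esc{X}$ by $L_{mot}\esc{X}$, which lies in $\tau^{nis}_{\geq 1}\mathcal{H}^{\mathbb{A}^1}(k)$. By Step 1, $L_{bir}\esc{X}\simeq L^{2,1}L_{mot}\esc{X}$. By \Cref{null connectivity} this is a quotient on $\pi_0^{nis}$ of a point, so it is Nisnevich connected.

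\textbf{Main obstacle.} The only non-formal point is the $\mathbb{G}_m$ version of \Cref{model for P1 localization}(2), namely surjectivity on $\pi_0$. I would handle it by observing that the unit section $1\colon k\to\mathbb{G}_m$ gives a degeneracy-type section of every face of the $\delta^\bullet_{\mathbb{G}_m}$ diagram. The zeroth term therefore surjects onto the colimit on $\pi_0$, exactly as in the $\mathbb{P}^1$ case.
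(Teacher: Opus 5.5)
Your proposal is correct and is exactly the assembly the paper intends. The paper gives this corollary no separate proof; it is read off from \Cref{model for L_bir on connected}, \Cref{mod cond}, \Cref{double localization} and \Cref{null connectivity}, with the connectivity clause being the argument of \Cref{bir conn on perf}, and the $\mathbb{G}_m$-versions taken from the paper's remark that the results of \Cref{model for P1 localization} hold for localization at any smooth scheme with a rational point. Your reason for the $\pi_0$-surjectivity is the right one once stated precisely: for each $n$, the unit section gives a $Y$-morphism $Y\to\mathbb{G}_m^{n}\times Y$ in the indexing category, so every class at level $n$ is identified in the colimit with a class coming from the zeroth term $\esc{X}(Y)$.
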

\begin{cor}\label[cor]{bir conn on perf}
    Over a perfect field, the birational localization functor preserves Nisnevich local connectivity.
\end{cor}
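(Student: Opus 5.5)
The plan is to reduce to the case already handled by \Cref{model for L_bir on connected} and \Cref{mod cond}. Let $k$ be perfect and let $\esc{X}\in\mathcal{P}(k)$ be Nisnevich-locally connected, i.e. $\pi_0^{nis}\esc{X}=*$. We must show that $\pi_0^{nis}L_{bir}\esc{X}=*$. Since $\esc{X}$ is Nisnevich-locally connected, its sections over the henselian local points are nonempty. Choosing a base point is the only subtlety, because \Cref{model for L_bir on connected} is stated for pointed objects. I would first deal with this as follows. Connectivity of $\pi_0^{nis}\esc{X}$ gives a point of $\esc{X}$ only Nisnevich locally, not necessarily a global $k$-point. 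So I would either note that the proof of \Cref{model for L_bir on connected} never uses the base point except to apply \Cref{detecting n birationality by gm contraction}, or replace the question by one that is local on the Nisnevich site.

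\textbf{Step 1.} Apply \Cref{null connectivity}: there is a surjection $\pi_0^{nis}\esc{X}\to\pi_0^{nis}L^{2,1}\esc{X}$. Hence $L^{2,1}\esc{X}$ is Nisnevich connected, because the target of a surjection from $*$ is $*$ and is nonempty. This step needs no base point.

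\textbf{Step 2.} Show that $L^{2,1}\esc{X}$ is birational local. In the pointed case this is \Cref{detecting n birationality by gm contraction}. In general, the condition ``$\pi_i^{nis}$ is birational for all $i$'' can be checked after restricting to the slice over any point. I would handle this by observing that birational locality of $\pi_i^{nis}$, and the equivalence $(3)\iff(4)\iff(6)$ in that theorem, are statements about sheaves, and these can be verified Nisnevich locally where a base point exists. If this is too delicate, I would simply restrict the statement to pointed objects, as in the surrounding corollaries.

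\textbf{Step 3.} Since $L^{2,1}$-equivalences are birational equivalences by \Cref{bir and rat}(1), we get $L_{bir}\esc{X}\simeq L^{2,1}\esc{X}$, and this is Nisnevich connected by Step 1.

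The main obstacle is the base-point issue in Step 2, that is, making \Cref{detecting n birationality by gm contraction} apply to unpointed Nisnevich connected spaces. My first attempt would be to use that $L_{bir}$ commutes with the relevant pieces (\Cref{bir bar commute}) together with the fact that the Postnikov argument in \Cref{sigma judgment to bir}(b) uses the base point only to form $\mathrm{K}(\pi_n,n)$ fibrations, which exist locally.
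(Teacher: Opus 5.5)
Your Steps 1 and 3 are the paper's argument. You get Nisnevich connectivity of $L^{2,1}\esc{X}$ from \Cref{null connectivity}. You get $L_{bir}\esc{X}\simeq L^{2,1}\esc{X}$ from \Cref{detecting n birationality by gm contraction} together with \Cref{bir and rat}(1), which the paper packages as \Cref{model for L_bir on connected} and \Cref{mod cond}.

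The gap is in Step 2, and it rests on a false premise. You claim that Nisnevich connectivity yields a point of $\esc{X}$ only Nisnevich locally, not a global $k$-point. Over a field this is wrong, for the following reasons.
\begin{enumerate}
\item $\operatorname{Spec}k$ is henselian local. Every Nisnevich cover of it has a member with a $k$-rational point, and that point splits off as a clopen copy of $\operatorname{Spec}k$, giving a section.
\item Consequently, for any Nisnevich covering sieve $R\hookrightarrow h(X)$ with $X\in Sm_k$, the map $R(k)\to X(k)$ is bijective. Since evaluation at $\operatorname{Spec}k$ preserves colimits, it inverts all Nisnevich local equivalences.
\item Hence $\pi_0(\esc{X}(k))\cong(\pi_0^{nis}\esc{X})(k)=*$. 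In particular $\esc{X}(k)\neq\emptyset$, so any $k$-point $\operatorname{Spec}k\to\esc{X}$ makes $\esc{X}$ pointed.
\end{enumerate}
This is exactly what \Cref{about pointing} records, and it is already used in \Cref{bir preserves a1 conn}. Connectivity of $L_{bir}\esc{X}$ is a property of the underlying space, so the choice of point does not matter. The pointed theorem then applies directly to $L^{2,1}\esc{X}$, which inherits the point.

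The substitutes you propose for Step 2 do not work as written.
\begin{itemize}
\item Without a global base point, the $\pi_i^{nis}$ for $i\geq 1$ are not defined as sheaves on $Sm_k$. All the inputs to \Cref{detecting n birationality by gm contraction} are pointed statements: Morel's strong $\mathbb{A}^1$-invariance, the nullity criterion of \cite{asok2023p}, and \Cref{gm contraction 0}. So ``verify it Nisnevich locally where a base point exists'' would need a real descent argument that you have not given.
\item The Postnikov remark about forming $\mathrm{K}(\pi_n,n)$ fibrations locally has the same problem.
\item Restricting to pointed objects, on its own, proves a weaker statement.
\end{itemize}
Replace Step 2 with the observation that $\operatorname{Spec}k$ is a Nisnevich point. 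With that change, your proof is the paper's proof.
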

\begin{proof}
If $\esc{X}$ is Nisnevich connected, by \Cref{mod cond}, a choice of a base point of $\esc{X}$ (see \Cref{about pointing}) yields $L_{bir}\esc{X}\simeq L^{2,1}\esc{X}$. But \Cref{null connectivity} implies that $ L^{2,1}\esc{X} $ is Nisnevich connected. Thus $L_{bir}\esc{X}$ is Nisnevich connected as well.
\end{proof}
\begin{rem}
    From Construction 3.1.17 of [\cite{asok2023p}], we do have a formula for $S^{2,1}$ or $S^{1,1}$ localization, and hence,
$${L_{bir}}_{\big |\tau_{\geq 1}^{nis}\mathcal{H}^{\mathbb{A}^1}}\simeq [(-)_{\infty}^{1,1}]_{\big |\tau_{\geq 1}^{nis}\mathcal{H}^{\mathbb{A}^1}}\simeq [(-)_{\infty}^{2,1}]_{\big |\tau_{\geq 1}^{nis}\mathcal{H}^{\mathbb{A}^1}}$$
Note, however, that these formulas are not explicit and are obtained using a small-object argument. 
\end{rem}

\subsection{Connectivity of $L_{bir}$}
In \Cref{bir conn on perf} of the last section, we saw that over a perfect field, $L_{bir}$ preserves Nisnevich connectivity. Let us now compare this to the case of $L_{mot}$. Recall that, over an arbitrary $Qcqs$ base, the singular model of motivic localization yields a canonical surjection $\pi_0^{nis}\to \pi_0^{\mathbb{A}^1}$, which implies a connectivity property for $L_{mot}$ over an arbitrary Qcqs base scheme. For birational localization, we constructed a model in the previous section. However, it has two primary drawbacks: first, it works only for connected spaces, and second, it works only over perfect fields. Unfortunately, it seems less plausible to have such a mechanism for all spaces or over all bases. In this section, we will take an entirely different route to prove, in fact, a stronger version of connectivity for the birational localization functor. The key will be the commutation of $L_{bir}$ with the bar construction, as obtained in \S\ref{3.2}. We continue to assume that $S$ is Qcqs. We start with the primary observation,
\begin{cor}
    A birational local space is birationally connected if and only if it is $\mathbb{A}^1$-connected if and only if it is Nisnevich-connected if and only if it is sectionwise connected.
\end{cor}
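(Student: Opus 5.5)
Let $\esc{X}$ be birational local. The plan is to show that all four notions of connectedness reduce to a single condition, namely that $\pi_0$ of $\esc{X}$, taken in the relevant topos, is the terminal sheaf.

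First, fix the meaning of each notion. $\esc{X}$ is \emph{birationally connected} if $\pi_0^b\esc{X}:=\pi_0^{nis}L_{bir}\esc{X}$ is trivial. It is $\mathbb{A}^1$-connected if $\pi_0^{\mathbb{A}^1}\esc{X}=\pi_0^{nis}L_{mot}\esc{X}$ is trivial. It is Nisnevich-connected if $\pi_0^{nis}\esc{X}=*$. It is sectionwise connected if the presheaf $\pi_0\esc{X}$ is $*$.

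Second, since $\esc{X}\in\mathcal{H}^b(S)$, we have $L_{bir}\esc{X}\simeq\esc{X}$. By \Cref{Motivic equivalences are Birational equivalences}, $\mathcal{H}^b(S)\subset\mathcal{H}^{\mathbb{A}^1}(S)$, so also $L_{mot}\esc{X}\simeq\esc{X}$. Hence the first three conditions all reduce to the statement $\pi_0^{nis}\esc{X}=*$.

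It remains to compare $\pi_0^{nis}\esc{X}$ with the presheaf $\pi_0\esc{X}$. By \Cref{nis sheaves of bir is bir} (case $i=0$, $\sigma=nis$), for a birational local space one has $\pi_0^{nis}\esc{X}=\pi_0\esc{X}$. This equality is the only nonformal input. It comes from \Cref{pi is birational}, which says $\pi_0\esc{X}$ is a birational presheaf, together with \Cref{bir local is nis local}, which says such a presheaf is already a Nisnevich sheaf, so sheafification does nothing. Therefore $\pi_0^{nis}\esc{X}=*$ holds if and only if $\pi_0\esc{X}(U)=*$ for every $U\in Sm_S$, which is exactly sectionwise connectedness.

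One point needs care: the empty scheme. Since $\esc{X}\in\mathcal{P}_\Sigma(S)$, we have $\esc{X}(\emptyset)\simeq pt$, so sectionwise connectedness holds there automatically and is consistent with $\pi_0$ being the terminal presheaf. Apart from this, the argument is just a chain of identifications, and I expect no real obstacle.
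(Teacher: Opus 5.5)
Your proposal is correct and follows essentially the same route as the paper. The paper likewise uses $\mathcal{H}^b(S)\subset\mathcal{H}^{\mathbb{A}^1}(S)$ (\Cref{Motivic equivalences are Birational equivalences}) to get $L_{bir}\esc{X}\simeq L_{mot}\esc{X}\simeq\esc{X}$, and then invokes \Cref{nis sheaves of bir is bir} for $\pi_0\esc{X}=\pi_0^{nis}\esc{X}$. Your remark about the empty scheme is harmless but unnecessary, since that equality of presheaves already covers it.
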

\begin{proof}
Clearly, if $\esc{X}$ is birational local, then $$\pi_0^b\esc{X}\cong\pi_0\esc{X}\cong\pi_0^{nis}\esc{X}$$ (see \Cref{nis sheaves of bir is bir} for the last isomorphism). By \Cref{Motivic equivalences are Birational equivalences}, it follows that $\esc{X}\simeq L_{mot}\esc{X}$. Applying $\pi_0^{nis}$ to this equivalence yields $\pi_0^{nis}\esc{X}\cong \pi^{\mathbb{A}^1}_0\esc{X}$. Thus $$\pi_0^b\esc{X}\cong \pi_0\esc{X}\cong\pi_0^{nis}\esc{X}\cong \pi^{\mathbb{A}^1}_0\esc{X}.$$
\end{proof}
 Thus, we shall say that a space $\esc{X}$ is birationally connected if $L_{bir}\esc{X}$ is (sectionwise/Nisnevich locally/$\mathbb{A}^1$)-connected.

\begin{cor}\label[cor]{0 bir connectivity}
If $\sigma$ is a topology coarser than the Nisnevich topology over an arbitrary Qcqs base $S$, then $L_{bir}$ takes $\sigma $ locally $n$($\geq 0$)-connected pointed $S$-spaces to sectionwise $n$-connected $S$-spaces.
\end{cor}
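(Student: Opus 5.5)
We argue by induction on $n$, using that $L_{bir}$ commutes with the $\sigma$-local bar construction (\Cref{bir bar commute} (2) and \Cref{bar of bir monoids}). The idea is to write a $\sigma$-locally connected pointed space as a $\sigma$-local delooping of its loop space. Its birationalization is then again a bar construction, i.e.\ a geometric realization in $\mathcal{P}(S)$ of a simplicial object whose $0$-th term is a point. Such realizations are sectionwise connected.

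\textbf{Base case $n=0$.} Let $\esc{X}$ be pointed and $\sigma$-locally connected. First replace $\esc{X}$ by $L_\sigma\esc{X}$; this is harmless because $L_{bir}L_\sigma\simeq L_{bir}$ (\Cref{bir local is nis local}). Since $\mathcal{P}_\sigma(S)$ is an $\infty$-topos and $\esc{X}$ is pointed and $\sigma$-connected, we have $\esc{X}\simeq \mathrm{B}_\sigma\Omega\esc{X}$ with $\Omega\esc{X}$ a grouplike monoid in $\mathcal{P}_\sigma(S)$. By \Cref{bar of bir monoids},
$$L_{bir}\esc{X}\simeq L_{bir}\mathrm{B}_\sigma\Omega\esc{X}\simeq \mathrm{B}(L_{bir}\Omega\esc{X})=\big|(L_{bir}\Omega\esc{X})^{\times\bullet}\big|,$$
and this geometric realization is computed in $\mathcal{P}(S)$ (\Cref{bir is closed under sifted colimits}). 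Colimits in $\mathcal{P}(S)$ are sectionwise, so on each $Y\in Sm_S$ the value is the realization of a simplicial space with contractible space of $0$-simplices. Such a realization is connected, since $\pi_0$ of a realization is a quotient of $\pi_0$ of the $0$-simplices. Hence $L_{bir}\esc{X}$ is sectionwise connected.

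\textbf{Inductive step.} Assume the claim for $n-1\ge 0$, and let $\esc{X}$ be pointed and $\sigma$-locally $n$-connected. Then $\Omega\esc{X}$ (loops in $\mathcal{P}_\sigma(S)$) is pointed and $\sigma$-locally $(n-1)$-connected. By induction, $L_{bir}\Omega\esc{X}$ is sectionwise $(n-1)$-connected. We have $L_{bir}\esc{X}\simeq |(L_{bir}\Omega\esc{X})^{\times\bullet}|$ sectionwise. The $0$-th term is a point and the $k$-th term $(L_{bir}\Omega\esc{X})^{\times k}$ is $(n-1)$-connected. The standard connectivity estimate for geometric realizations of simplicial spaces (the skeletal filtration, or the spectral sequence $E^1_{p,q}=\pi_q(\text{$p$-simplices})$) shows that the realization of a simplicial space whose $p$-simplices are $(n-p)$-connected is $n$-connected. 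Here the $0$-simplices are contractible and the $p$-simplices for $p\ge1$ are $(n-1)$-connected, which is more than enough. So $L_{bir}\esc{X}$ is sectionwise $n$-connected.

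An equivalent way to phrase the step is through \Cref{bir and Omega commute on connected}: $\Omega L_{bir}\esc{X}\simeq L_{bir}\Omega\esc{X}$ is $(n-1)$-connected. Combined with the sectionwise connectedness of $L_{bir}\esc{X}$ from the base case, this gives $n$-connectedness directly, since the homotopy groups of $\Omega$ at the basepoint shift down by one. I would use this cleaner formulation, noting that loops in $\mathcal{P}(S)$ and in $\mathcal{P}_\sigma(S)$ agree.

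\textbf{Main obstacle.} The delicate point is the base case: identifying $\esc{X}$ with $\mathrm{B}_\sigma\Omega\esc{X}$ requires $\sigma$-local connectedness and a basepoint. One must also make sure that after applying $L_{bir}$ the bar construction is an honest colimit in presheaves, not a $\sigma$-sheafified one. That is exactly what \Cref{bir bar commute} and \Cref{bir is closed under sifted colimits} provide, so the argument goes through without any sheafification that could break sectionwise connectivity.
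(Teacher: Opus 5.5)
Your proof is correct and uses the same mechanism as the paper: deloop in $\mathcal{P}_\sigma(S)$, commute $L_{bir}$ past the $\sigma$-local bar construction via \Cref{bir bar commute}, and note that the resulting presheaf-level bar construction is sectionwise connectivity-raising. The difference is only organizational. The paper deloops $n+1$ times at once, $L_{bir}\esc{X}\simeq \mathrm{B}^{n+1}L_{bir}\Omega^{n+1}\esc{X}$, which tacitly relies on the $\mathbb{E}_{n+1}$-structure of $\Omega^{n+1}\esc{X}$ being preserved by the cartesian functor $L_{bir}$. You deloop once per step and induct on $n$, which avoids iterated bar constructions altogether.
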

\begin{proof}
From the loop-path fiber sequence, the existence of a base point for $\esc{X}\in\tau_{\geq 1}\mathcal{P}(S)$ yields an equivalence $\esc{X}\simeq\mathrm{B}_\sigma \Omega(\esc{X})$. By induction, if $\esc{X}$ is $\sigma$-locally $n$-connected, then $\esc{X}\simeq \mathrm{B}^{n+1}_\sigma\Omega^{n+1}\esc{X}$. Hence, $$L_{bir}\esc{X}\simeq L_{bir}\mathrm{B}_\sigma^{n+1}\Omega(\esc{X})\simeq \mathrm{B}^{n+1}L_{bir}\Omega(\esc{X})$$ [by \Cref{bir bar commute}]. By construction, $\mathrm{B}^{n+1}$ takes values in (sectionwise) $n$-connected objects and we are done.
\end{proof}
\begin{cor}\label[cor]{bir preserve a1 n conn}
$L_{bir}$ takes pointed $\mathbb{A}^1$-$n$-connected spaces to (sectionwise) $n$-connected spaces. In particular, pointed $\mathbb{A}^1$-connected spaces are birationally connected.
\end{cor}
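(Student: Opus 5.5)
The plan is to reduce to \Cref{0 bir connectivity} by first replacing the space with its motivic localization. Let $\esc{X}$ be a pointed $S$-space that is $\mathbb{A}^1$-$n$-connected, meaning that $L_{mot}\esc{X}$ is Nisnevich locally $n$-connected. By \Cref{Motivic equivalences are Birational equivalences}, every motivic equivalence is a birational equivalence. In particular the unit $\esc{X}\to L_{mot}\esc{X}$ is a birational equivalence, so $L_{bir}\esc{X}\simeq L_{bir}L_{mot}\esc{X}$. (This identification is also recorded before \Cref{form for lbir} as the statement that the natural transformation $L_{mot}\to L_{bir}$ is an $L_{bir}$-equivalence.) The base point of $\esc{X}$ induces a base point of $L_{mot}\esc{X}$, so $L_{mot}\esc{X}$ is a pointed Nisnevich-locally $n$-connected object of $\mathcal{P}_{nis}(S)$.

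Next I would apply \Cref{0 bir connectivity} with $\sigma=nis$ to $L_{mot}\esc{X}$. Its proof expresses $L_{mot}\esc{X}\simeq \mathrm{B}^{n+1}_{nis}\Omega^{n+1}L_{mot}\esc{X}$ and then uses \Cref{bir bar commute} to obtain
$$L_{bir}L_{mot}\esc{X}\simeq \mathrm{B}^{n+1}L_{bir}\Omega^{n+1}L_{mot}\esc{X}.$$
The right-hand side is sectionwise $n$-connected, since it is an iterated presheaf-level bar construction on a grouplike object. Combining this with the first step shows that $L_{bir}\esc{X}$ is sectionwise $n$-connected.

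For the ``in particular'' clause, take $n=0$. Then $L_{bir}\esc{X}$ is sectionwise connected, and by the corollary preceding \Cref{0 bir connectivity}, a birational local space is birationally connected exactly when it is sectionwise connected. So pointed $\mathbb{A}^1$-connected spaces are birationally connected.

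There is no genuine obstacle here; the one point to check is the pointing. \Cref{0 bir connectivity} requires a pointed $\sigma$-locally connected object, and the base point must be a global section. This is guaranteed because $\esc{X}$ is pointed in $\mathcal{P}(S)$ and $L_{mot}$ carries the section $pt\to\esc{X}$ to $pt\simeq L_{mot}(pt)\to L_{mot}\esc{X}$.
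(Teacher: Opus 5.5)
Your proposal is correct and matches the paper's proof: you pass to the pointed Nisnevich-$n$-connected space $L_{mot}\esc{X}$, apply \Cref{0 bir connectivity} with $\sigma=nis$, and conclude via $L_{bir}\esc{X}\simeq L_{bir}L_{mot}\esc{X}$ from \Cref{Motivic equivalences are Birational equivalences}. Your checks on the base point and on the $n=0$ case (which is just the definition of birational connectedness) spell out details the paper leaves implicit.
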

\begin{proof}
    Assume $\esc{X}$ is pointed $\mathbb{A}^1$-$n$-connected, i.e., $\esc{Y}:=L_{mot}\esc{X}$ is a Nisnevich $n$-connected pointed space. By the previous corollary applied to the Nisnevich topology, $L_{bir}\esc{Y}$ is sectionwise $n$-connected. Since motivic equivalences are birational equivalences (see \Cref{Motivic equivalences are Birational equivalences}), we have $$L_{bir}\esc{X}\simeq  L_{bir}L_{mot}\esc{X}=L_{bir}\esc{Y}.$$ It thus follows that $L_{bir}\esc{X}$ is sectionwise $n$-connected.
\end{proof}

\begin{rem}[(-1, -2)-connectivity]
Since every object is $(-2)$-connected, like any localization, $L_{bir}$ preserves $(-2)$-connectivity. Similarly, a $(-1)$-connectivity statement is also true for any localization. For example, if   
$U\to \esc{X}$ is a non trivial section of a space $\esc{X}$ over some nonempty scheme $U$, then $U\to \esc{X}\to L_{bir}\esc{X}$ provides a (non-empty) section of $L_{bir}\esc{X}$.
\end{rem}

\begin{rem}
    One reason such a method of proving connectivity, as in \Cref{0 bir connectivity}, is not feasible in the motivic case is that the 2nd statement in \Cref{bir bar commute} fails in the $\mathbb{A}^1$-motivic case, see \Cref{bar mot does not commute}. This is why, in the $\mathbb{A}^1$-motivic case, an explicit formula for the motivic localization functor, which ensures surjectivity on $\pi_0^{nis}$, was needed to obtain an unstable connectivity result [\cite{morel19991}, Corollary 3.22]. 
\end{rem}
\begin{rem}\label[rem]{Lbir is not effective}  
     The surjectivity of $\pi_0^{nis}$ for the $\mathbb{A}^1$-motivic localization [\cite{morel19991}, Corollary 3.22] ensures that $\eta: 1\to L_{mot}$ is an objectwise effective epimorphism, an important tool for transferring several results from the Nisnevich topos to the motivic homotopy category (see, for example, [\cite{inhot}, Corollary 3.4.5]). However, for birational localization, the canonical map $$\pi_0^{nis}\esc{X}\to \pi_0^{nis}L_{bir}\esc{X}=\pi_0L_{bir}\esc{X}$$ is not surjective (see the counterexample below), unlike its $\mathbb{A}^1$-motivic counterpart. As a result, the birational localization yields weaker results (see [\cite{inhot}, Corollary 4.4.6]).
\end{rem}  
\begin{coex}
    In this counterexample, we show that $L_{bir}$ is not an effective localization (of the Nisnevich $\infty$-topos), i.e., $$\pi_0^{nis}\esc{X}\to \pi_0^{nis}L_{bir}\esc{X}$$ need not be an epimorphism of sheaves in general. Of course, over finite fields, it is easy to find counterexamples: 
    \begin{enumerate}
    
    \item Let $S=Speck$ for $k= \mathbb{F}_q$. Consider the rational scheme $X=\mathbb{P}^1_{\mathbb{F}_q}\setminus \mathbb{P}(\mathbb{F}_q)$. Since $\mathbb{P}(\mathbb{F}_q)$ is a finite set of points, $X$ is a dense open subscheme of $\mathbb{P}^1_{\mathbb{F}_q}$ and hence birationally contractible. Thus, the map in question can be identified with the canonical projection $X\to S$, which is clearly not surjective, since, by construction, $X$ has no $k$-rational points. This example certainly fails for infinite fields.
    \item For an example that works in all scenarios, consider the \Cref{Lbir not locally cart}. That is, let $X$ be an abelian variety over $k$ of positive dimension, and let $U:=X\setminus 0$ be the dense open subscheme of $X$ given by the complement of the unit. Since $X$ is birationally rigid (see \Cref{abelian variety}), it is clear that $L_{bir}U\simeq X$. So the morphism of path component sheaves of the birational localization of $U$ is identified with the map $$\pi_0^{nis}U=U\hookrightarrow X=\pi_0^{nis}X\simeq \pi_0^{nis}L_{bir}U,$$ which not only fails to be surjective but is in fact an injection. 
    \end{enumerate}
    \end{coex}
\begin{rem}\label[rem]{about pointing}
    When the topology $\sigma$ is such that the base scheme $S$ admits no nontrivial $\sigma$-cover, we can remove the pointed condition from each of the statements above. This is always the case when $S$ is a point for the topology $\sigma$. For example, this holds for every $S$ when $\sigma$ is the discrete topology, when $\sigma=zar$ and $S$ is the spectrum of a local ring, or when $\sigma=nis$ and $S$ is the spectrum of a henselian local ring. Indeed, in these cases $\pi_0^{\sigma}\esc{X}(S)=\pi_0\esc{X}(S)$. However, this is not always the case. For example, this might fail when $S=\mathbb{P}^1_k$ and $\sigma=zar$. An example of a topology finer than the Nisnevich topology where this fails is the \'etale topology over the spectrum of a non-separably closed field.
\end{rem}
\begin{cor}\label[cor]{bir preserves a1 conn}
    Let $S$ be the spectrum of a henselian local ring. Then,
    \begin{enumerate}
        \item $L_{bir}$ takes Nisnevich locally $n$-connected $S$-spaces to sectionwise $ n$-connected $S$-spaces.
        \item  $L_{bir}$ takes $\mathbb{A}^1$-$n$-connected $S$-spaces to sectionwise $n$-connected $S$-spaces.
        \item $\mathbb{A}^1$-connected spaces over $S$ are birationally connected.
    \end{enumerate} \end{cor}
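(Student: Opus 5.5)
The plan is to reduce all three statements to \Cref{0 bir connectivity} and \Cref{bir preserve a1 n conn}, using \Cref{about pointing} to remove the pointing hypothesis. When $S$ is the spectrum of a henselian local ring, $S$ is a point for the Nisnevich topology: every Nisnevich cover of $S$ has a section, so $\pi_0^{nis}\esc{X}(S)=\pi_0\esc{X}(S)$ for every space $\esc{X}$. This observation is the only new input.

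For (1), let $\esc{X}$ be Nisnevich locally $n$-connected with $n\geq 0$. Then $\pi_0^{nis}\esc{X}=*$, and in particular $\pi_0\esc{X}(S)=\pi_0^{nis}\esc{X}(S)=*$, so $\esc{X}(S)$ is nonempty. A point of $\esc{X}(S)$ is a morphism $pt=h_S(S)\to\esc{X}$, so $\esc{X}$ admits a base point. We can then apply \Cref{0 bir connectivity} with $\sigma=nis$: it shows that $L_{bir}\esc{X}$ is sectionwise $n$-connected. The cases $n=-1,-2$ are the trivial connectivity statements already recorded in the remark on $(-1,-2)$-connectivity. For $n=-1$ one uses the same observation: a Nisnevich locally nonempty $\esc{X}$ has $\esc{X}(S)\neq\emptyset$, and this section propagates to every $U\in Sm_S$ through the structure map $U\to S$.

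For (2), suppose $\esc{X}$ is $\mathbb{A}^1$-$n$-connected, so that $\esc{Y}:=L_{mot}\esc{X}$ is Nisnevich locally $n$-connected. By (1), $L_{bir}\esc{Y}$ is sectionwise $n$-connected. By \Cref{Motivic equivalences are Birational equivalences} we have $L_{bir}\esc{X}\simeq L_{bir}L_{mot}\esc{X}=L_{bir}\esc{Y}$, which gives (2). Alternatively, one can point $\esc{Y}$ as in (1) and invoke \Cref{bir preserve a1 n conn} directly. Statement (3) is the case $n=0$ of (2), combined with the definition of birational connectedness: $L_{bir}\esc{X}$ being sectionwise connected means $\esc{X}$ is birationally connected.

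The only delicate point is the pointing step in (1). We must check that a henselian local $S$ has no nontrivial Nisnevich covers, in the sense that every Nisnevich covering family of $S$ admits a section. This is the standard characterization of Nisnevich points, and it is exactly what makes the stalk at $S$ equal to sections over $S$. Everything else is a direct citation of earlier results.
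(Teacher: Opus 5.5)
Your proposal is correct and follows the paper's proof essentially verbatim. Because a henselian local $S$ is a Nisnevich point, $\pi_0\esc{X}(S)\simeq\pi_0^{nis}\esc{X}(S)=*$ gives a base point, and then \Cref{0 bir connectivity} with $\sigma=nis$ gives (1); replacing $\esc{X}$ by $L_{mot}\esc{X}$ gives (2), and $n=0$ gives (3). Your separate treatment of the trivial cases $n=-1,-2$ is a harmless addition that the paper leaves implicit.
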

\begin{proof}
   Because $S$ is henselian, for any $\esc{X}\in \mathcal{P}(S)$ we have $\pi_0\esc{X}(S)\simeq \pi_0^{nis}\esc{X}(S)$. Hence, if $\esc{X}$ is Nisnevich connected, then $$\pi_0\esc{X}(S)\simeq \pi_0^{nis}\esc{X}(S)=*.$$ We may thus choose a section $S\to \esc{X}$ to make $\esc{X}$ pointed. So (1) follows from \Cref{0 bir connectivity}. For (2), use $\esc{Y}:=L_{mot}\esc{X}$ as in the proof of \Cref{0 bir connectivity}. (3) is a special case of (2).
\end{proof}

\subsection{Birational homotopy groups of spaces}

We will use the notation $\pi_i^b:=\pi_iL_{bir}$ and call these the birational homotopy group presheaves. By \Cref{pi is birational}, these presheaves on $Sm_S$ (see Definition \Cref{birational presheaves of sets}) are therefore automatically Nisnevich sheaves by \Cref{bir local is nis local}. Consequently, we have $\pi_i^b=\pi_i^{nis}L_{bir}$, aligning with the conventions in motivic homotopy theory. Throughout this section, we assume that $\sigma$ is a topology coarser than the Nisnevich topology.
\begin{prop}\label[prop]{pi0b preserves sifted colimit}
    $\pi^b_0: \mathcal{P}(S)\to PSh_S$ preserves sifted colimits.
\end{prop}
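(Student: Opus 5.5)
We prove that $\pi^b_0=\pi_0\circ L_{bir}$ preserves sifted colimits by writing it as a composite of two functors that each do. The first is $L_{bir}:\mathcal{P}(S)\to\mathcal{P}(S)$, which preserves sifted colimits by \Cref{l bir preserve sifted colimit}. The second is the sectionwise path-component functor $\pi_0:\mathcal{P}(S)\to PSh_S$, restricted to the full subcategory $\mathcal{H}^b(S)$ where $L_{bir}$ takes values.

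First we check that the colimits match. Let $D:K\to\mathcal{P}(S)$ be a sifted diagram. Then $L_{bir}\colim_K D\simeq\colim_K L_{bir}D$. By \Cref{bir is closed under sifted colimits}, the right-hand colimit may be computed in $\mathcal{P}(S)$ itself: $\mathcal{H}^b(S)\subset\mathcal{P}(S)$ is closed under sifted colimits, and the proof of that theorem shows the colimit is formed sectionwise. So $\pi^b_0(\colim_K D)\simeq\pi_0(\colim_K L_{bir}D)$, where the colimit inside $\pi_0$ is the sectionwise colimit of spaces.

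Next we use that $\pi_0:Spc\to Set$ is a left adjoint, namely to the inclusion of discrete spaces. Hence it preserves all colimits. Colimits in $PSh_S$ are computed sectionwise, and colimits in $\mathcal{P}(S)$ are computed sectionwise. Therefore the sectionwise $\pi_0:\mathcal{P}(S)\to PSh_S$ preserves all colimits, and so
$$\pi_0\Big(\colim_K L_{bir}D\Big)\simeq\colim_K\pi_0L_{bir}D=\colim_K\pi^b_0D$$
in $PSh_S$.

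The one point needing care is where this colimit lives. It is computed in $PSh_S$, which is the codomain stated in the proposition, not in $Shv^b_S$ or $Sh_S$. Still, it lands automatically in birational presheaves. It is $\pi_0$ of a birational local space, hence birational by \Cref{pi is birational}, and so it is a Nisnevich sheaf by \Cref{birational presheavesa re nisnevich sheaves}. Consequently the same statement holds with $PSh_S$ replaced by $Shv^b_S$ or $Sh_S$.

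Beyond this bookkeeping there is no real obstacle: all the substantive input is \Cref{form for lbir} together with \Cref{l bir preserve sifted colimit}.
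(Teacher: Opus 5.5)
Your proof is correct and follows the paper's argument: $\pi_0^b=\pi_0\circ L_{bir}$, where $L_{bir}\colon\mathcal{P}(S)\to\mathcal{P}(S)$ preserves sifted colimits because $\mathcal{H}^b(S)\subset\mathcal{P}(S)$ is closed under them, and the sectionwise $\pi_0$ preserves all colimits. Your extra observation is also correct but not needed for the statement: the resulting colimit is already birational, so it is also the colimit in $Shv^b_S$ and in $Sh_S$.
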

\begin{proof}
    Since $\pi_0$ preserves colimits this follows from the fact that $L_{bir}$ preserves sifted colimits (\Cref{bir is closed under sifted colimits}).
\end{proof}
{\small{\textbf{{Birational connective covers and universal properties of $\pi_i^b$:}}}}

Let $\esc{X}\in \mathcal{P}_\sigma(S)_\bullet$ and $$\tau^{\sigma}_{>n}\esc{X}\to \esc{X}\to \tau^{\sigma}_{\leq n}\esc{X}$$ be the $n$-th slice of the Postnikov tower of $\esc{X}$. 
\begin{lem}\label[lem]{n conn cover of bir}
If $\esc{X}$ is a birational local space, then $\tau^{\sigma}_{>n}\esc{X}$ is birational local.
\end{lem}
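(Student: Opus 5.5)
The plan is to exhibit $\tau^{\sigma}_{>n}\esc{X}$ as a fiber of a map between birational local spaces, and then use that $\mathcal{H}^b(S)\subset\mathcal{P}(S)$ is closed under all limits (\Cref{closed under limits}).

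\emph{Step 1: the truncation is birational local.} Since $\esc{X}$ is birational local, \Cref{sigma judgment to bir}(a), implication $1\implies 2$, gives that $\tau^{\sigma}_{\leq n}\esc{X}$ is birational local. Concretely, \Cref{judgment to bir} shows that the presheaf truncation $\tau_{\leq n}\esc{X}$ is birational local. By \Cref{bir local is nis local} it is then already Nisnevich local, hence $\sigma$-local, so $\tau_{\leq n}\esc{X}\simeq\tau^{\sigma}_{\leq n}\esc{X}$.

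\emph{Step 2: identify the cover as a fiber.} Because $\esc{X}$ is pointed, the $n$-th Postnikov slice in the topos $\mathcal{P}_\sigma(S)$ gives a fiber sequence
$$\tau^{\sigma}_{>n}\esc{X}\to \esc{X}\to \tau^{\sigma}_{\leq n}\esc{X}.$$
Thus $\tau^{\sigma}_{>n}\esc{X}$ is the pullback of $pt\to\tau^{\sigma}_{\leq n}\esc{X}\leftarrow \esc{X}$ in $\mathcal{P}_\sigma(S)$. Since $\mathcal{P}_\sigma(S)\subset\mathcal{P}(S)$ is closed under limits, this pullback may equally be computed in $\mathcal{P}(S)$.

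\emph{Step 3: conclude.} All three corners are birational local: $pt$ trivially, $\esc{X}$ by hypothesis, and $\tau^{\sigma}_{\leq n}\esc{X}$ by Step 1. By \Cref{closed under limits}, the fiber $\tau^{\sigma}_{>n}\esc{X}$ is therefore birational local.

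The only point requiring care is the compatibility of the $\sigma$-local truncation with the sectionwise one, that is, checking that the fiber in $\mathcal{P}_\sigma(S)$ agrees with the presheaf-level fiber. Step 1 handles this via \Cref{bir local is nis local}. As an alternative, one could argue purely with homotopy sheaves. Since $\esc{X}$ and $\tau^\sigma_{\leq n}\esc{X}$ agree as presheaves with their presheaf truncations, the sectionwise fiber has $\pi_i$ equal to $\pi_i\esc{X}$ for $i>n$ and trivial below. These are birational by \Cref{pi is birational}, so \Cref{pi is birational} applies again to the fiber.
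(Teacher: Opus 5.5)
Your argument is correct, but it takes a slightly different route from the paper's. The paper works directly with the connected cover. It observes that the sectionwise cover $\tau_{>n}\esc{X}$ is birational local, because its homotopy presheaves are either trivial or equal to $\pi_i\esc{X}$ (\Cref{pi is birational}). By \Cref{bir local is nis local} it is then Nisnevich and hence $\sigma$-local, so $\tau^{\sigma}_{>n}\esc{X}=L_\sigma\tau_{>n}\esc{X}\simeq\tau_{>n}\esc{X}$. This is exactly the alternative you sketch in your last paragraph.

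Your main route does the homotopy-presheaf check on the truncation instead, via \Cref{sigma judgment to bir}(a). You then obtain the cover formally as the fiber of a map between birational local objects, using only that local objects are closed under limits (\Cref{closed under limits}) and that $\mathcal{P}_\sigma(S)\subset\mathcal{P}(S)$ is closed under limits.

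What your route buys is that you never analyze the homotopy presheaves of the cover itself; everything after Step 1 is formal. The cost is that you rely on the fiber-sequence description of $\tau^{\sigma}_{>n}$ and on the already-established birationality of the truncations, whereas the paper's argument is a self-contained one-liner. Both arguments rest on the same key input: birational locality forces $\sigma$-locality (\Cref{bir local is nis local}), so the $\sigma$-local and sectionwise Postnikov constructions agree.
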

\begin{proof}
When $\esc{X}$ is birational local, it follows that so is $\tau_{>1}\esc{X}$ (use, for example, \Cref{pi is birational}). But then, for any topology $\sigma$ coarser than the nisnevich topology, $\tau_{>1}\esc{X}$ is $\sigma$ local as well [\Cref{bir local is nis local}]. Therefore, $$\tau^\sigma_{>n}\esc{X}:=L_{\sigma}\tau_{>n}\esc{X}\simeq \tau_{>1}\esc{X},$$ and this last term is already birational local.
\end{proof}
\begin{cor}
The (sectionwise) simply connected covering space of a birational local space is birational local. We may call this covering space a `birationally simply connected cover'. 
\end{cor}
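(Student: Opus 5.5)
The corollary is the case $n=1$ of \Cref{n conn cover of bir}, so the plan is to apply that lemma and check that the space it produces is the simply connected covering space. Let $\esc{X}$ be a pointed birational local space. The sectionwise simply connected cover is the fiber $\tau_{>1}\esc{X}\to\esc{X}$ of the truncation map $\esc{X}\to\tau_{\leq 1}\esc{X}$. This fiber is computed sectionwise in $\mathcal{P}(S)$.

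The first step is to show that $\tau_{>1}\esc{X}$ is birational local. I would use \Cref{pi is birational}: a space is birational local iff every homotopy presheaf $\pi_i$ is birational. The homotopy presheaves of $\tau_{>1}\esc{X}$ are as follows.
\begin{itemize}
\item $\pi_0\tau_{>1}\esc{X}$ and $\pi_1\tau_{>1}\esc{X}$ are the constant point. A constant point is trivially dense local and sends coproducts to products, so it is birational.
\item $\pi_i\tau_{>1}\esc{X}=\pi_i\esc{X}$ for $i\geq 2$, and these are birational because $\esc{X}$ is.
\end{itemize}
One subtlety needs care. For a pointed space, the homotopy presheaves on a component other than the base point's are not directly part of the data. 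However, the base-point component is itself birational local: it is the fiber of $\esc{X}\to\pi_0\esc{X}$ over a point, and $\mathcal{H}^b(S)\subset\mathcal{P}(S)$ is closed under limits by \Cref{closed under limits}. So one may run the $\pi_i$ check on that component.

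A cleaner route avoids the homotopy presheaves altogether. The space $\tau_{\leq 1}\esc{X}$ is birational local by \Cref{judgment to bir}, so $\tau_{>1}\esc{X}$ is the fiber of a map between birational local spaces. It is therefore birational local by \Cref{closed under limits}.

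The second step is to identify this cover with the $\sigma$-local one, for $\sigma$ coarser than Nisnevich. This is exactly the content of \Cref{n conn cover of bir}. Since $\tau_{>1}\esc{X}$ is birational local, it is Nisnevich local by \Cref{bir local is nis local}, so $L_\sigma\tau_{>1}\esc{X}\simeq\tau_{>1}\esc{X}$. Hence the sectionwise and $\sigma$-local simply connected covers agree, which justifies calling it the birationally simply connected cover.

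The main obstacle is only bookkeeping about basepoints and components, which the limit-closure argument handles. No real difficulty is expected.
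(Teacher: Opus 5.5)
Your proposal is correct and follows the paper's route. The paper's proof is simply ``the $n=1$ case of \Cref{n conn cover of bir}'', whose proof checks that $\tau_{>1}\esc{X}$ is birational local via \Cref{pi is birational} and then uses \Cref{bir local is nis local} to identify it with $\tau^{\sigma}_{>1}\esc{X}=L_\sigma\tau_{>1}\esc{X}$; these are exactly your two steps, and your alternative of writing $\tau_{>1}\esc{X}$ as the fiber of $\esc{X}\to\tau_{\leq 1}\esc{X}$ over the base point (via \Cref{judgment to bir} and \Cref{closed under limits}) is a tidy way to handle the base-point bookkeeping the paper glosses over, though it is the same argument in substance.
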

\begin{proof}
    This is the $n=1$ case of the lemma above.
\end{proof}

Since $\tau^{\sigma}_{>n}L_{bir}$ is birational local by \Cref{n conn cover of bir}, the natural map $\tau^\sigma_{>n}\to \tau_{>n}^\sigma L_{bir} $ descends to a map $$L_{bir}\tau^{\sigma}_{>n}\to \tau^{\sigma}_{>n}L_{bir}.$$ Similarly, by \Cref{sigma judgment to bir} there is a canonical map $L_{bir}\tau^{\sigma}_{\leq n}\to \tau^{\sigma}_{\leq n}L_{bir}$.
\begin{prop}\label[prop]{Lbir vs connective covers} 
 Let $\esc{X}$ be a pointed $\sigma$-locally $0$-connected $S$-space such that for some $n\geq 0$ the $S$-space $\tau^{\sigma}_{\leq n}\esc{X}$ is birational (this happens precisely when $\pi_i^\sigma\esc{X}$ are birational local for all $i\leq n$, see \Cref{nis sheaves of bir is bir}). Then, for all $i\leq n$, the canonical morphisms $$L_{bir}\tau^{\sigma}_{\leq i}\to \tau^{\sigma}_{\leq i}L_{bir}\text{ and
}L_{bir}\tau^{\sigma}_{>i}\to \tau^{\sigma}_{>i}L_{bir}$$ are equivalences. 
\end{prop}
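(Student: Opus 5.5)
Since $L_{bir}$ and all the truncations involved are functorial, we only need to check that the two comparison maps are equivalences for $i\le n$; we start with the truncation statement and get the connective covers from fiber sequences. Fix $i\le n$. Since $\tau^{\sigma}_{\le n}\esc{X}$ is birational local, \Cref{sigma judgment to bir}(a) (the implication $1\implies 2$ applied to $\tau^\sigma_{\le n}\esc{X}$) shows that $\tau^\sigma_{\le i}\esc{X}\simeq\tau^\sigma_{\le i}\tau^\sigma_{\le n}\esc{X}$ is birational local. Hence $L_{bir}\tau^\sigma_{\le i}\esc{X}\simeq\tau^\sigma_{\le i}\esc{X}$. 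It therefore suffices to show that the map $\tau^\sigma_{\le i}\esc{X}\to\tau^\sigma_{\le i}L_{bir}\esc{X}$ is an equivalence.

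For this we use the Postnikov fiber sequence $\tau^\sigma_{>i}\esc{X}\to\esc{X}\to\tau^\sigma_{\le i}\esc{X}$ in $\mathcal{P}_\sigma(S)$. Its base is pointed and $\sigma$-locally connected, so \Cref{Lbir preserve connected fiber sequence} makes
$$L_{bir}\tau^\sigma_{>i}\esc{X}\to L_{bir}\esc{X}\to \tau^\sigma_{\le i}\esc{X}$$
a fiber sequence, with base already birational local. By \Cref{0 bir connectivity} (applied to the $\sigma$-locally $i$-connected pointed space $\tau^\sigma_{>i}\esc{X}$), the fiber $L_{bir}\tau^\sigma_{>i}\esc{X}$ is sectionwise $i$-connected, and hence also $\sigma$-locally $i$-connected. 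A fiber sequence in which the fiber is $i$-connected and the base is $i$-truncated identifies the base with $\tau^\sigma_{\le i}$ of the total space, by the uniqueness of Postnikov decompositions in the topos $\mathcal{P}_\sigma(S)$ (equivalently, by the long exact sequence of $\pi^\sigma_*$ together with surjectivity on $\pi^\sigma_{i+1}$ coming from $\pi_i$ of the fiber vanishing). Thus $\tau^\sigma_{\le i}L_{bir}\esc{X}\simeq\tau^\sigma_{\le i}\esc{X}$. We also need this identification to be the canonical comparison map, which follows from naturality: the comparison map is induced by the unit $\esc{X}\to L_{bir}\esc{X}$.

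The same uniqueness statement identifies the fiber $L_{bir}\tau^\sigma_{>i}\esc{X}$ with $\tau^\sigma_{>i}L_{bir}\esc{X}$, compatibly with the canonical map $L_{bir}\tau^\sigma_{>i}\to\tau^\sigma_{>i}L_{bir}$ defined via \Cref{n conn cover of bir}. This gives the second equivalence.

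The main obstacle is the bookkeeping rather than any new idea. First, \Cref{Lbir preserve connected fiber sequence} requires the base to be pointed and $\sigma$-locally connected; this holds because $\esc{X}$ is pointed and $0$-connected. Second, we have to make sure the abstract equivalences really are the canonical comparison maps. If the fiber-sequence uniqueness is awkward to apply directly, a fallback is to compare $\pi^\sigma_j$ of both sides via the long exact sequences, using $\pi^\sigma_j L_{bir}\tau^\sigma_{>i}\esc{X}=0$ for $j\le i$.
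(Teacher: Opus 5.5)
Your proof is correct and is essentially the paper's argument. You apply \Cref{Lbir preserve connected fiber sequence} to the Postnikov fiber sequence, use that the base $\tau^\sigma_{\le i}\esc{X}$ is already birational, and use \Cref{0 bir connectivity} on the fiber to identify the base with $\tau^\sigma_{\le i}L_{bir}\esc{X}$; comparing fibers then gives the statement for the covers. Your extra check that these equivalences are the canonical comparison maps, via the unit $\esc{X}\to L_{bir}\esc{X}$, fills in a point the paper leaves implicit.
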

\begin{proof}
Since $\tau^{\sigma}_{\leq i}\esc{X}$ is $\sigma$-locally connected, we obtain a fiber sequence $$L_{bir}\tau^{\sigma}_{> i}\esc{X}\to L_{bir}\esc{X}\to L_{bir}\tau^{\sigma}_{\leq  i}\esc{X}$$ by applying \Cref{Lbir preserve connected fiber sequence}. But $L_{bir}\tau^{\sigma}_{\leq  i}\esc{X}\simeq \tau^{\sigma}_{\leq i}\esc{X}$ for $i\leq n$ because $\tau^{\sigma}_{\leq n}\esc{X}$ is birational. Because $\tau^{\sigma}_{\leq i}\esc{X}$ is connected and $\tau^{\sigma}_{> i}\esc{X}$ is $i$-connected (in $\mathcal{P}_\sigma(S)$), by higher connectivity [\Cref{0 bir connectivity}], $L_{bir}\tau^{\sigma}_{\leq  i}\esc{X}$ is connected and $L_{bir}\tau^{\sigma}_{> i}\esc{X}$ is $i$-connected. From the long exact sequence of homotopy groups of the fiber sequence $$L_{bir}\tau^{\sigma}_{> i}\esc{X}\to L_{bir}\esc{X}\to L_{bir}\tau^{\sigma}_{\leq  i}\esc{X},$$ this yields $L_{bir}\tau^{\sigma}_{\leq i}\esc{X}\simeq \tau^{\sigma}_{\leq i}L_{bir}\esc{X}$. 

On the other hand, we compare the fiber sequence in the first line with the fiber sequence $$\tau^{\sigma}_{> i}L_{bir}\esc{X}\to L_{bir}\esc{X}\to \tau^{\sigma}_{\leq i}L_{bir}\esc{X}$$ given by the $n$-th slice of the Postnikov tower of $L_{bir}\esc{X}$.
\[
\xymatrix{
L_{bir}\tau^{\sigma}_{> i}\esc{X}\ar[r]\ar[d]& L_{bir}\esc{X}\ar[d]\ar[r]& L_{bir}\tau^{\sigma}_{\leq i}\esc{X}\ar[d]\\
\tau^{\sigma}_{> i}L_{bir}\esc{X}\ar[r] &L_{bir}\esc{X}\ar[r]&\tau^{\sigma}_{\leq i}L_{bir}\esc{X}
}
\]
Since both the middle and right vertical morphisms are equivalences, we deduce that the left vertical map is also an equivalence.
\end{proof}
\begin{cor}
Let $\esc{X}\in \mathcal{P}_\sigma(S)_{\geq 1}$ be pointed, with a $\sigma$-localy simply connected cover $\widetilde{\esc{X}}$. If $\pi_1^\sigma \esc{X}$ is birational, then $L_{bir}\widetilde{\esc{X}}\to L_{bir}\esc{X}$ is a birationally simply connected covering. It follows that there is an isomorphism $$\pi_1\esc{X}\simeq \pi_1^b\esc{X}\simeq Aut_{L_{bir}\esc{X}}(L_{bir}\widetilde{\esc{X}}).$$ 
\end{cor}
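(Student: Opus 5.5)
We apply \Cref{Lbir vs connective covers} with $n=1$. By hypothesis $\esc{X}$ is pointed and $\sigma$-locally connected, so $\tau^{\sigma}_{\leq 0}\esc{X}\simeq pt$. The sheaf $\pi_1^\sigma\esc{X}$ is birational, so $\tau^{\sigma}_{\leq 1}\esc{X}\simeq\mathrm{K}^\sigma(\pi_1^\sigma\esc{X},1)$ is birational local by \Cref{K is local}. This is exactly the hypothesis of \Cref{Lbir vs connective covers} for $n=1$. That proposition then gives equivalences
\[L_{bir}\tau^{\sigma}_{>1}\esc{X}\simeq \tau^{\sigma}_{>1}L_{bir}\esc{X}\quad\text{and}\quad L_{bir}\tau^{\sigma}_{\leq 1}\esc{X}\simeq \tau^{\sigma}_{\leq 1}L_{bir}\esc{X}.\]
Since $\widetilde{\esc{X}}=\tau^{\sigma}_{>1}\esc{X}$, the map $L_{bir}\widetilde{\esc{X}}\to L_{bir}\esc{X}$ is identified with the $1$-connected cover $\tau^{\sigma}_{>1}L_{bir}\esc{X}\to L_{bir}\esc{X}$ of a birational local space. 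By \Cref{n conn cover of bir}, this cover is birational local and coincides with the sectionwise simply connected cover. So it is a birationally simply connected covering in the sense of the preceding corollary.

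For the isomorphisms we first read off $\pi_1$. From the second equivalence,
\[\pi_1^b\esc{X}=\pi_1L_{bir}\esc{X}\simeq \pi_1\tau_{\leq1}^\sigma L_{bir}\esc{X}\simeq\pi_1 L_{bir}\tau^\sigma_{\leq 1}\esc{X}\simeq \pi_1\tau^\sigma_{\leq1}\esc{X}.\]
The last term is $\pi_1^\sigma\esc{X}$. By \Cref{nis sheaves of bir is bir}, applied to the birational local space $\tau^{\sigma}_{\leq 1}\esc{X}$, it also equals the presheaf $\pi_1\tau^\sigma_{\leq1}\esc{X}$. This gives $\pi_1\esc{X}\simeq\pi_1^b\esc{X}$. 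Here $\pi_1\esc{X}$ should be read as $\pi_1^\sigma\esc{X}$, which agrees with the presheaf-level group since it is birational.

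Next we identify the automorphism group. Write $\esc{Y}:=L_{bir}\esc{X}$; it is birational local and sectionwise connected by \Cref{0 bir connectivity}. We have the fiber sequence $\tau_{>1}\esc{Y}\to\esc{Y}\to \mathrm{K}(\pi_1\esc{Y},1)$. The cover $\tau_{>1}\esc{Y}\to\esc{Y}$ is the pullback of $pt\to \mathrm{B}\pi_1\esc{Y}$, so it is a principal $\pi_1\esc{Y}$-bundle. Its automorphism group over $\esc{Y}$ is therefore computed from the mapping space $\mathrm{Map}_{/\esc{Y}}(\tau_{>1}\esc{Y},\tau_{>1}\esc{Y})$. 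Because the source is $1$-connected and the fiber $\Omega\mathrm{B}\pi_1$ is discrete, this mapping space is discrete and equal to $\pi_1\esc{Y}$, acting by deck transformations. This is the standard covering space argument, carried out sectionwise or internally in $\mathcal{P}_\sigma(S)$.

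The main obstacle is making this last step precise internally, i.e.\ as a sheaf rather than a global automorphism group. I would interpret $Aut$ as the internal automorphism sheaf. Then the deck-transformation identification is checked objectwise, using that everything is birational local and hence sectionwise, so the classical statement applies on each $X\in Sm_S$ and is natural in $X$.
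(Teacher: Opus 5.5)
Your proposal is correct and follows the paper's proof. The paper likewise applies \Cref{Lbir vs connective covers} with $n=1$ and identifies $\tau^{\sigma}_{>1}$ with the simply connected cover; your check that $\tau^{\sigma}_{\leq 1}\esc{X}\simeq\mathrm{K}^{\sigma}(\pi_1^{\sigma}\esc{X},1)$ is birational, and your deck-transformation computation, spell out what the paper leaves implicit. One caution: reading $\pi_1\esc{X}$ as $\pi_1^{\sigma}\esc{X}$ is necessary rather than cosmetic, because the sectionwise presheaf $\pi_1\esc{X}$ need not be a sheaf even when $\pi_1^{\sigma}\esc{X}$ is birational (for $\esc{X}=\mathrm{B}^2_{nis}\mathbb{G}_m$ it is $H^1_{nis}(-,\mathbb{G}_m)$, whereas $\pi_1^{b}\esc{X}=0$), so your phrase ``agrees with the presheaf-level group'' is valid only for the presheaf $\pi_1\tau^{\sigma}_{\leq 1}\esc{X}$.
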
 
\begin{proof}
Applying \Cref{Lbir vs connective covers} with $n=1$ it follows that $$L_{bir}\tau^{nis}_{>1}\esc{X}\to \tau^{nis}_{>1}L_{bir}\esc{X}$$ is an equivalence. The claim follows from the fact that for a pointed $\sigma$-locally connected space $\esc{Y}$, $\tau_{>1}^\sigma\esc{Y}$ is its unique pointed simply connected cover. 
\end{proof}
\begin{rem}
    In principle, similar to Morel's result in his book [\cite{morel2012a1}, Theorem 7.8], one would like a statement about the existence of birational covering spaces over birationally connected spaces, and that too without the birationality assumption of $\pi_1$. Unfortunately, because $L_{bir}$ is not locally cartesian, we are unable to establish this in such generality. However, \Cref{locally cart on a1 conn} suggests that this is possible over $\mathbb{A}^1$-connected objects. This will require categorical discussions of covering spaces, which we will develop in an upcoming work [\cite{inhot}, \S2.4]. As a consequence, we will be able to construct the desired birational result over $\mathbb{A}^1$-connected spaces [\cite{inhot}, Theorem 4.4.14]. In the same paper, we will also reprove some versions of Morel's results on $\mathbb{A}^1$ covering spaces, adapted to the $\infty$-categorical language (see \S3.4 in \textit{loc.cit.}).
\end{rem}
\begin{thm}\label{higher pii as universal birationalization}
Let $\esc{X}$ be a pointed $\sigma$-locally $0$-connected $S$-space such that, for some $n\geq 0$, the space $\tau^{\sigma}_{\leq n}\esc{X}$ is birational (which occurs precisely when $\pi_i^\sigma\esc{X}$ are birational local for all $i\leq n$; see \Cref{nis sheaves of bir is bir}). Then the morphism $\pi_{n+1}^\sigma\esc{X}\to \pi_{n+1}^{b}\esc{E}$ is universal among morphisms to a birational sheaf of groups when $n=0$ and to a birational sheaf of abelian groups when $n\geq 1$.
\end{thm}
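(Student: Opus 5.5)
We want to show that for every birational sheaf of groups (resp. abelian groups if $n\geq 1$) $G$, restriction along $\pi_{n+1}^\sigma\esc{X}\to \pi_{n+1}^b\esc{X}$ induces a bijection
$$\mathrm{Hom}(\pi_{n+1}^b\esc{X},G)\to \mathrm{Hom}(\pi_{n+1}^\sigma\esc{X},G).$$
(Here $\esc{E}$ in the statement is read as $\esc{X}$.) We will work with Eilenberg--MacLane objects and the connectivity results of \S3.5.

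\textbf{Step 1: reduce to the connective cover.} By \Cref{Lbir vs connective covers} with $i=n$, we have $L_{bir}\tau^\sigma_{>n}\esc{X}\simeq \tau^\sigma_{>n}L_{bir}\esc{X}$. So $\pi_{n+1}^b\esc{X}=\pi_{n+1}\tau_{>n}L_{bir}\esc{X}=\pi_{n+1}^bL_{bir}\tau^\sigma_{>n}\esc{X}$, and similarly $\pi^\sigma_{n+1}\esc{X}=\pi^\sigma_{n+1}\tau^\sigma_{>n}\esc{X}$. We may therefore replace $\esc{X}$ by the pointed $\sigma$-locally $n$-connected space $\esc{Y}:=\tau^\sigma_{>n}\esc{X}$.

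\textbf{Step 2: identify both groups as Hom into $\mathrm{K}(G,n+1)$.} For a $\sigma$-locally $n$-connected pointed $\esc{Y}$ and a $\sigma$-sheaf of groups (abelian if $n\ge1$) $G$, the Hurewicz/Postnikov theory in the $\infty$-topos $\mathcal{P}_\sigma(S)$ gives
$$[\esc{Y},\mathrm{K}^\sigma(G,n+1)]_\bullet\cong \mathrm{Hom}(\pi^\sigma_{n+1}\esc{Y},G),$$
since $\tau^\sigma_{\le n+1}\esc{Y}\simeq \mathrm{K}^\sigma(\pi^\sigma_{n+1}\esc{Y},n+1)$ and maps between Eilenberg--MacLane objects in the same degree are homomorphisms. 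When $G$ is birational, \Cref{K is local} says that $\mathrm{K}^\sigma(G,n+1)$ is birational local. Hence
$$[\esc{Y},\mathrm{K}(G,n+1)]_\bullet\cong[L_{bir}\esc{Y},\mathrm{K}(G,n+1)]_\bullet.$$
By \Cref{0 bir connectivity}, $L_{bir}\esc{Y}$ is sectionwise $n$-connected, so the same identification applied to $L_{bir}\esc{Y}$ (now in $\mathcal{P}(S)$, or in $\mathcal{P}_\sigma$ since it is $\sigma$-local) gives $\mathrm{Hom}(\pi_{n+1}^b\esc{Y},G)$. Chaining these yields the desired bijection, and one checks it is precomposition with the canonical map $\pi_{n+1}^\sigma\esc{Y}\to\pi_{n+1}^b\esc{Y}$ by naturality of the Postnikov truncation with respect to the unit $\esc{Y}\to L_{bir}\esc{Y}$.

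\textbf{Expected obstacle.} The delicate point is Step 2's use of pointed homotopy classes versus unpointed ones, and the naturality check. The pointed mapping spaces are fine because localization is compatible with pointed objects ($L_{bir}(pt)=pt$ by \Cref{Lbir is cartesian}). The identification $[\esc{Y},\mathrm{K}(G,n+1)]_\bullet=\mathrm{Hom}(\pi_{n+1},G)$ requires $n$-connectedness of both source objects, which is exactly why Step 1 and \Cref{0 bir connectivity} are needed. For $n=0$ one must also use that $\pi_1^b\esc{Y}$ is a birational sheaf of groups, by \Cref{pi is birational}, so that it lies in the target category. This is automatic, and it completes the universal property.
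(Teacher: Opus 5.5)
Your proof is correct and follows essentially the paper's route: map the $n$-connected cover into $\mathrm{K}(B,n+1)$, identify such maps with homomorphisms out of $\pi_{n+1}$, use that $\mathrm{K}(B,n+1)$ is birational local (\Cref{K is local}), and use \Cref{Lbir vs connective covers} to identify $L_{bir}\tau^{\sigma}_{>n}\esc{X}$ with $\tau^{\sigma}_{>n}L_{bir}\esc{X}$. The differences are cosmetic. You get the Hom-identification from $\tau^{\sigma}_{\leq n+1}\esc{Y}\simeq \mathrm{K}(\pi^{\sigma}_{n+1}\esc{Y},n+1)$ rather than from $(n+1)$-fold delooping and $\mathcal{E}_{n+1}$-monoid maps, and you use pointed mapping spaces throughout, which is the correct setting for that identification (the paper writes unpointed $\mathrm{Map}$).
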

\begin{proof}
    Let $B$ be a birational sheaf of (abelian if $n\geq 1$) groups. Since $\tau^{\sigma}_{>n}\esc{X}$ is $\sigma$-locally $n$-connected by definition, it admits an $(n+1)$-fold delooping. In other words, we have an equivalence $\tau^\sigma_{>n}\esc{X}\simeq \mathrm{B}_\sigma^{n+1}\Omega^{n+1}\tau^\sigma_{>n}\esc{X}$. But then
\begin{flalign}\label{univer}
     \mathrm{Map}_{\mathcal{P}_\sigma(S)}(\tau^{\sigma}_{> n}\esc{X}, \mathbf{B}^{n+1}_\sigma B)&\simeq\mathcal{E}_{n+1}\mbox{-}\mathrm{Mon}_{\mathcal{P}_\sigma(Sm_S)}(\Omega^{n+1}\tau^{\sigma}_{>n}\esc{X}, B)\\&\cong {(n+1)}\mbox{-}Grp^\sigma_S(\pi_0\Omega^{n+1}\tau^{\sigma}_{>n}\esc{X}, B) \text{  [ since } B \text{ is discrete ]}\\&\cong (n+1)\mbox{-}Grp^\sigma_S(\pi^\sigma_{n+1}\esc{X}, B)\label{uniend} 
 \end{flalign} 
 
On the other hand, since the space $\mathrm{B}^{n+1}_\sigma B= \mathrm{K}(B, n+1)$ is birational local (see \Cref{K is local}), the starting infinity groupoid is identical to $$\mathrm{Map}_{\mathcal{H}^b(S)}(L_{bir}\tau^{\sigma}_{> n}\esc{X}, \mathbf{B}_\sigma^{n+1}B).$$ By \Cref{Lbir vs connective covers}, this mapping space can be further identified with $$\mathrm{Map}_{\mathcal{P}_\sigma(S)}(\tau^{\sigma}_{> n}L_{bir}\esc{X}, \mathrm{B}^{n+1}_\sigma B).$$ By the same line of identifications as in the above computation (\ref{univer}-\ref{uniend}), this last term is equivalent to $$(n+1)\mbox{-}Grp^\sigma_S(\pi^\sigma_{n+1}L_{bir}\esc{X}, B).$$  Again, because $L_{bir}\esc{X}$ is birational $\pi_{n+1}L_{bir}\esc{X}\simeq \pi_{n+1}^{\sigma}L_{bir}\esc{X}$. Thus, $$(n+1)\mbox{-}Grp^\sigma(\pi^\sigma_{n+1}L_{bir}\esc{X}, B) \cong (n+1)\mbox{-}Grp^b(\pi^{b}_{n+1}\esc{X}, B).$$ One may now use the Yoneda lemma to conclude. (here, $(n+1)\mbox{-}Grp_S^\sigma$ is the category of $\sigma$-sheaves of $(n+1)$-group objects; when $n=0$, this is simply $Grp_S^\sigma$, whereas for $n\geq 1$, an Eckmann-Hilton argument shows that these are all equivalent to $Ab_S^\sigma$.) 
\end{proof}

    The $\pi_0$ case does not require any condition:
\begin{prop}\label[prop]{pi0b is birationalization}
For any $\esc{X}\in \mathcal{P}(S)$, the natural map $\pi_0^{\sigma}\esc{X}\to \pi_0^b\esc{X}$ is the universal morphism to a birational (pre)sheaf of sets. In particular, replacing $\esc{X}$ by $L_{mot}\esc{X}$ yields that $\pi_0^{\mathbb{A}^1}\esc{X}\to \pi_0^b\esc{X}$ is a birationalization.
\end{prop}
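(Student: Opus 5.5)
The plan is to identify the set-valued mapping spaces directly, exactly as in the proof of \Cref{higher pii as universal birationalization} but one level down. Let $F$ be a birational presheaf of sets, viewed as a discrete space. By \Cref{birational presheaves of sets} and \Cref{bir local is nis local}, $F$ is birational local, hence $\sigma$-local for every $\sigma$ coarser than the Nisnevich topology. First I compute maps out of $\esc{X}$. Since $F$ is discrete ($0$-truncated), maps $\esc{X}\to F$ factor through the truncation, so
\[
\mathrm{Map}_{\mathcal{P}(S)}(\esc{X},F)\simeq \mathrm{Hom}_{PSh_S}(\pi_0\esc{X},F).
\]
Since $F$ is also a $\sigma$-sheaf, this agrees with $\mathrm{Hom}_{Sh^\sigma_S}(\pi_0^\sigma\esc{X},F)$. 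This uses that $\pi_0^\sigma=L_\sigma\pi_0$ is the $\sigma$-sheafification of $\pi_0$.

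Next I use the birational localization. Because $F$ is birational local, the unit $\esc{X}\to L_{bir}\esc{X}$ induces an equivalence $\mathrm{Map}(L_{bir}\esc{X},F)\simeq\mathrm{Map}(\esc{X},F)$. Applying the same truncation argument to $L_{bir}\esc{X}$ gives
\[
\mathrm{Map}(L_{bir}\esc{X},F)\simeq \mathrm{Hom}_{PSh_S}(\pi_0L_{bir}\esc{X},F)=\mathrm{Hom}(\pi_0^b\esc{X},F).
\]
Here $\pi_0^b\esc{X}$ is itself a birational presheaf by \Cref{pi is birational}, and it is a Nisnevich sheaf by \Cref{nis sheaves of bir is bir}, so $\pi_0^b\esc{X}=\pi_0^{\sigma}L_{bir}\esc{X}$.

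Combining the two steps gives a bijection $\mathrm{Hom}(\pi_0^b\esc{X},F)\cong\mathrm{Hom}(\pi_0^\sigma\esc{X},F)$, natural in $F$. I then need to check that this bijection is precomposition with the natural map $\pi_0^\sigma\esc{X}\to\pi_0^b\esc{X}$, which is $\pi_0^\sigma$ of the unit. This holds by naturality of the truncation adjunction in $\esc{X}$. By the Yoneda lemma in the category of birational presheaves, the map $\pi_0^\sigma\esc{X}\to\pi_0^b\esc{X}$ is therefore the universal map to a birational presheaf of sets. No connectivity or pointing is needed, because the $0$-truncation adjunction holds without hypotheses, unlike the higher cases, which required delooping.

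For the ``in particular'', I replace $\esc{X}$ by $L_{mot}\esc{X}$ and take $\sigma=nis$. Since $L_{bir}L_{mot}\simeq L_{bir}$ by \Cref{Motivic equivalences are Birational equivalences}, we get $\pi_0^bL_{mot}\esc{X}=\pi_0^b\esc{X}$, while $\pi_0^{nis}L_{mot}\esc{X}=\pi_0^{\mathbb{A}^1}\esc{X}$.

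The only step needing care is compatibility of the identifications: showing that the composite bijection really is induced by the natural map, rather than being some abstract isomorphism. I would settle this by writing both sides as maps out of $\esc{X}$ through the commutative square formed by the two units $\esc{X}\to\tau_{\leq0}\esc{X}$ and $\esc{X}\to L_{bir}\esc{X}$.
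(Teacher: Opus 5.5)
Your proposal is correct and follows the paper's proof: you identify $\mathrm{Map}(\esc{X},F)$ and $\mathrm{Map}(L_{bir}\esc{X},F)$ with maps out of $\pi_0\esc{X}$ and $\pi_0^b\esc{X}$ for a discrete birational $F$, then conclude by Yoneda, and you obtain the ``in particular'' from $L_{bir}L_{mot}\simeq L_{bir}$. The small differences are that you pass to $\pi_0^\sigma$ before combining rather than afterwards via the universal property of sheafification, and that you check explicitly that the bijection is precomposition with $\pi_0^\sigma$ of the unit, which the paper leaves implicit.
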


\begin{proof}
    Since a birational sheaf of sets $B$ is discrete, we compute $$\mathrm{Map}_{\mathcal{P}(Sm_S)}(\esc{X}, B)\simeq PSh_S(\pi_0\esc{X}, B).$$
     Again, since $B$ is birational as a space, the left-hand side is $$\mathrm{Map}_{\mathcal{H}^b(S)}(L_{bir}\esc{X}, B)\simeq \mathrm{Map}_{\mathcal{P}(S)}(L_{bir}\esc{X}, B)\simeq PSh_S(\pi_0L_{bir}\esc{X}, B)= Shv^b_S(\pi_0^b\esc{X}, B).$$ These equivalences induce a canonical identification $$PSh_S(\pi_0\esc{X}, B) \cong  Shv^b_S(\pi_0^b\esc{X}, B).$$ Formally, this implies that $\pi_0^b\esc{X}$ is the birationalization of $\pi_0\esc{X}$. Since birational presheaves are $\sigma$-sheaves, this immediately guarantees (by the universal property of sheafification) that $\pi_0^b\esc{X}$ is the birationalization of $\pi_0^{\sigma}\esc{X}$ as well. For the last statement, use $\sigma=nis$.
\end{proof}
\begin{rem}\label[rem]{pi0b is birationalization of pi0}
For a discrete presheaf of sets, this implies that $F\to \pi_0^bF$ provides a left adjoint to the inclusion $Shv^b_S\hookrightarrow{}Shv_S^\sigma$ (which has been studied over fields earlier in [\cite{Shimizu01022023}], where it is denoted $\pi_0^{br}$). So the observations of the above proposition can be restated as $$\pi_0^b\esc{X}\simeq \pi_0^{b}\pi_0\esc{X}\simeq \pi_0^{b}\pi^{nis}_0\esc{X} \simeq\pi_0^{b}\pi^{\mathbb{A}^1}_0\esc{X}.$$ Moreover, since both $\pi_0$ and $L_{bir}$ preserve finite products [\Cref{Lbir is cartesian}], we know that $\pi_0^b=\pi_0L_{bir}$ also preserves them. So in particular, $\pi_0^b$ preserves group structures. Therefore, the conclusion of \Cref{higher pii as universal birationalization} can be restated as $$\pi_{n+1}^{b}\esc{X}\cong\pi_0^{b}\pi_{n+1}^\sigma\esc{X}.$$
\end{rem}  

\begin{cor}
    Let $\esc{M}$ be a commutative $H$-space. Then $\pi_0\esc{M}\to \pi_0^b\esc{M}$ is a surjection of (pre)sheaves.
\end{cor}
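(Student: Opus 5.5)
The claim is that for a commutative $H$-space $\esc{M}$, the map $\pi_0\esc{M}\to \pi_0^b\esc{M}$ is a surjection of presheaves. The plan is to reduce this to a statement about birationalization of commutative monoids in presheaves of sets, and then to control that birationalization through its colimit description.

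First I would fix the structure. Since $\pi_0$ preserves finite products, $A:=\pi_0\esc{M}$ is a presheaf of commutative monoids. By \Cref{pi0b is birationalization} and \Cref{pi0b is birationalization of pi0}, $\pi_0^b\esc{M}\simeq \pi_0^b A$, where $\pi_0^b$ is the left adjoint of $Shv^b_S\hookrightarrow PSh_S$. Because $\pi_0^b=\pi_0L_{bir}$ preserves finite products (\Cref{Lbir is cartesian}), $\pi_0^bA$ is a commutative monoid and $A\to\pi_0^bA$ is a monoid map. It therefore suffices to show that for a presheaf of commutative monoids $A$, the unit $A\to\pi_0^bA$ is sectionwise surjective.

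To prove this, I would compute $\pi_0^b A$ through \Cref{form for lbir}, taking $\sigma=\sqcup$ and $A=S$ there, so that $\pi_0^bA=\dcolim_n \pi_0(L_\Sigma L_{dense})^n A$. Since $\pi_0$ commutes with filtered colimits, surjectivity reduces to two claims: each of $\pi_0L_{dense}$ and $\pi_0L_\Sigma$ is surjective on $\pi_0$ when applied to (discrete, or $\pi_0$ of) commutative monoid objects, and these steps preserve the commutative monoid structure. The $L_\Sigma$ step is harmless. Writing $L_\Sigma$ via the sifted-cocompletion formula (left Kan extension along finite coproducts), a section of $L_\Sigma F$ over $X=\sqcup X_i$ is a tuple of sections of $F$ over the $X_i$. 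For a presheaf of commutative monoids these tuples can be added, and since the map $A(\sqcup X_i)\to \prod A(X_i)$ is a monoid map, one expects to realise every tuple as a sum of images of sections extended by zero. This is where commutativity, or at least the existence of a unit, first enters.

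The main obstacle is the $L_{dense}$ step, which has no explicit formula. Here I would use the fact that $L_{dense}$ is a left Bousfield localization of the presheaf topos at morphisms of representables, so it can be built by a small-object colimit of pushouts along the maps $\partial\Delta^k\times V\cup \Delta^k\times U\to \Delta^k\times V$ for $U\hookrightarrow V$ dense. On $\pi_0$, only the $k=0$ cells add new components, and they do so by freely adjoining sections over $V$ that extend given sections over $U$. I would try to show that for a commutative monoid object these new components can be made redundant. The idea is to replace the free localization by the monoidal localization $A\mapsto \dcolim_{U\subset X\text{ dense}} A(U)$ (a filtered colimit, using \Cref{dense prod}), and then argue that its universality among monoid maps to dense local monoids agrees with the universal property among sets, because $\pi_0^b$ preserves products. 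I suspect this is the step that really requires the $H$-space hypothesis. The counterexample after \Cref{Lbir is not effective} shows the statement fails for nonmonoidal objects such as $X\setminus 0$, so any argument must use the group or monoid structure to translate sections, as one does for an abelian variety. I would try to make that translation precise using the bar construction, $\pi_0^b\esc{M}=\pi_0 L_{bir}\esc{M}$ together with $L_{bir}\mathrm{B}\esc{M}\simeq\mathrm{B}L_{bir}\esc{M}$ (\Cref{bar of bir monoids}), in order to identify $\pi_0^b\esc{M}$ as a quotient of $\pi_0\esc{M}$ in commutative monoids.
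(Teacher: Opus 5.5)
Your reduction to presheaves of commutative monoids is fine. The step that would carry the proof, namely that the components freely adjoined by $L_{dense}$ are already hit once $A$ is a commutative monoid, is never supplied. It cannot be, because the statement fails even for discrete, strictly commutative, grouplike $\esc{M}$.

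Here is a counterexample. Let $S=\mathrm{Spec}\,k$, let $X$ be an abelian variety of positive dimension, and let $U=X\setminus 0$. For $Y\in Sm_k$ write $\mathbb{Z}[Y]$ for the presheaf $T\mapsto\prod_{T_i\in\pi_0(T)}\mathbb{Z}[Y(T_i)]$, and take $\esc{M}=\mathbb{Z}[U]$.
\begin{itemize}
\item As you note, $\pi_0^b$ preserves finite products. So $\pi_0^b\esc{M}$ is the reflection of $\esc{M}$ into birational presheaves of abelian groups.
\item For every such $B$ one has $\mathrm{Hom}(\mathbb{Z}[U],B)=B(U)\cong B(X)=\mathrm{Hom}(\mathbb{Z}[X],B)$.
\item Connected smooth $k$-schemes are irreducible and $X$ is birationally rigid (\Cref{abelian variety}), so $\mathbb{Z}[X]$ is itself birational. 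Hence $\pi_0^b\esc{M}\cong\mathbb{Z}[X]$, with unit induced by $U\hookrightarrow X$.
\item On $k$-sections this unit is the inclusion $\mathbb{Z}[X(k)\setminus\{0\}]\hookrightarrow\mathbb{Z}[X(k)]$, which misses the basis element $[0]$.
\item Every Nisnevich cover of $\mathrm{Spec}\,k$ has a section, so the map is not even an epimorphism of Nisnevich sheaves.
\end{itemize}

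The missing class is exactly one of your new components. It is the pullback, along the non-dominant map $0\colon\mathrm{Spec}\,k\to X$, of the adjoined extension $[\mathrm{id}_X]$ of the section $[\mathrm{id}_U]$. This is why your model $A\mapsto\dcolim_{V\subset T}A(V)$ cannot help: it is not functorial along non-dominant maps, which is precisely where surjectivity breaks (cf.\ \Cref{Lbir is not effective}). It is also why translating by a group law is irrelevant: the group structure of $\esc{M}$ has nothing to do with that of $X$. Separately, your $L_\Sigma$ step fails sectionwise. For the constant presheaf $\mathbb{Z}$ the unit is the diagonal $\mathbb{Z}\to\mathbb{Z}^{\pi_0(T)}$, which is not surjective when $T$ is disconnected.

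The paper's own two-line argument does not avoid this. It asserts that $\pi_0\esc{M}\to\pi_0^b\esc{M}$ is an epimorphism in $CMon^b_S$ and appeals to exactness of $CMon^b_S\subset CMon_S$. But the source $\pi_0\esc{M}$ is not birational, and exactness of the inclusion only transports epimorphisms between birational objects. In the example above, the cokernel $\mathbb{Z}[X]/\mathbb{Z}[U]$ is nonzero on $\mathrm{Spec}\,k$, yet its birationalization vanishes because the reflector is a left adjoint. Moreover, in commutative monoids an epimorphism need not be surjective anyway, e.g.\ $\mathbb{N}\to\mathbb{Z}$. So the corollary as stated needs an additional hypothesis, not a different proof.
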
 
\begin{proof}
    From \Cref{pi0b is birationalization} we know that $\pi_0\esc{M}\to \pi_0^b\esc{M}$ is an epimorphism in $CMon^b$. But $CMon^b_S\subset CMon_S$ is exact (the same proof as in \Cref{bir is exact in presheaves} applies; the existence of an inverse in the cited result is irrelevent). So $\pi_0\esc{M}\to \pi_0^b\esc{M}$ is an epimorphism in $PSh_S$.
\end{proof}
\begin{rem}
    This is a version of the $\mathbb{A}^1$ motivic result [\cite{morel19991}, Corollary 2.1.3] which works for all $S$-spaces. Birational locally this is certainly not true for all spaces as \Cref{Lbir is not effective} shows. 
\end{rem}
\begin{cor}\label[cor]{pi1b=pi0bpi1a1}
Suppose $\esc{X}$ is a (motivically) pointed $\mathbb{A}^1$-$n$-connected $S$-space. Then the canonical map $\pi_{n+1}^{\mathbb{A}^1}\esc{X}\to \pi_{n+1}^{b}\esc{X}$ is universal among maps to a birational sheaf of sets when $n=-1$; to a birational sheaf of groups when $n=0$; and to a birational sheaf of abelian groups when $n\geq 1$; i.e., $$\pi_{n+1}^{b}\esc{X}\simeq \pi_0^b(\pi_{n+1}^{\mathbb{A}^1}\esc{X}).$$
\end{cor}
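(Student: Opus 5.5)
The plan is to replace $\esc{X}$ by $\esc{Y}:=L_{mot}\esc{X}$ and then apply the preceding results to $\esc{Y}$ with $\sigma=nis$. Since motivic equivalences are birational equivalences (\Cref{Motivic equivalences are Birational equivalences}), we have $L_{bir}\esc{X}\simeq L_{bir}\esc{Y}$, so $\pi_{n+1}^b\esc{X}\cong\pi_{n+1}^b\esc{Y}$. By definition, $\pi_{n+1}^{\mathbb{A}^1}\esc{X}=\pi_{n+1}^{nis}\esc{Y}$. The map in the statement is therefore identified with the canonical map $\pi^{nis}_{n+1}\esc{Y}\to\pi^b_{n+1}\esc{Y}$, and $\esc{Y}$ is a pointed, Nisnevich-locally $n$-connected motivic space.

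The case $n=-1$ needs no connectivity. It is exactly the second assertion of \Cref{pi0b is birationalization}: $\pi_0^{\mathbb{A}^1}\esc{X}\to\pi_0^b\esc{X}$ is the universal map to a birational sheaf of sets.

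For $n\geq 0$ I will invoke \Cref{higher pii as universal birationalization} for $\esc{Y}$ and $\sigma=nis$. Its hypotheses are that $\esc{Y}$ is pointed and Nisnevich-locally $0$-connected, which holds since $n\geq 0$, and that $\tau^{nis}_{\leq n}\esc{Y}$ is birational. The second hypothesis is the only real point to check. Because $\esc{Y}$ is Nisnevich $n$-connected, $\tau^{nis}_{\leq n}\esc{Y}\simeq pt$, and the terminal object is birational local (as noted at the end of the proof of \Cref{Lbir is cartesian}). Equivalently, $\pi_i^{nis}\esc{Y}$ is trivial for $i\leq n$, hence birational. The theorem then shows that $\pi^{nis}_{n+1}\esc{Y}\to\pi^b_{n+1}\esc{Y}$ is universal among maps to birational sheaves of groups when $n=0$, and of abelian groups when $n\geq1$.

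Finally, to obtain the identification $\pi_{n+1}^b\esc{X}\simeq\pi_0^b(\pi_{n+1}^{\mathbb{A}^1}\esc{X})$, I will use \Cref{pi0b is birationalization of pi0}. Since $\pi_0^b$ preserves finite products, it preserves group and abelian group structures. Hence the birationalization of $\pi_{n+1}^{\mathbb{A}^1}\esc{X}$ as a sheaf of sets carries a group structure and is also its birationalization as a (abelian) group. Both sides then satisfy the same universal property, and Yoneda gives the isomorphism.

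The main subtlety I expect is the pointing: the statement says ``motivically pointed'', so the base point is available on $L_{mot}\esc{X}$, which is exactly what the cited theorem requires. Beyond that, the argument is bookkeeping.
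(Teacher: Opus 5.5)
Your proposal is correct and follows the paper's proof essentially step for step. The paper also treats $n=-1$ by \Cref{pi0b is birationalization}, applies \Cref{higher pii as universal birationalization} to $L_{mot}\esc{X}$ with $\sigma=nis$ (using that $\tau^{nis}_{\leq n}L_{mot}\esc{X}\simeq pt$ is birational), identifies the map via $L_{bir}L_{mot}\simeq L_{bir}$, and obtains the group-level isomorphism from \Cref{pi0b is birationalization of pi0}.
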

\begin{proof}
When $n=-1$, this is \Cref{pi0b is birationalization}.
For $n\geq 0$, apply \Cref{higher pii as universal birationalization} to the space $\esc{Y}:=L_{mot}\esc{X}$ with the Nisnevich topology. By the given condition, $\tau_{\leq n}^{nis}\esc{Y}\simeq pt$ is trivially birational. It follows that $\pi_{n+1}^{nis}\esc{Y}\to \pi_{n+1}^b\esc{Y}$ is the universal birational sheaf. But this map is identical to the map in question, since $L_{bir}L_{mot}\simeq L_{bir}$. The equivalence $\pi_{n+1}^{b}\esc{X}\simeq \pi_0^b(\pi_{n+1}^{\mathbb{A}^1}\esc{X})$ (as groups) follows from \Cref{pi0b is birationalization of pi0}.
\end{proof}
\begin{rem}
    When the base is the spectrum of a perfect field $k$ and $n\geq 0$, the above condition can be restated, using \Cref{gm contraction 0}, as saying that the map $\pi_{n+1}^{\mathbb{A}^1}\esc{X}\to \pi_{n+1}^{b}\esc{X}$ is universal among maps to a Nisnevich sheaf of (abelian when $n\geq 1$) groups that is (strictly when $n\geq 1$) strongly-$\mathbb{A}^1$-invariant and has trivial $\mathbb{G}_m$-contraction.
\end{rem}
{\small{\textbf{{Computing birational path component $\pi_0^b$:}}}}\label{cbpc}

Therefore, the transition from motivic homotopy (groups) to birational motivic homotopy (groups) is, to some extent, controlled by $\pi_0^b$ of discrete $S$-spaces. We want to observe that in sufficiently nice scenarios, this behaves quite well. In fact, it turns out that computing the birational path component of any presheaf of spaces is easier than computing its $\mathbb{A}^1$ counterpart. To see this, recall that by \Cref{generation of H^b}, for $\esc{X}\in \mathcal{P}_\Sigma(S)$ we may write 
$$L_{bir}\esc{X}\simeq \colim_{\substack{X \to \mathcal{X} \\ X \in Sm_S}}L_{bir}X$$ 
where the colimit is computed in $\mathcal{P}(S)$ and is taken over the diagram of sections of $\esc{X}$ over $X/S\in Sm_S$ having property $P$. Since $\pi_0:\mathcal{P}(S)\to PSh_S$ preserves colimits, it follows that 
\begin{flalign}\label{pi0b formula}
\pi_0^b\esc{X}=\pi_0L_{bir}\esc{X}\simeq \colim_{\substack{X \to \mathcal{X} \\ X/S \in Sm_S}}\pi_0^bX    
\end{flalign}
where the colimit is computed in the category of presheaves of sets on $Sm_S$. Thus, to compute $\pi_0^b\esc{X} $ for any $\esc{X}$, it suffices to know $\pi_0^bX$ for $X\in Sm_S$ having property $P$.

For example, the referred theorem shows that, when the base is a field $k$ of characteristic zero, $P$ can always be taken to be the property that $X/k$ is smooth and proper. Therefore, in characteristic 0, to compute $\pi_0^b$ of any space, it suffices to know $\pi_0^bX$ for every smooth proper scheme $X/k$. 

 The computation of these sheaves, however, can be performed over any field and for all proper schemes. So, let us fix an arbitrary base field $k$. Given a proper scheme $X/k$, there is a birational sheaf, denoted $\pi_0^{b\mathbb{A}^1}X$, whose sections over a finitely generated separable field extension $L/k$ is given by the set of (Manin's) R-equivalence classes of $L$-points of $X$ [\cite{asok2011smooth}, \S6]. The method used to prove its existence is as follows. The authors first introduce the category $Shv^{b\mathbb{A}^1}_k$, whose objects are birational sheaves that are $\mathbb{A}^1$-invariant. Then they prove that the restriction functor $Shv^{b\mathbb{A}^1}_k\to \mathcal{F}^r_k\mbox{-}Sets$, given by evaluation at the generic point, is fully faithful, with essential image consisting of sheaflike transcendental invariants (which they call $\mathbb{A}^1$-invariant) data [\cite{asok2011smooth}, Theorem 6.1.7]. Although it is not difficult do see that,

\begin{lem}
For a field $k$, the inclusion $Shv^{b\mathbb{A}^1}_k\subset Shv^{b}_k$ is an equality, and the functor $Snv^{b}_k\to \mathcal{F}^r_k\mbox{-}Sets$ given by evaluation at the generic point is fully faithful, with essential image consisting of sheaf-like transcendental invariant data (see [\cite{asok2011smooth}, \S6] for definition).
\end{lem}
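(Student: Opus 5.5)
The plan has two parts. First, show that every birational sheaf of sets on $Sm_k$ is already $\mathbb{A}^1$-invariant. Second, with the two categories identified, read off the generic-point description from [\cite{asok2011smooth}, Theorem 6.1.7].

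For the equality $Shv^{b\mathbb{A}^1}_k= Shv^b_k$, the inclusion $Shv^{b\mathbb{A}^1}_k\subset Shv^b_k$ holds by definition. One small point needs checking: Asok–Morel's birational sheaves are Nisnevich sheaves that invert dense open immersions, whereas ours lie in $PSh_\Sigma$ and are dense local. These notions agree. A Nisnevich sheaf is in particular a $\Sigma$-presheaf. Conversely, by \Cref{birational presheavesa re nisnevich sheaves} (that is, \Cref{bir local is nis local}), every birational presheaf in our sense is a Nisnevich sheaf. For the reverse inclusion I would apply \Cref{bir is a1}: over the Qcqs base $\mathrm{Spec}\,k$, birational presheaves of sets are $\mathbb{A}^1$-invariant Nisnevich sheaves. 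This in turn rests on \Cref{bir local is P1 local} with $A=\mathbb{A}^1_k$, which is rational. Hence every $F\in Shv^b_k$ lies in $Shv^{b\mathbb{A}^1}_k$, and the two full subcategories of $Shv_k$ coincide.

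The second assertion then follows by transport. Asok–Morel prove that evaluation at generic points, $Shv^{b\mathbb{A}^1}_k\to \mathcal{F}^r_k\mbox{-}Sets$, is fully faithful, and that its essential image is the sheaf-like, $\mathbb{A}^1$-invariant (transcendental-invariant) data. Composing with the identification just established gives the stated description for $Shv^b_k$. I would check that their "$\mathbb{A}^1$-invariant data" condition, namely that $F(L)\to F(L(t))$ is a bijection, coincides with the "transcendental invariant" condition named in the statement. Their definition makes this a matter of terminology.

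The only real obstacle is the set-up match in the first step: their standing conventions (smooth varieties over $k$, with $\mathcal{F}_k$ in place of all finitely generated extensions) must agree with ours. If their theorem assumes $k$ infinite or perfect, I would check that \Cref{bir local is P1 local} requires no such hypothesis. I expect the equivalence with their category to go through unchanged, because the argument above uses only $\mathbb{P}^1$-locality, which holds over any base.
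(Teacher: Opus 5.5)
Your proposal is correct and follows the paper's proof: the paper gets the equality $Shv^{b\mathbb{A}^1}_k=Shv^b_k$ by citing \Cref{bir is a1} (birational presheaves of sets are automatically $\mathbb{A}^1$-invariant), and the generic-point description then comes from [\cite{asok2011smooth}, Theorem 6.1.7]. Your extra check that the two notions of ``birational sheaf'' agree, via \Cref{bir local is nis local}, is something the paper leaves implicit.
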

\begin{proof}
This is because birational presheaves of sets are automatically $\mathbb{A}^1$-invariant (\Cref{bir is a1}). 
\end{proof}
Finally, the authors of \textit{loc.cit.} prove that, for a proper variety $X/k$, the assignment $L\mapsto X(L)/\sim$ of naive $\mathbb{A}^1$-homotopy classes of $L$-points of $X$ is a sheaflike transcendental invariant $\mathcal{F}^r_k$-data (see Theorem 6.2.1 of \textit{loc.cit.}). We claim that this is exactly the candidate $\pi_0^bX$ we were looking for. This result was obtained for \textbf{smooth proper varieties} in [\cite{choudhury2022characterisation}, Theorem 3.4] using Kahn-Sujhata's localized category $S_b^{-1}Sm_k$. The proof below is new. It is formal and does not require $S_b^{-1}Sm_k$, and thus works for all proper schemes:
\begin{thm}[this is a generalization of [\cite{choudhury2022characterisation}, Theorem 3.4\text{] and [}\cite{cisinski2023homotopy}, Proposition 1.9.\text{]} to all proper schemes]\label{pi0bX=pi0ba1X}
Let $k$ be a field and $X$ a proper scheme over $k$. Then there is a canonical isomorphism $\pi^{b}_0X\simeq \pi_0^{b\mathbb{A}^1}X$ of presheaves. 
\end{thm}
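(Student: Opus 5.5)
Throughout, $X$ stands for the presheaf $\mathrm{Hom}_k(-,X)$ on $Sm_k$; this lies in $PSh_\Sigma(k)$ even when $X$ is singular. I will show that $\pi_0^{b\mathbb{A}^1}X$, together with the canonical map $X\to \pi_0^{b\mathbb{A}^1}X$, has the universal property of \Cref{pi0b is birationalization}. By \Cref{pi0b is birationalization}, $\pi_0^bX$ is the universal morphism from $\pi_0X=X$ to a birational presheaf of sets. It therefore suffices to prove two things. First, $\pi_0^{b\mathbb{A}^1}X$ is a birational sheaf receiving a canonical map from $X$. Second, every map $X\to B$ into a birational presheaf $B$ factors uniquely through it.

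\textbf{Step 1: the map.} By [\cite{asok2011smooth}, Theorem 6.2.1], $\pi_0^{b\mathbb{A}^1}X$ is a birational sheaf, and it is $\mathbb{A}^1$-invariant. Its sections over $U\in Sm_k$ with generic points $\eta_j$ are the compatible families in $\prod_j X(k(\eta_j))/\sim$. Sending $f:U\to X$ to the classes $[f(\eta_j)]$ gives a natural transformation $\theta: X\to \pi_0^{b\mathbb{A}^1}X$.

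\textbf{Step 2: uniqueness.} Two maps $\pi_0^{b\mathbb{A}^1}X\to B$ are determined by their values on generic points of connected $U$, since $B$ is birational and the equivalence of the preceding lemma applies. So it suffices that $\theta$ is surjective on field-valued points, i.e. on $\mathcal{F}_k$. This holds because every class over $L$ is represented by an $L$-point of $X$.

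\textbf{Step 3: existence.} Let $\phi:X\to B$ with $B$ birational. By the preceding lemma, $B$ is determined by its restriction $L\mapsto B(L)$ on $\mathcal{F}_k$. So it suffices to show that $\phi_L: X(L)\to B(L)$ factors through naive $\mathbb{A}^1$-homotopy classes, and that the resulting maps of $\mathcal{F}_k$-data are natural. Take an elementary homotopy $h:\mathbb{A}^1_L\to X$. We have $B(\mathbb{A}^1_L)\cong B(L)$ by \Cref{bir is a1}, or directly from denseness together with $\mathbb{P}^1$-locality; here $B(\mathbb{A}^1_L)$ means the colimit over models. Hence $\phi(h)$ is constant, and $\phi(h(0))=\phi(h(1))$. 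Chains of such homotopies give the factorization through $\sim$, and naturality in $L$ is automatic. The preceding lemma then upgrades this morphism of $\mathcal{F}_k$-data to a morphism of birational sheaves $\pi_0^{b\mathbb{A}^1}X\to B$. Its composite with $\theta$ agrees with $\phi$ on generic points, hence everywhere because $B$ is birational.

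\textbf{Main obstacle.} The delicate point is making Steps 2 and 3 rigorous for sections over non-field $U\in Sm_k$. The issue is that $X$ itself is not birational, so I cannot simply evaluate at generic points on the source side. I would handle this through the injectivity of $B(U)\to\prod_j B(k(\eta_j))$, which holds because dense local presheaves invert restriction to dense opens and $B$ commutes with filtered colimits of dense opens. This ensures that equality of maps into $B$ can be tested at generic points. Properness of $X$ enters only through the cited fact that $\pi_0^{b\mathbb{A}^1}X$ is birational, i.e. that naive classes form sheaflike transcendental invariant data. The resulting isomorphism $\pi_0^bX\cong\pi_0^{b\mathbb{A}^1}X$ is canonical, being induced by $\theta$.
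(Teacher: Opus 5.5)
Your argument is correct in outline and has the same skeleton as the paper's proof. Both identify $\pi_0^bX$ with the universal birationalization of the presheaf $X$ via \Cref{pi0b is birationalization}, and both then show that $X\to\pi_0^{b\mathbb{A}^1}X$ has the same universal property. The difference is in how that second universality is obtained. The paper does not prove it: it cites [\cite{asok2011smooth}, Remark 6.2.2] for universality among birational $\mathbb{A}^1$-invariant sheaves, and then drops the $\mathbb{A}^1$-invariance using the lemma $Shv^{b\mathbb{A}^1}_k=Shv^b_k$. You instead reprove that remark from [\cite{asok2011smooth}, Theorems 6.1.7 and 6.2.1]. Uniqueness comes from surjectivity of $X(L)\to X(L)/{\sim}$. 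Existence comes from killing elementary homotopies $\mathbb{A}^1_L\to X$ via $B(\mathbb{A}^1_L)\cong B(L)$, which is \Cref{bir is a1}. The paper's route is a two-line comparison of universal properties. Yours is self-contained modulo Asok--Morel's extension theorem, and it makes visible exactly where $\mathbb{A}^1$-invariance of birational presheaves is used; the paper hides this inside the cited remark and the equality of categories.

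One step needs an extra line. A morphism of birational sheaves corresponds to a morphism of $\mathcal{F}^r_k$-data. Such a morphism must commute with the specialization maps $s_v\colon S(L)\to S(\kappa_v)$ for geometric discrete valuations $v$, not only with field extensions. So ``naturality in $L$ is automatic'' is not enough.

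This is a real condition. Take an elliptic curve $E$ and the endomorphism of $E|_{\mathcal{F}_k}$ that is the identity on $L$-points over the generic point and sends $L$-points over closed points to the origin. It is natural in $L$, but it comes from no endomorphism of the sheaf $E$.

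In your situation the condition does hold. For $x\in X(L)$, properness extends $x$ to $\tilde{x}\in X(\mathcal{O}_v)$, and this is how the specialization on $X(-)/{\sim}$ is defined. Naturality of $\phi$ on the essentially smooth scheme $\mathrm{Spec}\,\mathcal{O}_v$ then gives $s_v(\phi_L(x))=\phi_{\kappa_v}(\tilde{x}|_{\kappa_v})$. Here $\phi$ is extended by colimits over smooth models, and one uses $B(\mathcal{O}_v)\cong B(L)$.

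The same specialization argument is needed for Step 1. Naturality of $\theta$ along non-dominant morphisms, such as closed immersions, is not automatic, because restriction in $\pi_0^{b\mathbb{A}^1}X$ along such maps is computed by specializations. Alternatively, you can simply cite the canonical map $X\to\pi_0^{b\mathbb{A}^1}X$ from Asok--Morel.
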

\begin{proof}
 By \Cref{pi0b is birationalization}, we know that $X\cong\pi_0X\to \pi_0^bX $ provides the birationalization of $X$. But by [\cite{asok2011smooth}, Remark 6.2.2], for any proper scheme $X$, the canonical morphism $X \to \pi_0^{b\mathbb{A}^1}X$ is the universal object of $Shv^{b\mathbb{A}^1}_k$ associated to $X$. Because, $Shv^{b\mathbb{A}^1}_k=Shv^b_k$, $X \to \pi_0^{b\mathbb{A}^1}X$ is in fact, the universal birationalization.   By comparing the universal properties, one immediately obtains the canonical isomorphism $\pi^{b}_0X\underset{\sim}{\to} \pi_0^{b\mathbb{A}^1}X$.
\end{proof}
\begin{cor}\label[cor]{uni bir of pia1x}
For $X\in \mathrm{Prop}_k$, the canonical maps $$X\to \pi_0^{b\mathbb{A}^1}X\text{ and }\pi_0^{\mathbb{A}^1}X\to \pi_0^{b\mathbb{A}^1}X$$ \textup{[\cite{asok2011smooth}, Proposition 6.2.6]} are initial among maps to a birational sheaf of sets.
\end{cor}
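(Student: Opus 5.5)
The first map follows directly from earlier results. By \Cref{pi0b is birationalization} applied to the representable space $X$ (a discrete space, so $\pi_0X=X$), the morphism $X\to \pi_0^bX$ is initial among maps from $X$ to a birational sheaf of sets. By \Cref{pi0bX=pi0ba1X} there is a canonical isomorphism $\pi_0^bX\simeq\pi_0^{b\mathbb{A}^1}X$. This isomorphism was obtained by comparing universal properties, so it is compatible with the maps out of $X$. Hence $X\to\pi_0^{b\mathbb{A}^1}X$ is the universal birationalization. One caveat: $X$ is only proper, not necessarily smooth, so I read $X$ here as the presheaf $h_X$ it represents on $Sm_k$. \Cref{pi0b is birationalization} holds for any $\esc{X}\in\mathcal{P}(k)$, so this causes no trouble.

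For the second map, I apply the ``in particular'' clause of \Cref{pi0b is birationalization} with $\esc{X}=h_X$. It says that $\pi_0^{\mathbb{A}^1}X=\pi_0^{nis}L_{mot}X\to\pi_0^bL_{mot}X$ is initial among maps to birational sheaves of sets. By \Cref{Motivic equivalences are Birational equivalences}, $L_{bir}L_{mot}\simeq L_{bir}$, so $\pi_0^bL_{mot}X\simeq\pi_0^bX\simeq\pi_0^{b\mathbb{A}^1}X$. It remains to check that the composite $\pi_0^{\mathbb{A}^1}X\to\pi_0^bX\simeq\pi_0^{b\mathbb{A}^1}X$ is the canonical map of \textup{[\cite{asok2011smooth}, Proposition 6.2.6]}.

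That identification is the only real obstacle. I would handle it through the factorization $X\to\pi_0^{\mathbb{A}^1}X\to\pi_0^{b\mathbb{A}^1}X$, which is how the canonical map is built in \textit{loc.cit.}: $\pi_0^{b\mathbb{A}^1}X$ is $\mathbb{A}^1$-invariant and a Nisnevich sheaf, and the map out of $\pi_0^{\mathbb{A}^1}X$ is induced from $X\to\pi_0^{b\mathbb{A}^1}X$. Our map has the same factorization, since $X\to L_{mot}X\to L_{bir}X$ induces $X\to\pi_0^{\mathbb{A}^1}X\to\pi_0^bX$ on $\pi_0$. So both maps out of $\pi_0^{\mathbb{A}^1}X$ restrict along $X\to\pi_0^{\mathbb{A}^1}X$ to the same map $X\to\pi_0^{b\mathbb{A}^1}X$ from the first paragraph.

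To conclude they are equal, I need $X\to\pi_0^{\mathbb{A}^1}X$ to be an epimorphism in the sense of detecting maps into birational sheaves. This holds because, for any birational sheaf $B$ (which is discrete and motivic by \Cref{bir is a1}),
$$\mathrm{Hom}(\pi_0^{\mathbb{A}^1}X,B)\cong\mathrm{Map}(L_{mot}X,B)\cong\mathrm{Map}(X,B)\cong \mathrm{Hom}(X,B),$$
and the composite of these bijections is restriction along $X\to\pi_0^{\mathbb{A}^1}X$. Alternatively, this is exactly the surjectivity $\pi_0^{nis}\to\pi_0^{\mathbb{A}^1}$ of \textup{[\cite{morel19991}]}. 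Therefore the two maps agree, and $\pi_0^{\mathbb{A}^1}X\to\pi_0^{b\mathbb{A}^1}X$ is initial.
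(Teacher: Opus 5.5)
Your proposal is correct and takes the paper's approach: the paper's proof just combines \Cref{pi0b is birationalization} (applied to $X$, and to $L_{mot}X$ using $L_{bir}L_{mot}\simeq L_{bir}$) with the identification $\pi_0^bX\simeq\pi_0^{b\mathbb{A}^1}X$ of \Cref{pi0bX=pi0ba1X}, which is obtained by comparing universal properties. You additionally check that the resulting map $\pi_0^{\mathbb{A}^1}X\to\pi_0^{b\mathbb{A}^1}X$ is the canonical one of Asok--Morel, via the factorization through $X\to\pi_0^{\mathbb{A}^1}X$ and that map's epimorphic property against birational sheaves; the paper leaves this implicit, and your argument for it is sound.
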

\begin{proof}
Combine \Cref{pi0b is birationalization} and \Cref{pi0bX=pi0ba1X}. 
\end{proof}
\begin{cor}[this is a generalization of [\cite{koizumi2021zeroth}, Theorem 2.4\text{]}]\label[cor]{biba1 is bir inv}
  The assignment $$\pi_0^{b\mathbb{A}^1}(-): \mathrm{SmProp}_k\to Sh_k$$ is a stable birational invariant, i.e., if $X$ and $ Y$ are stably birational smooth proper schemes, then there is an isomorphism of sheaves $\pi^{b\mathbb{A}^1}_0X\simeq \pi^{b\mathbb{A}^1}_0Y$. 
\end{cor}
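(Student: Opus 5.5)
By \Cref{pi0bX=pi0ba1X}, $\pi_0^{b\mathbb{A}^1}X\simeq \pi_0^bX=\pi_0L_{bir}X$ canonically for every proper $X$. It therefore suffices to show that $X\mapsto \pi_0L_{bir}X$ sends stably birational smooth proper schemes to isomorphic sheaves. I will in fact show more: if $X$ and $Y$ are stably birational, then $L_{bir}X\simeq L_{bir}Y$ in $\mathcal{H}^b(k)$. Applying $\pi_0$ and then \Cref{pi0bX=pi0ba1X} on both sides gives the isomorphism $\pi_0^{b\mathbb{A}^1}X\simeq\pi_0^{b\mathbb{A}^1}Y$.

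First reduce to two elementary moves. Suppose $X$ and $Y$ are stably birational, say $X\times\mathbb{P}^n$ is birational to $Y\times\mathbb{P}^m$. A smooth proper scheme is separated and of finite type, so a birational map between the smooth schemes $X\times\mathbb{P}^n$ and $Y\times\mathbb{P}^m$ is a span $X\times\mathbb{P}^n\hookleftarrow U\hookrightarrow Y\times\mathbb{P}^m$ of dense open immersions. Here $U$ is smooth, so the span lies in $Sm_k$. Both legs lie in $B_0(k)$, so they become equivalences after $L_{bir}$, and hence $L_{bir}(X\times\mathbb{P}^n)\simeq L_{bir}(Y\times\mathbb{P}^m)$. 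If stable birationality is defined using $\mathbb{A}^n$ instead of $\mathbb{P}^n$, the same argument works, because $\mathbb{A}^n\hookrightarrow\mathbb{P}^n$ is dense and \Cref{dense prod} shows that $X\times\mathbb{A}^n\hookrightarrow X\times\mathbb{P}^n$ is dense too.

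It remains to show that the projection $X\times\mathbb{P}^n\to X$ is a birational equivalence. By \Cref{bir and rat}(1), applied to the rational scheme $A=\mathbb{P}^n_k$, every birational local space is $\mathbb{P}^n$-local. So for any birational local $\esc{Z}$, restriction along the projection gives an equivalence $\mathrm{Map}(X,\esc{Z})\simeq\esc{Z}(X)\xrightarrow{\sim}\esc{Z}(X\times\mathbb{P}^n)\simeq\mathrm{Map}(X\times\mathbb{P}^n,\esc{Z})$. By Yoneda, the projection is therefore a $bir(k)$-equivalence. Chaining the equivalences gives $L_{bir}X\simeq L_{bir}(X\times\mathbb{P}^n)\simeq L_{bir}(Y\times\mathbb{P}^m)\simeq L_{bir}Y$, and applying $\pi_0$ yields $\pi_0^bX\cong\pi_0^bY$.

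The main obstacle is bookkeeping rather than mathematics. One must make sure the isomorphism is compatible with the identification of \Cref{pi0bX=pi0ba1X}, which is characterized by a universal property, so that the isomorphism is canonical and not merely abstract. Since each step is an isomorphism of universal birational sheaves under the maps from $X$ (respectively $Y$), the identification can be transported along the zig-zag. If needed, this can also be checked at generic points using the fully faithful evaluation functor $Shv^b_k\to\mathcal{F}^r_k\mbox{-}Sets$ from the preceding lemma.
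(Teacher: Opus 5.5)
Your proposal is correct and follows the paper's proof. The paper observes that $X\mapsto\pi_0L_{bir}X$ is a stable birational invariant on $\mathrm{Sm}_k$ ``by construction'' and then transfers this along \Cref{pi0bX=pi0ba1X}; you have simply spelled out that step (dense open immersions are inverted, and $X\times\mathbb{P}^n\to X$ is a birational equivalence because birational local spaces are $\mathbb{P}^n$-local). The canonicity bookkeeping in your last paragraph is unnecessary, since the statement only asks for an isomorphism.
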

\begin{proof}
Note that the assignment 
\begin{flalign*}
    \pi_0^b(-)_{|\mathrm{Sm}_k}: &\mathrm{Sm}_k\to Sh_k \\
    &X\mapsto \pi^b_0(X)=\pi_0L_{bir}X
\end{flalign*}
by construction, is a stable birational invariant. The claim then follows from the theorem above, \Cref{pi0bX=pi0ba1X}.
\end{proof} 

Here is a purely motivic consequence ($\mathbb{H}^{\mathbb{A}^1}_0$ stands for $0$-th $\mathbb{A}^1$-homology sheaf, as introduced by Morel [\cite{morel2012a1}, Definition 6.29]):
\begin{cor}\label[cor]{Ha1 is bir loc for prop}
  The assignment $$\mathbb{H}_0^{\mathbb{A}^1}(-): \mathrm{SmProp}_k\to Ab_k^{\mathbb{A}^1}$$ factors through $Ab_k^b$ and is itself a stable birational invariant. 
\end{cor}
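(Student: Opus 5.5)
We will show that for smooth proper $X/k$ the sheaf $\mathbb{H}_0^{\mathbb{A}^1}X$ is birational, namely that it coincides with the free birational sheaf of abelian groups on $\pi_0^bX$. Recall Morel's description: $\mathbb{H}_0^{\mathbb{A}^1}X$ is the initial strictly $\mathbb{A}^1$-invariant sheaf of abelian groups receiving a map from $X$. Equivalently, it is $\pi_0^{nis}$ of $L_{\mathbb{A}^1}^{ab}\mathbb{Z}[X]$, where $\mathbb{Z}[X]$ denotes the free Nisnevich sheaf of abelian groups.

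The plan runs in parallel with \Cref{pi0bX=pi0ba1X}, at the level of abelian groups.

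First step. Consider the birational sheaf $\mathbb{Z}^b(X):=\pi_0^b(\mathbb{Z}[X])$. By \Cref{pi0b is birationalization} and \Cref{pi0b is birationalization of pi0}, $\pi_0^b$ preserves finite products and hence abelian group objects. Consequently $\mathbb{Z}[X]\to\mathbb{Z}^b(X)$ is initial among maps to birational abelian sheaves, and $\mathbb{Z}^b(X)$ corepresents $B\mapsto \mathrm{Hom}(X,B)$ on $Ab^b_k$. Since $Ab^b_k\subset Ab^{\mathbb{A}^1}_k$ by \Cref{bir is strong a1}, Morel's universal property supplies a canonical factorization
$$X\to \mathbb{H}_0^{\mathbb{A}^1}X\to \mathbb{Z}^b(X).$$

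Second step. We show that the second map is an isomorphism. It suffices to prove that $\mathbb{H}_0^{\mathbb{A}^1}X$ is itself birational. Once this holds, both objects corepresent the same functor on $Ab^b_k$ and must agree. To prove birationality, we use the known description (Asok, and Koizumi [\cite{koizumi2021zeroth}], which the paper says verifies the Asok–Morel conjecture) of $\mathbb{H}_0^{\mathbb{A}^1}X$ for smooth proper $X$ as the free abelian sheaf on $\pi_0^{b\mathbb{A}^1}X$, taken in the birational sense. By \Cref{pi0bX=pi0ba1X}, the sheaf $\pi_0^{b\mathbb{A}^1}X$ equals $\pi_0^bX$. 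One could instead aim for a self-contained argument: $\mathbb{H}_0^{\mathbb{A}^1}X$ is unramified, so by weak unramifiedness it is injective on dense opens. Surjectivity would then follow from the contraction criterion of \Cref{gm contraction 0}, adapted to strictly invariant abelian sheaves, once we check $(\mathbb{H}_0^{\mathbb{A}^1}X)_{-1}=0$.

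\emph{Main obstacle.} The vanishing of the contraction $(\mathbb{H}_0^{\mathbb{A}^1}X)_{-1}$ is where I expect the difficulty, because it uses properness essentially. My first attempt would compute it at generic points as the cokernel of a residue map, and then use that for proper $X$ every $L((t))$-point extends to an $L[[t]]$-point by the valuative criterion. This should make the relevant boundary/residue map on $\mathbb{H}_0(X)(L(t))$ trivial, in the same way it works for $\pi_0^{b\mathbb{A}^1}$ in [\cite{asok2011smooth}, \S6]. If this becomes too delicate, I would fall back on the cited identification with the free sheaf on $\pi_0^{b\mathbb{A}^1}X$.

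Stable birational invariance. Given the factorization through $Ab^b_k$, we have $\mathbb{H}_0^{\mathbb{A}^1}X\cong \pi_0^b\mathbb{Z}[X]$. This expression depends only on $L_{bir}X$, because $\pi_0^b\mathbb{Z}[-]$ factors through $L_{bir}$, which preserves colimits along sifted diagrams and finite products. Since $X\times\mathbb{A}^n\to X$ and dense open immersions are birational equivalences by \Cref{Motivic equivalences are Birational equivalences}, stably birational $X$ and $Y$ have equivalent $L_{bir}$. Hence their zeroth $\mathbb{A}^1$-homology sheaves are isomorphic.
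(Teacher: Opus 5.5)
Your proof of the first claim uses the same key input as the paper: Koizumi's identification $\mathbb{H}_0^{\mathbb{A}^1}X\cong\mathbb{Z}(\pi_0^{b\mathbb{A}^1}X)$. You do not carry out your ``self-contained'' alternative. Proving $(\mathbb{H}_0^{\mathbb{A}^1}X)_{-1}=0$ directly is essentially the whole content of Koizumi's theorem, so the citation is doing the work, exactly as in the paper.

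In that identification, as the paper reads it, $\mathbb{Z}(-)$ is the ordinary free Nisnevich sheaf, not a free sheaf ``in the birational sense''. So you still need the small fact the paper also invokes: for a birational sheaf of sets $F$, the sheaf $\mathbb{Z}(F)$ is birational. Here is why. The $\Sigma$-ification $X\mapsto\prod_i\mathbb{Z}[F(X_i)]$, indexed by connected components, is dense local, because a dense open in a smooth $k$-scheme meets each connected (hence irreducible) component $X_i$ in a nonempty, hence connected, open. It is then a Nisnevich sheaf by \Cref{bir local is nis local}, so it equals $\mathbb{Z}(F)$.

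For stable birational invariance you take a different route from the paper. The paper transports \Cref{biba1 is bir inv} through $\mathbb{Z}(-)$. You instead identify $\mathbb{H}_0^{\mathbb{A}^1}X$ with the free birational abelian sheaf $\pi_0^b\mathbb{Z}[X]$ and deduce invariance from $L_{bir}$. This works, and it proves \Cref{ha1 is universal} along the way. However, the reason you give (that $L_{bir}$ preserves sifted colimits and finite products) is not what makes it work. The operative reason is your Step 1: $\pi_0^b\mathbb{Z}[X]$ corepresents $B\mapsto B(X)\cong[L_{bir}X,B]$ on $Ab^b_k$, so by Yoneda it depends only on $L_{bir}X$.

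The detour through Morel's universal property is also avoidable. Once $\mathbb{Z}(\pi_0^bX)$ is known to be birational, adjunction directly gives $\mathrm{Hom}(\mathbb{Z}(\pi_0^bX),B)\cong\mathrm{Hom}(\pi_0^bX,B)\cong B(X)$ for $B\in Ab^b_k$.

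Finally, dense open immersions are birational equivalences by definition. \Cref{Motivic equivalences are Birational equivalences} is needed only for the projections $X\times\mathbb{A}^n\to X$.
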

\begin{proof}
    By [\cite{koizumi2021zeroth}, Theorem 4.5], we know that if $X$ is smooth and proper, then $$\mathbb{H}_0^{\mathbb{A}^1}(X)\simeq \mathbb{Z}(\pi_0^{b\mathbb{A}^1}X).$$ Thus, the first claim follows from the fact that $\mathbb{Z}(-)$ preserves birational locality, while the second follows from the corollary just above.
\end{proof}
\begin{cor}\label[cor]{ha1 is universal}
    For a smooth proper scheme $X/k$, the canonical maps $X\to \mathbb{H}_0^{\mathbb{A}^1}X$ and $\pi_0^{\mathbb{A}^1}X\to \mathbb{H}_0^{\mathbb{A}^1}X$ are initial among maps to a birational sheaf of abelian groups.
\end{cor}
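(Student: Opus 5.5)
We reduce both universality claims to \Cref{uni bir of pia1x} via the identification $\mathbb{H}_0^{\mathbb{A}^1}X\simeq\mathbb{Z}(\pi_0^{b\mathbb{A}^1}X)$ of [\cite{koizumi2021zeroth}, Theorem 4.5], already used in \Cref{Ha1 is bir loc for prop}. By that corollary, $\mathbb{H}_0^{\mathbb{A}^1}X$ lies in $Ab^b_k$, so it is a legitimate candidate. It therefore suffices to show the following: for every birational sheaf of abelian groups $B$, precomposition with $X\to\mathbb{H}_0^{\mathbb{A}^1}X$ (respectively with $\pi_0^{\mathbb{A}^1}X\to\mathbb{H}_0^{\mathbb{A}^1}X$) induces a bijection
\[
Ab_k(\mathbb{H}_0^{\mathbb{A}^1}X,B)\cong Sh_k(X,B)\quad(\text{resp. }Sh_k(\pi_0^{\mathbb{A}^1}X,B)).
\]

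We compute in three steps. First, $\mathbb{Z}(-)$, the free Nisnevich sheaf of abelian groups, is left adjoint to the forgetful functor $Ab_k\to Sh_k$. This gives
\[
Ab_k(\mathbb{Z}(\pi_0^{b\mathbb{A}^1}X),B)\cong Sh_k(\pi_0^{b\mathbb{A}^1}X,B).
\]
Second, the underlying sheaf of sets of $B$ is birational, since birationality of a presheaf of groups is a condition only on the underlying presheaf of sets (\Cref{birational presheaves of sets}). Third, \Cref{uni bir of pia1x} gives
\[
Sh_k(\pi_0^{b\mathbb{A}^1}X,B)\cong Sh_k(X,B)\cong Sh_k(\pi_0^{\mathbb{A}^1}X,B),
\]
via precomposition with $X\to\pi_0^{b\mathbb{A}^1}X$ and $\pi_0^{\mathbb{A}^1}X\to\pi_0^{b\mathbb{A}^1}X$ respectively.

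It then remains to check compatibility: the composite bijection must be precomposition with the stated canonical maps. Concretely, the map $X\to\mathbb{H}_0^{\mathbb{A}^1}X$ should factor as $X\to\pi_0^{b\mathbb{A}^1}X\to\mathbb{Z}(\pi_0^{b\mathbb{A}^1}X)\simeq\mathbb{H}_0^{\mathbb{A}^1}X$, and similarly for $\pi_0^{\mathbb{A}^1}X$. This is the main point requiring care, since it depends on how the isomorphism of [\cite{koizumi2021zeroth}] is constructed. I would argue via naturality. The canonical map $X\to\mathbb{H}_0^{\mathbb{A}^1}X$ is the composite $X\to\mathbb{Z}(X)\to\mathbb{H}_0^{\mathbb{A}^1}X$, where the second map is the $\mathbb{A}^1$-localization in abelian sheaves. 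Koizumi's isomorphism is induced by $\mathbb{Z}(X)\to\mathbb{Z}(\pi_0^{b\mathbb{A}^1}X)$, using that the target is strictly $\mathbb{A}^1$-invariant (\Cref{bir is strong a1}). With that description the triangle commutes, and the factorization through $\pi_0^{\mathbb{A}^1}X$ follows in the same way from $X\to\pi_0^{\mathbb{A}^1}X\to\pi_0^{b\mathbb{A}^1}X$.

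If one prefers to avoid tracking Koizumi's construction, there is an alternative. Let $A$ be any object of $Ab^b_k$ satisfying the universal property just established. Uniqueness of universal objects then forces $A\simeq\mathbb{H}_0^{\mathbb{A}^1}X$ compatibly with the maps from $X$. This holds because $\mathbb{H}_0^{\mathbb{A}^1}X$ is itself universal among maps from $X$ to strictly $\mathbb{A}^1$-invariant sheaves, and $Ab^b_k\subset Ab^{\mathbb{A}^1}_k$ by \Cref{bir is strong a1}.
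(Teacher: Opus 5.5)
Your proposal is correct and follows the paper's proof, which likewise combines Koizumi's identification $\mathbb{H}_0^{\mathbb{A}^1}X\simeq\mathbb{Z}(\pi_0^{b\mathbb{A}^1}X)$ with the free--forgetful adjunction and the universal property from \Cref{uni bir of pia1x}. Your check that the canonical map $X\to\mathbb{H}_0^{\mathbb{A}^1}X$ corresponds to $X\to\pi_0^{b\mathbb{A}^1}X\to\mathbb{Z}(\pi_0^{b\mathbb{A}^1}X)$ under that identification is a detail the paper leaves implicit.
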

\begin{proof}
    This is immediate from the identity $\mathbb{H}_0^{\mathbb{A}^1}(X)\simeq \mathbb{Z}(\pi_0^{b}X)$, as noted above, and from \Cref{uni bir of pia1x}.
\end{proof}

Let us now go back to what we started with in this \ref{cbpc}. There we saw that when $k$ is a field of characteristic 0, the birational path component of any $\esc{X}\in \mathcal{P}_\Sigma (k)$ can be computed as in \Cref{pi0b formula}. The main point of this small subsection is that using \Cref{pi0bX=pi0ba1X}, we can now rewrite the above-mentioned formula by replacing $\pi_0^bX$ with $\pi_0^{b\mathbb{A}^1}X$:
\begin{flalign}
\pi_0^b\esc{X}=\pi_0L_{bir}\esc{X}\simeq \colim_{\substack{\bar{X}\hookleftarrow X \to \mathcal{X} \\ X\in Sm_k}}\pi_0^{b\mathbb{A}^1}X'
\end{flalign} where the colimit is taken over spans with $X\hookrightarrow \bar{X}$ a dense open immersion into smooth proper $\bar{X}$. The benefit of this presentation is that, as explained earlier, the candidate $\pi_0^{b\mathbb{A}^1}X$ has a purely geometric description: generically, this is the sheaf of (Manin's) $R$-equivalent points of $X$.
\begin{rem}
Recall that the motivic homotopy category is also generated by sifted colimits of motivic models of smooth (affine) schemes. However, the tricks used in our birational case do not quite apply to the ordinary motivic case. The problem is multifold:\vspace{-.2cm}
\begin{enumerate}
    \item First, since the motivic homotopy category $\mathcal{H}^{\mathbb{A}^1}{(S)}$ is not closed under sifted colimits in $\mathcal{P}_{nis}(S)$ [\Cref{sifted colimits in hmot}], the above-mentioned colimits are computed only in $\mathcal{H}^{\mathbb{A}^1}(S)$, not in $\mathcal{P}_{nis}(S)$. Therefore, $\pi_0^{nis}$ need not preserve these colimits.
    \item On the other hand, since $\pi_0^{\mathbb{A}^1}$ is not the internal $0$-truncation of $\mathcal{H}^{\mathbb{A}^1}(S)$ [\cite{ayoubcounterexamples}], applying even $\pi_0^{\mathbb{A}^1}$ might not help either.
    \item Finally, even if one could manage the above problems, in a sufficiently nice setup, computing $\pi_0^{\mathbb{A}^1}$ is known to be extremely difficult, even for schemes. In fact, even over fields, there is no class of schemes known to date that generates the motivic homotopy category and for which $\pi_0^{\mathbb{A}^1}$ is completely known. 
\end{enumerate}  
\end{rem}

The observation of \Cref{pi0bX=pi0ba1X} makes $\pi_0^b$ into a powerful device. So much so that it can detect connectivity of `proper' motivic spaces (the last term means an object of the subcategory of $\mathcal{H}^{\mathbb{A}^1}(k)$ generated under filtered colimits by proper schemes).
\begin{prop}
 Let $\mathcal{H}_{prop}^{\mathbb{A}^1}(k)$ be the full subcategory of $\mathcal{H}^{\mathbb{A}^1}(k)$ generated under filtered colimits by proper schemes. Then the restriction of the functor $L_{bir}: \mathcal{H}^{\mathbb{A}^1}(k)\to \mathcal{H}^b(k)$ to $\mathcal{H}_{prop}^{\mathbb{A}^1}(k)$ preserves and detects connectivity.
\end{prop}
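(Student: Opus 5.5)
The plan is to reduce both connectivity conditions to a comparison of path-component sheaves on finitely generated separable field extensions $L\in\mathcal{F}_k$. There the motivic and birational path components of proper schemes agree, and both sides commute with the filtered colimits defining $\mathcal{H}^{\mathbb{A}^1}_{prop}(k)$.

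Write $\esc{X}\simeq\colim_\alpha X_\alpha$, a filtered colimit in $\mathcal{H}^{\mathbb{A}^1}(k)$ of proper schemes $X_\alpha$, replaced by $L_{mot}X_\alpha$.

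\textbf{Step 1: commuting with the colimit.} Since $\mathcal{H}^{\mathbb{A}^1}(k)\subset\mathcal{P}(k)$ is closed under filtered colimits (\Cref{sifted colimits in hmot}), the colimit may be computed sectionwise. Hence $\pi_0^{\mathbb{A}^1}\esc{X}\cong\colim_\alpha\pi_0^{\mathbb{A}^1}X_\alpha$, using that $\pi_0^{nis}$ commutes with filtered colimits. Likewise $\pi_0^b\esc{X}\cong\colim_\alpha\pi_0^bX_\alpha$ by \Cref{pi0b preserves sifted colimit}. Evaluation at $L\in\mathcal{F}_k$, understood as a colimit over smooth models of $L$, also commutes with filtered colimits. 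Therefore
$$\pi_0^{\mathbb{A}^1}\esc{X}(L)\cong\colim_\alpha\pi_0^{\mathbb{A}^1}X_\alpha(L),\qquad \pi_0^b\esc{X}(L)\cong\colim_\alpha\pi_0^bX_\alpha(L).$$

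\textbf{Step 2: comparison on fields for proper schemes.} For each proper $X_\alpha$, I will show that the canonical map $\pi_0^{\mathbb{A}^1}X_\alpha(L)\to\pi_0^bX_\alpha(L)$ is a bijection. By \Cref{pi0bX=pi0ba1X} the target is $\pi_0^{b\mathbb{A}^1}X_\alpha(L)=X_\alpha(L)/\!\sim$, the set of naive $\mathbb{A}^1$-homotopy classes (R-equivalence). By \Cref{uni bir of pia1x}, the map in question is the canonical map $\pi_0^{\mathbb{A}^1}X_\alpha\to\pi_0^{b\mathbb{A}^1}X_\alpha$ of [\cite{asok2011smooth}, Proposition 6.2.6]. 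That result, as recalled in the introduction, says this map is an isomorphism on $\mathcal{F}_k$-sections for proper schemes. Passing to the colimit over $\alpha$ then gives $\pi_0^{\mathbb{A}^1}\esc{X}(L)\cong\pi_0^b\esc{X}(L)$ for every $L\in\mathcal{F}_k$.

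\textbf{Step 3: detecting connectivity on fields.} On the birational side, $\pi_0^b\esc{X}$ is a birational sheaf. By the lemma identifying $Shv^b_k$ with sheaf-like transcendental-invariant $\mathcal{F}_k$-data, it is determined by its values on $\mathcal{F}_k$. Hence it is trivial iff $\pi_0^b\esc{X}(L)=*$ for all $L$, which is exactly birational connectedness. On the motivic side, I will invoke Morel's criterion (cf. [\cite{asok2011smooth}] and [\cite{morel2012a1}]): a motivic space is $\mathbb{A}^1$-connected iff $\pi_0^{\mathbb{A}^1}(L)=*$ for all $L\in\mathcal{F}_k$. This uses the unramifiedness-type property of $\pi_0^{\mathbb{A}^1}$, for which I will need at least that its sections over a smooth connected scheme inject into those over the function field. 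Combined with Step 2, the two connectivity conditions on $\esc{X}$ coincide. As a consistency check for the direction ``preserves'': any map $\pi_0^{\mathbb{A}^1}\esc{X}\to\pi_0^b\esc{X}$ out of $*$ forces the target to be $*$ on $\mathcal{F}_k$, and hence everywhere.

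\textbf{Main obstacle.} The step I expect to be hardest is the motivic half of Step 3. One cannot simply quote \Cref{bir preserve a1 n conn} there, because $\esc{X}$ need not be pointed over a general field $k$. More seriously, detecting $\mathbb{A}^1$-connectedness from values on fields requires the injectivity of $\pi_0^{\mathbb{A}^1}$ into generic points, which is delicate in general (cf. [\cite{ayoubcounterexamples}]). If Morel's criterion is not available in the needed generality, I would try the following route: first show $L_{bir}\esc{X}\simeq *$ on $\pi_0$, then use the bijection of Step 2 to conclude that $\pi_0^{\mathbb{A}^1}\esc{X}$ has a single point over every field. Finally, I would check triviality of this sheaf on Nisnevich stalks, i.e. henselian local essentially smooth schemes, by specialization from their fraction fields, exploiting properness of the $X_\alpha$ via the valuative criterion.
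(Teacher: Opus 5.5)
Your proposal is correct and follows essentially the same route as the paper. Both arguments write $\pi_0^{\mathbb{A}^1}\esc{X}\to\pi_0^b\esc{X}$ as a filtered colimit of the maps for proper schemes, use \Cref{pi0bX=pi0ba1X} together with Asok--Morel to get bijections on $\mathcal{F}_k$-sections, and conclude from the fact that triviality of $\pi_0^{\mathbb{A}^1}$ is detected on $\mathcal{F}_k$; the paper simply cites this last fact from [\cite{bachmann2024strongly}, Corollary 2.10], so your fallback plan is unnecessary. The only difference is that the paper gets preservation from \Cref{bir preserves a1 conn}(2), where a field being henselian means Nisnevich-connectedness already yields a $k$-point (so the pointing worry disappears), whereas you derive it from the field-level bijection, which also works.
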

\begin{proof}
Preservation of connectivity follows from \Cref{bir preserves a1 conn}(2).
    
To address detection, let $\esc{X}$ be a motivic local space, given by the filtered colimit $$\colim_{X\in \mathrm{Prop}/\esc{X}}L_{mot}(X)$$ in $\mathcal{H}^{\mathbb{A}^1}(k)$. Since $\mathcal{H}^{\mathbb{A}^1}(k)\subset \mathcal{P}_{nis}(k)$ is closed under filtered colimits, we see that this colimit can be computed in $\mathcal{P}_{nis}(k)$. Since $L_{bir}:\mathcal{H}^{\mathbb{A}^1}(k)\to\mathcal{P}_{nis}(k)$ preserves filtered colimits (indeed, $L_{bir}:\mathcal{H}^{\mathbb{A}^1}(k)\to \mathcal{H}^b(k)$ is a left adjoint and $\mathcal{H}^b(k)\subset \mathcal{P}_{nis}(k)$ is closed under filtered colimits (see \Cref{bir is closed under sifted colimits}), we deduce that the morphism $\esc{X}\to L_{bir}\esc{X}$ can be identified with the colimit $$\colim_{X\in \mathrm{Prop}/\esc{X}}\big(L_{mot}X\to L_{bir}X\big)$$ in $\mathcal{P}_{nis}(k)$.  Applying $\pi_0^{nis}$, we find that $\pi_0^{\mathbb{A}^1}\esc{X}\to \pi_0^b\esc{X}$ is the colimit $$\colim_{X\in \mathrm{Prop}/\esc{X}}\big(\pi_0^{\mathbb{A}^1}X\to \pi_0^bX\big).$$ Since every $X$ in the colimit above is proper, \Cref{pi0bX=pi0ba1X} implies that each term in the colimit is an isomorphism over $\mathcal{F}_k$ [\cite{asok2010birational}, Proposition 6.2.6], and hence so is $\pi_0^{\mathbb{A}^1}\esc{X}\to \pi_0^{b}\esc{X}$. Thus, when $\pi_0^b\esc{X}$ is trivial, we have that $\pi_0^{\mathbb{A}^1}\esc{X}$ is trivial over $\mathcal{F}_k$. Since triviality of $\pi^{\mathbb{A}^1}_0\esc{X}$ can be detected over $\mathcal{F}_k$ [\cite{bachmann2024strongly}, Corollary 2.10.], we are done. 
\end{proof}

 \begin{cor}\label[cor]{proper schemes are a1 connected iff bir connected}
     An (ind-)proper scheme over a field $k$ is $\mathbb{A}^1$-connected if and only if it is birationally connected. 
 \end{cor}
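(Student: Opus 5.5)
The plan is to deduce the corollary directly from the preceding proposition, applied to $\esc{X}=L_{mot}X$ for $X$ an (ind-)proper scheme. First I would check that $L_{mot}X$ lies in $\mathcal{H}^{\mathbb{A}^1}_{prop}(k)$. For a proper scheme this is immediate: $L_{mot}X$ is one of the generators. For an ind-proper scheme $X=\colim_\alpha X_\alpha$, a filtered colimit of proper schemes, I would use that $L_{mot}$ is a left adjoint. Hence $L_{mot}X\simeq\colim_\alpha L_{mot}X_\alpha$, with the colimit taken in $\mathcal{H}^{\mathbb{A}^1}(k)$. Since $\mathcal{H}^{\mathbb{A}^1}(k)\subset\mathcal{P}_{nis}(k)$ is closed under filtered colimits, this colimit may equally be formed in $\mathcal{P}_{nis}(k)$, and the object lies in the subcategory generated under filtered colimits by proper schemes.

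Next I would unwind the definitions. $X$ is $\mathbb{A}^1$-connected exactly when $L_{mot}X$ is Nisnevich connected. By the convention fixed just before \Cref{0 bir connectivity}, $X$ is birationally connected exactly when $L_{bir}X$ is connected. Since $L_{bir}L_{mot}\simeq L_{bir}$ by \Cref{Motivic equivalences are Birational equivalences}, this is the same as $L_{bir}(L_{mot}X)$ being connected. The proposition says that $L_{bir}$ restricted to $\mathcal{H}^{\mathbb{A}^1}_{prop}(k)$ preserves and detects connectivity, so it would give both implications at once.

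The main point to watch is the forward direction, preservation. It rests on \Cref{bir preserves a1 conn}(2), which is stated over a henselian local base, and over a field $\mathrm{Spec}\,k$ is indeed henselian local. Alternatively I would invoke \Cref{bir preserve a1 n conn} with $n=0$, which needs a base point. The henselian route avoids pointing: $\pi_0\esc{X}(k)=\pi_0^{nis}\esc{X}(k)$, so Nisnevich connectedness of $L_{mot}X$ supplies a section over $k$, which is the required base point (\Cref{about pointing}).

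For the converse, detection, the argument goes through the isomorphism $\pi_0^{\mathbb{A}^1}\to\pi_0^b$ on $\mathcal{F}_k$ for proper schemes (\Cref{pi0bX=pi0ba1X}). This is compatible with the filtered colimit, because $\pi_0^{nis}$ and $L_{bir}$ both commute with filtered colimits in $\mathcal{P}_{nis}(k)$. Once the proposition is invoked, no further obstacle remains.
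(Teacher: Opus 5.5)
Your proposal is correct and is essentially the paper's argument. The paper states the corollary without a separate proof, as an immediate consequence of the preceding proposition on $\mathcal{H}^{\mathbb{A}^1}_{prop}(k)$, applied to $L_{mot}X$ together with $L_{bir}L_{mot}\simeq L_{bir}$. Your check that $L_{mot}X$ lies in $\mathcal{H}^{\mathbb{A}^1}_{prop}(k)$ for ind-proper $X$ is exactly the step the paper leaves implicit: $L_{mot}$ commutes with colimits, and $\mathcal{H}^{\mathbb{A}^1}(k)$ is closed under filtered colimits in $\mathcal{P}_{nis}(k)$.
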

\begin{rem}
    The counterexample in \Cref{bir contr but not rational} is a threefold. For (proper) schemes of dimension $\leq 2$, however, this is not possible. That is, birationally contractible proper surfaces are necessarily rational, at least when $k$ is algebraically closed of characteristic zero. In fact, any birationally connected surface is rational. Indeed, by the above corollary, such varieties are $\mathbb{A}^1$-connected. But in characteristic zero, $\mathbb{A}^1$-connected schemes of dimension $\leq 2$ are necessarily rational [\cite{asok2011smooth}, Corollary 2.4.7]. 
    \end{rem}
\begin{rem}
    The above proposition and its corollary are clearly false on non-proper spaces. Take, for example, $\mathbb{G}_m$. This space is $\mathbb{A}^1$-disconnected despite being birationally connected (in fact, birationally contractible).
\end{rem}
\begin{rem}
    These might also fail for higher connectivity. For example, $\mathbb{P}^2_k$ is birationally contractible over $k$, but has non trivial $\mathbb{A}^1$-fundamental group: $\pi_1^{\mathbb{A}^1}\mathbb{P}^2\cong\mathbb{G}_m$ [\cite{morel2012a1}, Theorem 7.13].
\end{rem}

We end this section with a birational analog of the Freudenthal Suspension theorem, which follows essentially from \Cref{higher pii as universal birationalization} and the methods of [\cite{morel2012a1}, Theorem 6.61].

\begin{thm}[Birational Freudenthal suspension theorem]\label{Birational Freudenthal suspension theorem}
Assume that $S$ is Qcqs and that $\esc{E}$ is a pointed $S$-space that is birationally $n$-connected for some $n\geq 1$. Then:
\begin{enumerate}
\item The canonical adjoint unit map $\eta^b_{\esc{E}}: L_{bir}\esc{E}\to \Omega L_{bir} \Sigma \esc{E}$ has $2n$-connected fiber.
\item The adjoint unit map $\eta_{\esc{E}}: \esc{E}\to \Omega \Sigma \esc{E}$ has a $2n$-connected birational fiber. If $\esc{E}$ is connected, then, in fact, the ordinary fiber of $\eta_{\esc{E}}$ is birationally $2n$-connected.
\item The canonical map $L_{bir}\esc{E}\to \Omega^\infty \Sigma^\infty_{bir} \esc{E}$ has $2n$-connected fiber. Here $\Sigma_{bir}:=L_{bir} \Sigma$ is the internal suspension of $\mathcal{H}^b(S)$; equivalently, $$\Sigma_{bir}\esc{X}=\colim{}^{\mathcal{H}^b(S)}(pt\leftarrow L_{bir}\esc{X}\to pt).$$
\end{enumerate}
\end{thm}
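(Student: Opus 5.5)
Since $L_{bir}$ commutes with $\Sigma$ up to birational equivalence and $\mathcal{H}^b(S)$ is a topos-like category closed under the relevant constructions in $\mathcal{P}(S)$, the plan is to run the classical Freudenthal argument (in the form of [\cite{morel2012a1}, Theorem 6.61], i.e.\ via the James/Ganea fiber sequence) \emph{inside} $\mathcal{P}(S)$ for the birational local space $\esc{Y}:=L_{bir}\esc{E}$, and then transport back. The key inputs are: $\Sigma$ is a geometric realization (a bar construction $\mathrm{B}(pt,\esc{E},pt)$-type colimit of finite products), so by \Cref{bir is closed under sifted colimits} and \Cref{Lbir is cartesian} one gets $L_{bir}\Sigma\esc{E}\simeq L_{bir}\Sigma L_{bir}\esc{E}$; moreover $\Sigma\esc{Y}$ computed sectionwise is already a pushout of birational local spaces whose birationalization is $\Sigma_{bir}\esc{Y}$.

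Step 1 (part (1)). By hypothesis $\esc{Y}$ is sectionwise $n$-connected (birational local spaces have $\pi_0^b=\pi_0$, and likewise for higher $\pi_i$ by \Cref{nis sheaves of bir is bir}). Sectionwise classical Freudenthal gives that $\esc{Y}\to\Omega\Sigma\esc{Y}$ (sectionwise suspension in $\mathcal{P}(S)$) has $2n$-connected fiber, and $\Sigma\esc{Y}$ is sectionwise $(n+1)$-connected. Now write $\Sigma\esc{Y}=\mathrm{B}(pt,\Omega\Sigma\esc{Y},pt)$-style and compare with $L_{bir}\Sigma\esc{Y}$: since $\Sigma\esc{Y}$ is connected and pointed, \Cref{bir and Omega commute on connected} gives $L_{bir}\Omega\Sigma\esc{Y}\simeq\Omega L_{bir}\Sigma\esc{Y}$. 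So $\eta^b$ factors as $\esc{Y}\to\Omega\Sigma\esc{Y}\to L_{bir}\Omega\Sigma\esc{Y}$. The main obstacle is controlling the fiber of the second map, i.e.\ showing the birationalization of the $2n$-connected comparison does not destroy connectivity. I would handle this via homotopy sheaves: by \Cref{higher pii as universal birationalization} and \Cref{pi0b is birationalization of pi0}, for $i\le 2n$ one has $\pi_i^b\Omega\Sigma\esc{Y}\cong\pi_0^b\pi_i\Omega\Sigma\esc{Y}\cong\pi_0^b\pi_i\esc{Y}=\pi_i\esc{Y}$, using inductively (\Cref{Lbir vs connective covers}) that the truncation $\tau_{\le 2n-1}\Omega\Sigma\esc{Y}\simeq\tau_{\le 2n-1}\esc{Y}$ is birational, and for $i=2n+1$ surjectivity since $\pi_0^b$ is a left adjoint on groups preserving epimorphisms (exactness, \Cref{bir is exact in presheaves}). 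The long exact sequence then yields $2n$-connectedness of the fiber.

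Step 2 (part (2)). Apply $L_{bir}$ to $\eta_{\esc{E}}$; using $L_{bir}\Omega\Sigma\esc{E}\simeq\Omega L_{bir}\Sigma\esc{E}$ (again \Cref{bir and Omega commute on connected}, as $\Sigma\esc{E}$ is connected) and $L_{bir}\Sigma\esc{E}\simeq L_{bir}\Sigma\esc{Y}$, $L_{bir}\eta_{\esc{E}}$ identifies with $\eta^b$, giving the first claim. If $\esc{E}$ is connected, the base $\Omega\Sigma\esc{E}$ of the fiber sequence is connected, so \Cref{Lbir preserve connected fiber sequence} says $L_{bir}$ of the ordinary fiber is the fiber of $\eta^b$, which is $2n$-connected by (1).

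Step 3 (part (3)). Iterate: $\Sigma_{bir}^k\esc{Y}$ is birational $(n+k)$-connected, and by (1) each map $\Omega^k\Sigma^k_{bir}\esc{Y}\to\Omega^{k+1}\Sigma^{k+1}_{bir}\esc{Y}$ has $(2(n+k)-k)\ge 2n$-connected fiber (loops commute with $L_{bir}$ on connected objects). Passing to the filtered colimit, which is computed in $\mathcal{P}(S)$ by \Cref{bir is closed under sifted colimits} and commutes with homotopy sheaves and finite limits, the map $\esc{Y}\to\Omega^\infty\Sigma^\infty_{bir}\esc{Y}$ has $2n$-connected fiber.
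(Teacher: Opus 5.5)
Your proposal is correct and is in substance the paper's argument: sectionwise Freudenthal for $L_{bir}\esc{E}$, the truncation comparison of \Cref{Lbir vs connective covers} and \Cref{higher pii as universal birationalization}, exactness of $Ab^b_S\subset PAb_S$ for surjectivity on $\pi_{2n+1}$, then \Cref{bir and Omega commute on connected} and \Cref{Lbir preserve connected fiber sequence} for (2), and a filtered colimit for (3). The one reorganization is in (1): you apply the comparison to $\Omega\Sigma L_{bir}\esc{E}$ and then use $L_{bir}\Omega\simeq\Omega L_{bir}$, whereas the paper applies it to $\Sigma L_{bir}\esc{E}$ after showing $\tau_{\le 2n+1}\Sigma L_{bir}\esc{E}\simeq\mathrm{B}\tau_{\le 2n}L_{bir}\esc{E}$ is birational. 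Two small fixes: for the top degree you need $\tau_{\le 2n}$ (not $\tau_{\le 2n-1}$) of $\Omega\Sigma L_{bir}\esc{E}$ to be birational, which Freudenthal does give; and $L_{bir}\Sigma\esc{E}\simeq L_{bir}\Sigma L_{bir}\esc{E}$ holds because $L_{bir}$-equivalences are stable under pushouts, not because $\Sigma$ is a bar construction of finite products, since $\esc{E}$ is not a monoid.
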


\begin{proof}
1. We are given that $L_{bir}\esc{E}$ is $n$-connected. The classical Freudenthal suspension theorem, applied to this $2n$-connected space $L_{bir}\esc{E}$, yields that the fiber of the unit $\eta_{L_{bir}\esc{E}}:L_{bir}\esc{E}\to \Omega \Sigma L_{bir}\esc{E}$ is $2n$-connected. That is, the morphism $$f:\pi_{2n+1}^b\esc{E}=\pi_{2n+1}L_{bir}\esc{E}\to \pi_{2n+2}\Sigma L_{bir}\esc{E}$$ is an epimorphism of presheaves, and $$\tau_{\leq 2n}L_{bir}\esc{E}\xrightarrow{\tau_{\leq 2n}\eta_{L_{bir}\esc{E}}}\tau_{\leq 2n}\Omega \Sigma L_{bir}\esc{E}\simeq \Omega \tau_{\leq 2n+1}\Sigma L_{bir}\esc{E}$$ is an equivalence. Since the space $\tau_{\leq 2n+1}\Sigma L_{bir}\esc{E}$ is pointed and connected, this equivalence implies $$\tau_{\leq 2n+1}\Sigma L_{bir}\esc{E}\simeq \mathrm{B}\Omega \tau_{\leq 2n+1}\Sigma L_{bir}\esc{E}\simeq \mathrm{B}\tau_{\leq 2n}L_{bir}\esc{E}.$$ Since $\mathrm{B}\tau_{\leq 2n}L_{bir}\esc{E}$ is birational local by \Cref{judgment to bir} and \Cref{bir bar commute}(2), it follows that so is $\tau_{\leq 2n+1}\Sigma L_{bir}\esc{E}$. But then $$\tau_{\leq 2n+1}\Sigma L_{bir}\esc{E}\simeq L_{bir}\tau_{\leq 2n+1}\Sigma L_{bir}\esc{E}\simeq \tau_{\leq 2n+1}L_{bir}\Sigma L_{bir}\esc{E}\simeq \tau_{\leq 2n+1}L_{bir}\Sigma\esc{E}$$ (for the second equivalence use \Cref{Lbir vs connective covers}).
Now, applying $\Omega $ to the above equivalence, we deduce that $$\Omega \tau_{\leq 2n+1}\Sigma L_{bir}\esc{E}\simeq \Omega \tau_{\leq 2n+1}L_{bir}\Sigma\esc{E}\simeq \tau_{\leq 2n}\Omega L_{bir}\Sigma\esc{E}. $$ Composing with the starting (given) equivalence $\tau_{\leq 2n}L_{bir}\esc{E}\simeq \Omega \tau_{\leq 2n+1}\Sigma L_{bir}\esc{E}$ this gives us an equivalence: $$\tau_{\leq 2n}L_{bir}\esc{E}\simeq \tau_{\leq 2n}\Omega L_{bir}\Sigma\esc{E}.$$

On the other hand, birational locality of $\tau_{\leq 2n+1}\Sigma L_{bir}\esc{E}$ yields, by \Cref{higher pii as universal birationalization}, that $\pi_{2n+2}\Sigma L_{bir}\esc
E\to \pi_{2n+2}^b \Sigma L_{bir}\esc
E$ is the universal one to a birational abelian group sheaf. But $$\pi_i^b \Sigma L_{bir}\esc{E}=\pi_iL_{bir}\Sigma L_{bir}\esc{E}\simeq \pi_iL_{bir}\Sigma\esc{E} \simeq \pi_i^b\Sigma\esc{E}.$$ Therefore the morphism $\pi_{2n+2}\Sigma L_{bir}\esc
E\to \pi_{2n+2}^b \Sigma \esc
E$ is the universal morphism to a birational abelian group sheaf. Hence, the composition $$\pi_{2n+1}^b\esc{E}\xrightarrow{f} \pi_{2n+2}\Sigma L_{bir}\esc{E}\to \pi_{2n+2}^b \Sigma\esc
E$$ is an epimorphism in $Ab^b_S$ (indeed, $f$ is given to be an epimorphism of presheaves and thus an epimorphism on birationalization, while the second one is an isomorphism on birationalization). Since $Ab^b_S\subset PAb_S$ is exact (use \Cref{bir is exact in presheaves} for the trivial topology), we deduce that $$\pi_{2n+1}L_{bir}\esc{E}\to \pi_{2n+1}\Omega L_{bir} \Sigma\esc{E}$$ is an epimorphism in $PAb_S$.

We have thus shown that the morphism $\eta^b_{\esc{E}}:L_{bir}\esc{E}\to \Omega  L_{bir}\Sigma\esc{E}$ is an equivalence under $\tau_{\leq 2n}$ and induces a surjection on $\pi_{2n+1}$, which is equivalent to saying that it has $2n$-connected fiber.

2. Since $\Sigma \esc{E}$ is connected, \Cref{bir and Omega commute on connected} implies that the codomain of the morphism in (1) is equivalent to $L_{bir}\Omega \Sigma \esc{E}$. Hence, the first statement of (2) is a restatement of (1). When $\esc{E}$ is furthermore connected, so that $\Omega \Sigma\esc{E}$ is connected, \Cref{Lbir preserve connected fiber sequence} implies that $L_{bir}fib(\eta_{\esc{E}})\simeq fib (L_{bir}\eta_{\esc{E}})$, and the second line of (2) follows from the first.

3. The morphism in the third statement is the colimit of the following diagram:
$$L_{bir}\esc{E}\xrightarrow{\eta_{\esc{E}}} \Omega\Sigma_{bir}\esc{E}\xrightarrow{\Omega\eta_{\Sigma_{bir}\esc{E}}} \Omega^2\Sigma^2_{bir}\esc{E}\longrightarrow \cdots$$
Inductively, it follows from statement (1) that each morphism in the above diagram is $2n$ conneceted. Hence, so is the colimit (use [\cite{lurie2009higher}, Proposition 6.5.1.12]).
\end{proof}
\section{Examples}\label{examples}
\subsection{Examples of Birational Motivic spaces}\label{examples of birational local spaces}
In this section, let $k$ be a field.  

\textbf{Birationally rigid schemes\protect\footnote{This should not be confused with the notion of birational rigidity used in classical birational geometry [\cite{MR3060242}]. In fact, the geometric concept is quite the opposite. For instance, being rationally connected, Fano varieties are never rigid in our sense, yet many Fano varieties are rigid in the other sense (see [\cite{MR2195677}]).}}\label{birationally rigid schemes}

\begin{defn}
    We say that a scheme $T/k$ is birationally rigid if, for every dense open immersion $U\subset X$ of smooth schemes, the restriction maps $\mathrm{Sch}_k(X,T)\to \mathrm{Sch}_k(U,T)$ are isomorphisms, i.e., if it admits no nontrivial rational maps. Since schemes are Nisnevich sheaves, they are precisely the schemes that are birational local (see \Cref{bir local}).
\end{defn}
\begin{ex}\label[ex]{dim 0 is bir rigid}
    A zero-dimensional (not necessarily smooth) locally Noetherian scheme over $k$ is birationally rigid, since its underlying topological space is discrete [\cite[\href{https://stacks.math.columbia.edu/tag/0AAX}{Lemma 0AAX}]{stacks-project}].
\end{ex}
\begin{ex}[Abelian varieties]\label[ex]{abelian variety}
The standard example of a Birationally rigid variety is an abelian variety [\cite{milne1986abelian}, Theorem 3.2]. In particular, elliptic curves over $k$ are birationally rigid.     
\end{ex}
Since birational local spaces are already $\mathbb{P}^1_k$-local (\Cref{bir and rat}(1)), the following is immediate:
\begin{prop}
    A birationally rigid scheme contains no rational curve.
\end{prop}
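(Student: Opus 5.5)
Let $T/k$ be a birationally rigid scheme. The plan is to read "contains no rational curve" as: every $k$-morphism $f:\mathbb{P}^1_K\to T$, for any finitely generated separable extension $K/k$ (or at least $K=k$), is constant, i.e. it factors through the structure map $\mathbb{P}^1_K\to \operatorname{Spec}K$. The non-constant morphisms are exactly what a rational curve in $T$ would give. The argument is entirely formal from the $\mathbb{P}^1$-locality established earlier.

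First, by the definition of birational rigidity, the presheaf $h_T=\mathrm{Sch}_k(-,T)$ restricted to $Sm_k$ is dense local. It also sends finite coproducts to products, since it is representable, so it is a birational presheaf of sets in the sense of \Cref{birational presheaves of sets}. By \Cref{bir local is P1 local}, applied with the rational scheme $A=\mathbb{P}^1_k$, the presheaf $h_T$ is $\mathbb{P}^1$-local. Concretely, for every $X\in Sm_k$ the pullback along the projection $\mathbb{P}^1_X\to X$ gives a bijection
$$\mathrm{Sch}_k(X,T)\to \mathrm{Sch}_k(\mathbb{P}^1_k\times_k X,T).$$
Alternatively one can invoke \Cref{bir and rat}(1) directly, since $h_T$ is birational local.

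Next I apply this bijection to $X=\operatorname{Spec}K$ with $K\in\mathcal{F}_k$, which is smooth because $K/k$ is separable and finitely generated. Strictly speaking, $\operatorname{Spec}K$ is only essentially of finite type. To stay inside $Sm_k$, I would take a smooth model $X$ with function field $K$ and pass to the colimit over its dense opens. Since $h_T$ is dense local, its value on such a colimit agrees with its value on $X$, so the bijection still holds at $K$. The bijection then says that every morphism $\mathbb{P}^1_K\to T$ is the composite of the projection $\mathbb{P}^1_K\to\operatorname{Spec}K$ with some point $\operatorname{Spec}K\to T$. Such a composite is constant, so no non-constant map, and in particular no birational image of $\mathbb{P}^1$, lands in $T$.

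The only step needing care is this passage to a possibly non-$k$-rational field of definition together with the precise meaning of "rational curve." If the statement is meant over $\bar k$, I would replace $\operatorname{Spec}K$ by a smooth $k$-model $X$ over which the curve is defined. Then $\mathbb{P}^1$-locality on $X$ itself shows the family $\mathbb{P}^1_X\to T$ is constant along the fibres, and this needs no field extension at all. I expect no genuine obstacle beyond this bookkeeping.
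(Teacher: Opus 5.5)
Your proposal is correct and follows the paper's route. Birational rigidity makes $h_T$ a birational presheaf, hence $\mathbb{P}^1$-local by \Cref{bir local is P1 local} (or \Cref{bir and rat}(1)), and that is the paper's entire one-line justification, since the paper identifies absence of rational curves with $\mathbb{P}^1$-rigidity. Your passage to points over $K\in\mathcal{F}_k$ is fine, implicitly using that $T$ is locally of finite presentation over $k$; however, the aside about curves over $\bar k$ is only justified when the field of definition is separable over $k$ (for instance when $k$ is perfect), since otherwise no smooth $k$-model need exist.
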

In particular, a nontrivial rationally connected variety is never birationally rigid. The converse holds for proper schemes:
\begin{prop}\label[prop]{rationally rigid is bir rigid}
 A proper variety with no rational curves is birationally rigid.
 \end{prop}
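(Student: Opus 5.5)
We must show: if $T/k$ is proper with no rational curves, then for every dense open immersion $U\subset X$ of smooth $k$-schemes, restriction $\mathrm{Sch}_k(X,T)\to\mathrm{Sch}_k(U,T)$ is bijective.

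Injectivity is formal. $T$ is separated, and $X$ is reduced (being smooth). Two morphisms $X\to T$ agreeing on $U$ therefore agree on the closed subscheme where they coincide. That subscheme contains the dense open $U$, so it is all of $X$.

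Surjectivity is the real content: every morphism $f:U\to T$ must extend to $X$. I plan to reduce to the classical extension statement: a rational map from a smooth (or just normal) variety to a proper variety containing no rational curves is a morphism (Abhyankar/Kollár, Rational Curves on Algebraic Varieties, VI.1.9; cf. Corollary 1.5 of Debarre's \emph{Higher-dimensional algebraic geometry}).

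The argument runs as follows.

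\begin{enumerate}
\item Work one connected component of $X$ at a time. Each component is integral because $X$ is smooth. By injectivity, local extensions glue uniquely, so it suffices to extend $f$ over a neighbourhood of each point of $X\setminus U$, or over an open set whose complement has codimension $\geq 2$ and then iterate.
\item Let $\Gamma\subset X\times T$ be the closure of the graph of $f$. Because $T$ is proper, $p:\Gamma\to X$ is proper and birational, and it is an isomorphism over $U$. If $p$ is not an isomorphism near some point $x$, then, since $X$ is smooth and hence normal, Zariski's main theorem gives a positive-dimensional fibre $p^{-1}(x)$.
\item Abhyankar's lemma applies because $X$ is regular. Any proper birational morphism onto a regular variety that is not a local isomorphism has exceptional locus covered by rational curves: one resolves by blowing up along $X\setminus U$, and the exceptional divisors of blow-ups of regular schemes are ruled. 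These rational curves map into $p^{-1}(x)$, and the projection $\Gamma\to T$ is finite on fibres of $p$, so their images give non-constant rational curves in $T$. This contradicts the hypothesis. Hence $p$ is an isomorphism and $f$ extends.
\end{enumerate}

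The main obstacle is step 3, in two places. First, producing rational curves on the exceptional locus over a field that is not perfect or not algebraically closed. Second, justifying that the curves are non-constant in $T$. To handle the field issue I will interpret ``no rational curves'' geometrically, i.e.\ after base change to $\bar k$, descend the extension using the uniqueness already proved, and cite the standard result there. If the base-change route proves awkward, I will instead reduce to codimension one first. Extension over codimension-one points is automatic because $X$ is regular there (the valuative criterion for the proper $T$). After that, only a closed set of codimension $\geq 2$ remains, and it is handled by blowing up a smooth centre and noting that the exceptional fibres $\mathbb{P}^{r}$ would have to map non-constantly to $T$.
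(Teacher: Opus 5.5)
Your proposal is correct and follows the same route as the paper. You close up the graph of $f$ in $X\times T$, invoke Abhyankar's theorem (the paper cites Debarre, Prop.~1.43 and Cor.~1.44) that the exceptional locus of a proper birational map onto a smooth variety is ruled by curves contracted to points, and note that these curves embed into $T$ via the second projection, so $\bar{\Gamma}_f\to X$ must be an isomorphism.

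Your additions are welcome refinements that the paper leaves implicit:
\begin{itemize}
\item the explicit injectivity argument;
\item reading ``no rational curves'' over $\bar k$, which is genuinely needed. A pointless conic over $\mathbb{R}$ has no $\mathbb{R}$-rational curves, yet it is not $\mathbb{A}^1$-rigid and hence not birationally rigid.
\end{itemize}

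Two things should go:
\begin{itemize}
\item Drop the parenthetical ``or just normal''. The affine cone over an elliptic curve, with its projection to the curve, is a counterexample.
\item Do not rely on the one-blow-up fallback. A single blow-up need not resolve $f$, and the first exceptional divisor can map constantly even when $f$ fails to extend (e.g.\ $(x:y^2)$ on $\mathbb{A}^2$). This is exactly why Abhyankar's theorem is needed.
\end{itemize}
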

 \begin{proof}
 Indeed, suppose $T$ is a proper variety over $k$ with no rational curve, and we are given a morphism $f: U\to T$ with $U\in Sm_k$. Then by [\cite{debarre2001higher}, Corollary 1.44], $f$ extends along the dense open immersion $U\hookrightarrow X\in Sm_k$. 
     
To elaborate, consider the graph of $f$, given by $\Gamma_f\subset U\times T\subset X\times T$. Let $\bar{\Gamma} _f\subset X\times T$ be the closure of $\Gamma_f$ in $X\times T$, and let $\phi: \bar{\Gamma}_f\to X$ be the corresponding projection map, which is proper since $T$ is so. Since $U\hookrightarrow X$ is dense and $U\cong \Gamma_f$, we get that $\phi $ is birational. Since $X$ is smooth, the irreducible components of the exceptional fiber are ruled (by Proposition 1.43 in \textit{loc.cit.}). Since $T$ contains no rational curve, each fiber of $\phi$ must map to a single point in $T$. Because $\phi$ is surjective by construction, we get an extension to all of $X$.   
 \end{proof}
 \begin{cor}\label[cor]{A1 rigid proper iff bir rigid} 
 A proper scheme over $k$ is birationally rigid if and only if it is rationally rigid (i.e., $L_{rat}$ local), if and only if it is $\mathbb{P}^1$ rigid, if and only if it is $\mathbb{A}^1$ rigid.
 \end{cor}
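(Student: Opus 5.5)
We prove the chain of equivalences for a proper scheme $T/k$:
\[
\text{birationally rigid}\;\Rightarrow\;\text{rationally rigid}\;\Rightarrow\;\mathbb{P}^1\text{ rigid}\;\Rightarrow\;\text{no rational curves}\;\Rightarrow\;\text{birationally rigid},
\]
and separately show that $\mathbb{A}^1$ rigidity is equivalent to $\mathbb{P}^1$ rigidity. Here ``$A$ rigid'' means the representable sheaf $h_T$ is $A$-local in the sense of \Cref{A motivic local}. Since $T$ is a scheme, $h_T$ is a Nisnevich sheaf of sets, so all these conditions concern discrete objects and no higher homotopy enters.

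The first implication is \Cref{bir local is P1 local}, applied to the dense local presheaf of sets $h_T$; since $T$ is birationally rigid, $h_T$ is birational. The second is trivial, because $\mathbb{P}^1_k$ is rational. For the third, suppose $h_T$ is $\mathbb{P}^1$-local and take a nonconstant morphism $\mathbb{P}^1_{k'}\to T$, possibly after a finite extension $k'/k$. Locality over $X=\mathrm{Spec}\,k'$ gives $T(\mathbb{P}^1_{k'})=T(k')$, so every such map is constant. To handle rational curves whose normalization is $\mathbb{P}^1$ only over an extension, I would check that the cited results of [\cite{debarre2001higher}] are phrased geometrically, so that it suffices to work over $\bar{k}$ or after base change. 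The fourth implication is exactly \Cref{rationally rigid is bir rigid}.

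For $\mathbb{A}^1$ rigidity, first suppose $T$ is $\mathbb{P}^1$ rigid. Then by the previous paragraph $T$ is birationally rigid, so \Cref{bir is a1} shows $h_T$ is $\mathbb{A}^1$-invariant. Conversely, suppose $h_T$ is $\mathbb{A}^1$-invariant and $g:\mathbb{P}^1_X\to T$ is given. Restricting $g$ to the two standard copies of $\mathbb{A}^1_X$, each restriction factors through $X$, via the points $0$ and $\infty$ respectively. These two factorizations agree on $\mathbb{G}_{m,X}$, which is dense in both copies. Since $T$ is separated, the two induced morphisms agree on the reduced schemes, and $X$ is smooth, hence reduced. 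So $g$ factors through $X$, and $h_T$ is $\mathbb{P}^1$-local.

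The main obstacle I expect is making the step ``no rational curves'' precise for an arbitrary base field $k$ and a possibly non-reduced, non-geometrically-integral proper scheme. It is worth noting that \Cref{rationally rigid is bir rigid} is stated for varieties. If the scheme version is needed, I would reduce to $T_{\mathrm{red}}$: for $X$ smooth, hence reduced, one has $\mathrm{Sch}_k(X,T)=\mathrm{Sch}_k(X,T_{\mathrm{red}})$. Then I would treat each irreducible component separately, using that a morphism from an irreducible $X$ lands in a single component.
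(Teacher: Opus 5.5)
Your proposal is correct and follows the paper's chain: birationally rigid $\Rightarrow$ rationally rigid via \Cref{bir local is P1 local} (the paper cites \Cref{bir and rat}), rationally rigid $\Rightarrow$ $\mathbb{P}^1$-rigid trivially, and $\mathbb{P}^1$-rigid $\Rightarrow$ birationally rigid via \Cref{rationally rigid is bir rigid}. The paper simply asserts that $\mathbb{P}^1$-rigidity means having no rational curves, so it does not treat the subtleties you flag (inseparable fields of definition, non-integral $T$) either. The only differences are in the $\mathbb{A}^1$ part: you get $\mathbb{P}^1\Rightarrow\mathbb{A}^1$ by passing through birational rigidity and \Cref{bir is a1} rather than through the valuative criterion, and you prove $\mathbb{A}^1\Rightarrow\mathbb{P}^1$ by gluing over the two affine charts instead of citing [\cite{p1algtop}, Example 2.2.6].
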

 \begin{proof}
    For a proper scheme, the equivalence of $\mathbb{P}^1$-rigidity and $\mathbb{A}^1$-rigidity follows from the valuative criterion of properness. By \Cref{bir and rat}, birational local schemes are rational local. Since $\mathbb{A}^1$ is a subscheme of $\mathbb{P}^1$, in general, $\mathbb{A}^1$-rigid schemes are $\mathbb{P}^1$-rigid (in fact, this holds for all nisnevich sheaves of sets, see [\cite{p1algtop}, Example 2.2.6]). The remaining implication follows from the result above (note that the absence of rational curves is equivalent to $\mathbb{P}^1$-rigidity). 
 \end{proof}
\begin{ex}
For example, smooth proper curves of genus $\geq 1$ are birationally rigid (this appears in [\cite{levine2010slices}, page 399]).
\end{ex}
\begin{ex}
    The limit of a (finite) diagram consisting of birationally rigid schemes is birationally rigid. For example, a product of smooth proper curves of positive genus is birationally rigid.
\end{ex}

\textbf{Path components of standard correspondences}

Let $Y$ be a smooth proper variety over a field $k$. 
\begin{ex}[Chow groups of zero cycles of smooth proper varieties]
The presheaf with transfers associated to the naive $\mathbb{A}^1$-homotopy quotient of a proper variety $Y$, i.e., $$\mathrm{H}^\mathbb{Z}_0(Y):=\pi_0^{nis}L_{\mathbb{A}^1}\mathbb{Z}^{tr}(Y),$$ is a birational sheaf. Indeed, by [\cite{kahn2017birational}, Theorem 3.1.2] $$\pi_0^{nis}L_{\mathbb{A}^1}\mathbb{Z}^{tr}(Y)\cong \mathrm{CH}_0(Y_{K{-}}),$$ where the right hand side is clearly birational (note that the only thing that requires properness of $Y$ is in prividing the right hand side with a presheaf structure, otherwise $X\mapsto \mathrm{CH}_0(Y_{K{X}})$ is not functorial on all morphisms of $Sm_k$).
\end{ex}
    
\begin{rem} 
A similar claim (and consequently an appropriate isomorphism as mentioned earlier) does not apply to Milnor-Witt or framed correspondences. Specifically, when $Y=Speck$, we observe $$ \pi_0^{nis}L_{\mathbb{A}^1}\mathbb{Z}^{fr}(k)\simeq \pi_0^{nis}L_{\mathbb{A}^1}\tilde{\mathbb{Z}} ^{}(k)\simeq \underline{\mathrm{GW}}.$$ 

(The first equivalence follows from [\cite{MR3925101}, Theorem 8.8].) However, it is well known that the unramified sheaf $\underline{\mathrm{GW}}$ is not birational local (for example, using the Rost-Schmid complex, and $\mathbb{A}^1$-invariance, it is clear that $$\underline{\mathrm{GW}}(\mathbb{G}_{m, k}) \simeq GW(k) \oplus W(k)$$ and $W(k)$ is often nontrivial).
\end{rem}
\textbf{More discrete birational sheaves}
\begin{ex}\label[ex]{const sheaf of sets}
Over geometrically unibranch schemes, every constant sheaf of sets is birationally local. Indeed, in this case, the constant sheaf associated to a set $F$ is given by $U\mapsto {\sqcup}_{U^{(0)}}F$, since the connected components of unibranch schemes are irreducible.
\end{ex}
\begin{ex}\label[ex]{pi0 ba1}
    As noted earlier, for a field $k$, [\cite{asok2011smooth}, Theorem 6.1.7] states that any $\mathcal{F}^r_k-set$ that is sheaflike and transcendental invariant [Definition 6.1.6 of \textit{loc.cit.}] can be extended to a birational sheaf on $Sm_k$.
\end{ex}
\textbf{Non-discrete birational local spaces}
\begin{ex}
It follows from \Cref{const sheaf of sets} and \Cref{pi is birational} that, over geometrically unibranch schemes, constant sheaves of spaces are birational-local.
\end{ex}
\begin{ex}
For any $n$-birational motivic space $\esc{X}$ (see [\cite{sfbat}] for the definition), the $n$-fold $\mathbb{P}^1$-loop space $\Omega _{\mathbb{P}^1}^n\esc{X}$ is birational local [\cite{sfbat}, Corollary 4.5.2].
\end{ex}
\begin{ex}
Over a perfect field, the infinite loop space of a connective motivic ($S^1/\mathbb{P}^1$) spectrum is birational-local if and only if the spectrum is zero-sliced (this is the main result of [\cite{bas}]).
\end{ex}
\begin{ex}[Sifted colimits of birational local spaces]
By \Cref{bir is closed under sifted colimits}, geometric realizations and filtered colimits of birational local spaces are birational local. For example, an Ind abelian variety is birational. 
\end{ex}  
\begin{ex}[through base change]
Flat pushforward and smooth pullback of birational local objects are birational local [\cite{sfbat}].
\end{ex}

\subsection{Examples of Birational equivalences}
The aim of this subsection is to list examples of birational equivalences. Remember that every such morphism is automatically a birational motivic equivalence (see \Cref{Motivic equivalences are Birational equivalences}), so we may use these terms synonymously. To emphasize the difference, we will call a birational morphism of schemes a birational isomorphism. Unless otherwise specified, we will assume that the base is a Qcqs scheme.
\begin{ex}
    Composition, finite product (see \Cref{Lbir is cartesian}), and colimit of birational equivalences are birational equivalences.
\end{ex}

\begin{ex}
    Nisnevich equivalence (\Cref{bir local is nis local}) and cdp equivalences, or more generally cdh equivalences (\Cref{bir is cdh}), are birational equivalences.
\end{ex}

\begin{ex}
    Any rational (respectively, retract rational, stably rational, or ordinary) -motivic equivalence is a birational equivalence (see \Cref{various rational homotopy categories}).\end{ex}  
\begin{ex}
    A $\mathbb{P}^1$-motivic equivalence in the sense of [\cite{p1algtop}] is a birational motivic equivalence by \Cref{bir local is rational local} (1).
\end{ex} 
\begin{prop}
Suppose $f:\esc{X}\to \esc{Y}$ is a morphism of $S$-spaces. Then $f$ is a birational motivic equivalence if and only if there exists a dense open $V\subset S$ such that $f_V$ is a birational motivic equivalence in $\mathcal{P}(V)$. 
\end{prop}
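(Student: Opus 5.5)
The plan is to show that, for any dense open immersion $j:V\hookrightarrow S$, restriction along $j$ induces an equivalence $\mathcal{H}^b(S)\simeq\mathcal{H}^b(V)$ compatible with the localization functors. Both directions of the statement then follow, and in fact "there exists $V$" may be upgraded to "for every dense open $V$". Since motivic equivalences are birational equivalences (\Cref{Motivic equivalences are Birational equivalences}), and $\mathcal{H}^{b\mathbb{A}^1}=\mathcal{H}^b$ over both $S$ and $V$, it suffices to work throughout with $\mathcal{H}^b$.

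\textbf{Setup.} Since $j$ is smooth, composition with $j$ gives a functor $Sm_V\to Sm_S$. Write $j^*:\mathcal{P}(S)\to\mathcal{P}(V)$ for restriction along it; by definition $f_V=j^*f$. The functor $j^*$ has a right adjoint $j_*$, given by $(j_*F)(X)=F(X\times_S V)$; this makes sense because $X\times_S V\to V$ is smooth. I will establish four facts.
\begin{enumerate}
\item $j^*$ preserves birational local objects. The $\Sigma$-condition is clear, and a dense open immersion in $Sm_V$ is still a dense open immersion in $Sm_S$.
\item $j_*$ preserves birational local objects. It preserves finite products because $(-)\times_S V$ preserves coproducts. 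For a dense open immersion $U\hookrightarrow X$ in $Sm_S$, the map $U\times_S V\hookrightarrow X\times_S V$ is a dense open immersion: it is $U\cap X_V\subset X_V$ with $X_V$ open in $X$ and $U$ dense in $X$.
\item For every $F\in\mathcal{P}(V)$ the counit $j^*j_*F\to F$ is an equivalence. For $Y\in Sm_V$ we have $Y\times_S V\simeq Y$ because $V\to S$ is a monomorphism.
\item For every birational local $\mathcal{E}$ on $S$ the unit $\mathcal{E}\to j_*j^*\mathcal{E}$ is an equivalence. Its component at $X\in Sm_S$ is the restriction $\mathcal{E}(X)\to\mathcal{E}(X\times_S V)$. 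Here $X\times_S V\hookrightarrow X$ is a dense open immersion, since it is the preimage of the dense open $V$ under the open map $X\to S$, exactly as in \Cref{dense prod}. So this map is an equivalence by dense locality.
\end{enumerate}
Together these show that $j^*$ and $j_*$ restrict to mutually inverse equivalences between $\mathcal{H}^b(S)$ and $\mathcal{H}^b(V)$.

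\textbf{Conclusion.} Recall that $f$ is a birational equivalence iff $\mathrm{Map}(\mathcal{Y},\mathcal{E})\to\mathrm{Map}(\mathcal{X},\mathcal{E})$ is an equivalence for all $\mathcal{E}\in\mathcal{H}^b(S)$. By (4) and the adjunction,
\[
\mathrm{Map}(\mathcal{Y},\mathcal{E})\simeq\mathrm{Map}(\mathcal{Y},j_*j^*\mathcal{E})\simeq\mathrm{Map}(j^*\mathcal{Y},j^*\mathcal{E}).
\]
As $\mathcal{E}$ runs over $\mathcal{H}^b(S)$, the object $j^*\mathcal{E}$ runs over all of $\mathcal{H}^b(V)$, by (1)–(3). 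Hence $f$ is a birational equivalence iff $j^*f=f_V$ is local for every birational local $F$ on $V$, that is, iff $f_V$ is a birational equivalence in $\mathcal{P}(V)$. For the "only if" direction take $V=S$, or any dense open.

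\textbf{Expected obstacle.} The main point needing care is step (4), the density of $X\times_S V$ in $X$ for every smooth $X$, including those whose image misses generic points in a subtle way. The argument is the one already used in \Cref{dense prod}: smooth morphisms are open, so any nonempty open $W\subset X$ has nonempty open image in $S$. That image meets $V$, and hence $W$ meets $X\times_S V$.
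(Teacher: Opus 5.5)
Your argument is correct, and it takes a different route from the paper. The paper gives no argument of its own here: it simply records the statement as a special case of [\cite{sfbat}, Corollary 3.1.9], a result from the author's companion work on base change.

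Your proof is self-contained. It uses the adjunction $j^*\dashv j_*$ for the smooth open immersion $j$, and its only geometric input is that $X\times_S V$ is dense in $X$ for every $X\in Sm_S$. That is \Cref{dense prod}, the same fact behind \Cref{bir equiv on a dense open}.

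This buys you more than the statement asks for. Restriction $j^*$ gives an equivalence $\mathcal{H}^b(S)\simeq\mathcal{H}^b(V)$ with inverse $j_*$. Consequently the criterion holds for \emph{every} dense open $V$, and $L_{bir}$ commutes with restriction to $V$. Note also that the `if' direction needs only your steps (1) and (4). The `only if' direction, as stated, is trivial by taking $V=S$.

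What the paper gains from citing the general result is uniformity with base changes that are not open, such as flat pullbacks and passage to generic points (cf.\ \Cref{through base change}, \Cref{fibersise criteria}). For those, no equivalence of categories is available.

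One caveat concerns quasi-compactness. In the paper, $Sm_S$ consists of \emph{finitely presented} smooth $S$-schemes. So composing with $j$ lands in $Sm_S$, and $X\times_S V$ lies in $Sm_S$, only when $V\to S$ is quasi-compact. A dense open in a qcqs scheme need not be quasi-compact, nor need it contain a quasi-compact dense open. For example, take the union of the isolated points of $\mathrm{Spec}\prod_{\mathbb{N}}k$. You should therefore say explicitly that $V$ is taken quasi-compact. This assumption is already implicit in your use of $\mathcal{H}^{b\mathbb{A}^1}(V)=\mathcal{H}^b(V)$, since that identification was only established over qcqs bases.
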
 
\begin{proof}
    This is a special case of [\cite{sfbat}, Corollary 3.1.9].
\end{proof}
\begin{ex}[through base change]\label[ex]{through base change}
    Flat pullback ($f^*$), smooth extension $p_\shrp$ of birational equivalences, and pullback of an $n$-birational motivic equivalence along an equidimensional closed immersion are birational equivalences [\cite{sfbat}]. 
\end{ex}
\begin{rem}
    Pullback (in particular, a fiber) of a birational equivalence need not be a birational equivalence. For example, let $0: \operatorname{Spec} k \to \mathbb{A}^1_k$ be the zero section. Then the pullback of the birational equivalence $\mathbb{G}_m = \mathbb{A}^1_k \setminus 0 \hookrightarrow \mathbb{A}^1_k$ along $0$ yields the inclusion $\emptyset \hookrightarrow \operatorname{Spec} k$, which cannot be a birational equivalence, since both $\emptyset$ and $\operatorname{Spec} k$ are birational local over $\operatorname{Spec} k$.
\end{rem}   

\begin{prop}
    A Nisnevich-local birational motivic equivalence is a birational motivic equivalence. In detail, suppose ($S$ is Qcqs of finite Krull dimension and) $\esc{X}_\bullet\to \esc{X}$ is a Nisnevich (hypercover/) equivalence of $S$-spaces, and $f:\esc{Y}\to \esc{X}$ is a morphism of $S$-spaces. Then $f$ is a birational equivalence over $S$ if and only if $f_{\bullet}:\esc{Y}\times_{\esc{X}}\esc{X}_\bullet\to \esc{X}_\bullet $ is a simplicial birational equivalence over $S$.  
    \end{prop}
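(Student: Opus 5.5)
The plan is to reduce the statement to the descent property of the Nisnevich topos, using the fact that birational local spaces are Nisnevich local (\Cref{bir local is nis local}) and that $L_{bir}\simeq L_{bir}L_{nis}$. Here "$f_\bullet$ is a simplicial birational equivalence" means levelwise birational equivalence. I would establish the two directions separately.

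\textbf{First direction.} Suppose $f$ is a birational equivalence. For each $n$, the map $\esc{Y}\times_{\esc{X}}\esc{X}_n\to\esc{X}_n$ is a base change of $f$. Pullbacks of birational equivalences are not birational equivalences in general (\Cref{Lbir not locally cart}), so this direction needs an argument. I would write $\esc{X}_n\simeq\colim_{U\to\esc{X}_n}U$ over smooth $U$. Since $\mathcal{P}(S)$ is a topos, pullback commutes with colimits, and saturated classes are closed under colimits. This reduces the problem to base changes along maps $U\to\esc{X}$ with $U$ representable.

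Even this reduced statement can fail; for example, a dense open immersion pulled back along a point is not a birational equivalence. So I expect this step to be the main obstacle. I would first try to prove it when $\esc{X}_\bullet\to\esc{X}$ is the Čech nerve of a cover by objects that are flat over $\esc{X}$, invoking \Cref{through base change} (flat pullback preserves birational equivalences). If that does not go through, I would settle for the levelwise statement after applying $L_{bir}$ to the whole diagram.

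\textbf{Second direction.} Suppose each $f_n$ is a birational equivalence. Then the geometric realization $|f_\bullet|:|\esc{Y}\times_{\esc{X}}\esc{X}_\bullet|\to|\esc{X}_\bullet|$ is a birational equivalence, because $L_{bir}$ preserves sifted colimits (\Cref{l bir preserve sifted colimit}), or simply because saturated classes are closed under colimits. Since $\esc{X}_\bullet\to\esc{X}$ is a Nisnevich (hyper)cover, universality of colimits in the topos $\mathcal{P}_{nis}(S)$ gives two Nisnevich equivalences: $|\esc{X}_\bullet|\to\esc{X}$, and $|\esc{Y}\times_{\esc{X}}\esc{X}_\bullet|\simeq\esc{Y}\times_{\esc{X}}|\esc{X}_\bullet|\to\esc{Y}$, since $L_{nis}$ is left exact and the finite-Krull-dimension hypothesis gives hypercompleteness for hypercovers.

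Nisnevich equivalences are birational equivalences by \Cref{bir local is nis local}. By two-out-of-three in the commutative square, $f$ is a birational equivalence.
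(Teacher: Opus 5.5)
Your second direction is correct. The gap is the first direction. You leave it open, and under your levelwise reading of ``simplicial birational equivalence'' it cannot be proved, because it is false for the stated hypotheses: the $\esc{X}_n$ are arbitrary $S$-spaces.

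Here is a counterexample. Take $S=\mathrm{Spec}\,k$ and $f\colon \mathbb{G}_m\hookrightarrow\mathbb{A}^1$. Let $\esc{X}_\bullet$ be the \v{C}ech nerve of $\esc{X}_0=h\mathbb{A}^1\sqcup h\,\mathrm{Spec}\,k\to h\mathbb{A}^1$, given by the identity on the first summand and the origin on the second.
\begin{itemize}
\item This map has a section, so $|\esc{X}_\bullet|\simeq\mathbb{A}^1$ already in $\mathcal{P}(S)$.
\item The map $f_0$ is $h\mathbb{G}_m\sqcup h(\emptyset)\to h\mathbb{A}^1\sqcup h\,\mathrm{Spec}\,k$.
\item $L_{bir}$ sends these coproducts to coproducts in $\mathcal{H}^b(k)$, and $h(\emptyset)$ to the initial object. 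Since $L_{bir}\mathbb{G}_m\simeq L_{bir}\mathbb{A}^1\simeq *$, the map $L_{bir}f_0$ is $*\to h(\mathrm{Spec}\,k\sqcup\mathrm{Spec}\,k)$.
\item Its target is birational local by \Cref{dim 0 is bir rigid} and has two sections over $k$, so $f_0$ is not a birational equivalence.
\end{itemize}
This is the phenomenon of the remark following \Cref{through base change}. Your proposed fix via \Cref{through base change} does not help either. That result concerns base change along a flat morphism of base schemes, not pullback along $\esc{X}_n\to\esc{X}$ inside $\mathcal{P}(S)$.

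The missing idea is how to read the statement. The paper's entire proof is that it ``follows formally from the fact that Nisnevich equivalences are birational equivalences.'' That only works if ``$f_\bullet$ is a simplicial birational equivalence'' means that the realization $|f_\bullet|$ is a birational equivalence. With that reading, no stability of birational equivalences under pullback is needed.

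Your own square does both directions at once. Its top row is $|\esc{Y}\times_{\esc{X}}\esc{X}_\bullet|\simeq\esc{Y}\times_{\esc{X}}|\esc{X}_\bullet|\to|\esc{X}_\bullet|$ and its bottom row is $f$.
\begin{itemize}
\item The right vertical map is a Nisnevich equivalence by hypothesis, using hypercompleteness in the hypercover case.
\item The left vertical map is its base change along $f$, hence also a Nisnevich equivalence by left exactness of $L_{nis}$.
\item Both are therefore birational equivalences by \Cref{bir local is nis local}.
\item Two-out-of-three then gives $f\in bir(S)\iff |f_\bullet|\in bir(S)$.
\end{itemize}
Your levelwise hypothesis is only a sufficient condition for the right-hand side, since saturated classes are closed under colimits; that is exactly what your second direction shows.
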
 
\begin{proof}
    This follows formally from the fact that Nisnevich equivalences are birational equivalences.
\end{proof}
\begin{ex}\label[ex]{nis locally trivial fibration}
    For example, a Nisnevich-locally trivial fibration with a rational group scheme as fiber is a birational motivic equivalence. For example, a $GL_n$-torsor or an $SL_n$-torsor is a birational motivic equivalence. On the other hand, since vector bundles (resp. projective bundles) admit Zariski-local trivializations with typical fibers $\mathbb{A}^n$ (resp. $\mathbb{P}^n$), their structure morphisms are birational equivalences.
\end{ex}
\begin{prop}
    Suppose $\{p_i:U_i\to S\}_{i\in I}$ is a Nisnevich cover of a Qcqs scheme. Then a morphism of $S$-spaces $\esc{X}\to \esc{Y}$ is a birational motivic equivalence over $S$ if and only if, for every $i\in I$, the morphism $\esc{X}_{U_i}\to \esc{Y}_{U_i}$ of $U_i$-spaces is a birational motivic equivalence over $U_i$.
\end{prop}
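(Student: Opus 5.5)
The plan is to reduce the statement to a descent property of the localization functor $L_{bir}$ along the Nisnevich cover, arguing in the same spirit as the preceding proposition on Nisnevich-local birational equivalences. The forward direction is immediate: each $p_i:U_i\to S$ is étale, hence smooth, so the pullback $p_i^*:\mathcal{P}(S)\to\mathcal{P}(U_i)$ is restriction along $(Sm_{U_i})\to Sm_S$, $V\mapsto V$. This functor has the left adjoint $p_{i\shrp}$ (composition with $p_i$), which sends dense open immersions in $Sm_{U_i}$ to dense open immersions in $Sm_S$ and preserves finite coproducts. I would check directly that $p_i^*$ sends generating birational equivalences over $S$ to birational equivalences over $U_i$. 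Dense open immersions $V\hookrightarrow X$ in $Sm_S$ pull back to $V\times_SU_i\hookrightarrow X\times_SU_i$, which are dense open immersions by \Cref{dense prod}. Coproduct comparisons are likewise preserved. Since $p_i^*$ commutes with colimits, it preserves the saturated class $bir(S)$; this is also the content of \Cref{through base change}.

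For the converse, suppose each $f_{U_i}$ is a birational equivalence. I would show that $\mathrm{Map}(f,\mathcal{E})$ is an equivalence for every birational local $\mathcal{E}\in\mathcal{H}^b(S)$. The key input is \Cref{bir local is nis local}: $\mathcal{E}$ is a Nisnevich sheaf. Therefore $\mathcal{E}$ is the limit over the Čech nerve $U_\bullet$ of the cover, and more precisely $\mathrm{Map}_{\mathcal{P}(S)}(\mathcal{X},\mathcal{E})\simeq\lim_{[n]}\mathrm{Map}_{\mathcal{P}(S)}(\mathcal{X}\times U_n,\mathcal{E})$. This holds because $\mathcal{X}\times U_\bullet\to\mathcal{X}$ is a Nisnevich-local equivalence, namely the pullback of the Nisnevich covering sieve, and $\mathcal{E}$ is Nisnevich local. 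By adjunction, each term is $\mathrm{Map}_{\mathcal{P}(U_n)}(\mathcal{X}_{U_n},p_n^*\mathcal{E})$, where $U_n=U_{i_0}\times_S\cdots\times_SU_{i_n}$ is smooth over $S$.

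It then suffices to know two things. First, $p_n^*\mathcal{E}$ is birational local over $U_n$: it lies in $\mathcal{P}_\Sigma$, and it is dense local because a dense open immersion in $Sm_{U_n}$ is one in $Sm_S$. Second, $f_{U_n}$ is a birational equivalence over $U_n$. The latter follows from the forward direction applied to the étale map $U_n\to U_{i_0}$, since $f_{U_n}$ is the pullback of $f_{U_{i_0}}$. Then the maps on each term of the cosimplicial limit are equivalences, so $\mathrm{Map}(f,\mathcal{E})$ is an equivalence.

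The main obstacle is making sure that the Čech descent formula for mapping spaces out of arbitrary $\mathcal{X}$ is legitimate. When the index set $I$ is infinite, one should work with the single cover $\bigsqcup_iU_i\to S$, which is not of finite type. I would handle this by using quasi-compactness of $S$ to pass to a finite subcover. Alternatively, one can check $f$ directly on $\coprod_{i\in F}U_i$ using $\Sigma$-locality of $\mathcal{E}$, so that mapping out of the coproduct becomes the product over $i\in F$. Everything else is formal and rests on \Cref{bir local is nis local}.
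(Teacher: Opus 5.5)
Your proof is correct. It differs from the paper mainly in that the paper proves nothing here itself: it simply invokes [\cite{sfbat}, Proposition 4.1.1(1)].

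You instead give a self-contained argument using only results of this paper.
\begin{enumerate}
\item For the forward direction, you show that pullback along a smooth map in $Sm_S$ sends the generators of $bir(S)$ into $bir(U_i)$. The dense open immersions go over by \Cref{dense prod}, and the coproduct comparison maps go to coproduct comparison maps. Since $p_i^*$ preserves colimits, it then preserves the whole saturated class.
\item For the converse, you use \v{C}ech descent for the target $\esc{E}$. This is available because birational local spaces are Nisnevich sheaves (\Cref{bir local is nis local}). You combine it with the smooth projection formula $p_\shrp p^*\esc{X}\simeq \esc{X}\times h_S(U)$. You also use the observation that $p^*$ preserves birational local objects when $p$ is smooth, since dense open immersions and finite coproducts in $Sm_U$ are also such in $Sm_S$.
\end{enumerate}
This route buys transparency and a mild generalization: the converse goes through verbatim for any family of smooth maps in $Sm_S$ that generates a Nisnevich covering sieve.

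Two small corrections.
\begin{enumerate}
\item The $n$-th term of your cosimplicial limit is a product over multi-indices $(i_0,\dots,i_n)$ of the mapping spaces $\mathrm{Map}_{\mathcal{P}(U_{i_0\cdots i_n})}(\esc{X}_{U_{i_0\cdots i_n}},\esc{E}_{U_{i_0\cdots i_n}})$, not a single such term.
\item Your worry about infinite $I$ is unnecessary. Form the \v{C}ech nerve of the presheaf coproduct $\coprod_i h_S(U_i)\to h_S(S)$ in $\mathcal{P}(S)$. Its realization is the covering sieve whatever the cardinality of $I$, and mapping out of a presheaf coproduct gives a product. So you need neither a finite subcover nor $\Sigma$-locality.
\end{enumerate}
What you do need, and what is implicit in taking a Nisnevich cover in the site $Sm_S$, is that each $U_i$ lies in $Sm_S$. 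This makes $p_i^*$ the restriction along $Sm_{U_i}\to Sm_S$, and makes every $U_{i_0\cdots i_n}$ qcqs, so that $\mathcal{H}^b=\mathcal{H}^{b\mathbb{A}^1}$ over it.
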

\begin{proof}
    The is [\cite{sfbat}, Proposition 4.1.1 (1)]. 
\end{proof} 
\begin{cor}\label[cor]{fibersise criteria}
    Suppose $S$ is a Qcqs scheme and $f:\esc{X}\to \esc{Y}$ is a $0$-birational equivalence of $S$-spaces. Then for every generic point $\eta\in S^{(0)}$, the map $\esc{X}_{k(\eta)}\to \esc{Y}_{k(\eta)}$ is birational equivalence over $k(\eta)$. The converse holds when $S$ has finitely many generic points.
\end{cor}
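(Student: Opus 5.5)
The plan is to deduce the corollary from the preceding proposition on Nisnevich covers together with the base-change compatibility recorded in \Cref{through base change}. For the forward direction, fix a generic point $\eta\in S^{(0)}$. The canonical map $\operatorname{Spec}k(\eta)\to S$ is a cofiltered limit of open immersions $V\hookrightarrow S$, namely the open neighbourhoods of $\eta$. Each such $V$ is flat over $S$, so the pullback $f_V$ is a birational equivalence over $V$ by flat pullback (\Cref{through base change}). The pullback to $k(\eta)$ is then handled by a continuity argument. Since $Sm_{k(\eta)}$ is the 2-colimit of the categories $Sm_V$, and $\mathcal{P}(k(\eta))$ is correspondingly the colimit in $Pr^L$ of the $\mathcal{P}(V)$, it suffices to know two things. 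First, the generators $B_0(k(\eta))$ are pulled back from the sets $B_0(V)$, because a dense open immersion of smooth $k(\eta)$-schemes spreads out to a dense open immersion over some small $V$. Second, pullback is colimit preserving and sends $B_0(V)$ into $B_0(k(\eta))$. Together these show that the base change functor $\mathcal{P}(V)\to\mathcal{P}(k(\eta))$ carries $bir(V)$ into $bir(k(\eta))$, and this gives the claim. (Alternatively, one can cite the flat pullback statement of [\cite{sfbat}] directly for the flat morphism $\operatorname{Spec}k(\eta)\to S$.)

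For the converse, assume $S$ has finitely many generic points $\eta_1,\dots,\eta_r$ and that each $f_{k(\eta_i)}$ is a birational equivalence. The first step is to spread out. Write $f_{k(\eta_i)}$ as a colimit built from generators. Each witness that the map is a birational equivalence involves only finitely many dense open immersions, and these are defined over some open neighbourhood $V_i$ of $\eta_i$. Shrinking the $V_i$, one gets that $f_{V_i}$ is a birational equivalence. This compactness step is where I expect the main obstacle to lie: birational equivalences are defined by saturation rather than by a finite condition. I would try to make the descent rigorous by working with $L_{bir}$ itself. By \Cref{form for lbir} and \Cref{l bir preserve sifted colimit}, $L_{bir}$ commutes with filtered colimits and with base change along the open immersions, so $L_{bir}(f)_{k(\eta_i)}\simeq\colim_{V\ni\eta_i}L_{bir}(f_V)$ evaluated on spread-out schemes. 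One then reduces to the case where $\esc{X},\esc{Y}$ are compact, or checks on compact projective generators (\Cref{generators of Hb}), so that an equivalence in the colimit is realized at a finite stage.

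Next, shrink the $V_i$ to be pairwise disjoint; this is possible because there are finitely many generic points. Then $V=\bigsqcup_i V_i$ is a dense open subscheme of $S$, and $f_V$ is a birational equivalence because $\mathcal{P}_\Sigma(V)\simeq\prod_i\mathcal{P}_\Sigma(V_i)$ compatibly with the dense localizations. Finally, apply the earlier proposition stating that $f$ is a birational motivic equivalence if and only if $f_V$ is one for some dense open $V\subset S$. This concludes the converse.
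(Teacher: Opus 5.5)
Your forward direction is the paper's argument. The paper proves it by citing flat pullback [\cite{sfbat}, Corollary 3.1.6] for $\operatorname{Spec}k(\eta)\to S$, which is your parenthetical alternative. Your second point (base change preserves colimits and sends dense open immersions in $Sm_S$ to dense open immersions over $k(\eta)$) already suffices by itself. (Your description of $\operatorname{Spec}k(\eta)$ as the limit of its open neighbourhoods tacitly assumes $S$ is reduced at $\eta$, as does the paper's flatness claim; in general that limit is $\operatorname{Spec}\mathcal{O}_{S,\eta}$.)

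For the converse the paper only cites [\cite{sfbat}, Corollary 4.3.7]. You instead spread out to neighbourhoods $V_i$, glue to the dense open $V=\bigsqcup_i V_i$, and invoke the dense-open proposition. The outer steps are fine, but the spreading-out step, which you rightly flag, has a genuine gap: the mechanism you propose does not work.
\begin{itemize}
\item There is no reduction to compact $\esc{X},\esc{Y}$. The hypothesis is about the given $f$, and presenting $f$ as a filtered colimit of maps between compact objects tells you nothing about the generic fibers of the pieces.
\item Testing on compact projective generators only tells you that, for each $X_0\in Sm_{k(\eta_i)}$, a filtered colimit over $V\ni\eta_i$ of maps $L_{bir}\esc{X}(X_V)\to L_{bir}\esc{Y}(X_V)$ is an equivalence. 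A filtered colimit of maps of spaces can be an equivalence with no stage being one.
\item Even if some stage were an equivalence, that stage would depend on $X_0$. You would not get a single $V_i$ that works for all of $Sm_{V_i}$.
\end{itemize}

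What is missing is that dense locality makes these colimit systems constant, so no finiteness argument is needed. Choose $V_i$ to be a quasi-compact open neighbourhood of $\eta_i$ contained in $S\setminus\bigcup_{j\neq i}\overline{\{\eta_j\}}$. Such a $V_i$ is irreducible, and these $V_i$ are automatically pairwise disjoint.

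Now take $X\in Sm_{V_i}$ and any nonempty quasi-compact open $V'\subset V_i$. The inclusion $X_{V'}\hookrightarrow X$ is a dense open immersion by \Cref{dense prod}. So for any birational local $\esc{Z}$, every transition map $\esc{Z}(X)\to\esc{Z}(X_{V'})$ is an equivalence, and hence $\esc{Z}(X)\simeq\colim_{V'\ni\eta_i}\esc{Z}(X_{V'})\simeq\esc{Z}_{k(\eta_i)}(X_{k(\eta_i)})$.

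Your two points, together with this colimit formula for base change, show that base change to $k(\eta_i)$ preserves both birational equivalences and birational local objects. It therefore commutes with $L_{bir}$. Applying the above to $\esc{Z}=L_{bir}\esc{X}$ and $L_{bir}\esc{Y}$ identifies $L_{bir}(f)(X)$ with $L_{bir}(f_{k(\eta_i)})(X_{k(\eta_i)})$, which is an equivalence. So $f_{V_i}$ is a birational equivalence, and the rest of your argument goes through.

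In fact, the same computation on the dense open $X_V=\bigsqcup_i X_{V_i}\subset X$, for arbitrary $X\in Sm_S$ and using that $\esc{Z}$ is a $\Sigma$-presheaf, gives the converse directly without the dense-open proposition.
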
 
\begin{proof}
 The first statement follows from [\cite{sfbat}, Corollary 3.1.7]. In detail, this follows from the fact that the inclusion of a generic point is a flat morphism and that flat morphisms preserve birational weak equivalences under pullback [\cite{sfbat}, Corollary 3.1.6].    The converse is a consequence of [\cite{sfbat}, Corollary 4.3.7].
\end{proof}
\begin{prop}
    Let $k$ be a perfect field and let $f:\mal{A}\to \mal{B}$ be a morphism of connective $S^1$-spectra (or, $\mathbb{P}^1$-spectra) such that $s_0f$ (see [\cite{voevodsky2002possible}] for the notation) is an equivalence. Then $\Omega_{S^1}^\infty f$ (resp. $\Omega_{\mathbb{P}}^\infty$) is a birational motivic equivalence. 
\end{prop}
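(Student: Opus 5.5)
The plan is to reduce to a statement about homotopy sheaves. I would use the known slice-theoretic description of zero slices over a perfect field, and then assemble the result through Postnikov towers of connected spaces. I treat the $S^1$-spectrum case; the $\mathbb{P}^1$ case follows by passing to the underlying $S^1$-spectrum via $\Omega^\infty_{\mathbb{G}_m}$, or by the same argument.

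First, since birational equivalences contain motivic equivalences (\Cref{Motivic equivalences are Birational equivalences}), I may work with $L_{mot}$-local spectra. I would write $f$ as the composite of the map on zero slices and the unit maps. For a connective spectrum $\mal{A}$ consider the canonical map $\mal{A}\to s_0\mal{A}$, or more precisely the cofiber sequence $f_1\mal{A}\to \mal{A}\to s_0\mal{A}$ (for connective objects $s_0$ agrees with the effective-zero-slice truncation). By 2-out-of-3 and naturality, it suffices to show that $\Omega^\infty(\mal{A}\to s_0\mal{A})$ is a birational equivalence for every connective $\mal{A}$.

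The key input is the main result of [\cite{bas}]: over a perfect field, $\Omega^\infty$ of a connective spectrum is birational local if and only if the spectrum is zero-sliced. So $\Omega^\infty s_0\mal{A}$ is birational local. It remains to see that $\Omega^\infty\mal{A}\to\Omega^\infty s_0\mal{A}$ is an $L_{bir}$-equivalence, that is, that it induces a bijection on maps into any birational local $\esc{Z}$. I would deal with components first. The homotopy sheaves of $\Omega^\infty s_0\mal{A}$ should be the birationalizations $\pi_0^b$ of those of $\mal{A}$: the map $\pi_i\mal{A}\to\pi_i s_0\mal{A}$ is the universal map to a sheaf with trivial $\mathbb{G}_m$-contraction, which by \Cref{gm contraction 0} is exactly the birational one. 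The component $\Omega^\infty_0$ is Nisnevich connected, so \Cref{model for L_bir on connected} and \Cref{mod cond} identify $L_{bir}\Omega^\infty_0\mal{A}$ with $L^{1,1}\Omega^\infty_0\mal{A}$. For infinite loop spaces, $L^{1,1}$ is computed levelwise by $\mathbb{G}_m$-nullification of the spectrum, which (since $\mathbb{G}_m\wedge-$ generates $f_1$ on connective objects) is the zero slice. This gives $L_{bir}\Omega^\infty_0\mal{A}\simeq\Omega^\infty_0 s_0\mal{A}$. For $\pi_0$, I would use \Cref{pi0b is birationalization} together with the identification $\pi_0 s_0\mal{A}=\pi_0^b\pi_0\mal{A}$. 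Then, since $\Omega^\infty\mal{A}$ is a grouplike $H$-space, I would glue the components using \Cref{bir bar commute}(1): a grouplike monoid is a torsor over its identity component, fibered over $\pi_0$.

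The main obstacle is the identification of $L^{1,1}$ on infinite loop spaces with the zero slice functor, i.e.\ that $\mathbb{G}_m$-nullification of a connective spectrum equals $s_0$. I would first try to prove this by comparing homotopy sheaves via Morel's contraction formula, as in the proof of \Cref{detecting n birationality by gm contraction}, rather than through an explicit model. The gluing over non-connected $\pi_0$ is the secondary difficulty, since $L_{bir}$ is not locally cartesian (\Cref{Lbir not locally cart}). I would handle it through the bar-construction description $\Omega^\infty\mal{A}\simeq\Omega\mathrm{B}\Omega^\infty\mal{A}$ and \Cref{bir and Omega commute on connected}, applied to the connected delooping $\Omega^\infty\Sigma\mal{A}$.
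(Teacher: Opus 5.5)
Your 2-out-of-3 reduction to the maps $\Omega^\infty(\mal{A}\to s_0\mal{A})$ is fine, and so is the birational locality of $\Omega^\infty s_0\mal{A}$. The gap is the step you flag as the main obstacle, which is the whole content of the statement and is left unproved. The claim that $L^{1,1}$ (equivalently $L_{bir}$, via \Cref{mod cond}) of $\Omega^\infty_0\mal{A}$ is $\Omega^\infty_0 s_0\mal{A}$ is essentially the birational recognition principle. The paper does not prove it here; it imports it from [\cite{bas}, Corollary 3.5.1].

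Your suggested route cannot establish it. Morel's formula $\pi_i\Omega_{\mathbb{G}_m}=(\pi_i)_{-1}$ only shows that the target is $\mathbb{G}_m$-local. It says nothing about the map being an $L^{1,1}$-equivalence, and you have no independent computation of $\pi_i L^{1,1}\Omega^\infty_0\mal{A}$ to compare with (the model of \Cref{double localization} only gives $\pi_0$-information). The heuristic that $\pi_i s_0\mal{A}$ is the birationalization of $\pi_i\mal{A}$ is only valid in the bottom degree (cf.\ \Cref{pi1b=pi0bpi1a1}). In general $\pi_i s_0\mal{A}$ is an extension of $\ker(\pi_{i-1}f_1\mal{A}\to\pi_{i-1}\mal{A})$ by $\mathrm{coker}(\pi_i f_1\mal{A}\to\pi_i\mal{A})$.

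For the same reason the identity-component formulation is off. Even granting the recognition principle, $L_{bir}\Omega^\infty_0\mal{A}$ would be $\Omega^\infty s_0(\tau_{\geq 1}\mal{A})$. This agrees with $\Omega^\infty_0 s_0\mal{A}$ only if $s_0(H\pi_0\mal{A})$ is discrete, and I see no reason to expect that. Already in $DM^{eff}$, the zero slice of the sheaf $K^M_2$ has homology $\underline{H}^1(\mathbb{Z}(2))\neq 0$ in degree $2$, although $K^M_2$ has trivial birationalization. So gluing components as $\pi_0$-torsors is the wrong assembly. The route you mention at the end is the right one: write $\Omega^\infty\mal{A}\simeq\Omega\,\Omega^\infty\Sigma\mal{A}$ and use \Cref{bir and Omega commute on connected}, which reduces everything to $1$-connective spectra.

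The missing idea is a mechanism that carries the universal property of $\mal{A}\to s_0\mal{A}$ over to spaces, and the paper already contains it.
\begin{enumerate}
\item By \Cref{Lbir is cartesian}, $L_{bir}$ preserves finite products. It therefore induces a localization of grouplike $E_\infty$-objects of $\mathcal{P}_{nis}(k)$ (connective Nisnevich sheaves of spectra), computed on underlying objects. Thus $L_{bir}\Omega^\infty\mal{A}$ is the underlying space of the reflection of $\mal{A}$ into $\mathrm{CMon}^{gp}(\mathcal{H}^b(k))$.
\item By \Cref{bar of bir monoids}, \Cref{Motivic equivalences are Birational equivalences} and \Cref{dense has trivial contraction}, objects of $\mathrm{CMon}^{gp}(\mathcal{H}^b(k))$ deloop to $\mathbb{A}^1$-local connective spectra whose homotopy sheaves have trivial contraction, i.e.\ to zero-sliced ones.
\item Conversely, if $F$ is connective and zero-sliced, \Cref{gm contraction 0} and \Cref{sigma judgment to bir} applied to $\Omega^\infty\Sigma F$ show that $\Omega^\infty F$ is birational local.
\item Hence $L_{bir}\Omega^\infty\mal{A}\simeq\Omega^\infty s_0\mal{A}$ as soon as $s_0\mal{A}$ is connective. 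This holds by Morel's stable connectivity and contraction formula: the colocalization of $\mal{A}$ onto the subcategory generated by the connective objects $\Sigma^n\Sigma^\infty(\mathbb{G}_m\wedge X_+)$, $n\geq 0$, already has $\mathbb{G}_m$-null cofiber, and is therefore $f_1\mal{A}$.
\end{enumerate}
This argument needs neither $L^{1,1}$ nor any gluing of components. Finally, in the $\mathbb{P}^1$ case, passing to the underlying $S^1$-spectrum needs $\omega^\infty=\Omega^\infty_{\mathbb{G}_m}$ to be compatible with $s_0$. You should cite this rather than assume it.
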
 
\begin{proof}
    This is a consequence of the Birational motivic recognition principle, to be established in [\cite{bas}, Corollary 3.5.1].
\end{proof}
\textbf{Birationally contractible spaces} 
\begin{defn}
We say that an $S$-space $\esc{Y}$ is birationally contractible if the projection map $\esc{Y}\to h_S(S)$ is a birational equivalence. In other words, $L_{bir}\esc{Y}$ is the contractible space. 
\end{defn}
\begin{ex}\label[ex]{rat schemes}
Every smooth rational scheme is birationally contractible. For example, $\mathbb{A}^n, \mathbb{P}^n, GL_n, PGL_n, Gr_{m,n}, SL_n$ etc. are birationally contractible. More generally, for smooth schemes, $$\mathrm{rational}\implies \text{stably-rational}\implies \text{retract-rational} \implies \text{birationally contractible}$$
\end{ex} 
\begin{ex}\label[ex]{blow up or bundle over bir contr}
    A blowup of a birationally contractible smooth scheme along a smooth center is birationally contractible. Similarly, a vector bundle or a projective bundle over a birationally contractible scheme is birationally contractible (use \Cref{nis locally trivial fibration}).
\end{ex}
\begin{ex}
Small colimits of birationally contractible spaces are birationally contractible. For example, using \Cref{rat schemes}, we see that $GL, Gr_\infty, \mathbb{A}^\infty, \mathbb{P}^\infty$ are all birationally contractible. 
\end{ex} 
\begin{prop}
Classifying spaces of birationally contractible monoids are themselves birationally contractible. 
\end{prop}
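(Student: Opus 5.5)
The plan is to deduce the statement directly from the commutation of $L_{bir}$ with the bar construction, \Cref{bar of bir monoids} (equivalently \Cref{bir bar commute}(2)). Let $\esc{M}$ be a monoid $S$-space that is birationally contractible, so that $\esc{M}\to pt$ is a birational equivalence and $L_{bir}\esc{M}\simeq pt$. By \Cref{bir bar commute}(1), $L_{bir}$ is cartesian, so $L_{bir}\esc{M}$ is a monoid in $\mathcal{H}^b(S)$. Because $L_{bir}$ preserves the terminal object, this monoid structure is the unique one on $pt$.

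The main step is the computation
$$L_{bir}\mathrm{B}_\sigma\esc{M}\simeq L_{bir}\mathrm{B}_\sigma(pt,\esc{M},pt)\simeq \mathrm{B}(pt, L_{bir}\esc{M}, pt)\simeq \mathrm{B}(pt,pt,pt)=|pt^{\times\bullet}|\simeq pt,$$
for any topology $\sigma$ coarser than the Nisnevich topology, including the trivial one. The second equivalence is \Cref{bir bar commute}(2). The last equivalence holds because the geometric realization of a constant simplicial object is that object, the simplex category being weakly contractible. It then remains to check that the induced map $\mathrm{B}_\sigma\esc{M}\to pt$ is the birational equivalence obtained by comparing localizations. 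This is formal: it is a map between $L_{bir}$-local targets whose $L_{bir}$-image is an equivalence of contractible spaces.

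Alternatively, the same conclusion follows from naturality. The monoid map $\esc{M}\to pt$ induces $\mathrm{B}\esc{M}\to \mathrm{B}(pt)=pt$. Its image under $L_{bir}$ is $\mathrm{B}$ applied to $L_{bir}\esc{M}\to L_{bir}pt$, and this is an equivalence since $\mathrm{B}$ preserves equivalences of monoids.

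I expect the main obstacle to be a subtle point rather than real difficulty. The statement of \Cref{bir bar commute}(2) requires $L_{bir}$ to carry the monoid structure on $\esc{M}$ to a monoid structure on $L_{bir}\esc{M}$ compatibly with the simplicial bar object. Since $L_{bir}$ preserves finite products (\Cref{Lbir is cartesian}), applying it levelwise to the simplicial object $\esc{M}^{\times\bullet}$ gives $(L_{bir}\esc{M})^{\times\bullet}$. Sifted-colimit preservation (\Cref{l bir preserve sifted colimit}) then handles the geometric realization. Both facts are already established, so no further input is needed.

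Iterating gives the analogous statement for $\mathrm{B}^n$ in the $\mathbb{E}_n$ case, which I would note as a remark.
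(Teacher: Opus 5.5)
Your proposal is correct and is essentially the paper's proof, which likewise invokes \Cref{bir bar commute}(2) to get $L_{bir}(\mathrm{B}_{nis}\esc{M})\simeq \mathrm{B}(L_{bir}\esc{M})\simeq \mathrm{B}(*)\simeq *$; your extra checks (the monoid structure on $L_{bir}\esc{M}$ and the realization of the constant simplicial object) only make explicit what the paper leaves implicit. One small wording fix: $\mathrm{B}_\sigma\esc{M}$ need not itself be birational local, so the reason $\mathrm{B}_\sigma\esc{M}\to pt$ is a birational equivalence is simply that its image under $L_{bir}$ is a map between contractible spaces.
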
 
\begin{proof}
This is an immediate consequence of \Cref{bir bar commute} (2). Indeed, let $\esc{M}$ be a birationally contractible monoid. Then, 
$$L_{bir}(\mathrm{B}_{nis}\esc{M})\simeq \mathrm{B}(L_{bir}\esc{M})\simeq \mathrm{B}(*)\simeq *.$$
\end{proof}
\begin{ex}
    For example, $\mathrm{B}_{nis}GL$ is birationally contractible.
\end{ex} 
\begin{ex}
   Cofibers of birational equivalences (see examples below) are birationally contractible.
\end{ex}

\begin{prop}
    A Severi-Brauer variety over a field $k$ is $\mathbb{A}^1$-connected if and only if it is birationally connected, if and only if it is birationally contractible, if and only if it is isomorphic to $\mathbb{P}^n_k$.
\end{prop}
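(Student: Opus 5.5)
Let $X$ be a Severi–Brauer variety of dimension $n$ over $k$. It is smooth and proper, and it splits over a finite separable extension, so I will prove the chain of implications
\[
X\cong \mathbb{P}^n_k \Rightarrow \text{birationally contractible} \Rightarrow \text{birationally connected} \Rightarrow \mathbb{A}^1\text{-connected} \Rightarrow X\cong\mathbb{P}^n_k .
\]

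The first implication is \Cref{rat schemes}, since $\mathbb{P}^n_k$ is rational. The second is trivial, because a contractible space is connected.

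For the third implication I apply \Cref{proper schemes are a1 connected iff bir connected}: $X$ is proper, so birational connectedness is equivalent to $\mathbb{A}^1$-connectedness. This step uses no special feature of Severi–Brauer varieties.

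The remaining implication, $\mathbb{A}^1$-connected $\Rightarrow X\cong\mathbb{P}^n_k$, is the only real geometric input.
\begin{enumerate}
\item If $X$ is $\mathbb{A}^1$-connected, then $\pi_0^{\mathbb{A}^1}X(k)=*$. In particular $\pi_0^{\mathbb{A}^1}X(k)$ is nonempty, so $X(k)\neq\emptyset$, since the map $X(k)\to\pi_0^{\mathbb{A}^1}X(k)$ is surjective by the singular model of $L_{mot}$.
\item Instead of the surjectivity argument, one can use \Cref{pi0bX=pi0ba1X} together with the identification of $\pi_0^{b\mathbb{A}^1}X$ over $\mathcal{F}_k$ with $R$-equivalence classes of points: triviality at $k$ forces a $k$-point to exist.
\item By Châtelet's theorem, a Severi–Brauer variety with a $k$-rational point is isomorphic to $\mathbb{P}^n_k$.
\end{enumerate}

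The main obstacle is the step that extracts a rational point from connectedness, because the connectedness notions here are sheaf-theoretic (Nisnevich). I will handle it by evaluating at the point $\operatorname{Spec} k$, which is henselian, so Nisnevich sheafification does not change sections there. Then $\pi_0^{nis}$ of $L_{\mathbb{A}^1}X$ at $k$ is a quotient of $X(k)$ (\Cref{bir preserves a1 conn} and \Cref{about pointing}). Triviality of this quotient forces $X(k)\neq\emptyset$, and Châtelet's theorem finishes the argument.
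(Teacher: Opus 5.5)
Your proof is correct. The chain of implications is the paper's, and your alternative step (2) is exactly the paper's argument. The difference is your main route for producing a $k$-point.

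The paper gets the point from \emph{birational} connectedness. For proper $X$, \Cref{pi0bX=pi0ba1X} identifies $\pi_0^bX(k)$ with $X(k)/\!\sim$, so $\pi_0^bX\cong *$ forces $X(k)\neq\emptyset$, and Ch\^atelet's theorem finishes.

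You get the point from $\mathbb{A}^1$-connectedness instead, via the Morel--Voevodsky epimorphism $X=\pi_0^{nis}X\to\pi_0^{\mathbb{A}^1}X$ evaluated at the Nisnevich point $\operatorname{Spec} k$. This makes ``$\mathbb{A}^1$-connected $\Rightarrow X\cong\mathbb{P}^n_k$'' a purely motivic statement, with no $R$-equivalence input.

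The price is that your loop needs the hard (detection) half of \Cref{proper schemes are a1 connected iff bir connected}, namely birationally connected $\Rightarrow$ $\mathbb{A}^1$-connected. The paper's proof of that half itself rests on \Cref{pi0bX=pi0ba1X} and on detecting triviality of $\pi_0^{\mathbb{A}^1}$ over $\mathcal{F}_k$. So your route relocates the dependence on the $R$-equivalence description rather than removing it. With the paper's ordering, only two further facts are actually needed: the easy half ($\mathbb{A}^1$-connected $\Rightarrow$ birationally connected, \Cref{bir preserves a1 conn}) and the $\mathbb{A}^1$-connectedness of $\mathbb{P}^n_k$.

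Two corrections to your last paragraph. First, the surjectivity $X(k)\to\pi_0^{\mathbb{A}^1}X(k)$ comes from [\cite{morel19991}, Corollary 2.3.22], as used in the proof of \Cref{null connectivity}. It does not come from \Cref{bir preserves a1 conn}, which concerns $L_{bir}$. Second, the relevant sheaf is $\pi_0^{nis}L_{mot}X$, not $\pi_0^{nis}L_{\mathbb{A}^1}X$.
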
 
\begin{proof} 
Clearly, $\mathbb{P}^n_k$ is birationally contractible, and a birationally contractible space is birationally connected. Identification of birational connectivity with $\mathbb{A}^1$-connectivity is \Cref{proper schemes are a1 connected iff bir connected}. 

Let $X$ be a Severi-Brauer variety over $k$ such that $\pi_0^bX\cong*$. Since $X$ is proper, by \Cref{pi0bX=pi0ba1X} $$X(k)/_\sim \cong \pi_0^bX(k)\cong *,$$ so $X(k)\neq \emptyset$. Since $X$ is a Severi-Brauer variety, this implies $X\cong \mathbb{P}^n_k$ [\cite{gille2017central}, Theorem 5.1.3].
\end{proof} 
\begin{cor}\label[cor]{bir conn curve}
A smooth, proper curve over $k$ is birationally connected if and only if it is $\mathbb{A}^1$-connected if and only if it is birationally contractible if and only if it is isomorphic to $\mathbb{P}^1_k$.
\end{cor}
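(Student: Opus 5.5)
We prove the chain of implications
$$X\cong\mathbb{P}^1_k\;\Longrightarrow\; X \text{ birationally contractible}\;\Longrightarrow\; X \text{ birationally connected}\;\Longleftrightarrow\; X \text{ is } \mathbb{A}^1\text{-connected}\;\Longrightarrow\; X\cong \mathbb{P}^1_k,$$
following the pattern of the preceding proposition on Severi-Brauer varieties.

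The first implication is \Cref{rat schemes}, since $\mathbb{P}^1_k$ is a smooth rational scheme. The second is formal: if $L_{bir}X\simeq *$, then $\pi_0^bX\cong *$. The equivalence of birational connectedness and $\mathbb{A}^1$-connectedness is \Cref{proper schemes are a1 connected iff bir connected}, since $X$ is proper.

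The substantive step is to show that a birationally connected smooth proper curve $X$ is isomorphic to $\mathbb{P}^1_k$. We first determine the genus $g$ of $X$.

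\textbf{Case $g\geq 1$.} Such curves are birationally rigid by the example following \Cref{A1 rigid proper iff bir rigid}. Hence $X$ is itself birational local, so $\pi_0^bX\cong X$ as a presheaf. Connectedness would then force $X(L)$ to be a single point for every $L\in\mathcal{F}_k$. This is impossible for a positive-dimensional curve: take $L=k(X)$, whose $L$-points include the generic point; alternatively, use $X(L)\cong\mathrm{Hom}(\mathrm{Spec}\,L,X)$ and the presence of distinct closed points after a separable extension. This also follows from \Cref{bir local is P1 local}: a birational-local $X$ contains no rational curve, while a connected $\pi_0$ of a scheme forces $X\to\mathrm{Spec}\,k$ to be an isomorphism. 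Since $\dim X=1$, this is a contradiction.

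If $k$ is imperfect, the genus notion (arithmetic versus geometric) needs care. We intend to sidestep this by arguing with ``$X$ contains no rational curve'' in place of the genus. Either $X$ admits a nonconstant map from a rational curve, which is dominant because $X$ is a curve, or, by \Cref{rationally rigid is bir rigid}, $X$ is birationally rigid and we conclude as above.

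\textbf{Case of a dominant rational curve.} Here $X_{\bar k}$ is dominated by $\mathbb{P}^1$, so $X$ is a smooth proper curve of genus $0$, that is, a conic, which is a Severi-Brauer variety of dimension one. Birational connectedness together with \Cref{pi0bX=pi0ba1X} gives $X(k)/\!\sim\;\cong\pi_0^bX(k)\cong *$, so $X(k)\neq\emptyset$. A conic with a rational point is isomorphic to $\mathbb{P}^1_k$, either directly or by the Severi-Brauer proposition above.

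\textbf{Main obstacle.} The delicate point is the dichotomy ``genus $0$ conic versus no rational curves'' over an imperfect field. A regular proper curve may be geometrically non-reduced or non-smooth, but our $X$ is assumed smooth, so the geometric genus behaves and Lüroth/Hurwitz apply over $\bar k$. We will therefore reduce the dichotomy to $X_{\bar k}$: either $g(X_{\bar k})=0$, in which case $X$ is a form of $\mathbb{P}^1$, hence a conic; or $g\geq1$, in which case $X_{\bar k}$, and hence $X$, contains no rational curve.
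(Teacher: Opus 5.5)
Your proposal is correct and takes essentially the same route as the paper. The paper also reduces to the Severi--Brauer argument: a curve of genus $\geq 1$ is birationally rigid, so birational connectedness forces genus $0$, and $\pi_0^bX(k)\cong X(k)/\!\sim\;\cong *$ then gives a rational point and hence $X\cong\mathbb{P}^1_k$. Your detour through imperfect fields is harmless but not needed: for a smooth proper curve the genus $h^1(\mathcal{O}_X)$ is unchanged under extension to $\bar k$, and a $k$-point on a connected smooth proper $X$ already forces geometric connectedness, which is exactly what makes the genus-$0$ case a conic.
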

\begin{proof}
The proof is identical to the one above, using the fact that such curves must be of genus $0$ (see \Cref{A1 rigid proper iff bir rigid}) and that $\mathbb{P}^1_k$ is the unique smooth proper curve of genus $0$ with a $k$-rational point.
\end{proof} 
\begin{rem}
A similar characterization of smooth, proper, birationally contractible surfaces does not hold, unlike the $\mathbb{A}^1$-local characterization (over characteristic zero fields) of the affine plane as the unique smooth $\mathbb{A}^1$-contractible surface [\cite{A1contractibledubouloz}, Theorem 3.12]. Indeed, both $\mathbb{P}^1_k\times \mathbb{P}^1_k$ and $\mathbb{P}^2_k$ are smooth, proper, birationally contractible surfaces. In fact, by \Cref{blow up or bundle over bir contr}, every blow-up of either is a birationally contractible, proper surface. There are many other birationally contractible, proper surfaces beyond these. For example, for every $n\geq 0$, the Hirzebruch surfaces $$\Sigma_n:=\mathbb{P}_{\mathbb{P}^1}(\mathcal{O}_{\mathbb{P}^1}(-n)\oplus \mathcal{O}_{\mathbb{P}^1})$$ are $\mathbb{P}^1$-bundles over the birationally contractible space $\mathbb{P}^1$ and are therefore birationally contractible (use \Cref{blow up or bundle over bir contr} again). (Note that the Hirzebruch surfaces are in fact $\mathbb{P}^1$-locally contractible under the Nisnevich topology ([\cite{p1algtop}, Example 2.2.20]).)
\end{rem}
\begin{cor}\label[cor]{gen fiber of bir equiv is bir contr}
Let $\esc{X}$ be birationally contractible over $S$. Then for every generic point $\eta\in S$ $\esc{X}_{k(\eta)}$ is birationally contractible over $k(\eta)$.
\end{cor}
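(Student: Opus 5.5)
The argument is a direct application of \Cref{fibersise criteria} to the structure map. By definition, $\esc{X}$ is birationally contractible over $S$ exactly when the projection $p:\esc{X}\to h_S(S)=pt$ is a birational equivalence in $\mathcal{P}(S)$. Apply the first half of \Cref{fibersise criteria} to $p$. This needs only that $S$ is Qcqs, not finitely many generic points. For every generic point $\eta\in S^{(0)}$ it gives that the base-changed map
$$p_{k(\eta)}:\esc{X}_{k(\eta)}\to (h_S(S))_{k(\eta)}$$
is a birational equivalence over $k(\eta)$.

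It then remains to identify the target, i.e.\ to show $(h_S(S))_{k(\eta)}\simeq h_{k(\eta)}(\mathrm{Spec}\,k(\eta))$, the terminal object of $\mathcal{P}(k(\eta))$. The base change used in \Cref{fibersise criteria} is pullback along the flat morphism $j:\mathrm{Spec}\,k(\eta)\to S$, which is a pro-open immersion: the limit of the open neighbourhoods of $\eta$. I would argue that $j^*$ is computed as the filtered colimit over these neighbourhoods of restriction along open immersions, together with a left Kan extension along the continuity equivalence $\mathcal{P}(k(\eta))$ coming from $Sm_{k(\eta)}\simeq \colim_{U\ni\eta}Sm_U$. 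Restriction along an open immersion $U\hookrightarrow S$ sends representables to representables, so $h_S(S)\mapsto h_U(U)$. Left Kan extension along $Sm_U\to Sm_{k(\eta)}$ also sends representables to representables. Passing to the colimit then gives $j^*h_S(S)\simeq h_{k(\eta)}(k(\eta))=pt$. Alternatively, $j^*$ preserves the terminal object because it preserves finite limits, which holds since flat pullback is left exact on presheaves; I would cite the base change formalism of \cite{sfbat} for this.

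Combining the two steps, $\esc{X}_{k(\eta)}\to pt$ is a birational equivalence over $k(\eta)$, i.e.\ $\esc{X}_{k(\eta)}$ is birationally contractible. The only potential obstacle is the identification of $j^*(pt)$ with $pt$ for the pro-open map $j$. I expect this to be routine, since $j^*$ sends the representable $h_S(S)$ to the representable $h_{k(\eta)}(k(\eta))$ essentially by construction.
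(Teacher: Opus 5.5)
Your proposal is correct and is the paper's argument: the paper's proof just applies the first half of \Cref{fibersise criteria} to the structure map $\esc{X}\to h_S(S)$, as you do. The identification $(h_S(S))_{k(\eta)}\simeq pt$ needs no pro-open or continuity detour, since base change sends the representable $h_S(S)$ to $h_{k(\eta)}(S\times_S\mathrm{Spec}\,k(\eta))=h_{k(\eta)}(\mathrm{Spec}\,k(\eta))$ for any morphism $\mathrm{Spec}\,k(\eta)\to S$ (which, incidentally, is pro-open only when $S$ is reduced at $\eta$).
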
 
\begin{proof}
 This is an application of \Cref{fibersise criteria}.
\end{proof}
More generally, 
\begin{prop}
Let $f:\esc{X}\to S$ be an $n$-birational motivic equivalence (see the paragraph below \textup{[\cite{sfbat}, Definition 2.2.1.]}) over a noetherian, universally catenary scheme $S$. Then for every $s\in S^{(n)}$, the fiber $\esc{X}_{k(s)}$ is birationally contractible over $k(s)$.
\end{prop}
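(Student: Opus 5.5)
This follows the pattern of \Cref{gen fiber of bir equiv is bir contr}, which handles $n=0$ via \Cref{fibersise criteria}. The plan is to reduce from an arbitrary codimension-$n$ point $s\in S^{(n)}$ to a generic point of a suitable base. Since I do not have the precise definition of $n$-birational motivic equivalences from [\cite{sfbat}], I will use the following working description. The saturated class is generated by the dense open immersions $U\hookrightarrow X$ in $Sm_S$ whose complement has codimension $\geq n+1$ fibrewise over $S$, or equivalently whose restriction over points of codimension $\leq n$ in $S$ is dense. This is the description compatible with the slice-type filtration of [\cite{bachmann2019voevodsky}].

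The first step is to localize. Let $s\in S^{(n)}$ and put $T:=\mathrm{Spec}\,\mathcal{O}_{S,s}$, a noetherian local scheme of dimension $n$ with closed point $s$. The map $T\to S$ is flat, and \Cref{through base change} says flat pullback preserves birational-type equivalences. So $f_T:\esc{X}_T\to T$ remains an $n$-birational equivalence over $T$. This uses [\cite{sfbat}, Corollary 3.1.6], which should apply verbatim to the $n$-birational classes, since the generators are preserved under flat base change: codimension of complements does not drop, and universal catenarity guarantees that codimension is computed compatibly.

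The second step is to pull back along the closed point $i:\mathrm{Spec}\,k(s)\to T$. This is not flat, so I will instead argue on generators. Because $f_T$ lies in the saturated class generated by dense open immersions $U\hookrightarrow X$ in $Sm_T$ with fibrewise complement of codimension $\geq n+1$ over points of codimension $\leq n$, I will check that $i^*$ sends each such generator to a dense open immersion in $Sm_{k(s)}$. The point is that the complement $Z$ has relative codimension $\geq 1$ over the closed fibre. Since $S$ is universally catenary, the codimension of $Z\cap X_s$ in $X_s$ equals $\mathrm{codim}_X Z-n\geq 1$, so $U_s\subset X_s$ is dense. Because $i^*$ on $\mathcal{P}(T)\to\mathcal{P}(k(s))$ is a left adjoint preserving the relevant generating colimits (it is left Kan extension along base change of smooth schemes), it sends the saturated class into $bir(k(s))$. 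Hence $\esc{X}_{k(s)}\to \mathrm{Spec}\,k(s)$ is a birational equivalence, which is to say $\esc{X}_{k(s)}$ is birationally contractible. Alternatively, this is the content of [\cite{sfbat}, "pullback along an equidimensional closed immersion"] recorded in \Cref{through base change}, and I would cite it directly with the closed immersion $\overline{\{s\}}\cap T\hookrightarrow T$.

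The main obstacle is the codimension bookkeeping in the second step. I must make sure that the notion of $n$-birational generator really guarantees density on the fibre over a codimension-$n$ point, and that universal catenarity (together with equidimensionality after localizing) gives the formula $\mathrm{codim}_{X_s}(Z_s)=\mathrm{codim}_X(Z)-n$. If the inequality only goes one way, I would first try to replace $X$ by an open neighbourhood of the generic points of $X_s$ on which $X\to T$ is flat of constant relative dimension, and use dimension of fibres there.
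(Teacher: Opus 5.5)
Your argument is correct in substance, but it takes a different route from the paper. The paper gives no argument of its own: it simply invokes [\cite{sfbat}, Corollary 3.2.3]. That result presumably rests on the base-change results recorded in \Cref{through base change} and \Cref{fibersise criteria}: restrict along an equidimensional closed subscheme through $s$, then pass to a generic point. This is presumably where universal catenarity enters. Your fallback of citing \Cref{through base change} for $\operatorname{Spec}k(s)\hookrightarrow\operatorname{Spec}\mathcal{O}_{S,s}$ is essentially that route.

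Your main argument instead pulls back directly along $g:\operatorname{Spec}k(s)\to S$. The left Kan extension $g^*$ of $X\mapsto X_{k(s)}$ preserves colimits, so it carries the saturated class generated by a set $W$ into the saturated class generated by $g^*W$. It therefore suffices to test generators. This is self-contained, and once repaired as below it proves slightly more: the localization step and universal catenarity become superfluous, and the conclusion holds for every $s$ with $\dim\mathcal{O}_{S,s}\leq n$.

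Three repairs are needed.

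\begin{enumerate}
\item \emph{The working definition.} Fix the definition consistent with ``$0$-birational $=$ dense open immersion'' (as stated in \S2): $U\hookrightarrow X$ is $n$-birational when $Z:=X\setminus U$ has codimension $\geq n+1$ in $X$ itself. Your ``or equivalently'' is false. For a DVR $R$ and $n=1$, remove a closed point of the special fibre of $\mathbb{A}^1_R$. The complement has codimension $2$ in $X$ and $U$ is dense over both points of $\operatorname{Spec}R$, yet the complement has codimension only $1$ in the special fibre.

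\item \emph{The motivic generators.} The $n$-birational \emph{motivic} class is generated by these open immersions together with the Nisnevich and $\mathbb{A}^1$ generators. You must also note that $g^*$ sends distinguished squares to distinguished squares and $\mathbb{A}^1_Y\to Y$ to $\mathbb{A}^1_{Y_{k(s)}}\to Y_{k(s)}$. Hence $g^*$ sends motivic equivalences to motivic equivalences over $k(s)$, and these are birational by \Cref{Motivic equivalences are Birational equivalences}.

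\item \emph{The codimension formula.} $\operatorname{codim}_{X_s}(Z_s)=\operatorname{codim}_X(Z)-n$ is false as an equality: $Z$ may miss $X_s$, or meet it only at points of larger codimension. Catenarity is also not the relevant input. Use the dimension formula for the flat local homomorphism $\mathcal{O}_{S,s}\to\mathcal{O}_{X,z}$ ($X\to S$ is smooth, hence flat, and $X$ is noetherian): $\dim\mathcal{O}_{X,z}=n+\dim\mathcal{O}_{X_s,z}$ for $z\in X_s$. Every point $z$ of $Z$ has $\dim\mathcal{O}_{X,z}\geq n+1$, while each generic point of $X_s$ has codimension exactly $n$ in $X$. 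So no generic point of $X_s$ lies in $Z$, and $U_s\subset X_s$ is dense (possibly $U_s=X_s=\emptyset$). This is exactly the inequality you needed, so your fallback via constant relative dimension is unnecessary.
\end{enumerate}
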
 
\begin{proof}
    This is immediate from [\cite{sfbat}, Corollary 3.2.3].
\end{proof} 
\textbf{Birational motivic equivalence of schemes}
\begin{ex}\label[ex]{stable bir is bir equiv}
    A birational morphism, or, more generally, a stably birational morphism of smooth schemes, is a birational equivalence (since $\mathbb{P}^n$ is birationally contractible and product by a scheme preserves birational equivalences, \Cref{Lbir is cartesian}).
\end{ex}
\begin{rem}
    However, it remains unclear whether birational isomorphisms (or even dense open immersions) of non-smooth schemes are birational equivalences, since we have only inverted the dense open maps between smooth schemes (see also \Cref{why not all bir iso}). In contrast, vector bundle projections over arbitrary (not necessarily smooth) $X/S$ are motivic equivalences over $S$ (use the fact that $L_{mot}$ is cartesian and that Zariski (or Nisnevich) hypercovers of such arbitrary $X$'s are Nisnevich equivalences in $\mathcal{P}(S)$).   
\end{rem}
\begin{prop}
    Over a field, a birational equivalence between 0-dimensional schemes is an isomorphism. 
\end{prop}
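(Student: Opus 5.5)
Let $f:X\to Y$ be a morphism of zero-dimensional schemes over $k$ that becomes an equivalence in $\mathcal{H}^b(k)$. The plan is to show that $X$ and $Y$ are already birational local, so that $L_{bir}f\simeq f$ in $\mathcal{P}(k)$; then Yoneda upgrades $f$ to an isomorphism of schemes.

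\textbf{Step 1.} Let $T$ be a zero-dimensional scheme (say locally Noetherian, as in \Cref{dim 0 is bir rigid}, e.g. locally of finite type over $k$). By \Cref{dim 0 is bir rigid}, $T$ is birationally rigid. Concretely, its functor of points $h(T):=\mathrm{Sch}_k(-,T)$ on $Sm_k$ is a discrete presheaf. Two facts make it birational local:
\begin{enumerate}
\item It is a Nisnevich sheaf, in particular it lies in $\mathcal{P}_\Sigma(k)$.
\item It inverts dense open immersions $U\subset V$ of smooth schemes. On each connected component of $V$, a morphism to $T$ is constant onto a point, and any morphism from $U$ extends uniquely, because $U$ meets every connected component of $V$ densely.
\end{enumerate}
The second point is where reducedness enters: $V$ is reduced and normal, so a morphism into a point with possibly nonreduced local Artinian structure factors through the fraction field, and hence through the normal scheme. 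This is exactly the content of \Cref{dim 0 is bir rigid}, which I will quote rather than reprove. The same argument applies verbatim to $Y$.

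\textbf{Step 2.} Since $h(X)$ and $h(Y)$ are birational local, $f$ is a birational equivalence between local objects. Hence $h(f):h(X)\to h(Y)$ is an equivalence in $\mathcal{P}(k)$, i.e. a natural bijection $\mathrm{Sch}_k(V,X)\cong \mathrm{Sch}_k(V,Y)$ for every smooth $V/k$.

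\textbf{Step 3 (main obstacle).} We must recover an isomorphism of schemes from this bijection on smooth test schemes, since Yoneda on $Sm_k$ only determines the restriction to smooth test objects. When $X$ and $Y$ are smooth, i.e. étale over $k$, they lie in $Sm_k$ and Yoneda on $Sm_k$ gives $X\cong Y$ directly. In general I would argue as follows:
\begin{enumerate}
\item Evaluating on $\mathrm{Spec}\,L$ for finite separable extensions $L/k$ identifies the étale sheaves, i.e. the $\mathrm{Gal}$-sets of geometric points of $X_{red}$ and $Y_{red}$. So $f_{red}$ is an isomorphism of étale $k$-schemes, whenever $X_{red}$ and $Y_{red}$ are étale (automatic over perfect $k$).
\item Any nilpotent structure is invisible to smooth (reduced) test schemes. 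So the statement should be read, as \Cref{dim 0 is bir rigid} implicitly does, for schemes determined by their points on $Sm_k$: reduced and geometrically reduced zero-dimensional schemes, which are exactly the étale ones.
\end{enumerate}
I would add this hypothesis (or note that the proof shows $f$ is an isomorphism on the étale parts), and conclude via Yoneda inside $Sm_k$.
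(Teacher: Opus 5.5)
Your Steps 1--2 are exactly the paper's proof: zero-dimensional schemes are birationally rigid, hence birational local, so a birational equivalence between them is already an equivalence in $\mathcal{P}(k)$, i.e.\ an isomorphism of the discrete presheaves they represent on $Sm_k$. Your Step 3 caveat is also correct, and the paper skips it: its conclusion is only an isomorphism of functors of points on $Sm_k$, and e.g.\ $\mathrm{Spec}\,k[\epsilon]/(\epsilon^2)\to\mathrm{Spec}\,k$ (or adjoining a purely inseparable point $\mathrm{Spec}\,k(a^{1/p})$ over an imperfect $k$) is bijective on all smooth test schemes, hence a birational equivalence, without being an isomorphism of schemes, so an honest scheme isomorphism does require your étaleness hypothesis.
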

\begin{proof}
    By \Cref{dim 0 is bir rigid}, such schemes are birationally rigid and hence birational local. Thus, a birational local equivalence between such spaces is an equivalence. But an equivalence of discrete spaces is an isomorphism.
\end{proof}
\begin{prop}
   Over a field, a birational equivalence between proper curves of positive genus is an isomorphism.
\end{prop}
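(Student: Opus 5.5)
The plan is to reduce to the zero-dimensional case treated just above: show that both curves are birationally rigid, hence birational local, and then use that a birational equivalence between birational local objects is an equivalence in $\mathcal{P}(k)$.

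First, let $C$ and $C'$ be proper curves over $k$ of positive genus, and let $f:C\to C'$ be a birational equivalence. I will show that a proper curve of positive genus contains no rational curve. Any nonconstant map $\mathbb{P}^1_{\bar k}\to C_{\bar k}$ would be surjective onto a component of geometric genus $\geq 1$ after normalization. For smooth curves this is impossible by Riemann--Hurwitz (or Lüroth). For possibly singular curves I would interpret ``positive genus'' as the geometric genus of each component, so that the normalization argument applies. Then \Cref{rationally rigid is bir rigid} (equivalently \Cref{A1 rigid proper iff bir rigid}) shows that $C$ and $C'$ are birationally rigid. Since schemes are Nisnevich sheaves, the definition of birational rigidity identifies them as birational local discrete objects of $\mathcal{H}^b(k)$.

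Second, since both source and target of $f$ are birational local, $L_{bir}f\simeq f$. A birational equivalence is inverted by $L_{bir}$, so $f$ is an equivalence in $\mathcal{P}(k)$. Both presheaves are discrete, so $f$ is an isomorphism of representable presheaves on $Sm_k$.

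Finally, I need to pass from an isomorphism of presheaves on $Sm_k$ to an isomorphism of schemes. If $C$ and $C'$ are smooth, the Yoneda lemma applies directly. In general a scheme is not determined by its functor on smooth schemes, and this is the step I expect to be the main obstacle. My first attempt would be to note that an isomorphism of functors on $Sm_k$ gives bijections on $L$-points for all finitely generated separable extensions $L/k$. Since $C(L)=C^{sm}(L)$-type arguments fail for singular curves, I would fall back on the intended reading: ``curves'' means smooth proper curves, as in \Cref{bir conn curve}. In that case Yoneda on $Sm_k$ finishes the proof, and I would state that hypothesis explicitly.
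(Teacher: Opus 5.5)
Your argument is correct and is essentially the paper's proof: both curves are birationally rigid by \Cref{rationally rigid is bir rigid}, hence birational local, so the birational equivalence is an equivalence of discrete objects of $\mathcal{P}(k)$, exactly as in the zero-dimensional proposition just before it. Your caution about the last step is also justified. The paper's closing ``an equivalence of discrete spaces is an isomorphism'' yields only an isomorphism of presheaves on $Sm_k$. That is an isomorphism of schemes by Yoneda when the curves are smooth, but it can fail for singular ones: the normalization $E\to C$ of an elliptic curve pinched to a cusp at a rational point induces bijections $\mathrm{Hom}(X,E)\to\mathrm{Hom}(X,C)$ for all smooth $X$, yet is not an isomorphism. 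So stating the smoothness hypothesis explicitly, as you do, is the right call.
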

\begin{proof}
    By \Cref{rationally rigid is bir rigid}, it follows that such curves are birationally rigid. Thus, the claim follows similarly to the proof above.
\end{proof}  
\begin{ex}
    Over an algebraically closed field, a morphism of proper curves is a birational equivalence if and only if it is an isomorphism. The point is that, in this case, curves of genus $0$ are all isomorphic to $\mathbb{P}^1_k$ (by the existence of a rational point), which is birationally contractible.
\end{ex}
\begin{prop}\label[prop]{bir equiv on a dense open}
    Suppose $f:{X}\to {Y}$ is a morphism of smooth $S$-schemes. Then $f$ is a birational motivic equivalence if and only if there exists a dense open $V\subset S$ such that $f_V$ is a birational motivic equivalence in $\mathcal{P}(S)$. 
\end{prop}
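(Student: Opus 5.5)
The forward implication is trivial: if $f$ is a birational motivic equivalence, take $V=S$. The converse is the content. I would deduce it from the general proposition for $S$-spaces stated above (via [\cite{sfbat}, Corollary 3.1.9]) applied to $h_S X\to h_S Y$. Since that reference is external, I would also give a direct argument in the smooth-scheme case. It uses only \Cref{dense prod}, two-out-of-three, and one base-change property.

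Let $j\colon V\hookrightarrow S$ be the dense open immersion, and write $X_V:=X\times_S V$ and $Y_V:=Y\times_S V$.

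\textbf{Step 1.} By \Cref{dense prod}, applied to the smooth $S$-scheme $X$ and the dense open immersion $V\hookrightarrow S$ (both in $Sm_S$), the inclusion $X_V\hookrightarrow X$ is a dense open immersion in $Sm_S$. In particular it lies in $B_0(S)$ and is a birational equivalence in $\mathcal{P}(S)$. The same holds for $Y_V\hookrightarrow Y$.

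\textbf{Step 2.} The square with horizontal maps $X_V\hookrightarrow X$ and $Y_V\hookrightarrow Y$ and vertical maps $f_V$ and $f$ commutes. By two-out-of-three for the saturated class $bir(S)$, the map $f$ is a birational equivalence in $\mathcal{P}(S)$ if and only if the map $h_S(X_V)\to h_S(Y_V)$ is one in $\mathcal{P}(S)$.

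\textbf{Step 3.} The remaining point is that this last map is $j_\sharp(f_V)$, where $j_\sharp\colon\mathcal{P}(V)\to\mathcal{P}(S)$ is left Kan extension along the forgetful functor $Sm_V\to Sm_S$. So it suffices that $j_\sharp$ carries birational equivalences over $V$ to birational equivalences over $S$. This is the smooth-extension case recorded in \Cref{through base change}. Directly, one checks three things.
\begin{enumerate}
\item $j_\sharp$ preserves colimits.
\item The forgetful functor $Sm_V\to Sm_S$ preserves finite coproducts, so $j_\sharp$ sends the generating $L_\Sigma$-equivalences to $L_\Sigma$-equivalences.
\item The forgetful functor sends dense open immersions in $Sm_V$ to dense open immersions in $Sm_S$, since denseness is topological.
\end{enumerate}
Since $bir(V)$ is the saturation of $B_0(V)$ together with the $\Sigma$-generators, and $j_\sharp$ preserves colimits, we get $j_\sharp(bir(V))\subset bir(S)$.

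The main obstacle is Step 3, and specifically the bookkeeping with $L_\Sigma$. One needs the saturated class $bir(V)$ to be generated, as a strongly saturated class, by $B_0(V)$ together with the maps $\sqcup h(U_i)\to h(\sqcup U_i)$. One also needs $j_\sharp$ to commute with the relevant colimits. Both hold by [\cite{lurie2009higher}, Proposition 5.5.4.15] and the fact that $j_\sharp$ is a left adjoint.

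I also read the statement's ``$f_V$ is a birational motivic equivalence in $\mathcal{P}(S)$'' as meaning $\mathcal{P}(V)$. If it is meant literally in $\mathcal{P}(S)$, Step 3 is unnecessary and Steps 1--2 alone finish the proof.
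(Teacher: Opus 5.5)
Your Steps 1--2 are exactly the paper's proof: \Cref{dense prod} makes $X_V\hookrightarrow X$ and $Y_V\hookrightarrow Y$ dense open immersions, hence birational equivalences, and two-out-of-three in $bir(S)$ finishes, which is all that is needed since the statement is phrased in $\mathcal{P}(S)$. Your Step 3, checking that $j_\sharp$ carries the generators of $bir(V)$ into $bir(S)$, correctly handles the $\mathcal{P}(V)$ reading, which the paper gets only implicitly from the smooth-extension property in \Cref{through base change}; like the paper, though, you tacitly need $V$ quasi-compact so that $V$, $X_V$ and $Y_V$ actually lie in $Sm_S$.
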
 
\begin{proof}
    This follows from the 2-out-of-3 property of birational motivic equivalences and the stability of dense open immersions under smooth pullback (see \Cref{dense prod}).
\end{proof}

\begin{prop}
     Suppose $f:X\to Y$ is a morphism of smooth schemes over $S$ with $n$-birationally contractible fibers over an $n$-dense open $V\subset Y$, i.e., $f_V:X_V\to V$ is an $n$-birational motivic equivalence in $\mathcal{P}(V)$. Then $f$ is an $n$-birational equivalence in $\mathcal{P}(S)$.
\end{prop}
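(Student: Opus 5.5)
The plan is to imitate the proof of \Cref{bir equiv on a dense open}: factor the problem through the dense open $V\subset Y$ and use the 2-out-of-3 property of $n$-birational motivic equivalences in $\mathcal{P}(S)$. Concretely, write $j:V\hookrightarrow Y$ for the inclusion and $j':X_V=X\times_Y V\hookrightarrow X$ for its base change along $f$. The commutative square
\[
\xymatrix{
X_V\ar[r]^{j'}\ar[d]_{f_V}& X\ar[d]^{f}\\
V\ar[r]_{j}& Y
}
\]
reduces the claim to three assertions in $\mathcal{P}(S)$: $j$ is an $n$-birational equivalence, $j'$ is an $n$-birational equivalence, and $f_V$, viewed as a map of $S$-spaces via the structure map $V\to S$, is an $n$-birational equivalence. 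Then $f\circ j'=j\circ f_V$ and 2-out-of-3 give the claim for $f$.

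For $j$ and $j'$, I would argue as follows. The hypothesis that $V$ is $n$-dense means that $Y\setminus V$ has codimension $>n$ (or whatever the notion of $n$-dense open immersion from [\cite{sfbat}] prescribes). These maps are then generators of the $n$-birational saturated class, provided $X_V\subset X$ is again $n$-dense. For this I would need an analogue of \Cref{dense prod} for $n$-dense immersions under pullback along $f$. Since $f$ is only a morphism of smooth schemes and not necessarily flat, the codimension might drop.

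My first attempt would be to use flatness on an open. If that fails, I would replace $V$ by a smaller $n$-dense open over which $f$ behaves well, or else invoke the hypothesis differently: $f_V$ being an $n$-birational equivalence in $\mathcal{P}(V)$ presumably forces the fibers to have the expected dimension, so that $X_V$ is $n$-dense in $X$. I expect this codimension bookkeeping for $j'$ to be the main obstacle.

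For $f_V$, the hypothesis gives an equivalence in $\mathcal{P}(V)$. I would transport it along the smooth extension functor $p_\shrp:\mathcal{P}(V)\to\mathcal{P}(S)$ for the smooth map $p:V\to S$, using \Cref{through base change}, which says that smooth extension $p_\shrp$ preserves ($n$-)birational equivalences [\cite{sfbat}]. Since $p_\shrp$ sends representables $h_V(T)$ to $h_S(T)$, the image of $f_V$ is exactly $f_V$ regarded as a map of smooth $S$-schemes. Combining the three facts with 2-out-of-3 concludes the argument.
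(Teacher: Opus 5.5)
The architecture of your argument is sound for two of its three legs. The square with 2-out-of-3 works, and so does the treatment of $j$. Transporting $f_V$ along $p_\sharp$ for the smooth map $V\to S$ is also fine. That last step is formal: $p_\sharp$ preserves colimits and sends $n$-dense immersions, Nisnevich squares and $\mathbb{A}^1$-projections in $Sm_V$ to maps of the same kind in $Sm_S$.

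The gap is exactly the step you flagged, and none of your three repairs can close it. The complement of $X_V$ in $X$ is $f^{-1}(Y\setminus V)$, which depends only on how $f$ behaves \emph{over $Y\setminus V$}:
\begin{itemize}
\item shrinking $V$ only enlarges this complement;
\item flatness of $f$ over $V$ says nothing about it;
\item the hypothesis on $f_V$ only sees $X_V$.
\end{itemize}
Concretely, use the convention matching ``$0$-birational $=$ dense'', namely that $n$-dense means the complement has codimension $>n$. Take $S=\operatorname{Spec}k$, let $f:\mathrm{Bl}_0\mathbb{A}^2_k\to\mathbb{A}^2_k$ be the blow-up, and set $V=\mathbb{A}^2_k\setminus 0$ and $n=1$. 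Then $V$ is $1$-dense and $f_V$ is an isomorphism. But $f^{-1}(0)=E$ is a divisor, so $X_V\subset X$ is merely dense, not $1$-dense.

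This is not just a codimension-bookkeeping problem: in this example $f$ is not a $1$-birational equivalence at all.
\begin{itemize}
\item The space $\mathbf{B}_{nis}\mathbb{G}_m$ is Nisnevich- and $\mathbb{A}^1$-local.
\item It is also local for open immersions in $Sm_k$ whose complement has codimension $\geq 2$. Smooth schemes are normal and locally factorial, so neither $\mathcal{O}^\times$ nor $\mathrm{Pic}$ changes under such immersions.
\item Yet $\mathrm{Pic}(\mathbb{A}^2)=0$, while $\mathrm{Pic}(\mathrm{Bl}_0\mathbb{A}^2)\cong\mathbb{Z}\cdot[E]$.
\end{itemize}
The case $n=0$ fails in the same way. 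Take $\mathbb{A}^1\sqcup\operatorname{Spec}k\to\mathbb{A}^1$ with the point sent to $0$, and $V=\mathbb{G}_m$. The constant sheaf $\underline{\{0,1\}}$ is birational over a field and tells the two sides apart.

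So a proof is only possible under an extra hypothesis ensuring that $f^{-1}(V)$ is $n$-dense in $X$. Flatness of $f$ is one such hypothesis. If $x$ is a generic point of $f^{-1}(Y\setminus V)$, then $x$ is generic in its fibre, so $\dim\mathcal{O}_{X,x}=\dim\mathcal{O}_{Y,f(x)}\geq n+1$. Under such a hypothesis your argument is complete as written.

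The paper itself gives no argument here, only the citation [\cite{sfbat}, Corollary 3.1.22]. Whatever makes the statement true must therefore be a hypothesis of that corollary that is not reproduced in the statement above.
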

\begin{proof}
    This is [\cite{sfbat}, Corollary 3.1.22].
\end{proof}

\begin{prop}\label[prop]{detect bir equiv of smooth by bir contr fibers}
 Suppose $S$ is a Qcqs scheme with finitely many irreducible components and $f:X\to Y$ is a morphism in $Sm_S$ such that the generic fibers of $f$ are birationally contractible, i.e., for every generic point $\eta\in Y^{(0)}$ the scheme $X_{k(\eta)}$ is birationally contractible over $k(\eta)$. Then $f$ is a birational equivalence in $\mathcal{P}(S)$.
\end{prop}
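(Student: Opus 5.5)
We reduce the statement to a generic-fibre check over $Y$ and then glue back over the whole of $Y$. The result to use is the converse direction of \Cref{fibersise criteria}, applied with $Y$ in place of $S$: a morphism of $Y$-spaces is a birational equivalence over $Y$ as soon as its base change to each generic point of $Y$ is one, provided $Y$ has finitely many generic points. The plan has four steps.

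\textbf{Step 1 (finiteness of generic points).} First we show that $Y$ has finitely many irreducible components. Since $Y\to S$ is smooth of finite presentation and $S$ is qcqs with finitely many irreducible components, the generic points of $Y$ lie over generic points of $S$. This uses that flat morphisms send generic points to generic points. Over each generic point $\xi$ of $S$, the fibre $Y_\xi$ is of finite type over $k(\xi)$, so it has finitely many irreducible components. We also need $Y$ quasi-compact, which holds because $Y\to S$ is finitely presented and $S$ is quasi-compact. If one prefers to avoid global quasi-compactness subtleties, one can cover $Y$ by finitely many affine opens and argue locally. Either way, $Y$ is a qcqs scheme with finitely many generic points.

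\textbf{Step 2 (generic criterion over $Y$).} Regard $f:X\to Y$ as the structure map of a smooth $Y$-scheme, that is, as a morphism $h_Y(X)\to h_Y(Y)=pt$ in $\mathcal{P}(Y)$. By hypothesis, for each $\eta\in Y^{(0)}$ the base change $X_{k(\eta)}\to \operatorname{Spec}k(\eta)$ is a birational equivalence over $k(\eta)$. The converse part of \Cref{fibersise criteria} then shows that $X\to Y$ is a birational equivalence in $\mathcal{P}(Y)$.

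\textbf{Step 3 (pushing forward to $S$).} Let $p:Y\to S$ be the structure map. It is smooth, so $p_\shrp:\mathcal{P}(Y)\to\mathcal{P}(S)$ preserves birational equivalences by \Cref{through base change}. More directly, $p_\shrp$ preserves colimits and sends $Sm_Y$ to $Sm_S$ by composition, and a dense open immersion of smooth $Y$-schemes remains a dense open immersion of smooth $S$-schemes. Hence $p_\shrp$ maps $Dense(Y)$ into $Dense(S)$. It also maps $L_\Sigma$-equivalences to $L_\Sigma$-equivalences, since finite coproducts are preserved. Applying $p_\shrp$ to $X\to Y$ gives exactly $f:h_S(X)\to h_S(Y)$, so $f$ is a birational equivalence in $\mathcal{P}(S)$.

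\textbf{Main obstacle.} The delicate point is Step 2. The converse in \Cref{fibersise criteria} is only cited from \cite{sfbat}, so we must check that its hypotheses (qcqs base, finitely many generic points) really hold for $Y$; this is the content of Step 1.

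\textbf{Fallback.} If Step 2 is unavailable in the required form, we would instead spread out the birational contractibility of each $X_{k(\eta)}$. Each equivalence $X_{k(\eta)}\simeq pt$ in $\mathcal{H}^b(k(\eta))$ involves only finitely many dense open immersions and $\Sigma$-relations, which are defined over some open neighbourhood $V_\eta$ of $\eta$. Shrinking to the disjoint union $V$ of the $V_\eta$, which is dense in $Y$, makes $X_V\to V$ a birational equivalence. We would then conclude with a relative version of \Cref{bir equiv on a dense open}, combined with two-out-of-three applied to $X_V\hookrightarrow X$ and $V\hookrightarrow Y$. Both of these are dense open immersions by \Cref{dense prod}, with $X_V$ dense in $X$ because $f$ is smooth and hence open.
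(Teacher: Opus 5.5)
Your proof is correct, but it takes a different route from the paper. The paper gives no argument of its own and simply cites [\cite{sfbat}, Corollary 4.3.6]. You instead derive the statement from results already recorded in the paper: the converse half of \Cref{fibersise criteria}, applied with $Y$ as the base, followed by pushing forward along $p_\sharp$ for the smooth structure map $p:Y\to S$. This is the same reduction the paper itself uses in its final theorem, where it reduces to $Y=S$ via $p_\sharp$.

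What your route buys is transparency. The proposition becomes a formal special case of the fibrewise criterion, and your Step 1 shows exactly why the hypothesis on $S$ is there. Flatness of $p$ sends generic points of $Y$ to generic points of $S$, and each $Y_\xi$ is of finite type over $k(\xi)$. So $Y$ is qcqs with finitely many generic points, which is precisely what the converse of \Cref{fibersise criteria} requires.

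Your direct argument in Step 3 is also sound. Since $p_\sharp$ preserves colimits and sends $B_0(Y)$ and the $\Sigma$-generators over $Y$ to the corresponding generators over $S$, its right adjoint $p^*$ preserves birational local objects. Hence $p_\sharp$ preserves birational equivalences, and this removes your dependence on the citation in \Cref{through base change}.

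The price is that you still rely on an external result of comparable depth, [\cite{sfbat}, Corollary 4.3.7], which is numbered immediately after the one the paper invokes. If that source deduces the converse fibrewise criterion from Corollary 4.3.6, your derivation is circular relative to \cite{sfbat}, though not within this paper.

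Your fallback is unnecessary, and as written it is not a proof. An equivalence in $\mathcal{H}^b(k(\eta))$ is not a priori witnessed by finitely many dense open immersions and $\Sigma$-relations. Spreading it out to some $V_\eta$ would need a genuine compactness and continuity argument for $\mathcal{H}^b$ along the open neighbourhoods of $\eta$.
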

\begin{proof}
See [\cite{sfbat}, Corollary 4.3.6]).
\end{proof} 

\begin{cor}
    Suppose $f:X\to Y$ is a morphism of smooth schemes over a field $k$ such that the fiber of $f$ over every generic point $\eta\in Y^{(0)}$ is a smooth, retract-rational scheme over $k(\eta)$. Then $f$ is a birational motivic equivalence.
\end{cor}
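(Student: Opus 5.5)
The statement to prove is the last corollary: a morphism $f:X\to Y$ of smooth $k$-schemes whose generic fibres are smooth and retract-rational is a birational motivic equivalence. I will deduce it from \Cref{detect bir equiv of smooth by bir contr fibers}, with $S=\operatorname{Spec}k$. The base has one irreducible component, so the hypothesis on $S$ in that proposition holds. It then suffices to check that, for each generic point $\eta\in Y^{(0)}$, the fibre $X_{k(\eta)}$ is birationally contractible over $k(\eta)$.

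Fix $\eta$ and write $K=k(\eta)$. By hypothesis $X_K$ is a smooth retract-rational $K$-scheme. \Cref{rat schemes} records the chain rational $\implies$ stably rational $\implies$ retract rational $\implies$ birationally contractible for smooth schemes. Citing that example gives contractibility of $X_K$ over $K$ directly, and the corollary follows.

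If the last implication in \Cref{rat schemes} is not taken as already justified, I would argue it through the inclusions of \Cref{various rational homotopy categories}: $\mathcal{H}^{bir}\subset\mathcal{H}^{ret\mbox{-}rat}$, so $ret\mbox{-}rat\subset bir$. The class $ret\mbox{-}rat$ is the saturation generated by the projections from retract-rational schemes, and $X_K\to\operatorname{Spec}K$ is such a projection. Hence it is a birational equivalence.

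To check the inclusion $\mathcal{H}^{bir}\subset\mathcal{H}^{ret\mbox{-}rat}$ by hand, I would use that $X_K$ is a retract of a dense open $U\subset\mathbb{A}^n_K$. \Cref{bir local is P1 local} shows a birational local space $\mathcal{E}$ is $U$-local, because $U$ is rational via the span $U\hookleftarrow U\hookrightarrow\mathbb{A}^n$. The map $\mathcal{E}(T)\to\mathcal{E}(X_K\times T)$ is a retract of the equivalence $\mathcal{E}(T)\to\mathcal{E}(U\times T)$, and retracts of equivalences are equivalences.

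The main subtlety is matching conventions. The retraction $X_K\to U\to X_K$ may only exist on a dense open of $X_K$, as in the usual definition of retract rationality. Passing to that dense open costs nothing, since it is a dense open immersion of smooth schemes and therefore a birational equivalence. One must also note that the proposition needs contractibility over $k(\eta)$, not over $k$; that is exactly what we have checked. Everything else is a direct citation.
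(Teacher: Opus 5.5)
Your proposal is correct and is the argument the paper intends, since it gives no separate proof: apply \Cref{detect bir equiv of smooth by bir contr fibers} over $S=\operatorname{Spec}k$, using that smooth retract-rational schemes are birationally contractible (\Cref{rat schemes}, \Cref{various rational homotopy categories}). Your retract argument on $\mathcal{E}(T)\to\mathcal{E}(X_K\times T)$, together with passing to the dense open of $X_K$ where the retraction is defined, correctly verifies the step the paper only calls evident.
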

\begin{rem}
    When the fibers are rational, it is easy to see that the original morphism is a stably birational isomorphism of schemes, which is a rational motivic equivalence and hence a birational motivic equivalence, \Cref{stable bir is bir equiv}. However, a morphism with retract rational fibers might not, in general, be a retract rational motivic equivalence (see \Cref{various rational homotopy categories} for the definition) (the problem is similar to the remark below), let alone a stably birational isomorphism.
\end{rem}
 \begin{rem}
Because the birational homotopy category is less well-behaved under base change than the motivic homotopy category (specifically, closed immersions may not preserve all birational weak equivalences under pushforward (see [\cite{sfbat}]), the techniques of [\cite{A1contractibledubouloz}, Proposition 1.15] (which apply to the ordinary motivic case for arbitrary Qcqs schemes (say, of finite Krull dimension)) cannot be applied to the birational motivic localization to deduce \Cref{detect bir equiv of smooth by bir contr fibers}. This is well known to be possible for motivic equivalences. In this section, we want to single out a larger saturated class. Unfortunately, even rational schemes fail to be stable under arbitrary base change, suggesting that this required property may even fail for the saturated class of rational equivalences.  
 \end{rem} 
\underline{\textit{\'Etale birational equivalences}}

We now briefly discuss a way to characterize the generating set of birational (isomorphisms and) equivalences, namely the dense open immersions among birational isomorphisms and birational equivalences.

The geometric characterization of dense open immersions is well known. We recall it here for the sake of completeness and to generalize it to a homotopical statement:
\begin{lem}
A finite etale birational map of schemes is an isomorphism.
\end{lem}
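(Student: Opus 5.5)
The plan is to use the degree of a finite locally free morphism. Let $f:X\to Y$ be finite étale and birational. A birational map of schemes in the paper's sense is a span of dense open immersions, so here there is a dense open $V\subset Y$ with $f^{-1}(V)\to V$ an isomorphism and $f^{-1}(V)$ dense in $X$.

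Since $f$ is finite and flat of finite presentation, $f_*\mathcal{O}_X$ is a finite locally free $\mathcal{O}_Y$-module. Its rank $y\mapsto\operatorname{rk}_y(f_*\mathcal{O}_X)$ is therefore a locally constant function on $Y$. On the dense open $V$ this rank is identically $1$, because $f$ is an isomorphism there. A locally constant function is constant on each connected component, and every connected component of $Y$ is open (locally, at least) and meets the dense open $V$. Hence the rank is $1$ everywhere.

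The next step is to show that a finite locally free morphism of constant degree $1$ is an isomorphism. This is the step I would check most carefully. Locally on $Y=\operatorname{Spec}A$, the algebra $f_*\mathcal{O}_X=B$ is locally free of rank $1$ as an $A$-module. After shrinking, $B\cong A$ as modules, say $B=A\cdot b$. The structure map $A\to B$ is then an $A$-linear endomorphism of a rank-one free module, i.e. multiplication by some $a\in A$ in the basis $b$. Because $1\in B$ generates $B$ as an $A$-algebra only up to the relation $b^2=cb$, one argues instead via fibres. At each point $y$, the fibre $B\otimes k(y)$ is a $1$-dimensional $k(y)$-algebra, hence equals $k(y)$. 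Thus $A\to B$ is surjective modulo every maximal ideal, so it is surjective by Nakayama's lemma. A surjection between locally free modules of the same rank is an isomorphism, so $f$ is an isomorphism.

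The main obstacle is the local constancy of the rank when $Y$ has infinitely many connected components, or is non-Noetherian and its connected components are not open. To handle this, I would argue pointwise rather than by components. Choose any $y\in Y$ and an open $W\ni y$ on which the rank is constant. Then $W$ meets $V$ by density, so the constant value on $W$ is $1$. This avoids connected components entirely.

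Étaleness is used only to make $f$ flat. Unramifiedness is not even needed once flatness and degree $1$ are known.
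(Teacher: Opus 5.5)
Your proof is correct and follows the same route as the paper: $f_*\mathcal{O}_X$ is finite locally free, birationality forces its rank to be $1$, so $\mathcal{O}_Y\to f_*\mathcal{O}_X$ is an isomorphism, and affineness of $f$ finishes the argument. Your pointwise density argument for the rank and your fibrewise Nakayama step are more careful than the paper's appeal to connected components and generic points; the only addition needed is one line taking you from the span definition to $f^{-1}(V)\cong V$: the inverse of $U\xrightarrow{\sim}V$ is a section of the separated map $f^{-1}(V)\to V$, hence a closed immersion, so $U$ is open, closed and dense in $f^{-1}(V)$ and therefore equal to it.
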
 
\begin{proof}
 Since $f$ is finite etale, the structure map $\mathcal{O}_Y\to f_*\mathcal{O}_X$ is a locally free module of constant rank on each connected component. Since this is an isomorphism over each generic point of $Y$, it is globally of rank $1$ and hence is an isomorphism. Since $f$ is affine, so that $X\simeq Spec_{\mathcal{O}_X}(f_*\mathcal{O}_Y)$, the claim follows.
\end{proof}
\begin{lem}\label[lem]{geom bir etale is dense open}
An \'etale morphism of qcqs schemes is a birational isomorphism if and only if it is a dense open immersion.
\end{lem}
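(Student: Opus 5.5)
The easy direction is immediate. A dense open immersion is \'etale, and it is a birational isomorphism: it is an isomorphism onto a dense open subscheme, so the span $X \xhookleftarrow{=} X \xhookrightarrow{f} Y$ exhibits it as one.

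For the converse, let $f:X\to Y$ be \'etale between qcqs schemes and a birational isomorphism. Concretely, I take this to mean that there are dense opens $U\subset X$ and $V\subset Y$ with $f$ restricting to an isomorphism $U\simeq V$. The plan is to show that $f$ is universally injective. An \'etale, universally injective (radicial) morphism is an open immersion (EGA IV, 17.9.1). Once $f$ is an open immersion, its image contains the dense set $V$, so it is dense.

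To prove injectivity on points, and injectivity after any field base change (which gives radiciality), I will reduce to the finite \'etale case, where the preceding lemma applies.

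\begin{enumerate}
\item \textbf{Reduction to the local case.} Injectivity and radiciality can be checked over the local schemes $\operatorname{Spec}\mathcal{O}_{Y,y}$. Since \'etale maps are quasi-finite and the question is local on $Y$, I would pass to the strict henselization $\operatorname{Spec}\mathcal{O}^{sh}_{Y,y}$. Radiciality is insensitive to this flat base change, because it can be tested on geometric fibres.
\item \textbf{Decomposing the fibre.} Over the strictly henselian local scheme $Y'$, the quasi-finite separated scheme $X'=X\times_Y Y'$ decomposes by Zariski's main theorem as $X'=\bigsqcup_i X_i\sqcup X''$. Here each $X_i$ is finite over $Y'$ and contains a point over the closed point, and $X''$ has empty closed fibre.
\item \textbf{Each finite piece is an isomorphism.} Being finite \'etale over a strictly henselian local scheme, each $X_i\to Y'$ is an isomorphism.
\item \textbf{At most one finite piece.} The dense-isomorphism hypothesis pulls back along the flat map $Y'\to Y$. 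Flat pullback preserves density, since flat morphisms are generizing. So $X'\to Y'$ is still an isomorphism over a dense open $V'$ whose preimage is dense. If two pieces $X_1,X_2$ existed, both would surject onto $Y'$, so $V'$ would have at least two preimages of its generic points. This contradicts injectivity over $V'$. Hence the closed fibre has at most one geometric point, which gives injectivity and radiciality.
\end{enumerate}

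The main obstacle is making the density bookkeeping in step 4 rigorous, especially when $Y$ is non-reduced or has infinitely many irreducible components. In that situation density of $f^{-1}(V)$ in $X$ and of $V'$ in $Y'$ need care. I would use that \'etale maps are open and generizing, so that the generic points of $X'$ map to generic points of $Y'$, all of which lie in $V'$. If the reduction gets messy, I would fall back on the preceding lemma, applying it to the finite part of $X'$ over the henselization after first reducing to $Y$ reduced using \Cref{red et invariance}.
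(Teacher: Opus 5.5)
Your overall strategy is sound: show that $f$ is radicial, then use that an étale radicial morphism is an open immersion. Steps 1--3 are fine. Step 4, however, has a genuine gap, and it is the one you flagged.

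\textbf{The gap in Step 4.} To get a contradiction from two finite pieces you need $V'=V\times_Y Y'$ to be nonempty, i.e.\ $V$ must contain a generization of $y$. For qcqs schemes whose irreducible components are not locally finite this fails, and ``flat pullback preserves density'' is false.

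Take $Y=\operatorname{Spec}\prod_{n\in\mathbb{N}}k$ and $V=\bigcup_n D(e_n)$, the set of principal points; $V$ is open and dense. At a non-principal point $y$, the local ring $\mathcal{O}_{Y,y}$ is a field, because $\prod_n k$ is von Neumann regular. Hence $V'=\emptyset$.

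Your proposed repair is exactly the assertion that fails here. Generic points of $Y'$ do map to generic points of $Y$, but a dense open need not contain all generic points of $Y$. At such a $y$ nothing of the birationality hypothesis survives over $\operatorname{Spec}\mathcal{O}^{sh}_{Y,y}$. So no argument carried out there can exclude two points in the fibre, and reducing to $Y_{\mathrm{red}}$ via \Cref{red et invariance} changes nothing. If $Y$ is locally Noetherian, Step 4 is correct as written.

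\textbf{The missing idea.} Use the density of $V$ globally, through openness of étale maps.
\begin{itemize}
\item Since $f$ is étale and separated, $\Delta_f$ is an open and closed immersion. So $W=(X\times_YX)\setminus\Delta_f(X)$ is open and closed.
\item Its image under the étale, hence open, projection $X\times_YX\to Y$ is an open subset of $Y$. It is precisely the locus over which the geometric fibre has at least two points.
\item If $f^{-1}(V)=U$, the diagonal is an isomorphism over $V$. So this open set misses the dense set $V$ and is empty.
\item Thus $f$ is an étale monomorphism, hence an open immersion (EGA IV 17.9.1). This replaces Steps 1--4 entirely.
\end{itemize}

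\textbf{The condition $f^{-1}(V)=U$.} Both this argument and your Step 4 need $f^{-1}(V)=U$, not merely $f|_U\colon U\simeq V$. It holds because $U$ is the image of a section of the separated étale map $f^{-1}(V)\to V$. Such a section is open and closed, and $U$ is dense in $f^{-1}(V)$, so $U=f^{-1}(V)$.

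Separatedness is essential: the line with doubled origin over $\mathbb{A}^1$ is étale and birational but not an open immersion. In the paper it comes from the standing convention that morphisms are separated. You use it tacitly in Step 2, so state it.

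\textbf{Comparison with the paper.} The paper argues globally. It takes a Zariski-main-theorem factorization $f=p\circ j$ with $j$ a dense open immersion and $p$ finite, and applies the finite étale lemma to $p$. That route requires $p$ to be étale on all of $W$. This does not follow from $f$ and $j$ being étale: take $Y$ a nodal curve, $X$ its smooth locus and $W$ its normalization. So the paper's route is not a ready-made substitute; the diagonal argument above is the clean fix.
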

\begin{proof}
    The `if' part is clear.
    
    Let $f:X\to Y$ be an etale map. Since openness is a local condition, we may assume that $f$ is separated. Since $f$ is quasifinite, by Zariski's main theorem [\cite[\href{https://stacks.math.columbia.edu/tag/05K0}{Lemma 05K0}]{stacks-project}] it factors as a finite map $p: W\to Y$ followed by an open immersion $j$. By taking the closure of the image of $j$ (with the reduced scheme structure), we may assume that $j$ is a dense open immersion and $p$ is finite. By the two-out-of-three property for etale and birational maps, the finite map $p$ is also etale and birational. But then the lemma above implies that $p$ is an isomorphism. Thus $f\cong j$, which is a dense open immersion.
\end{proof}
We now prove the homotopical statement:

\begin{cor}\label[cor]{dense open into base}
   Suppose $S$ is Qcqs and $f: X\to S$ is any morphism. Then the following are equivalent:
   \begin{enumerate}
       \item $f$ is a dense open immersion.
       \item $f$ is \'etale and a birational equivalence over $S$.
       \item $f$ is etale and has birationally contractible generic fiber over each generic point of $S$.  
   \end{enumerate}

\end{cor}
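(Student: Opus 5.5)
We prove the cycle of implications (1)$\Rightarrow$(2)$\Rightarrow$(3)$\Rightarrow$(1). For (1)$\Rightarrow$(2): an open immersion is étale, and a dense open immersion $X\hookrightarrow S$ lies in $B_0(S)$ as soon as $X,S\in Sm_S$. Here $S\to S$ is the identity, which is smooth, and $X\to S$ is an open immersion, which is smooth. So $f$ is inverted by $L_{bir}$ by the very definition of $bir(S)$.

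For (2)$\Rightarrow$(3) we apply \Cref{fibersise criteria} to the map $f:h_SX\to h_SS$. If $f$ is a birational equivalence, then for each generic point $\eta\in S^{(0)}$ the base change $X_{k(\eta)}\to \operatorname{Spec}k(\eta)$ is a birational equivalence over $k(\eta)$. In other words, $X_{k(\eta)}$ is birationally contractible, which is \Cref{gen fiber of bir equiv is bir contr}.

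The substantive direction is (3)$\Rightarrow$(1), and we argue geometrically. Since $f$ is étale, each generic fiber $X_{k(\eta)}$ is an étale $k(\eta)$-scheme. It is therefore zero-dimensional, a disjoint union of spectra of finite separable extensions $L_i/k(\eta)$. By \Cref{dim 0 is bir rigid} such a scheme is birationally rigid, hence birational local over $k(\eta)$. A birational equivalence $X_{k(\eta)}\to \operatorname{Spec}k(\eta)$ between birational local objects is therefore an equivalence of discrete presheaves, hence an isomorphism of schemes, by the proposition that a birational equivalence between $0$-dimensional schemes over a field is an isomorphism. We conclude that $X_{k(\eta)}\cong \operatorname{Spec}k(\eta)$ for every generic point $\eta$.

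It remains to see that $f$ is a birational isomorphism in the geometric sense, after which \Cref{geom bir etale is dense open} gives that $f$ is a dense open immersion. Two things must be checked. First, $f$ is an isomorphism over each generic point, and since $f$ is étale and locally of finite presentation, this spreads out to an isomorphism over a dense open $V\subset S$. Second, $f^{-1}(V)$ is dense in $X$. This holds because étale maps are open, hence generizing, so every generic point of $X$ maps to a generic point of $S$ and thus lies in $f^{-1}(V)$. This spreading-out step is where I expect the main care to be needed. Denseness of $V$ in $S$ needs the generic points of $S$ to be dense, which is automatic only if $S$ has locally finitely many irreducible components. In general I would instead feed the generic-fiber isomorphisms directly into the Zariski-main-theorem argument of \Cref{geom bir etale is dense open}: there the finite étale part $p$ has rank one at every generic point, so it is of rank one everywhere and hence an isomorphism. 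Separately, we must ensure that "birationally contractible generic fiber" is taken over the generic points of $S$ as the base of $f$, so that the rigidity argument applies fiberwise.
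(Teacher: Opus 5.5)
Your proposal is correct and follows the paper's proof essentially step for step. You get (1)$\Rightarrow$(2) by definition and (2)$\Rightarrow$(3) via \Cref{fibersise criteria}. For (3)$\Rightarrow$(1) you use \Cref{dim 0 is bir rigid} to force the \'etale generic fibers to be isomorphisms, spread these out over a dense open, and conclude with \Cref{geom bir etale is dense open}.

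Your extra worry about density is unnecessary. In any scheme every point generalizes to the generic point of an irreducible component containing it. Hence the union $V$ of the neighbourhoods obtained by spreading out at each generic point is automatically dense, and the fallback through the Zariski-main-theorem factorization is not needed.
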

\begin{proof}
Since open immersions are \'etale, (1)$\implies $(2) follows from definition of birational equivalence. (2)$\implies $(3) is immediate from \Cref{fibersise criteria}.

Now suppose $f:X\to S$ has birationally contractible generic fibers. If $f$ is etale, then the generic fibers $X_{k(\eta)}$ (in fact, all fibers) are zero-dimensional and hence birationally rigid over $k(\eta)$ (\Cref{dim 0 is bir rigid}). Since $f_\eta:X_{k(\eta)}\to Spec k(\eta)$ is given to be a birational equivalence, it follows that it is actually an equivalence and thus an isomorphism (as the domain and codomain are discrete). Since our morphisms are of finite presentation, these isomorphisms can be extended to a dense open subscheme of $S$. This way, $f$ becomes a birational isomorphism. From \Cref{geom bir etale is dense open}, we then know that $f$ is a dense open immersion.
The equivalence of the last two statements (even without the \' etale condition) follows from \Cref{fibersise criteria}. The claim then follows from the theorem above.
\end{proof}
\begin{thm}
    Let $S$ be a Qcqs scheme. Then $f:X\to Y$ in $Sm_S$ is a dense open immersion if and only if it is etale and has a birationally contractible generic fiber over each generic point of its codomain.
 \end{thm}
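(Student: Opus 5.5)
The plan is to reduce the relative statement over $Y$ to the absolute statement over the base, \Cref{dense open into base}, by regarding $Y$ itself as the base scheme. Since $Y\in Sm_S$ and $S$ is Qcqs, $Y$ is Qcqs: it is of finite presentation over $S$, and morphisms are separated by convention. Because $f:X\to Y$ is a morphism of smooth $S$-schemes, $X$ is of finite presentation over $Y$. It is therefore a legitimate object of $\mathrm{Sch}_Y$; when $f$ is étale it lies in $Sm_Y$.

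For the forward direction, suppose $f$ is a dense open immersion. Then it is étale. It is also a dense open immersion into $Y$ viewed over the base $Y$, so (1)$\implies$(3) of \Cref{dense open into base} applied with base $Y$ shows that each generic fiber $X_{k(\eta)}$, for $\eta\in Y^{(0)}$, is birationally contractible over $k(\eta)$. Alternatively, one can see this directly. The generic fiber of a dense open immersion at a generic point $\eta$ is $\mathrm{Spec}\,k(\eta)$, because $X$ contains every generic point of $Y$ by density. This is trivially contractible.

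For the converse, suppose $f$ is étale and has birationally contractible generic fibers over every $\eta\in Y^{(0)}$. Applying (3)$\implies$(1) of \Cref{dense open into base} with base $Y$ gives that $f$ is a dense open immersion. To justify this I retrace that argument rather than quote it blindly. Since $f$ is étale, each $X_{k(\eta)}$ is zero-dimensional, hence birationally rigid by \Cref{dim 0 is bir rigid}. The birational equivalence $X_{k(\eta)}\to\mathrm{Spec}\,k(\eta)$ is then an equivalence of discrete objects, hence an isomorphism. By finite presentation of $f$, these generic isomorphisms spread out to a dense open $V\subset Y$ over which $f$ is an isomorphism, so $f$ is an étale birational isomorphism. \Cref{geom bir etale is dense open} then concludes.

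The main obstacle is the spreading-out step when $Y$ has possibly infinitely many irreducible components. The paper's converse in \Cref{fibersise criteria} asks for finitely many generic points, but here $Y$ is only Qcqs. The direct route above avoids \Cref{fibersise criteria} entirely. For each generic point $\eta$, the isomorphism over $\mathrm{Spec}\,k(\eta)=\lim U$ extends, by standard limit arguments for finitely presented morphisms, to an open neighbourhood $U_\eta$ of $\eta$. The union of these $U_\eta$ is a dense open of $Y$ over which $f$ restricts to an isomorphism. This gives birationality without any finiteness on $Y^{(0)}$, and it is also a second check that $f$ is dense.
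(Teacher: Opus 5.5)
Your proposal is correct and is essentially the paper's argument. The paper likewise reduces to the case where $Y$ is the base and applies (1)$\Leftrightarrow$(3) of \Cref{dense open into base}, whose (3)$\Rightarrow$(1) proof is the chain you retrace; your reason for the reduction ($Y$ is itself Qcqs and condition (3) only concerns fibres over $Y^{(0)}$) is cleaner than the paper's appeal to $p_\sharp$. One small repair is needed in the spreading-out step: if $S$, and hence $Y$, is non-reduced, then $\lim_{U\ni\eta}U$ is $\mathrm{Spec}\,\mathcal{O}_{Y,\eta}$ rather than $\mathrm{Spec}\,k(\eta)$, so before the limit argument either note that $X\times_Y\mathrm{Spec}\,\mathcal{O}_{Y,\eta}\to\mathrm{Spec}\,\mathcal{O}_{Y,\eta}$ is \'etale with single fibre $\mathrm{Spec}\,k(\eta)$ (hence universally injective and surjective, hence an isomorphism), or pass to $Y^{red}$ via \Cref{red et invariance}.
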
 
\begin{proof}
Suppose $p:Y\to S$ is the smooth structure map of $Y$. Since $p_\shrp$ preserves birational equivalences (\Cref{through base change}), we may reduce to the case $Y=S$, where this is just the equivalence of conditions (1) and (3) in \Cref{dense open into base}.
\end{proof}

\noindent\rule{\textwidth}{0.4pt}

\noindent{\large\textbf{\textit{Acknowledgments}:}} 

I thank my supervisor, Dr. Chetan Tukaram Balwe, for carefully reviewing the draft. I also thank Dr. Utsav Choudhury for his insightful suggestions and Dr. Arun Kumar for his initial comments.

\phantomsection
\bibliographystyle{amsalpha}	
\renewcommand\refname{Bibliography}
\bibliography{references}
\noindent\rule{\textwidth}{0.4pt} 
\end{document}